\def\th{\theta}
\def\Cal{\mathcal}
\def\K{{\Cal K}}
\def\I{{\Cal I}}
\def\T{{\Cal T}}
\def\G{\mathcal{G}}
\def\Q{\mathcal{Q}}
\def\f0{f_0}
\def\Fc0{\varphi_0}
\def\rn{\bbr^n}
\def\I_k {I_{-}^{k/2}}
\def\I+k {I_{+}^{k/2}}
\def\bbr{{\Bbb R}}
\def\bbn{{\Bbb N}}
\def\bbh{{\Bbb H}}
\def\bbc{{\Bbb C}}
\def\bbz{{\Bbb Z}}
\def\bbe{{\Bbb E}}
\def \hn{{\bbh^n}}
\def\cosh{{\hbox{\rm cosh}}}
\def\sinh{{\hbox{\rm sinh}}}
\def\const{{\hbox{\rm const}}}
\def\cos{{\hbox{\rm cos}}}
\def\ch{{\hbox{\rm cosh}}}
\def\sh{{\hbox{\rm sinh}}}
\def\tanh{{\hbox{\rm tanh}}}
\def \hns {\overset {*}{\bbh}{}^n}
\def\part{\partial}
\def\intl{\int\limits}
\def\b{\beta}
\def\Gam{\Gamma}
\def\Om{\Omega}
\def\a{\alpha}
\def\om{\omega}
\def\del{\delta}
\def\vp{\varphi}
\def\gam{\gamma}
\def\sig{\sigma}
\def\lam{\lambda}
\def\e{\varepsilon}
\def\t{\tau}
\def\sn{S^{n-1}}
\font\frak=eufm10
\def\fr#1{\hbox{\frak #1}}
\def\frR{\fr{R}}        
\def\frS{\fr{S}}
\def\const{{\hbox{\rm const}}}
\def\cos{{\hbox{\rm cos}}}
\def\part{\partial}
\def\intl{\int\limits}
\def\b{\beta}
\def\Gam{\Gamma}
\def\Om{\Omega}
\def\a{\alpha}
\def\hn{\bbh^n}
\newtheorem{theorem}{Theorem}[section]
\newtheorem{lemma}[theorem]{Lemma}
\newtheorem{corollary}[theorem]{Corollary}
\newtheorem{proposition}[theorem]{Proposition}
\theoremstyle{remark}
\newtheorem{remark}[theorem]{Remark}
\newtheorem{example}[theorem]{Example}
\numberwithin{equation}{section}
\newcommand{\be}{\begin{equation}}
\newcommand{\ee}{\end{equation}}
\newcommand{\bea}{\begin{eqnarray}}
\newcommand{\eea}{\end{eqnarray}}
\newcommand{\Bea}{\begin{eqnarray*}}
\newcommand{\Eea}{\end{eqnarray*}}
\def\sideremark#1{\ifvmode\leavevmode\fi\vadjust{\vbox to0pt{\vss
 \hbox to 0pt{\hskip\hsize\hskip1em
\vbox{\hsize2cm\tiny\raggedright\pretolerance10000
 \noindent #1\hfill}\hss}\vbox to8pt{\vfil}\vss}}}%
\begin{document}

\title[Gegenbauer-Chebyshev  Integrals and  Radon Transforms]
{Gegenbauer-Chebyshev  Integrals and  Radon Transforms}

\author{B. Rubin }
\address{Department of Mathematics, Louisiana State University, Baton Rouge,
Louisiana 70803, USA}
\email{borisr@math.lsu.edu}


\subjclass[2010]{Primary 44A12; Secondary  44A15}



\keywords{Gegenbauer-Chebyshev fractional integrals, Radon transforms, support theorems. }

\begin{abstract}

We suggest  new  modifications of   Helgason's support theorems  and  descriptions of the kernels for several projectively equivalent transforms of integral geometry.  The paper deals with the  hyperplane Radon transform and its dual, the totally geodesic transforms on the sphere and the hyperbolic space, the spherical slice transform, and the spherical mean transform for spheres through the origin.
The assumptions for functions are   formulated in integral terms.   The proofs rely on the properties of the Gegenbauer-Chebyshev  integrals which    generalize Abel type fractional integrals on the positive half-line.
 \end{abstract}

\maketitle

\section{Introduction}
\setcounter{equation}{0}

The Radon transform assigns to a function $f$ on $\rn$  its integrals $(Rf)(\xi)=\int_\xi f$ over  hyperplanes $\xi$ in $\rn$. Diverse modifications of this transform
   are widely used in image reconstruction problems \cite{AKK, Ku14, Na1}. The related Gegenbauer-Chebyshev  integrals (see (\ref{4gt6a}), (\ref {4gt6a1}),  (\ref{4gt6ale}),  (\ref{4gt6a1le}) below)  which    generalize Abel type operators of fractional integration play an important role in the study of Radon transforms. Information about these integrals can be found, e.g., in   \cite {vBer, vBE, SKM}.
According to Ludwig \cite[p. 50]{Lud}, who referred to the private communication by  L. Sarason, the connection between the Gegenbauer-Chebyshev integrals and the Radon transform   on $\bbr^n$ was  known to  Lax and Phillips \cite {LP62}.  In the case $n=2$, it was independently discovered by Cormack \cite{Co63}; see also subsequent works  by    Cnops \cite{Cno}, Cormack and Quinto \cite{CoQ}, Deans \cite[Chapter 7] {Dea},  Helgason \cite[Chapter I, Section 2]{H08},   Natterer \cite[p. 25]{Na1}.

 A simple unilateral structure
 of the Gegenbauer-Chebyshev  integrals    can be used to retrieve information about the support of a function  from the knowledge of the support of its Radon transform.  The last observation is closely related to the celebrated
  Helgason's support theorem which states the following.

   \begin{theorem} \label{Helgason's}
   If   $(Rf)(\xi)=0$ for all  hyperplanes $\xi$ that do not meet a ball of radius $a>0$, then $f(x)=0$ for all $x$ outside of that ball provided that
  \be\label{on tra} f\!\in \!C(\rn) \quad \mbox{\rm and} \quad  \sup_x |x|^m |f(x)| <\infty \quad \forall  \, m>0.\ee
  \end{theorem}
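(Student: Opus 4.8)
The plan is to reduce the $n$-dimensional statement to a family of one-dimensional problems for the Gegenbauer--Chebyshev integrals, whose support-preserving structure has already been recorded. Since both the hypotheses in (\ref{on tra}) and the conclusion are covariant under rigid motions, I first translate the ball to the origin, so that it becomes $B_a=\{x:|x|\le a\}$. Writing a hyperplane as $\xi=\{x\in\rn: x\cdot\th=p\}$ with $\th\in\sn$ and $p\in\bbr$, the condition ``$\xi$ does not meet $B_a$'' becomes $|p|>a$, and the hypothesis reads $(Rf)(\th,p)=0$ for all $\th\in\sn$ and all $|p|>a$. The decay assumption guarantees that $(Rf)(\th,p)$ is well defined and rapidly decreasing in $p$, so that all the expansions below converge absolutely.

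Next I expand $f$ in spherical harmonics. Writing $x=r\omega$ with $r>0$, $\omega\in\sn$, put $f(r\omega)=\sum_{k,j} f_{kj}(r)\,Y_{kj}(\omega)$, where $\{Y_{kj}\}$ is an orthonormal basis of degree-$k$ harmonics. The decisive point, which is the Cormack-type factorization underlying the role of the Gegenbauer--Chebyshev integrals, is that $R$ is diagonalized by this decomposition: each component $f_{kj}(r)Y_{kj}(\omega)$ is sent by $R$ to $g_{kj}(p)\,Y_{kj}(\th)$, where $g_{kj}$ is obtained from $f_{kj}$ by a one-dimensional Gegenbauer--Chebyshev integral of the type introduced in (\ref{4gt6a})--(\ref{4gt6a1}), with Gegenbauer weight of index $\lambda=(n-2)/2$. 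Projecting the hypothesis onto $Y_{kj}(\th)$ therefore yields $g_{kj}(p)=0$ for all $|p|>a$ and every $k,j$.

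The core of the argument is then the unilateral (Volterra) nature of these integrals: $g_{kj}(p)$ depends only on the values $f_{kj}(r)$ with $r\ge|p|$. Consequently the vanishing of $g_{kj}$ on $\{|p|>a\}$ is an Abel-type integral equation of the first kind on that half-line, and by the support-preserving inversion of the Gegenbauer--Chebyshev integral recorded earlier one concludes $f_{kj}(r)=0$ for all $r>a$. Reassembling the series gives $f(r\omega)=0$ for $r>a$, that is, $f$ vanishes outside $B_a$, which is the claim.

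The step I expect to require the most care is not the one-dimensional inversion itself---that is supplied by the structural properties of the Gegenbauer--Chebyshev integrals---but rather the rigorous justification that the harmonic expansion may be handled term by term: one must control the growth in $k$ of the radial profiles $f_{kj}$ and of the associated operators, so that the series for $f$ and for $Rf$ converge and may legitimately be integrated against $Y_{kj}(\th)$. This is precisely where the full strength of the hypothesis $\sup_x|x|^m|f(x)|<\infty$ for all $m>0$ enters: it forces rapid decay of every profile $f_{kj}$ and thereby legitimizes the reduction to the half-line problems, just as the classical counterexamples show that some such decay is indispensable.
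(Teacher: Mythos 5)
Your proposal is correct and follows essentially the same route the paper takes: the paper cites Theorem \ref{Helgason's} as classical, but its own generalization (Theorem \ref{azw1a2R}) is proved exactly by your scheme --- spherical harmonic decomposition, the Gegenbauer--Chebyshev representation of $R$ on each harmonic subspace (Lemma \ref{ppo9j}), the unilateral inversion on $(a,\infty)$ (Corollary \ref{mlpzx}, whose hypothesis $\int_a^\infty |f_{m,\mu}(t)|\,t^{2\lambda+m-1}\,dt<\infty$ is where the rapid decay genuinely enters, since otherwise $\G^{\lambda,m}_{-}$ has a nontrivial kernel), and reassembly over $(m,\mu)$. The only minor difference is at the last step: for the a.e.\ statement the paper reassembles via the Poisson integral on $S^{n-1}$ rather than by controlling growth in the harmonic degree, and for your continuous $f$ this reduces to the completeness of spherical harmonics, so the convergence concern you flag is lighter than you suggest.
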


This  result which extends to arbitrary convex sets in $\rn$ dates back  to Helgason's 1963 address \cite {H63}.  It was mentioned in \cite[p. 438] {H64} and presented with detailed proof in \cite{H65};  see also \cite[ p. 10]{H11}.
  A short proof of   Theorem \ref{Helgason's}  for compactly supported functions was suggested by Strichartz \cite{Str82}. Strichartz's proof was modified  by Boman and   Lindskog \cite{Bom09} for Radon transforms of measures.

 \vskip 0.2 truecm

\noindent{\bf Question:} {\it Can the assumptions in (\ref{on tra}) be weakened}?

 \vskip 0.2 truecm

This question which is intimately connected with the structure of the kernel of the operator  $R$ is the main concern  of the present paper.
 The basic tool   is the Gegenbauer-Chebyshev  integrals for which we establish new facts.
Unlike the aforementioned  publications  where the functions  are continuous and rapidly decreasing, we deal with arbitrary locally integrable functions satisfying  certain integral conditions.

 \vskip 0.2truecm

{\bf Plan of the paper, main results,  and  comments.}

Section 2 contains necessary preliminaries.  Section 3 is devoted to the Gegenbauer-Chebyshev  integrals.
In Section 4 we revise  known facts about the action of the Radon transform and its dual on the subspaces generated by spherical harmonics and prove  the following generalization of Theorem \ref{Helgason's}.

 \begin{theorem} \label{Helgason's2} Let  $B_a^-=\{x \in \rn: |x|>a\}$, $a>0$.
If $f(x)=0$  for almost all  $x\in B_a^-$, then   $(Rf)(\xi)=0$ for almost all hyperplanes $\xi$ in this domain.   The converse is true if  \be\label{osllsgMKL} \intl_{B_a^-} |f(x)|\,|x|^m\, dx <\infty\quad \forall  \, m>0\ee
and  fails otherwise.
\end{theorem}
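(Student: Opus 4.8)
The plan is to handle the two directions separately and to reduce the converse to the one–dimensional facts about Gegenbauer--Chebyshev integrals established in Section 3.

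\textbf{Forward direction.} This is purely geometric and needs no decay hypothesis. I parametrize a hyperplane as $\xi_{\th,t}=\{x\in\rn: x\cdot\th=t\}$, $\th\in\sn$, $t\in\bbr$, so that the distance from the origin to $\xi_{\th,t}$ equals $|t|$ and the condition ``$\xi\subset B_a^-$'' reads $|t|>a$. If $f=0$ a.e.\ on $B_a^-$, then $f$ is concentrated on the closed ball $\{|x|\le a\}$, which is disjoint from every such $\xi_{\th,t}$; by Fubini the integrand defining $(Rf)(\th,t)$ vanishes for a.e.\ point of the hyperplane, so $(Rf)(\xi)=0$ for all $|t|>a$.

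\textbf{Converse.} First I would expand $f$ in spherical harmonics, $f(r\om)=\sum_{k,j} f_{k,j}(r)\,Y_{k,j}(\om)$, $r>0$, $\om\in\sn$. The classical Cormack-type computation (recalled in Section 4) shows that $R$ preserves each harmonic subspace and acts on the radial profile through a Gegenbauer--Chebyshev integral: $(Rf)(\th,t)=\sum_{k,j} g_{k,j}(t)\,Y_{k,j}(\th)$, where $g_{k,j}$ is obtained from $f_{k,j}$ by the operator of Section 3, whose kernel involves $C_k^{\nu}$, $\nu=(n-2)/2$, and which carries the decisive unilateral structure: $g_{k,j}(t)$ depends only on the values $f_{k,j}(r)$ with $r\ge|t|$. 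The hypothesis that $(Rf)(\th,t)=0$ for all $|t|>a$ forces, by orthogonality of the $Y_{k,j}$, that $g_{k,j}(t)=0$ on $(a,\infty)$ for every $k,j$. The role of (\ref{osllsgMKL}) is then to license the inversion: expanding that condition in polar coordinates yields finiteness of the radial moments $\int_a^\infty |f_{k,j}(r)|\,r^{m}\,dr$ for every $k,j$ and every $m>0$, which is exactly the decay under which the Gegenbauer--Chebyshev integral of Section 3 is injective on $(a,\infty)$. Applying that injectivity componentwise gives $f_{k,j}=0$ a.e.\ on $(a,\infty)$ for all $k,j$, and completeness of the spherical harmonics yields $f=0$ a.e.\ on $B_a^-$.

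\textbf{Failure without (\ref{osllsgMKL}), and the main obstacle.} For the last clause I would invoke the companion result of Section 3: when the moment condition is dropped, the homogeneous Gegenbauer--Chebyshev equation on $(a,\infty)$ admits a nonzero, necessarily slowly decaying, solution $\vp$; taking $f(r\om)=\vp(r)Y_{k,j}(\om)$ produces a function nonzero on $B_a^-$ yet with $Rf=0$ for all $|t|>a$, so the converse genuinely fails. The main obstacle is the passage between the global integral condition on $f$ and the per-component statement: one must justify the term-by-term identity $(Rf)(\th,t)=\sum_{k,j} g_{k,j}(t)\,Y_{k,j}(\th)$ for merely locally integrable $f$ — convergence of the defining integrals for a.e.\ hyperplane and legitimacy of interchanging $R$ with the harmonic expansion — and verify that (\ref{osllsgMKL}) transfers to the uniform radial decay needed so that the individual null results reassemble into the vanishing of $f$ itself rather than of finitely many projections.
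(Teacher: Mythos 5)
Your proposal is correct and follows essentially the same route as the paper's proof (given there as Theorem \ref{azw1a2R} together with Remark \ref{his impl}): spherical harmonic decomposition, the unilateral Gegenbauer--Chebyshev representation of $R$ on each harmonic subspace, inversion licensed by the moment condition, and the kernel functions $|x|^{-n-k}Y_m(x/|x|)$ as counterexamples when (\ref{osllsgMKL}) fails. The two technical points you flag at the end are exactly the ones the paper handles, via the duality identity (\ref{duas3}) to control the a.e.\ issues and the Poisson integral to reassemble the vanishing Fourier--Laplace coefficients into $f=0$.
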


A natural addition to  Theorem \ref{Helgason's2} is the following statement describing the kernel  of the operator $R$.   This description  is given in terms of the Fourier-Laplace coefficients.  Equivalent statements in any  topological space  containing appropriate linear combinations of spherical harmonics can be obtained by taking  closure in the corresponding topology.

To state the result, let $\{Y_{m,\mu}\}$ be an  orthonormal basis of real-valued spherical harmonics in $L^2(S^{n-1})$; see, e.g., \cite{Mu}. Here
 $m=0,1,2, \ldots,$ and $ \mu=1,2, \ldots d_n (m)$, where
\be\label{kWSQRT23} d_n(m) =(n+2m-2)\,
\frac{(n+m-3)!}{m! \, (n-2)!}\ee
is the dimension of the subspace  of spherical harmonics of degree $m$. Given a function $f$ on $\rn$, the corresponding Fourier-Laplace coefficients are defined by
\[ f_{m,\mu} (r)=\intl_{\sn} f(r\th) Y_{m,\mu} (\th)\, d\th.\]

\begin{theorem} \label{azw1a223}
Let
\be\label{azw1X23}  I_1 (f)=\intl_{|x|>a} \!\frac{|f(x)|}{|x|}\, dx<\infty\quad \mbox{for  all $a>0$}.\ee

\noindent (i) \  Suppose that  $f_{m,\mu} (r)=0$ for almost all $r>0$ if $m=0,1$, and
\be \label{azw2INTRO}
f_{m,\mu} (r)= \sum_{\substack{k=0 \\  m-k \,  even }}^{m-2} \frac{c_{k}}{r^{n+k}}, \qquad c_{k}=\const,\ee
if $m\ge 2$. Then  $(Rf)(\xi)=0$ for almost all hyperplanes $\xi$ in $\rn$.

\noindent (ii) \ Conversely, let $(Rf)(\xi)=0$ for almost all hyperplanes $\xi$ in $\rn$. Suppose, in addition to (\ref{azw1X23}), that
\be \label{azw2INTRO2} I_2(f)\!=\!\intl_{|x|<a}\! \!\!|x|^{N-1} |f(x)|\, dx\!<\!\infty \quad \mbox{for some $N\!>\!0$ and  $a\!>\!0$}.\ee
  Then each  Fourier-Laplace coefficient $f_{m,\mu} (r)$ is a finite linear combination of functions $r^{-n-k}$, $k=0,1, \ldots$, and the following statements hold.

\noindent {\rm (a)} If $m=0,1$, then $f_{m,\mu} (r)\equiv 0$.

\noindent {\rm (b)}  If $m\ge 2$ and $f \neq 0$\footnote{The inequality $f \neq 0$ means  that the set $\{x\in \rn: f (x)\neq 0\}$ has positive measure.},  then $f_{m,\mu} (r)\not\equiv 0$  for at least one pair $(m,\mu)$. For every such pair, $f_{m,\mu} (r)$ has the form (\ref{azw2INTRO}).
\end{theorem}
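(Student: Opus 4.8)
The plan is to reduce the $n$-dimensional kernel problem to a family of one-dimensional problems indexed by the spherical harmonic degree $m$, and then to invoke the kernel description of the Gegenbauer-Chebyshev integrals established in Section~3. First I would expand $f$ in the orthonormal basis $\{Y_{m,\mu}\}$, so that the coefficients $f_{m,\mu}(r)$ carry all the information about $f$. The condition $I_1(f)<\infty$ guarantees that $(Rf)(\xi)$ exists for almost all $\xi$ and, after writing $\xi=\xi(\omega,s)$ with unit normal $\omega$ and signed distance $s$, that the harmonic components of the Radon data are well defined. The key structural fact, which follows from the rotation equivariance of $R$ together with standard spherical-harmonic computations (the Funk-Hecke theorem), is that $R$ acts diagonally on the harmonic decomposition: the $(m,\mu)$-component of $(Rf)(\omega,s)$ equals $Y_{m,\mu}(\omega)$ times the Gegenbauer-Chebyshev integral of order $m$ (one of the operators (\ref{4gt6a})--(\ref{4gt6a1le})) applied to $f_{m,\mu}$. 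Hence $(Rf)(\xi)=0$ for a.a.\ $\xi$ is equivalent to the vanishing, for every $(m,\mu)$ and a.a.\ $s$, of this one-dimensional integral of $f_{m,\mu}$.

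For part (i) I would verify directly that each power function $r^{-n-k}$ with $0\le k\le m-2$ and $m-k$ even is annihilated by the order-$m$ Gegenbauer-Chebyshev operator. After the substitution $u=s/r$ turning this integral into an Abel-type fractional integral, the computation reduces to evaluating expressions of the form $\int u^{\,p}(1-u^2)^{(n-3)/2}C_m^\lambda(u)\,du$ with $\lambda=(n-2)/2$ (Chebyshev when $n=2$); these vanish for exactly the admissible $k$ by the degree and parity properties of the Gegenbauer polynomials together with a Beta-integral evaluation. The hypothesis $I_1(f)<\infty$ is precisely what makes these integrals converge, so the termwise argument is legitimate and yields $(Rf)(\xi)=0$.

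For part (ii) I would run the equivalence of the first paragraph in reverse: the vanishing of $(Rf)$ forces the order-$m$ Gegenbauer-Chebyshev integral of $f_{m,\mu}$ to vanish for a.a.\ $s$. Here the second assumption $I_2(f)<\infty$ is essential, since it controls the behavior of $f_{m,\mu}$ near the origin and thereby confines the homogeneous equation to a space in which its only solutions are power functions; the kernel description of Section~3 then identifies $f_{m,\mu}$ as a finite linear combination of the $r^{-n-k}$. Indeed, in radial form $I_2$ reads $\int_0^a r^{N+n-2}|f_{m,\mu}(r)|\,dr<\infty$, so a term $r^{-n-k}$ requires $\int_0^a r^{N-2-k}\,dr<\infty$, i.e.\ $k<N-1$, which enforces finiteness. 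For $m=0,1$ the admissible range $0\le k\le m-2$ is empty, giving $f_{m,\mu}\equiv0$ and statement (a); for $m\ge2$ the surviving powers satisfy $0\le k\le m-2$ with $m-k$ even, which is the form (\ref{azw2INTRO}). Finally, if $f\neq0$ then by completeness of $\{Y_{m,\mu}\}$ some coefficient is not identically zero, and since all degree-$0$ and degree-$1$ coefficients vanish this forces $m\ge2$, proving (b).

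The step I expect to be the main obstacle is the kernel characterization of the Gegenbauer-Chebyshev integral on the class of merely locally integrable functions constrained only by $I_1$ and $I_2$, rather than on smooth, rapidly decreasing functions. Pinning down exactly which power functions survive --- both that the listed ones are annihilated and that no other power, and no non-power solution of the homogeneous equation, can occur --- requires the careful fractional-integral analysis of Section~3. The delicate point is the simultaneous bookkeeping of decay at infinity and integrability at the origin, since it is precisely the failure of one of these constraints that would enlarge the kernel beyond (\ref{azw2INTRO}) and account for the ``fails otherwise'' phenomenon recorded in Theorem~\ref{Helgason's2}.
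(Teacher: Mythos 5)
Part (i) of your proposal is essentially sound and matches the computational content of the paper (Lemma \ref{ppo9j} reduces the $(m,\mu)$-component of $Rf$ to $\G^{\lam,m}_{-}f_{m,\mu}$, and Lemma \ref{89srg} shows that the powers $r^{-n-k}=r^{-2\lam-k-2}$ with $0\le k\le m-2$, $m-k$ even, are annihilated); you should only add the Poisson-integral step needed to pass from the vanishing of all Fourier--Laplace components of $Rf$ to $Rf=0$ a.e., as in the proof of Theorem \ref{zaeh}. The paper itself routes part (i) through the dual transform via (\ref{jikbBU2}) and Theorem \ref{zaeh}, but your direct route is equivalent.

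Part (ii) has a genuine gap at exactly the decisive step. You write that ``the kernel description of Section~3 then identifies $f_{m,\mu}$ as a finite linear combination of the $r^{-n-k}$,'' but no such description exists in Section~3: Lemmas \ref{89srg} and \ref{marti} only exhibit certain power functions \emph{in} the kernel of $\G^{\lam,m}_{\pm}$, and the question of whether anything else lies in the kernel on the class of functions satisfying only the existence conditions (\ref{for10zg}), (\ref{026g094t}) is explicitly posed as an \emph{open problem} at the end of Section~3. The uniqueness results that are available (Corollary \ref{mlpzx}) require the strictly stronger decay $\int_a^\infty|f(t)|\,t^{2\lam+m-1}\,dt<\infty$, which $I_1(f)<\infty$ does not supply for $m\ge 2$ --- indeed the kernel elements $r^{-n-k}$ satisfy $I_1$ but violate that stronger condition, so those results cannot be invoked. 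The paper closes this gap by an entirely different mechanism: it passes to $\vp=Bf$ and the dual transform via (\ref{jikbBU2}), uses $I_2(f)<\infty$ solely to guarantee $Bf\in S'(Z_n)$ (Lemma \ref{azw1a}), then shows that $R^*\vp=0$ forces $\vp=0$ in the $\Phi'(Z_n)$-sense because $\Phi_e(Z_n)=R\,\Phi(\rn)$ (Theorem \ref{pesen}), whence each $\vp_{m,\mu}$ is a \emph{polynomial} by Proposition \ref{lpose6} and Lemma \ref{zaehle3}; only after $\vp_{m,\mu}$ is known to be a polynomial does the algebraic analysis of $\G^{\lam,m}_{+}$ on monomials determine which powers survive. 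Your reading of $I_2$ as merely controlling integrability of the individual terms $r^{-n-k}$ near the origin presupposes the polynomial structure you are trying to establish, so the argument as written is circular at this point and needs the Semyanistyi--Lizorkin distributional input to be completed.
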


 The  results of Section 4  trace back to the  aforementioned  works by Helgason  \cite {H63, H64, H65,H11}
and Ludwig \cite{Lud}. Regretfully, some  important justifications in \cite{Lud} are skipped. For example, it is not explained why the functions $\psi_{j,\ell} (s)$ in the proof of Theorem 4.2 (see \cite[p. 65]{Lud}) do exist.  We circumvent this difficulty and suggest a  different approach which invokes  the  Semyanistyi-Lizorkin spaces of Schwartz functions orthogonal to all polynomials.

The condition (\ref{azw2INTRO2}) allows $f(x)$  to  grow as $x\to 0$, but not faster than some power of $|x|^{-1}$.  We conjecture that this condition can be omitted; see open problems at the end of the paper. The finiteness  of $I_1$ is necessary for the existence of the Radon transform  on  radial functions.

The question about the kernel of the Radon transform $f \to Rf$ is closely related to  non-injectivity of this operator when f does not belong to  $L^p(\rn)$ with $1\le p<n/(n-1)$ (otherwise, $R$ is injective). Our Theorem \ref{azw1a223} implies  counter-examples in Boman \cite[Section 6]{Bom91} and Boman and  Lindskog \cite[Section 5]{Bom09}. However, the conclusion (ii) of Theorem \ref{azw1a223} may fail  in the absence of the assumption (\ref{azw1X23}). Indeed, according to  Zalcman \cite{Za82} ($n=2$), Armitage \cite{Arm},
   Armitage and Goldstein \cite{ArG} (see also  Helgason \cite[p. 19]{H11}), there exists a nonconstant harmonic function $h$ on $\rn$, $n\ge 2$, such that $\int_\xi |h|<\infty$ and $\int_\xi  h=0$ for every $(n-1)$-dimensional hyperplane $\xi $. Such a function $h$ obviously satisfies (\ref{azw2INTRO2}). However, the Fourier-Laplace coefficients $h_{m,\mu} (r)$, $m\ge 2$, cannot have the form (\ref{azw2INTRO}). Indeed,   for any $a>0$,
\bea &&\left |\intl_0^a h_{m,\mu} (r)\, dr\right |\le\intl_0^a dr \intl_{\sn} |h(r\th) Y_{m,\mu} (\th)|\, d\th\nonumber\\
&&=\intl_{|x|<a} |h(x) Y_{m,\mu} (x/|x|)|\, \frac{dx}{|x|^{n-1}}\le c\intl_{|x|<a} \frac{dx}{|x|^{n-1}}<\infty.\nonumber\eea
On the other hand, the integral $\int_0^a h_{m,\mu} (r)\, dr$ diverges for all non-zero functions of the form (\ref{azw2INTRO}) because of the strong singularity at $r=0$. This contradiction shows that $h$ does not obey (\ref{azw1X23}).

 Analogues of Theorems \ref{Helgason's2} and \ref{azw1a223} hold for the dual Radon transform; see Theorems \ref{zaeh}, \ref{zaehle4}, \ref{azw1a2R2}. Diverse modifications and generalizations of the Helgason  support theorem can be found
in  \cite{ Bom91a}-\cite{BQ}, \cite{GQ94, OTak}, \cite{Q92}-\cite{Q08}, \cite{Taki1}-\cite{Wie}. The uniqueness problem for  Radon-like  transform was studied in  \cite{Bom91, Bom93, BQ87, Per, Q83}. These publications contain many other related references.
 The methods,  aims, and  results of these works essentially differ from ours.

In Sections 5-8 we study similar problems  for   some other important Radon-like transforms.
Section 5 is devoted to the spherical mean transform that assigns to a function $f$ on $\rn$ the integrals of $f$ over  spheres passing through the origin.   In the case $n=2$, this transform was introduced by  Cormack \cite{Co63} who obtained a formal inversion formula in terms of the Fourier series. A similar inversion problem for spheres through the origin in $\rn$,   $n\ge 3$ odd,   was  studied by  Chen \cite{Che64, Che67}  and Rhee \cite{Rh70, Rh70a}  in connection with  the Darboux equation.
 Their consideration relies on certain paraboloidal means. The case of all $n\ge 2$ was investigated
  by Cormack and Quinto \cite{CoQ}. These authors used spherical harmonic expansions, the link with the dual  Radon transform in $\rn$, and the results of Ludwig \cite{Lud}; see also Quinto \cite{Q82, Q83, Q08} and Solmon \cite[p. 340]{So87}. Our treatment of this class of operators (see Theorems \ref{zaehQuin}, \ref{zaehQusu})
  also relies on the connection with the Radon transform but we do not use the results from \cite{Lud} and deal with more general classes of   functions.

  A similar work has been done in Sections 6,7, and 8 for the Funk transform on the unit sphere $S^n$ \cite {Fu11, Fu13, GGG2, H11},   the corresponding  spherical slice transform for geodesic spheres through the north pole, and  the  totally geodesic Radon transform on the $n$-dimensional real hyperbolic space \cite {H11}.

  The name {\it spherical slice transform} was adopted by  Helgason for the  transformation previously studied by Abouelaz and  Daher \cite{AbD} on zonal functions. In the case of the $2$-sphere in $\bbr^3$ it was proved (see Helgason \cite[p. 145]{H11})  that there is a link  between the spherical slice transform and the Radon transform over lines in the $2$-plane.
   This fact is generalized in Theorems \ref{zasliep} and \ref{zasli} for the $n$-dimensional case, $n\ge 2$,  and combined  with the corresponding statements from Section 4.

   We conclude this discussion by noting that the idea of the projective equivalence of  Radon-like transforms, as in Sections 5-8, is not new;
   cf. \cite{GGG2, P1}. The link between the  Radon transform on $\rn$ and the corresponding transforms on other constant curvature spaces
 was used by Kurusa  \cite{Ku94} to transfer Helgason's support theorem from $\rn$ to the sphere and the hyperbolic space; see also \cite{BC93, BCK}.
  Our formulas are  different and the functions may not be smooth. Moreover, we describe the kernel of the corresponding operators and give examples of their non-injectivity.

   An open problem related to the unilateral structure of the Gegenbauer-Chebyshev integrals and the corresponding Radon transforms is formulated at the end of the paper; see also   an open problem  of the same nature at the end of Section 3.

\section{Preliminaries}

\subsection {Notation}  In the following $\bbz, \bbn, \bbr, \bbc$ are the sets of all integers,
positive integers, real numbers, and complex numbers,
respectively; $\bbz_+ = \{ j \in \bbz: \ j \ge 0 \}$;  $ \bbr_+ = \{ a \in \bbr: \ a > 0 \}$;
$S^{n-1} = \{ x \in \bbr^n: \ |x| =1 \}$ is the unit sphere
in $\bbr^n= \bbr e_1 \oplus \cdots \oplus \bbr e_{n}$, where $e_1, \ldots,  e_{n}$ are the coordinate unit vectors.   For $\theta \in S^{n-1}$, $d\theta$ denotes  the surface element on $S^{n-1}$;  $\sigma_{n-1} =  2\pi^{n/2} \big/ \Gamma (n/2)$ is the surface area of $S^{n-1}$.  We set $d_*\theta= d\theta/\sigma_{n-1}$ for  the normalized surface element on $S^{n-1}$.

The letter $c$  denotes  an inessential positive constant that may vary at each  occurrence. Dealing with  integrals, we say that  the integral
 exists in the Lebesgue sense if it is finite when the expression under the sign of integration is replaced by its absolute value.

\subsection {  Gegenbauer and Chebyshev polynomials}   The Gegenbauer polynomials
$C_m^\lam (t)$ form an orthogonal system in the weighted space $L^2 ([-1,1]; w_{\lam})$, $w_{\lam}(t)=(1-t^2)^{\lam -1/2}$, $\lam >-1/2$.
  In the case  $\lam=0$, they are usually substituted by the Chebyshev polynomials $T_m (t)$. For further references, we review some  properties of the polynomials $C_m^\lam (t)$ and  $T_m (t)$.

Let $|t|\le 1$ and $\lam > -1/2$. Then
\be\label{kioxsru}  |C^\lam_m (t)|\le c\, \left\{\begin{array}{ll} 1,  & \mbox{if $m$ is even,}\\
|t|,  & \mbox{if $m$ is odd,} \qquad c\equiv c(\lam, m) =\const.\\
 \end{array}
\right.\ee
The same inequality holds for $T_m (t)$; cf. 10.9(18) and 10.11(22) in \cite{Er}.
The following equalities  for the Mellin transforms are simple consequences of
  47(1) and 48(4) from  \cite[Sec. 10 (10)]{Mari}. Let $\eta=0$ if $m$ is even and $\eta=1$ if $m$ is odd,
  \be\label {gynko}
c_{\lam,m}=\frac{ \Gam (2\lam+m)\,\Gam (\lam+1/2)}{2 m!\, \Gam (2\lam)}, \qquad \lam > -1/2, \quad \lam \neq 0.\ee
Then\footnote{If $m$ is odd, then $\eta =1$  and  (\ref {89zse55})  is understood  for $-1<  Re\, z \le 0$ by continuity (the  same for (\ref {89zset})).}
\bea\label {89zse} \a_m(z)&\equiv&  \intl_0^1 u^{z-1} (1 \!- \! u^2)^{\lam -1/2}\, C^\lam_m (u)\, du\\
\label {89zse55}&=&
\frac {c_{\lam,m}\, \displaystyle{\Gam \left(\frac{z}{2}\right)\,\Gam \left(\frac{z \!+ \!1}{2}\right)}}
{\displaystyle{ \Gam \left(\lam \! + \!\frac{z\!+ \!1 \!+m}{2}\right)\, \Gam \left(\frac{z \!+ \!1- \!m}{2}\right)}}, \quad Re\, z\!>\!-\eta;\eea
\bea\label {89zse1} \b_m(z)&\equiv& \intl_0^1 u^{z-1} (1 \!- \!u^2)^{\lam -1/2}\, C^\lam_m (1/u)\, du\\&=&
\frac {c_{\lam,m}\, \displaystyle{\Gam \left(\frac{z\!-\!m}{2}\right)\,\Gam \left(\lam \!+\!\frac{z\!+\!m}{2}\right)}}{ \displaystyle{\Gam \left(\lam\!+\!\frac{z}{2}\right)\, \Gam \left(\lam\!+\!\frac{z\!+\!1}{2}\right)}}, \qquad Re\, z>m. \nonumber\eea
These formulas can be equivalently written  in a different form; see 2.21.2(5) and 2.21.2(25) in \cite{PBM2}.
Similarly, by   18(1) and 19(4) from  \cite[Sec. 10 (10)]{Mari}, we have
\be\label {89zset}  \intl_0^1 u^{z-1} (1 \!- \! u^2)^{-1/2}\, T_m (u)\, du=
\frac { \displaystyle{\pi^{1/2}\, \Gam \left(\frac{z}{2}\right)\,\Gam \left(\frac{z \!+ \!1}{2}\right)}}
{\displaystyle{2\, \Gam \left(\frac{z\!+ \!1 \!+m}{2}\right)\, \Gam \left(\frac{z \!+ \!1- \!m}{2}\right)}}, \ee
\be\label {89zse1t}  \intl_0^1 u^{z-1} (1 \!- \!u^2)^{-1/2}\, T_m (1/u)\, du\!=\!
\frac {\displaystyle{\pi^{1/2}\, \Gam \left(\frac{z\!-\!m}{2}\right)\,\Gam \left(\frac{z\!+\!m}{2}\right)}}{ \displaystyle{2\,\Gam \left(\frac{z}{2}\right)\, \Gam \left(\frac{z\!+\!1}{2}\right)}}, \ee
where $Re\, z>-\eta$ and $Re\, z>m$, respectively.

\subsection {Riemann-Liouville and Erd\'elyi-Kober Fractional Integrals}\label{Fractional}
 We briefly review some facts from \cite{Ru13b, SKM}.
For a function $f$  on  $\bbr_+$,  the  fractional integrals  of the Riemann-Liouville type are defined by
\[ (I^\a_{+}f ) (t) =
\frac{1}{\Gamma (\alpha)} \intl^t_0 \frac{f(s) \,ds} {(t - s)^{1-
\alpha}},  \quad (I^\a_{-}f ) (t) = \frac{1}{\Gamma (\alpha)} \intl_t^{\infty}
\frac{f(s) \,ds} {(s-t)^{1- \alpha}}, \]
where $t>0$ and $\a>0$. Both integrals are unilateral.
Hence, the behavior of $f(s)$ is irrelevant for $s\to \infty$ (in $I^\a_{+}f$) and $s\to 0$ (in $I^\a_{-}f$).
   Passing to reciprocals, one can express one integral through another:
 \be\label {zhuhjtf} (I^\a_{-}f)(x)=x^{\a-1} (I^\a_{+}f_1)(1/x), \qquad f_1(x)=x^{-\a-1}f(1/x).\ee
The integral $I^\a_{+}f$ is well defined for any locally integrable
function $f$. The convergence of   $I^\a_{-}f$   depends on    the behavior of $f$  at infinity.
 \begin{lemma}\label{lif} Let $a>0$. If
 \be\label{for10} \intl_a^\infty |f(s)|\, s^{\a -1}\, ds
<\infty,\ee
then  $(I^\a_{-}f)(t)$ is finite for almost all $t> a$.
If $f$ is non-negative, locally integrable on $[a,\infty)$, and (\ref{for10}) fails, then $(I^\a_{-}f)(t)=\infty$ for every $t\ge a$.
\end{lemma}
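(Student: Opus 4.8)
The plan is to treat the two assertions separately, reducing both to a comparison between the weight $(s-t)^{\a-1}$ that appears inside $I_-^\a$ and the weight $s^{\a-1}$ occurring in (\ref{for10}); the factor $1/\Gamma(\a)$ is a fixed positive constant and is irrelevant to questions of finiteness. I note at the outset that, because $a>0$, the function $s^{\a-1}$ is bounded below by a positive constant on every interval $[a,b]$, so (\ref{for10}) already forces $\intl_a^b |f(s)|\,ds\le c\intl_a^b |f(s)|\,s^{\a-1}\,ds<\infty$; hence $f\in L^1([a,b])$ for each $b>a$ and no separate local integrability hypothesis is needed in the first part.

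For the first claim I would establish finiteness \emph{in the mean} over $t$. Fix $b>a$ and set $J=\intl_a^b dt\,\intl_t^\infty |f(s)|\,(s-t)^{\a-1}\,ds$. By Tonelli's theorem the order of integration may be exchanged, and the inner integral in $t$ is elementary: for $a<s\le b$ one gets $\intl_a^s (s-t)^{\a-1}\,dt=(s-a)^\a/\a$, while for $s>b$ one gets $[(s-a)^\a-(s-b)^\a]/\a$. Therefore
\[ \a J=\intl_a^b |f(s)|\,(s-a)^\a\,ds+\intl_b^\infty |f(s)|\,\big[(s-a)^\a-(s-b)^\a\big]\,ds. \]
The first term is finite since $(s-a)^\a$ is bounded on $[a,b]$ and $f\in L^1([a,b])$. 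For the second term, the mean value theorem gives $(s-a)^\a-(s-b)^\a=\a\,\xi^{\a-1}(b-a)$ with $\xi\in(s-b,s-a)$, whence $(s-a)^\a-(s-b)^\a\le c\,s^{\a-1}$ for all $s\ge 2b$ (the constant depending on $\a,a,b$), the remaining range $b<s<2b$ being harmless by continuity and local integrability. Thus the second term is dominated by a multiple of $\intl_b^\infty |f(s)|\,s^{\a-1}\,ds<\infty$. Hence $J<\infty$, so the nonnegative function $t\mapsto\intl_t^\infty |f(s)|\,(s-t)^{\a-1}\,ds$ is finite for almost every $t\in(a,b)$; letting $b\to\infty$ yields the assertion for almost all $t>a$.

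For the second claim, fix $t\ge a>0$ and assume $f\ge 0$ with $\intl_a^\infty f(s)\,s^{\a-1}\,ds=\infty$. Since $s^{\a-1}$ is bounded on $[a,2t]$ and $f$ is locally integrable there, the divergence must come from infinity, i.e. $\intl_{2t}^\infty f(s)\,s^{\a-1}\,ds=\infty$. It then suffices to bound $(s-t)^{\a-1}$ below by a multiple of $s^{\a-1}$ for large $s$: for $s\ge 2t$ one has $s/2\le s-t\le s$, so that $(s-t)^{\a-1}\ge \min(1,2^{1-\a})\,s^{\a-1}=c_\a\,s^{\a-1}$ regardless of the sign of $\a-1$. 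Consequently $(I_-^\a f)(t)\ge \dfrac{c_\a}{\Gamma(\a)}\intl_{2t}^\infty f(s)\,s^{\a-1}\,ds=\infty$ for every $t\ge a$, as required.

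The only genuinely delicate point is the comparison among the powers $(s-t)^{\a-1}$, $(s-a)^\a-(s-b)^\a$, and $s^{\a-1}$, which must be made uniform in $s$ and whose monotonicity in the base switches sign at $\a=1$. Once this is handled—by the mean value theorem for the upper estimate and by the two-sided bound $s/2\le s-t\le s$ (valid for $s\ge 2t$) for the lower one—the remainder is a routine application of Tonelli's theorem together with the observation that, for $a>0$, the hypothesis (\ref{for10}) already subsumes local integrability of $f$ on $[a,\infty)$.
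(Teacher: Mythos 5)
Your proof is correct and follows essentially the same route as the paper: the first assertion via Tonelli applied to $\int_a^b(I^\a_-|f|)(t)\,dt$, and the second via the comparison $(s-t)^{\a-1}\ge c_\a\, s^{\a-1}$ for $s\ge 2t$. The only cosmetic differences are that you argue the second part directly rather than by contradiction and unify the cases $\a\le 1$ and $\a>1$ through the constant $\min(1,2^{1-\a})$, where the paper treats them separately.
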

\begin{proof}
The first statement  is a consequence of the inequality
\[\intl_a^b (I^\a_{-}|f|)(t)\,dt<\infty \quad \forall \,a<b<\infty. \] The latter can be checked  by
 changing the order of integration. To
prove the second statement, we assume the contrary, that is, $f\ge 0$ is locally integrable on $[a,\infty)$, (\ref{for10}) fails, but
$(I^\a_{-}f)(t)$ is finite for some $t\ge a$. Let first $\a\le 1$. Then for any $N>t$,
\[ \intl_t^{N}\frac{f(s)\,
ds}{(s-t)^{1-\a}}>\intl_t^{N}\frac{f(s)\,
ds}{s^{1-\a}}=\left ( \,\intl_a^{N}-\intl_a^{t}\right
)\frac{f(s)\, ds}{s^{1-\a}}.\]
If $N \to \infty$, then,
by the assumption, the left-hand side remains bounded, whereas the
right-hand side tends to infinity.
  If $\a>1$, we proceed as follows. Fix any $b>t$. Then, for any $N>0$,
\[
 \intl_t^{2b+N}\frac{f(s)\,
ds}{(s-t)^{1-\a}}>\intl_{2b}^{2b+N}\frac{f(s)\,
ds}{(s-t)^{1-\a}}>2^{1-\a}\intl_{2b}^{2b+N}\frac{f(s)\,
ds}{s^{1-\a}}\]
 (note that $s-t>s-b>s/2$). The rest of the proof is
as before.
\end{proof}

The corresponding operators $\Cal D^\a_{\pm}$ of fractional differentiation  are defined as left
inverses of  $I^\a_{\pm}$, so that
$\Cal D^\a_{\pm}I^\a_{\pm} f=f$.  The operators  $\Cal D^\a_{\pm}$ may have different analytic forms. For example, if $\alpha = m +
\alpha_0, \;
 m = [\alpha]$ (the integer part of $\a$),  $0 \le \alpha_0 <
1$, then
 \be\label{frr+}\Cal D^\a_{\pm} \vp = (\pm d/dt)^{m +1} I^{1 -
\alpha_0}_{\pm} \vp. \ee
The equality $\Cal D^\a_{\pm}I^\a_{\pm} f=f$ must be justified at each occurrence.

The Erd\'elyi-Kober type fractional integrals are defined by
\be\label{EKO} \!\!\!\!(I^\a_{+, 2} f)(t)\!=\!
\frac{2}{\Gamma (\alpha)} \intl^t_0 \!\!\frac{f(s) \, s\,ds} {(t^2 \!-\! s^2)^{1-
\alpha}},  \quad \!(I^\a_{-, 2} f)(t)\! =\! \frac{2}{\Gamma (\alpha)} \intl_t^{\infty}\!\!
\frac{f(s) \, s\,ds} {(s^2\!-\!t^2)^{1- \alpha}}, \ee
so that $ I^\a_{\pm, 2} f=A^{-1}I^\a_{\pm} Af$ where $(Af) (t)=f (\sqrt {t})$.
 The integral $(I^\a_{+, 2} f)(t)$ is absolutely convergent for almost all $t>0$ whenever $r\to rf(r)$ is a locally integrable function on $\bbr_+$.  For $(I^\a_{-, 2} f)(t)$, the following statement  is a consequence of Lemma \ref{lif}.

 \begin{lemma}\label{lifa2}  Let $a>0$. If
 \be\label{for10z} \intl_a^\infty |f(s)|\, s^{2\a -1}\, ds
<\infty, \ee
 then $(I^\a_{-, 2} f)(t)$ is finite for almost all $t>a$.
  If  $f$ is non-negative, locally integrable on $[a,\infty)$, and (\ref{for10z}) fails, then $(I^\a_{-, 2} f)(t)=\infty$ for every $t\ge a$.
 \end{lemma}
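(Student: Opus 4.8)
The plan is to reduce the lemma entirely to Lemma \ref{lif} by exploiting the conjugacy $I^\a_{-, 2} f=A^{-1}I^\a_{-} Af$ recorded just after (\ref{EKO}), where $(Af)(t)=f(\sqrt{t})$ and hence $(A^{-1}g)(t)=g(t^2)$. First I would make the underlying change of variable explicit to confirm the pointwise identity $(I^\a_{-, 2} f)(t)=(I^\a_{-} Af)(t^2)$. Starting from $(I^\a_{-} Af)(t^2)=\Gam(\a)^{-1}\intl_{t^2}^\infty f(\sqrt{s})\,(s-t^2)^{\a-1}\,ds$ and substituting $s=r^2$, the Jacobian $ds=2r\,dr$ produces the factor $2r$ and turns the kernel $(s-t^2)^{\a-1}$ into the Erd\'elyi--Kober kernel $(r^2-t^2)^{\a-1}$ of (\ref{EKO}), yielding exactly $(I^\a_{-, 2} f)(t)$.

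Next I would translate the hypothesis (\ref{for10z}) into the hypothesis (\ref{for10}) for the function $Af$ at the base point $a^2$. Substituting $u=s^2$ in $\intl_{a^2}^\infty |(Af)(u)|\,u^{\a-1}\,du$ gives $2\intl_a^\infty |f(s)|\,s^{2\a-1}\,ds$, so (\ref{for10z}) holds for $f$ precisely when (\ref{for10}) holds for $Af$ at $a^2$. Invoking the first part of Lemma \ref{lif} with $a$ replaced by $a^2$ then shows that $(I^\a_{-} Af)(u)$ is finite for almost all $u>a^2$. Since the map $t\mapsto t^2$ is a smooth diffeomorphism of $(a,\infty)$ onto $(a^2,\infty)$ with nonvanishing derivative, it carries null sets to null sets, so the exceptional set for $(I^\a_{-, 2} f)(t)=(I^\a_{-} Af)(t^2)$ in the variable $t$ is again null. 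This proves the first assertion.

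For the divergence assertion I would check that the hypotheses of the second part of Lemma \ref{lif} transfer to $Af$. If $f\ge 0$ then $Af\ge 0$, and local integrability of $f$ on $[a,\infty)$ forces local integrability of $Af$ on $[a^2,\infty)$, because $\intl_{a^2}^{b^2}(Af)(u)\,du=2\intl_a^b f(s)\,s\,ds\le 2b\intl_a^b f(s)\,ds<\infty$. The failure of (\ref{for10z}) is, by the same substitution as above, exactly the failure of (\ref{for10}) for $Af$ at $a^2$. Hence the second part of Lemma \ref{lif} gives $(I^\a_{-} Af)(u)=\infty$ for every $u\ge a^2$, and therefore $(I^\a_{-, 2} f)(t)=(I^\a_{-} Af)(t^2)=\infty$ for every $t\ge a$, using $t\ge a\iff t^2\ge a^2$.

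The argument is essentially bookkeeping, so I expect no deep obstacle; the only step demanding a little care is the measure-theoretic one in the first assertion—transferring the \emph{almost all} conclusion through the substitution $u=t^2$—which is clean precisely because that map is bi-Lipschitz on each compact subinterval of $(a,\infty)$ bounded away from the origin.
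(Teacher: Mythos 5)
Your proposal is correct and follows exactly the route the paper intends: the paper states Lemma \ref{lifa2} as an immediate consequence of Lemma \ref{lif} via the conjugation $I^\a_{-,2}f=A^{-1}I^\a_{-}Af$, $(Af)(t)=f(\sqrt{t})$, and your write-up simply makes the change of variables and the transfer of the hypotheses and of the almost-everywhere conclusion explicit. All the bookkeeping (the factor $2$ from the Jacobian, the base point $a^2$, and the null-set preservation under $t\mapsto t^2$ on $(a,\infty)$ with $a>0$) checks out.
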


Fractional derivatives $ \Cal D^\a_{\pm, 2}$  of the  Erd\'elyi-Kober type are defined as the  left inverses of $(I^\a_{\pm, 2})^{-1}$. For example, if $\alpha = m +
\alpha_0, \;
 m = [\alpha], \; 0 \le  \alpha_0 <
1$, then, formally, (\ref{frr+}) yields
\be\label{frr+z}   \Cal D^\a_{\pm, 2} \vp=(\pm D)^{m +1}\,
I^{1 - \alpha_0}_{\pm, 2}\vp, \qquad D=\frac {1}{2t}\,\frac {d}{dt}.\ee
The equality $\Cal D^\a_{\pm, 2}I^\a_{\pm, 2} f=f$ must be justified at each occurrence.

Inversion of  $I^\a_{-, 2}$ may cause  difficulties related to convergence at infinity. The following statement holds.
\begin{theorem}\label{78awqe} \cite{Ru13b} Let $\vp= I^\a_{-, 2} f$, where $f$  satisfies   (\ref{for10z}) for every $a>0$. Then  $f(t)= (\Cal D^\a_{-, 2} \vp)(t)$ for almost all $t\in \bbr_+$, where  $\Cal D^\a_{-, 2} \vp$ has one of the following forms.

\noindent {\rm (i)} If $\a=m$ is an integer, then

\be\label {90bedri}
\Cal D^\a_{-, 2} \vp=(- D)^m \vp, \qquad D=\frac {1}{2t}\,\frac {d}{dt}.\ee

\noindent {\rm (ii)}  If $\alpha = m +\alpha_0, \; m = [ \alpha], \; 0 \le \alpha_0 <1$, then

\be\label{frr+z3} \Cal D^\a_{-, 2} \vp = t^{2(1-\a+m)}
(- D)^{m +1} t^{2\a}\psi, \quad \psi=I^{1-\a+m}_{-,2} \,t^{-2m-2}\,\vp.\ee
Alternatively,
\be\label{frr+z4} \Cal D^\a_{-, 2} \vp=2^{-2\a}\, \Cal D^{2\a}_- \, t\,  I^\a_{-, 2}\, t^{-2\a-1} \, \vp,\ee
where $\Cal D^{2\a}_-$ denotes the Riemann-Liouville derivative of order $2\a$, which can be computed according to (\ref{frr+}).

\noindent {\rm (iii)} If, moreover, $\int_1^\infty |f(t)|\, t^{2m +1}\, dt
<\infty$, then
\be\label{frr+z4y0} \Cal D^\a_{-, 2} \vp=(- D)^{m +1} I^{1-\a+m}_{-, 2}\, \vp.\ee
\end{theorem}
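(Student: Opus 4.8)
The backbone of the argument is the intertwining relation $I^\a_{-,2}=A^{-1}I^\a_{-}A$ recorded after (\ref{EKO}), with $(Af)(t)=f(\sqrt{t})$, together with its formal consequence $\Cal D^\a_{-,2}=A^{-1}\Cal D^\a_{-}A$. The plan is to push the problem to the Riemann--Liouville half-line, invert $I^\a_-$ there under the natural weighted integrability hypothesis, and conjugate back. Setting $g=Af$ and substituting $v=s^2$, one sees at once that hypothesis (\ref{for10z}) on $f$ is identical to the Riemann--Liouville condition $\int_b^\infty|g(v)|\,v^{\a-1}\,dv<\infty$ on $g$ for every $b>0$; hence Lemma~\ref{lif} guarantees that $h:=I^\a_- g=A\vp$ is finite a.e., so the inversion is well posed. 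It then suffices to prove $\Cal D^\a_- h=g$ a.e. in each stated shape: conjugation by $A$ turns $d/dv$ into $D=(2t)^{-1}d/dt$ and multiplication by $v^\b$ into multiplication by $t^{2\b}$, converting every Riemann--Liouville formula into the Erd\'elyi--Kober formula of the theorem. In the integer case $\a=m$ the hypothesis reads $\int_b^\infty|g|\,v^{m-1}\,dv<\infty$, so $I^m_- g$ is absolutely convergent and may be differentiated $m$ times under the integral sign; this gives $(-d/dv)^m I^m_- g=g$, whose $A$-image is (\ref{90bedri}).

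For $\a=m+\a_0$ with $0<\a_0<1$ the naive derivative $(-d/dv)^{m+1}I^{1-\a_0}_-h$ recovers $g$ only when the semigroup law $I^{1-\a_0}_- I^\a_-=I^{m+1}_-$ is legitimate, i.e. when $I^{m+1}_- g$ converges. Translating $\int_1^\infty|g|\,v^{m}\,dv<\infty$ back through $A$ gives exactly the extra hypothesis $\int_1^\infty|f(t)|\,t^{2m+1}\,dt<\infty$ of part (iii); under it the direct formula is valid and yields (\ref{frr+z4y0}). Without this decay, $h=I^\a_- g$ need not decay fast enough for $I^{1-\a_0}_-h$ to converge, which is the principal obstacle. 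I would overcome it by a power-weight regularization: the function $v^{-m-1}h$ decays fast enough that $I^{1-\a_0}_-(v^{-m-1}h)$ is absolutely convergent, and the commutation rules for $I^\b_-$, multiplication by powers, and $d/dv$ (as in \cite{SKM}) give $v^{1-\a_0}(-d/dv)^{m+1}v^{\a}I^{1-\a_0}_-(v^{-m-1}h)=g$; the exponents satisfy $(1-\a_0)+\a-(m+1)=0$, so the net differentiation order is $\a$. Its $A$-image is precisely (\ref{frr+z3}).

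The alternative form (\ref{frr+z4}) is best obtained directly, without the substitution $A$. Inserting the weight between two copies of the Erd\'elyi--Kober integral, applying Fubini's theorem, and evaluating the inner $\sigma$-integral (via the substitution $\sigma=s^2$ and a Beta-type integral together with the Legendre duplication formula for $\Gam$) yields the composition identity $I^\a_{-,2}\,t^{-2\a-1}\,I^\a_{-,2}=2^{2\a}\,t^{-1}\,I^{2\a}_-$. Consequently $t\,I^\a_{-,2}\,t^{-2\a-1}\vp=2^{2\a}I^{2\a}_- f$, and applying the Riemann--Liouville derivative $\Cal D^{2\a}_-$, which inverts $I^{2\a}_-$ precisely under (\ref{for10z}) since $\int_a^\infty|f|\,s^{2\a-1}\,ds<\infty$ is exactly the convergence condition for $I^{2\a}_- f$, recovers $2^{-2\a}\Cal D^{2\a}_-(2^{2\a}I^{2\a}_- f)=f$. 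When $2\a$ is non-integer, $\Cal D^{2\a}_-$ itself is computed from (\ref{frr+}), possibly after the same kind of regularization or after the reciprocal passage (\ref{zhuhjtf}) from $I^{2\a}_-$ to $I^{2\a}_+$.

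The one genuine difficulty throughout is this convergence bookkeeping: one must verify that the regularizing weight $v^{-m-1}$ renders every intermediate integral absolutely convergent under the bare hypothesis (\ref{for10z}) and that the weighted identity above holds for almost every $v$ rather than merely formally; cf. the divergence dichotomy in Lemmas~\ref{lif} and \ref{lifa2}. Once this is secured, the remaining steps are routine conjugation by $A$ and the standard Riemann--Liouville calculus.
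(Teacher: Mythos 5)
The paper does not prove Theorem \ref{78awqe}: it is imported verbatim from \cite{Ru13b}, so there is no in-paper argument to compare against. That said, your strategy is the standard (and almost certainly the intended) one, and its main steps check out. The conjugation $I^\a_{-,2}=A^{-1}I^\a_- A$ does translate (\ref{for10z}) into (\ref{for10}) exactly as you say, the integer case and part (iii) reduce to the semigroup law plus Lebesgue differentiation, and your regularized identity for part (ii) is correct: one can verify by a Beta/${}_2F_1$ reduction of the inner kernel that $v^{\a}\,I^{1-\a_0}_-\,v^{-m-1}\,I^{\a}_- g=I^{m+1}_-\bigl(v^{\a_0-1}g\bigr)$, and the absolute convergence needed for Fubini holds under (\ref{for10z}) alone because the relevant inner integral is $O(u^{\a-1})$; note also that $I^{m+1}_-(v^{\a_0-1}g)$ converges precisely under $\int^\infty |g|v^{\a-1}\,dv<\infty$, so the final differentiation is legitimate. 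Your composition identity $t\,I^\a_{-,2}\,t^{-2\a-1}\,I^\a_{-,2}=2^{2\a}I^{2\a}_-$ for (\ref{frr+z4}) is likewise correct (the kernel integral $\int_t^u (s^2-t^2)^{\a-1}(u^2-s^2)^{\a-1}s^{-2\a}ds$ evaluates to $\const\cdot(u-t)^{2\a-1}/(tu)$).

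The one genuine soft spot is your last step for (\ref{frr+z4}): applying $\Cal D^{2\a}_-$ in the naive form (\ref{frr+}) to $I^{2\a}_- f$ requires convergence of $I^{1-\b_0}_- I^{2\a}_- f$ with $2\a=M+\b_0$, and the resulting weight is $u^{M}$ with $M=[2\a]\ge 2\a-1$, which is \emph{not} guaranteed by (\ref{for10z}). So "possibly after the same kind of regularization" is not optional --- you must either impose the extra decay as in part (iii), or run the same power-weight regularization you used for (\ref{frr+z3}) (or pass to $I^{2\a}_+$ via (\ref{zhuhjtf})) to make $\Cal D^{2\a}_- I^{2\a}_- f=f$ rigorous under the bare hypothesis. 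With that point made precise, the proof is complete.
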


The powers of $t$ in this theorem denote the corresponding multiplication operators. An advantage of the inversion formula  (\ref{frr+z4}) in comparison with (\ref{90bedri}),
 (\ref {frr+z3}), and (\ref {frr+z4y0}), is that
 it employs the derivative $d/dt$, rather than  $D=(2t)^{-1} d/dt=d/dt^2$.

\subsection{A Simple Lemma}

The following  lemma, which connects the integration over $S^{n-1}\subset \rn$ with the integration over the coordinate hyperplane $\bbr^{n-1}=\bbr e_1 \oplus \cdots \oplus \bbr e_{n-1}$, is useful in different occurrences.

\begin{lemma} \label {viat} ${}$ \

\noindent ${\rm {(i)}}$ If $f\in L^1(S^{n-1})$, then

  \be\label{teq1} \intl_{S^{n-1}}
f (\theta)\, d\th=\intl_{\bbr^{n-1}}\left [f \Big (\frac{x+e_n}{|x+e_n|} \Big )+f \Big
(\frac{x-e_n}{|x-e_n|} \Big )\right ]\,\frac{dx}{(|x|^2 +1)^{n/2}}\,.\ee
 In particular, if $f$ is even, then
\be \label{hvar} \intl_{S^{n-1}} f(\theta)\,
d\th=2\intl_{\bbr^{n-1}} f\left ( \frac{x+e_n}{|x+e_n|}\right )\,\frac{dx}{(|x|^2 +1)^{n/2}}.\ee

\noindent ${\rm {(ii)}}$ Conversely, if $f\in L^1(\bbr^{n-1})$, $S^{n-1}_+\!=\!\{\theta\in S^{n-1}: \, \theta_n\!>\!0\}$, then
 \be\label{teq2} \intl_{\bbr^{n-1}} f(x)\, dx=\intl_{S^{n-1}_+}f
\Big (\frac{\theta'}{\theta_n}\Big )\,
\frac{d\th}{\theta_n^n}, \qquad \theta'=(\theta_1, \dots, \theta_{n-1}).\ee
\end{lemma}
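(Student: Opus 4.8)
The plan is to reduce both identities to a single change of variables, the central (gnomonic) projection between the coordinate hyperplane $\bbr^{n-1}=\bbr e_1\oplus\cdots\oplus\bbr e_{n-1}$ and the sphere, and to compute its Jacobian once. First I would introduce the map $\phi(x)=(x+e_n)/|x+e_n|=(x,1)/\sqrt{|x|^2+1}$ for $x\in\bbr^{n-1}$, where $(x,1)$ denotes the vector with first $n-1$ coordinates $x$ and last coordinate $1$. Its last coordinate equals $1/\sqrt{|x|^2+1}>0$, so $\phi$ carries $\bbr^{n-1}$ onto the open upper hemisphere $S^{n-1}_+=\{\theta:\,\theta_n>0\}$; it is a smooth bijection whose inverse is precisely $\theta\mapsto\theta'/\theta_n$. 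The companion map $x\mapsto(x-e_n)/|x-e_n|=(x,-1)/\sqrt{|x|^2+1}$ parametrizes the open lower hemisphere in the same fashion. The equator $\{\theta_n=0\}$ has $d\theta$-measure zero and may be discarded.

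The computational core is the surface-measure pullback $\phi^{*}(d\theta)$. Writing $g=\sqrt{|x|^2+1}$ and differentiating, the $n\times(n-1)$ Jacobian matrix $J$ of $\phi$ has Gram matrix
\[ J^{T}J=\frac{1}{g^{2}}\Big(I-\frac{xx^{T}}{g^{2}}\Big).\]
The rank-one perturbation $I-g^{-2}xx^{T}$ has eigenvalue $1-|x|^{2}/g^{2}=g^{-2}$ along $x$ and eigenvalue $1$ on its orthogonal complement, so $\det(J^{T}J)=g^{-2(n-1)}g^{-2}=g^{-2n}$. Taking the square root of the Gram determinant yields the key identity
\[ d\theta=\frac{dx}{(|x|^{2}+1)^{n/2}}\]
on either hemisphere.

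Granting this identity, (\ref{teq1}) follows by splitting $\intl_{S^{n-1}}f\,d\theta$ into its contributions over $S^{n-1}_+$ and its lower-hemisphere analogue and changing variables on each piece. When $f$ is even, the relation $f((x-e_n)/|x-e_n|)=f((-x+e_n)/|-x+e_n|)$ together with the substitution $x\mapsto -x$ identifies the lower-hemisphere integral with the upper one, which produces the factor $2$ in (\ref{hvar}). For (\ref{teq2}) I would instead read the key identity backwards: since $\theta_n=1/\sqrt{|x|^{2}+1}$ we have $(|x|^{2}+1)^{n/2}=\theta_n^{-n}$, hence $dx=d\theta/\theta_n^{n}$, and substituting $x=\theta'/\theta_n$ over $S^{n-1}_+$ returns exactly $\intl_{\bbr^{n-1}}f(x)\,dx$. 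The only genuinely delicate step is the Jacobian computation, and even there the rank-one structure of $J^{T}J$ makes the determinant transparent; the hypotheses $f\in L^1(S^{n-1})$ and $f\in L^1(\bbr^{n-1})$ merely secure absolute convergence and legitimize the change-of-variables formula.
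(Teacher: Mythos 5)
Your proof is correct, but it takes a genuinely different route from the paper's. The paper proves both parts by slice integration: it writes $\intl_{S^{n-1}}f(\theta)\,d\theta$ as $\intl_0^\pi \sin^{n-2}\vp\,d\vp\intl_{S^{n-2}}f(\om\sin\vp+e_n\cos\vp)\,d\om$, substitutes $s=\tan\vp$ so that $\sin^{n-2}\vp\,d\vp=s^{n-2}(1+s^2)^{-n/2}ds$, and then recognizes $x=s\om$ as polar coordinates in $\bbr^{n-1}$; part (ii) is the same computation run in the opposite direction on the range $\vp\in(0,\pi/2)$. You instead treat the gnomonic projection $\phi(x)=(x,1)/\sqrt{|x|^2+1}$ as a single $(n-1)$-dimensional change of variables and compute its Gram determinant, exploiting the rank-one structure of $I-g^{-2}xx^{T}$ to get $\sqrt{\det(J^TJ)}=(|x|^2+1)^{-n/2}$; all three formulas then drop out of the one identity $d\theta=dx/(|x|^2+1)^{n/2}$ together with the hemisphere decomposition and the symmetry $x\mapsto -x$ in the even case. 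Your computation checks out (the cross terms in $v_i\cdot v_j$ do combine to give $g^{-2}(\delta_{ij}-x_ix_j/g^{2})$, and the eigenvalue bookkeeping yields $g^{-2n}$). What the paper's method buys is elementarity: only one-variable calculus and polar coordinates, with no appeal to the area formula for parametrized hypersurfaces. What yours buys is conceptual economy: a single Jacobian identity explains simultaneously why the weight $(|x|^2+1)^{-n/2}$ appears in (i) and why $\theta_n^{-n}$ appears in (ii), since $\theta_n=(|x|^2+1)^{-1/2}$ makes the two weights literally reciprocal.
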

\begin{proof}  The slice integration yields
$$
\intl_{S^{n-1}} f (\theta)\, d\theta=\intl_0^\pi  \sin ^{n-2} \vp\,
d\vp\intl_{S^{n-2}} f (\om\sin\vp +e_n \,\cos\,\vp)\,  d\om.$$ Set $s=\tan \vp$ on the right-hand side to obtain
$$
\intl_0^\infty \frac{s^{n-2}\, ds}{(1+s^2)^{n/2}}\intl_{S^{n-2}}\left [ f
\Big (\frac{s\om +e_n}{\sqrt{1+s^2}} \Big )+f \Big (\frac{s\om
-e_n}{\sqrt{1+s^2}} \Big )\right ]\, d\om.$$ This coincides with
 (\ref{teq1}). Similarly,
\bea
 &&\intl_{\bbr^{n-1}} f(x)\, dx=\intl_0^\infty s^{n-2}\, ds\intl_{S^{n-2}}
f (s\om)\, d\om\nonumber \\
&=&\intl_0^{\pi/2}\frac{\sin ^{n-2} \vp}{\cos^n \vp }\,
d\vp\intl_{S^{n-2}}
 f \Big (\frac{\om\sin\vp}{\cos\, \vp}\Big )\, d\om=\intl_{S^{n-1}_+}f
\Big (\frac{\theta'}{\theta_n}\Big )\,
\frac{d\th}{\theta_n^n}.\nonumber\eea
 \end{proof}

 \subsection{The Radon Transforms} \label{90kiy3} We recall some facts that are needed for our treatment. More information can be found, e.g., in  \cite{GGG2, GGV, H11, Na1, Ru04b, Ru13b}.
Let $\Pi_n$ be the set of all unoriented hyperplanes in
$\bbr^n$.
The Radon transform of a function $f$ on $\rn$
  is defined by the formula
\be\label{rtra} (Rf)(\xi)= \intl_{\xi} f(x) \,d_\xi x,\qquad \xi \in
\Pi_n,\ee
provided that this integral exists. Here $d_\xi x$ denotes  the Euclidean volume element  in $\xi$.
Every hyperplane $\xi\in \Pi_n$ has the form $\xi=\{x:\, x\cdot \th =t\}$, where $\theta \in \sn$, $t\in \bbr$.
Thus, we can write (\ref{rtra}) as
  \be\label{rtra1}  (Rf) (\theta, t)=\intl_{\theta^{\perp}} f(t\theta +
u) \,d_\theta u,\ee where
$\theta^\perp=\{x: \, x \cdot \theta=0\}$ is the hyperplane
orthogonal to $\theta$ and passing through the origin,
$d_\theta u$ is the Euclidean volume element  in $\theta^{\perp}$. We denote $Z_n=S^{n-1}\times \bbr$ and equip $Z_n$ with the product measure
$d_*\theta dt$, where $d_*\theta=\sig_{n-1}^{-1} d\theta$ is the normalized surface measure on $\sn$.

Clearly, $ (Rf) (\theta,t) = (Rf)(-\theta, -t)$ for every $(\theta,t) \in Z_n$.
 Using (\ref{teq2})  and assuming $t\neq 0$, one can also write
 (\ref{rtra1}) as an integral over  the hemisphere:
 \be\label{hpplz3}
 (Rf) (\theta, t)=|t|^{n-1}\intl_{v\in \sn:\,v\cdot \theta>0} f\left (\frac{tv}{v\cdot \theta} \right )\,\frac{dv}{(v\cdot \theta)^n};\ee
see also  \cite[p. 26]{Na1}.  If $f$ is a  radial function, that is,  $f(x) \!\equiv \!f_0(|x|)$, then $(Rf)(\theta, t) \equiv F_0(t)$, where
\be\label{rese}
F_0(t)=\sigma_{n-2} \intl^\infty_{|t|}\! f_0 (r)
(r^2-t^2)^{(n-3)/2}r dr.\ee

The next theorem shows for which functions $f$  the Radon transform $Rf$ does exist (cf. \cite[Theorem 3.2]{Ru13b}).
\begin{theorem}\label{byvs1} If
\be \label {lkmux}
\intl_{|x|>a}  \,\frac{|f(x)|} {|x|} \, dx<\infty \quad \forall \, a>0, \ee
 then  $(Rf)(\xi)$ is finite for  almost all $\xi \in \Pi_n$.
If $f$ is nonnegative, radial, and (\ref{lkmux}) fails, then  $(Rf)(\xi)\equiv \infty$.
\end{theorem}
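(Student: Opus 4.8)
The plan is to prove the two assertions separately: Tonelli's theorem for the finiteness statement, and the Erd\'elyi--Kober realization of the radial Radon transform together with Lemma \ref{lifa2} for the divergence statement. For the first statement I would fix $b>0$ and integrate $(R|f|)(\th,t)$ over the slab $|t|\le b$. Writing $(R|f|)(\th,t)=\intl_{\th^\perp}|f(t\th+u)|\,d_\th u$ as in (\ref{rtra1}) and applying Tonelli (the integrand is nonnegative), the $t$- and $u$-integrations recombine into an integral of $|f|$ over the slab $\{x:\,|x\cdot\th|\le b\}$, so that
\[
\intl_{\sn} d_*\th \intl_{-b}^{b}(R|f|)(\th,t)\,dt=\irn |f(x)|\,W_b(|x|)\,dx,\qquad W_b(r)=\intl_{\sn}\mathbf 1_{\{|r\,\th_1|\le b\}}\,d_*\th .
\]
By rotation invariance $W_b$ depends only on $r=|x|$, and the slice formula gives $W_b(r)=c_n\intl_{-1}^{1}\mathbf 1_{\{|rs|\le b\}}(1-s^2)^{(n-3)/2}\,ds$ with $c_n=\sig_{n-2}/\sig_{n-1}$.

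Next I would record the two elementary bounds that make $W_b$ match the hypothesis: $W_b(r)=1$ for $r\le b$ (the constraint is vacuous, and $c_n\int_{-1}^1(1-s^2)^{(n-3)/2}\,ds=1$), while for $r>b$ one has $W_b(r)=c_n\intl_{|s|\le b/r}(1-s^2)^{(n-3)/2}\,ds\le C\,b/r$. Splitting the integral at $|x|=b$ therefore yields
\[
\irn |f(x)|\,W_b(|x|)\,dx\le \intl_{|x|\le b}|f(x)|\,dx+C\,b\intl_{|x|>b}\frac{|f(x)|}{|x|}\,dx ,
\]
both terms being finite: the first by local integrability of $f$, the second by (\ref{lkmux}). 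Hence $\intl_{-b}^{b}(R|f|)(\th,t)\,dt<\infty$ for almost every $\th$, so $(R|f|)(\th,t)<\infty$ for almost every $(\th,t)$ with $|t|<b$; letting $b$ run through the positive integers covers almost all of $Z_n$ and shows that $(Rf)(\xi)$ is absolutely convergent for almost all $\xi\in\Pi_n$.

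For the divergence statement, let $f(x)=f_0(|x|)\ge0$ be radial. By (\ref{rese}) the Radon transform depends only on $t$ and equals $F_0(t)=\sig_{n-2}\intl_{|t|}^\infty f_0(r)(r^2-t^2)^{(n-3)/2}r\,dr$, which is exactly a constant multiple of $(I^{(n-1)/2}_{-,2}f_0)(|t|)$ in the notation of (\ref{EKO}): the choice $\a=(n-1)/2$ gives $1-\a=(3-n)/2$ and $2\a-1=n-2$. For radial $f$ the condition (\ref{lkmux}) reads $\intl_a^\infty f_0(r)r^{n-2}\,dr<\infty$, which is precisely condition (\ref{for10z}) for $I^\a_{-,2}$. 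If (\ref{lkmux}) fails for some $a>0$, then, since the finite-interval contribution is controlled by local integrability, it fails for every $a>0$; Lemma \ref{lifa2} then gives $F_0(t)=\infty$ for all $t\ge a$ and all $a>0$, hence for all $t\neq0$, while $t=0$ is immediate because $F_0(0)=\sig_{n-2}\intl_0^\infty f_0(r)r^{n-2}\,dr=\infty$. Thus $(Rf)(\xi)\equiv\infty$.

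The computations are routine; the main obstacle, which I would treat carefully, is the passage from the integrated estimate to almost-everywhere finiteness and, symmetrically, arranging that the spherical weight $W_b(r)$ decays exactly like $1/r$ so that (\ref{lkmux}) is reproduced verbatim rather than up to an unwanted power of $|x|$. The contribution near the origin must be absorbed into the standing assumption that $f$ is locally integrable, which is what forces the decomposition of the integral at $|x|=b$ above.
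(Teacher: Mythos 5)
Your overall strategy (Tonelli over a slab of hyperplanes, the slice formula for the induced spherical weight, and the identification of the radial Radon transform with an Erd\'elyi--Kober integral so that Lemma \ref{lifa2} applies) is the natural one; the paper does not reprove this theorem but cites \cite[Theorem 3.2]{Ru13b}, whose argument is of exactly this type. The radial half of your argument is correct, including the identification $F_0(t)=\const\cdot (I^{(n-1)/2}_{-,2}f_0)(|t|)$ and the observation that for radial $f$ the condition (\ref{lkmux}) is precisely (\ref{for10z}) with $2\a-1=n-2$; the only caveat, which you flag, is that both the implication ``(\ref{lkmux}) fails for some $a$ $\Rightarrow$ it fails for every $a$'' and the hypothesis of Lemma \ref{lifa2} use local integrability of $f_0$ on $[a,\infty)$.

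The gap is in the first half, in how you treat the origin. The hypothesis (\ref{lkmux}) controls $f$ only on $\{|x|>a\}$ and says nothing about integrability near $x=0$; the theorem is applied in the paper to functions with non-integrable singularities at the origin (see the condition (\ref{azw2INTRO2}), which only requires $\int_{|x|<a}|x|^{N-1}|f|\,dx<\infty$ for some $N>0$, and the Example following Theorem \ref{zaehle4RA}, where $f(x)=|x|^{-n}Y_2(x/|x|)$). Your slab $\{|t|\le b\}$ contains hyperplanes arbitrarily close to the origin, and this is exactly what produces the term $\int_{|x|\le b}|f|\,dx$, which is not finite under (\ref{lkmux}) alone; promoting local integrability to a ``standing assumption'' changes the theorem. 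The repair is to integrate over $\{a<|t|<b\}$ instead: the induced weight $\widetilde W(r)=c_n\int_{-1}^1\mathbf 1_{\{a<|rs|<b\}}(1-s^2)^{(n-3)/2}\,ds$ then vanishes identically for $r\le a$ (every point of a hyperplane at distance $|t|>a$ from the origin satisfies $|x|\ge |t|>a$) and obeys $\widetilde W(r)\le \min(1,\,Cb/r)\le C'b/r$ for $r>a$, so the whole double integral is bounded by $C'b\int_{|x|>a}|f(x)|\,|x|^{-1}dx<\infty$ with no contribution from a neighborhood of the origin; letting $a\downarrow 0$ and $b\uparrow\infty$ along sequences covers $Z_n$ up to the null set $\{t=0\}$. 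A last small point: for $n=2$ the density $(1-s^2)^{-1/2}$ is unbounded, so the estimate $W_b(r)\le Cb/r$ should be justified via $\arcsin(b/r)\le(\pi/2)\,b/r$ rather than by bounding the density.
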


The following equality is a particular case of  \cite[formula (2.19)]{Ru04b}:
\be\label{duas3} \intl_{Z_n} \frac{(Rf)(\theta,
t)}{(1+t^2)^{n/2}}\,d_*\theta dt= \intl_{\bbr^n}
\frac{f(x)}{(1+|x|^2)^{1/2}}\,dx \ee
provided that the right-hand side  exists in the Lebesgue sense.

The    dual Radon transform  takes a  function $\vp (\theta,t)$ on $Z_n$  to a function  $(R^*\vp)(x)$ on  $\bbr^n$  by the formula
\be\label{durt}
(R^*\vp)(x)= \intl_{S^{n-1}} \vp(\theta, x\cdot \theta)\,d_*\theta. \ee
The operators $R$ and $R^*$ can be expressed one through another.

\begin{lemma}\label {iozesf} Let $x\neq 0$, $t\neq 0$,
\be \label {jikbVF} (A\vp)(x)\!=\! \frac{1}{|x|^n} \,\vp\left (\frac{x}{|x|}, \frac{1}{|x|}\right ), \qquad (Bf)(\theta, t)\!=\!\frac{1}{|t|^n} \, f \left (\frac{\theta}{t}\right ).\ee
The following equalities hold provided that the expressions on either side exist in the Lebesgue sense:
\be \label {jikb}(R^* \vp)(x)\!=\!\frac{2}{|x|\, \sig_{n-1}}\, (RA\vp)\left (\frac{x}{|x|}, \frac{1}{|x|}\right ), \ee
\be\label {jikbBU} (Rf)(\theta, t)\!=\!\frac{\sig_{n-1}}{2|t|}\, (R^* Bf)\left (\frac{\theta}{t}\right ).\ee
\end{lemma}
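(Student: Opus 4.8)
The plan is to prove (\ref{jikb}) by a direct computation based on the hemisphere representation (\ref{hpplz3}), and then to deduce (\ref{jikbBU}) from it using the fact that the maps $A$ and $B$ in (\ref{jikbVF}) are mutually reciprocal.

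First I would fix $x\neq 0$ and set $\theta_0=x/|x|$, $t_0=1/|x|$. Applying (\ref{hpplz3}) to the function $f=A\vp$ at the point $(\theta_0,t_0)$ gives
\[
(RA\vp)(\theta_0,t_0)=t_0^{n-1}\int_{v\cdot\theta_0>0}(A\vp)\!\left(\frac{t_0\, v}{v\cdot\theta_0}\right)\frac{dv}{(v\cdot\theta_0)^n}.
\]
The crucial point is the behaviour of the reciprocal map (\ref{jikbVF}) on the argument $y=t_0 v/(v\cdot\theta_0)$: since $|v|=1$ and $v\cdot\theta_0>0$, one has $|y|=t_0/(v\cdot\theta_0)$, $y/|y|=v$, and $1/|y|=(v\cdot\theta_0)/t_0=|x|\,(v\cdot\theta_0)=v\cdot x$. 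Hence $(A\vp)(y)=|y|^{-n}\vp(y/|y|,1/|y|)=\big((v\cdot\theta_0)/t_0\big)^n\vp(v,v\cdot x)$, and after the powers of $v\cdot\theta_0$ cancel I obtain
\[
(RA\vp)(\theta_0,t_0)=t_0^{-1}\int_{v\cdot\theta_0>0}\vp(v,v\cdot x)\,dv=|x|\int_{v\cdot\theta_0>0}\vp(v,v\cdot x)\,dv.
\]

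To finish (\ref{jikb}) I would pass from this hemisphere integral to the full-sphere integral appearing in the definition (\ref{durt}) of $R^*$. Replacing $\theta$ by $-\theta$ in (\ref{durt}) shows that $(R^*\vp)(x)$ depends only on the even part of $\vp$, so I may assume $\vp(\theta,t)=\vp(-\theta,-t)$; then $\int_{\sn}\vp(\theta,x\cdot\theta)\,d\theta=2\int_{v\cdot\theta_0>0}\vp(v,v\cdot x)\,dv$, and dividing by $\sig_{n-1}$ yields exactly $(R^*\vp)(x)=\tfrac{2}{|x|\sig_{n-1}}(RA\vp)(\theta_0,t_0)$, which is (\ref{jikb}).

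For (\ref{jikbBU}) I would first record the algebraic identity $AB f=f$: by (\ref{jikbVF}), $(ABf)(y)=|y|^{-n}(Bf)(y/|y|,1/|y|)=|y|^{-n}|y|^{n}f(y)=f(y)$. Applying the already proven (\ref{jikb}) to $\vp=Bf$ (which is automatically even) gives $(R^*Bf)(x)=\tfrac{2}{|x|\sig_{n-1}}(Rf)(x/|x|,1/|x|)$. Taking $x=\theta/t$, so that $|x|=1/|t|$ and $(x/|x|,1/|x|)=(\sgn(t)\,\theta,|t|)$, and using the symmetry $(Rf)(\theta,t)=(Rf)(-\theta,-t)$ noted before (\ref{hpplz3}), the right-hand side equals $(Rf)(\theta,t)$, and rearranging produces (\ref{jikbBU}). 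The main obstacle will be the evenness reduction (the hemisphere-to-sphere doubling that supplies the factor $2$) together with the careful sign bookkeeping when $t<0$ in this last step; once the action of the reciprocal map on $y=t_0 v/(v\cdot\theta_0)$ is pinned down, the remaining manipulations are routine substitutions.
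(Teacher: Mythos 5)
Your proposal is correct and follows essentially the same route as the paper: both proofs rest on the sphere--hyperplane correspondence of Lemma \ref{viat} (you invoke it prepackaged as the hemisphere formula (\ref{hpplz3}), while the paper substitutes directly in (\ref{rtra1}) and then applies (\ref{hvar})), and both deduce (\ref{jikbBU}) from (\ref{jikb}) via $ABf=f$, the automatic evenness of $Bf$, and a separate sign check for $t<0$. One minor caveat: your step ``I may assume $\vp(\theta,t)=\vp(-\theta,-t)$'' is really a standing hypothesis rather than a reduction, since replacing $\vp$ by its even part also changes the right-hand side $(RA\vp)$ — but the paper's own use of (\ref{hvar}) carries exactly the same implicit evenness assumption, and $\vp$ is even in every application of the lemma, so this matches the paper's level of rigor.
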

\begin{proof} The proof relies on Lemma \ref{viat}.    By (\ref{rtra1}),
\bea
 (RA\vp) (\theta, t)&=&\intl_{\theta^{\perp}} \vp \left(\frac{t\theta +u}{|t\theta +u|}, \frac{1}{|t\theta +u|}\right) \,\frac{d_\theta u}{|t\theta +u|^n}\nonumber \\
&=&\intl_{\bbr^{n-1}} \vp \left(\frac{\gam (e_n  +y)}{|e_n  +y|}, \frac{1}{t|e_n  +y|}\right) \,\frac{dy}{t|e_n  +y|^n},\nonumber \eea
where $\theta =\gam e_n$, $\gam \in O(n)$. Setting $\theta =x/|x|$, $t=1/|x|$, we note that
$$
 \frac{1}{t|e_n  +y|}=|x|\left (e_n \cdot \frac{e_n  +y}{|e_n  +y|}\right )=|x|\left (\gam e_n \cdot \frac{\gam (e_n  +y)}{|e_n  +y|}\right )=
x\cdot \frac{\gam (e_n  +y)}{|e_n  +y|}.$$
Hence,
$$
 (RA\vp)\left (\frac{x}{|x|}, \frac{1}{|x|}\right ) =\intl_{\bbr^{n-1}}
 \vp \left(\frac{\gam (e_n  +y)}{|e_n  +y|}, x\cdot \frac{\gam (e_n  +y)}{|e_n  +y|}\right) \,\frac{|x|\,dy}{|e_n  +y|^n}. $$
 Now (\ref{hvar}) yields
$$
 (RA\vp)\left (\frac{x}{|x|}, \frac{1}{|x|}\right ) =\frac{|x|\, \sig_{n-1}}{2}\,\intl_{S^{n-1}}\vp (\gam\theta, x\cdot \gam\theta)\, d_*\theta=
 \frac{|x|\, \sig_{n-1}}{2}\,(R^* \vp)(x),$$
 which gives (\ref{jikb}). The second equality can be obtained from the first one if we change the notation and assume $\vp$ in  (\ref{jikb}) to be even. Here the cases $t>0$ and $t<0$ should be considered separately.
\end{proof}

Theorem \ref{byvs1} combined with (\ref{jikb}) gives the following
\begin{corollary} \label{gttgzuuuuh} If $\vp (\th,t)$ is  locally integrable on $Z_n$, then the dual Radon transform $(R^*\vp)(x)$ is finite for  almost all $x\in\rn$. If $\vp (\th,t)$ is nonnegative, independent of $\th$, i.e.,  $\vp (\th,t)\equiv \vp_0 (t)$, and such that
\[\intl_0^a \vp_0 (t)\, dt =\infty,\]
for some $a>0$, then  $(R^*\vp)(x)\equiv \infty$.
 \end{corollary}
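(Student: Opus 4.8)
The plan is to deduce both assertions from Theorem~\ref{byvs1} applied to the function $f=A\vp$, with $A$ as in (\ref{jikbVF}), and then to transfer the outcome back to $R^*\vp$ through the identity (\ref{jikb}). Everything therefore reduces to re-expressing the hypotheses on $\vp$ as hypotheses on $A\vp$ of precisely the type that Theorem~\ref{byvs1} controls.

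First I would check that local integrability of $\vp$ on $Z_n$ makes $A\vp$ satisfy the admissibility condition (\ref{lkmux}). Writing $x=r\th$ in polar coordinates and substituting $s=1/r$ converts the relevant integral into
\be\label{plan-cov}
\intl_{|x|>a}\frac{|(A\vp)(x)|}{|x|}\,dx=\intl_0^{1/a}\intl_{S^{n-1}}|\vp(\th,s)|\,d\th\,ds,
\ee
which is finite for every $a>0$ because $S^{n-1}\times(0,1/a)$ is a bounded part of $Z_n$ on which $\vp$ is integrable. Theorem~\ref{byvs1} then gives that $(RA\vp)(\th,t)$ is finite for almost all $(\th,t)\in Z_n$. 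It remains to carry this statement through the map $x\mapsto(x/|x|,1/|x|)$ in (\ref{jikb}). This map is a diffeomorphism of $\rn\setminus\{0\}$ onto $S^{n-1}\times(0,\infty)$ (its inverse being $(\th,t)\mapsto\th/t$), hence it sends Lebesgue-null sets to Lebesgue-null sets; since the hyperplanes through the origin form a null family, the a.e.\ finiteness of $RA\vp$ on the slice $t>0$ transfers to the a.e.\ finiteness of $(RA\vp)(x/|x|,1/|x|)$, and (\ref{jikb}) delivers the finiteness of $(R^*\vp)(x)$ for almost all $x\in\rn$.

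For the second assertion, suppose $\vp(\th,t)\equiv\vp_0(t)\ge0$. Then $A\vp$ is radial, $(A\vp)(x)=|x|^{-n}\vp_0(1/|x|)$, and the same change of variables as in (\ref{plan-cov}) yields
\be\label{plan-radial}
\intl_{|x|>a}\frac{|(A\vp)(x)|}{|x|}\,dx=\sig_{n-1}\intl_0^{1/a}\vp_0(s)\,ds.
\ee
Since $\intl_0^{a_0}\vp_0(s)\,ds=\infty$ for some $a_0>0$, taking $a\le1/a_0$ makes the right-hand side of (\ref{plan-radial}) infinite; thus $A\vp$ is a nonnegative radial function violating (\ref{lkmux}). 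The second part of Theorem~\ref{byvs1} forces $(RA\vp)(\xi)\equiv\infty$, and (\ref{jikb}) propagates this to $(R^*\vp)(x)\equiv\infty$ for every $x\ne0$.

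I expect the only genuinely delicate point to be the transfer of the ``almost all $\xi$'' conclusion of Theorem~\ref{byvs1} into an ``almost all $x$'' conclusion for $R^*\vp$, i.e.\ confirming that the nonlinear substitution concealed in (\ref{jikb}) neither destroys nor manufactures null sets; the diffeomorphism remark above settles this. All remaining steps---the two elementary change-of-variable computations and the direct invocation of Theorem~\ref{byvs1}---are routine. As an independent check one may compute $(R^*\vp_0)(x)$ directly as $c\intl_{-1}^1\vp_0(|x|u)(1-u^2)^{(n-3)/2}\,du$, in which the non-integrable singularity of $\vp_0$ at the origin reappears near $u=0$ and reconfirms $(R^*\vp)(x)\equiv\infty$.
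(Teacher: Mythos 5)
Your proposal is correct and follows exactly the route the paper intends: the corollary is stated there as an immediate consequence of Theorem~\ref{byvs1} applied to $A\vp$ together with the identity (\ref{jikb}), and your change-of-variables computations supply precisely the details the paper leaves implicit. The only (immaterial) quibble is that (\ref{jikb}) says nothing about the single point $x=0$, but this does not affect the statement.
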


The following function spaces are  important in the theory of Radon transforms.  Let $S(\bbr^n)$ be the Schwartz space  of $C^\infty$-functions which together with their derivatives of all orders are rapidly decreasing. We supply $S(\bbr^n)$ with the standard topology and denote by $S'(\bbr^n)$ the corresponding space of tempered distributions. The following
 spaces were introduced by  Semyanistyi \cite{Se1}  and extensively studied by
Lizorkin \cite{Liz1}-\cite{Liz3}; see also Helgason \cite{H11} and
Samko \cite{Sa5}. Let
$$
\Psi (\bbr^n)=\{\psi\in S(\bbr^n):\ (\part^j\psi)(0)=0\ \ \text{\rm for all}\quad
j \in \bbz^n_+\}.
$$
We denote by $\Phi(\bbr^n)$  the Fourier image of $\Psi(\bbr^n)$ and supply  $\Psi(\bbr^n)$  and $\Phi(\bbr^n)$ with the topology of  $S(\bbr^n)$. The corresponding spaces of distributions are denoted by $\Psi'(\rn)$ and $\Phi' (\rn)$.
\begin{proposition} \label{lpose6}  Two $S'$-distributions that coincide in the
$\Phi'$-sense differ from each other by a polynomial.\end{proposition}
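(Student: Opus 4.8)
The plan is to recast the statement as a claim about the annihilator of $\Phi(\rn)$ inside $S'(\rn)$ and then to transfer it to the Fourier side, where the condition becomes transparent. Writing $H=f-g\in S'(\rn)$, the assumption that $f$ and $g$ coincide in the $\Phi'$-sense means exactly that $\langle H,\phi\rangle =0$ for every $\phi\in\Phi(\rn)$. Since $\Phi(\rn)=\F[\Psi(\rn)]$ by definition, each such $\phi$ has the form $\phi=\F\psi$ with $\psi\in\Psi(\rn)$, and the defining adjoint relation $\langle H,\F\psi\rangle =\langle \F H,\psi\rangle$ for the distributional Fourier transform rewrites the hypothesis as $\langle \F H,\psi\rangle =0$ for all $\psi\in\Psi(\rn)$. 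Thus everything reduces to the claim that a tempered distribution $G=\F H$ annihilating $\Psi(\rn)$ must be supported at the origin, together with an identification of $\F^{-1}G$.

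The key step is the characterization of the annihilator of $\Psi(\rn)$. First I would take an arbitrary $\phi\in C_c^\infty(\rn)$ with $0\notin\supp\phi$; such a $\phi$ lies in $S(\rn)$ and all of its derivatives vanish at the origin (indeed identically near $0$), so $\phi\in\Psi(\rn)$. Hence $\langle G,\phi\rangle =0$ for every test function vanishing in a neighborhood of the origin, which is precisely the statement $\supp G\subseteq\{0\}$. By the classical structure theorem for distributions supported at a single point, $G$ is then a finite sum $G=\suml_{|\alpha|\le N}c_\alpha\,\partial^\alpha\delta_0$ with constants $c_\alpha$.

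Finally I would pass back by the inverse Fourier transform. Since $\F^{-1}(\partial^\alpha\delta_0)$ equals, up to a normalization constant, the monomial $x^\alpha$, we obtain $H=\F^{-1}G=P$, a polynomial, whence $f-g=P$, as required. (The converse inclusion, that every polynomial coincides with $0$ in the $\Phi'$-sense, is immediate since each $\partial^\alpha\delta_0$ annihilates $\Psi(\rn)$, but only the stated direction is needed.)

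The hard part will be purely a matter of bookkeeping rather than substance: one must fix the Fourier normalization so that $\F$ is a topological isomorphism of $S(\rn)$ carrying $\Psi(\rn)$ onto $\Phi(\rn)$, so that the adjoint formula $\langle H,\F\psi\rangle=\langle \F H,\psi\rangle$ holds as written, and so that $\F^{-1}\partial^\alpha\delta_0$ comes out to be a monomial. The only genuinely structural ingredient is the structure theorem for distributions supported at a point; once the Fourier duality between $\Phi(\rn)$ and $\Psi(\rn)$ is in hand, the rest is routine.
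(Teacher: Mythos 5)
Your proof is correct and complete. Note that the paper gives no proof of this proposition at all — it is quoted as a known fact about the Semyanistyi--Lizorkin spaces with references to Semyanistyi, Lizorkin, and Samko — and your argument (pass to the Fourier side, observe that every $C_c^\infty$ function vanishing near the origin lies in $\Psi(\rn)$ so that $\F H$ is supported at $\{0\}$, apply the structure theorem for point-supported distributions, and invert) is exactly the standard proof found in those sources.
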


The analogues  of the Semyanistyi-Lizorkin
 spaces for $Z_n=S^{n-1}\times \bbr$ are defined as follows.   The
derivatives of a function $g$ on $S^{n-1}$ will be defined as the
restrictions onto $S^{n-1}$ of the corresponding derivatives of
$\tilde g(x)= g(x/|x|)$, namely,
\be\label{iiuuyg}
(\partial^\a g)(\theta)=(\partial^\a \tilde g)(x)\big|_
{x=\theta},\qquad \a \in \bbz_+^n, \quad \theta\in S^{n-1}.
\ee
  We denote by $ S(Z_n)$ the space of all functions $\vp(\theta,t)$ on $Z_n=S^{n-1}\times \bbr$, which are
infinitely differentiable in $\theta$ and $t$ and rapidly decreasing
as $t \to \pm\infty$ together with all derivatives.
The topology in $ S(Z_n)$ is defined by the sequence of norms
\be\label {jjyyz}
||\vp ||_m=\sup\limits_{|\a|+j\le m} \sup\limits_{\theta,t} \
\!(1+|t|)^m |(\partial_\theta^\a \partial^j_t \vp)(\theta,t)|, \quad
m\in \bbz_+.
\ee
 The corresponding space of distributions is denoted by $ S'(Z_n)$.
 We set
 \bea \Psi (Z_n)&=&\{\psi(\theta,t)\in
S(Z_n): \ (\partial^\a_\theta
\partial^j_t\psi)(\theta,0)=0, \nonumber\\&{}& \text{for all}\ \a\in\bbz^n_+, \
j\in\bbz_+,
\ \theta\in  S^{n-1}\},\nonumber \\
\label{mest1}\Phi (Z_n)&=&F_1\Psi (Z_n)=\{\vp(\theta,t)\in S(Z_n):
\eea
\[ \intl_{-\infty}^{\infty} t^j (\partial^\a_\theta \partial^k_t \vp)(\theta,t)\,dt=0, \
\text{for all}\  j\in\bbz_+,\ \a\in\bbz^n_+, \ k\in\bbz_+, \
\theta\in S^{n-1}\}.\] Here  $F_1$ denotes the
one-dimensional Fourier transform in the $t$-variable. We supply $\Psi (Z_n)$ and $\Phi (Z_n)$ with the topology of the ambient space $S(Z_n)$. The corresponding spaces of distributions  are denoted by  $\Psi'(Z_n)$ and $\Phi' (Z_n)$. The notation $S_e(Z_n)$, $\Psi_e(Z_n)$, and $\Phi_e(Z_n)$ is used for the corresponding spaces of even functions.

\begin{theorem}\label{pesen}\cite{Se1, H11}  The operator $R $ is an isomorphism from $\Phi(\rn)$ onto $\Phi_e(Z_n)$.  The operator $R^*$ is an isomorphism from  $\Phi_e(Z_n)$ onto $\Phi(\rn)$.
\end{theorem}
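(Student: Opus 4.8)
The plan is to transfer the whole statement to the Fourier side and reduce it to a single geometric isomorphism between Lizorkin spaces. Recall the projection--slice identity: for $\varphi\in S(\rn)$, if $F_1$ is the one--dimensional Fourier transform in $t$ and $\F$ the full Fourier transform on $\rn$, then $(F_1R\varphi)(\theta,\tau)=(\F\varphi)(\tau\theta)$, obtained by writing $x=t\theta+u$ with $u\in\theta^\perp$ in (\ref{rtra1}) and integrating $e^{-it\tau}$ first. Introduce the polar pullback $(\Lambda h)(\theta,\tau)=h(\tau\theta)$ of a function $h$ on $\rn$; it is automatically even since $(-\tau)(-\theta)=\tau\theta$. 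Because $\Phi(\rn)=\F\Psi(\rn)$ and $\Phi_e(Z_n)=F_1\Psi_e(Z_n)$, the slice identity reads $F_1\circ R=\Lambda\circ\F$ on $\Phi(\rn)$, so $R=F_1^{-1}\Lambda\,\F$. Since $\F$ and $F_1$ are isomorphisms of the relevant spaces, it suffices to prove that $\Lambda$ is a topological isomorphism of $\Psi(\rn)$ onto $\Psi_e(Z_n)$.

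The inclusion $\Lambda\big(\Psi(\rn)\big)\subset\Psi_e(Z_n)$ is the easy half. If $h\in\Psi(\rn)$, then $\Lambda h$ is even, smooth in $(\theta,\tau)$, and rapidly decreasing in $\tau$ together with all derivatives, since $|\tau\theta|=|\tau|$ and $h$ is Schwartz. The flatness requirement $(\partial_\theta^\alpha\partial_\tau^j\,\Lambda h)(\theta,0)=0$ holds because every $\tau$--derivative of $h(\tau\theta)$ at $\tau=0$ is a linear combination of the derivatives $(\partial^\beta h)(0)$, all of which vanish as $h$ is flat at the origin; applying the tangential operators $\partial_\theta^\alpha$ from (\ref{iiuuyg}) does not change this. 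Tracking these bounds through the seminorms (\ref{jjyyz}) shows $\Lambda$ is continuous.

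The substantive point is surjectivity with continuity of the inverse, and I expect this to be the main obstacle. Given $g\in\Psi_e(Z_n)$, define $h(y)=g(y/|y|,|y|)$ for $y\neq0$ and $h(0)=0$; evenness of $g$ makes this unambiguous and gives $\Lambda h=g$ off the origin, where $h$ is clearly smooth. The claim is that $h$ extends to an element of $\Psi(\rn)$, and this is exactly where the flatness built into the Lizorkin spaces is indispensable. Since $\sn$ is compact, flatness of $g$ at $\tau=0$ yields, for every $M$ and every $\alpha,j$, a bound $|(\partial_\theta^\alpha\partial_\tau^j g)(\theta,\tau)|\le c_M|\tau|^M$. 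Differentiating $h(y)=g(y/|y|,|y|)$ by the chain rule produces at most $|\gamma|$ negative powers of $|y|$, each carried by a $\theta$-- or $\tau$--derivative of $g$; combined with the flatness bound this gives, for every multi-index $\gamma$ and every $N$, an estimate $|(\partial^\gamma h)(y)|\le c_N|y|^N$ for $0<|y|\le1$. By the standard extension criterion, a function smooth off the origin all of whose derivatives are $O(|y|^N)$ for every $N$ extends to a $C^\infty$ function that is flat at the origin; together with the rapid decay inherited from $g$ this places $h\in\Psi(\rn)$. The same estimates, read through the seminorms, make $g\mapsto h$ continuous, so $\Lambda$ is an isomorphism and hence $R\colon\Phi(\rn)\to\Phi_e(Z_n)$ is an isomorphism.

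For the dual transform I would not repeat the argument but use the composition formula $R^*R=c\,\mathcal{I}^{\,n-1}$, where $\mathcal{I}^{\,n-1}$ is the Riesz potential of order $n-1$, acting as multiplication by $c|\xi|^{-(n-1)}$ on the Fourier side. Because every element of $\Phi(\rn)$ has Fourier transform vanishing to infinite order at the origin, multiplication by any power $|\xi|^{\lambda}$ preserves $\Psi(\rn)$, so $\mathcal{I}^{\,n-1}$ is an isomorphism of $\Phi(\rn)$. As $R$ is already known to be an isomorphism of $\Phi(\rn)$ onto $\Phi_e(Z_n)$ with range all of $\Phi_e(Z_n)$, the identity $R^*=c\,\mathcal{I}^{\,n-1}\circ R^{-1}$ holds on $\Phi_e(Z_n)$ and exhibits $R^*$ as the composition $\Phi_e(Z_n)\to\Phi(\rn)\to\Phi(\rn)$ given by $R^{-1}$ followed by $c\,\mathcal{I}^{\,n-1}$, hence a composition of isomorphisms. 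This shows that $R^*$ is an isomorphism of $\Phi_e(Z_n)$ onto $\Phi(\rn)$ and completes the proof.
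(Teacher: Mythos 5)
The paper does not prove this theorem; it is stated with citations to Semyanistyi and Helgason, and your argument reconstructs exactly the classical route of those sources: the projection--slice identity reduces everything to showing that the polar pullback $h\mapsto h(\tau\theta)$ is an isomorphism of $\Psi(\rn)$ onto $\Psi_e(Z_n)$, with the flatness at the origin built into the Lizorkin spaces doing the work in the surjectivity step, and the composition formula $R^*R=c\,\mathcal{I}^{\,n-1}$ (an automorphism of $\Phi(\rn)$, since multiplication by $|\xi|^{\lambda}$ preserves $\Psi(\rn)$) handling the dual transform. The proof is correct and is essentially the same as the cited one.
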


\section  {Gegenbauer-Chebyshev Fractional Integrals}\label {1123}

\subsection  {The Right-sided Integrals}

In this section we consider the following integral operators on $\bbr_+$ indexed by $\lam >-1/2$ and a  nonnegative integer $m$.
 Let first  $\lam \neq 0$. We set
\be \label {4gt6a}
(\G^{\lam, m}_{-} f)(t)=\frac{1}{c_{\lam,m}}\,\intl_t^\infty (r^2 - t^2)^{\lam -1/2}\, C^\lam_m \left (\frac{t}{r} \right )\, f (r)  \, r\, dr,\ee
\be \label {4gt6a1}  (\stackrel{*}{\G}\!{}_{\!-}^{\!\lam,m} f)(t)
=\frac{t}{c_{\lam,m}}\,\intl_t^\infty (r^2 - t^2)^{\lam -1/2}\, C^\lam_m \left (\frac{r}{t} \right )\, f (r)  \,\frac{dr}{r^{2\lam +1}},\ee
\be\label {gynko}
c_{\lam,m}=\frac{ \Gam (2\lam+m)\,\Gam (\lam+1/2)}{2 m!\, \Gam (2\lam)}.\ee
In the cases $m=0$ and $m=1$, when  $C^\lam_0 (t) =1$ and $C^\lam_1 (t) = 2\lam t$, these operators are expressed through the Erd\'elyi-Kober type integrals (\ref{EKO}) by the formulas
\be \label {4gt6a8}
\G^{\lam, 0}_{-} f=I_{-,2}^{\lam +1/2}f, \qquad \G^{\lam, 1}_{-} f=t\,I_{-,2}^{\lam +1/2}\,t^{-1} f,\ee
\be \label {4gt6a9}
\stackrel{*}{\G}\!{}_{\!-}^{\!\lam,0} f=t \,I_{-,2}^{\lam +1/2}\,t^{-2\lam -2} f,\quad
\stackrel{*}{\G}\!{}_{\!-}^{\!\lam,1} f=I_{-,2}^{\lam +1/2}\,t^{-2\lam -1} f.\ee
Here, as usual,  the powers of $t$ denote the corresponding multiplication operators.

 In the case  $\lam=0$, when the Gegenbauer polynomials are  substituted by the Chebyshev ones, we set
\be \label {4gt6at}
(\T_{-}^m f)(t)=\frac{2}{\sqrt{\pi}}\,\intl_t^\infty (r^2 - t^2)^{-1/2}\, T_m \left (\frac{t}{r} \right )\, f (r)  \, r\, dr,\ee
\be \label {4gt6a1t}  (\stackrel{*}{\T}\!{}_{\!-}^m f)(t)
=\frac{2t}{\sqrt{\pi}}\,\intl_t^\infty (r^2 - t^2)^{-1/2}\, T_m \left (\frac{r}{t} \right )\, f (r)  \,\frac{dr}{r}.\ee
As in (\ref {4gt6a8}) and (\ref {4gt6a9}).
 \be \label {4gt6a8Ch}
\T^{0}_{-} f=I_{-,2}^{1/2}f, \qquad \T^{1}_{-} f=t\,I_{-,2}^{1/2}\,t^{-1} f,\ee
\be \label {4gt6a9Ch}
\stackrel{*}{\T}\!{}_{\!-}^{\!0} f=t \,I_{-,2}^{1/2}\,t^{-2} f,\quad
\stackrel{*}{\T}\!{}_{\!-}^{\!1} f=I_{-,2}^{1/2}\,t^{-1} f.\ee

  We call  (\ref{4gt6a})-(\ref{4gt6a1})  and (\ref{4gt6at})-(\ref{4gt6a1t})
  the {\it  right-sided Gegenbauer and Chebyshev fractional integrals}, respectively. The next proposition contains information about the existence of these integrals.

   \begin{proposition} \label{lo8xw}  Let $a>0$, $\lam >-1/2$. The integrals $(\G^{\lam, m}_{-} f)(t)$ and $(\stackrel{*}{\G}\!{}_{\!-}^{\!\lam,m} f)(t)$ are finite for almost all $t>a$ under the following conditions.

{\rm (i)} For $(\G^{\lam, m}_{-} f)(t)$:
\be\label{for10zg} \intl_a^\infty |f(t)|\, t^{2\lam-\eta }\, dt
<\infty,\qquad \eta=\left \{\begin{array} {ll} 0 &\mbox{if $m$ is even,}\\
1 &\mbox{if $m$ is odd.}\end{array}\right.\ee

{\rm (ii)} For $(\stackrel{*}{\G}\!{}_{\!-}^{\!\lam,m} f)(t)$:
\be\label{for10zgd} \intl_a^\infty |f(t)|\, t^{m-2}\, dt
<\infty.\ee

 The case $\lam =0$ gives the similar statements for $\T_{-}^m f$ and $\stackrel{*}{\T}\!{}_{\!-}^m f$.
 \end{proposition}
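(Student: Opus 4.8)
The plan is to majorize each Gegenbauer--Chebyshev integral by a constant multiple of an Erd\'elyi--Kober integral $I^{\lam +1/2}_{-, 2}$ of a suitable power weight times $|f|$, and then invoke Lemma \ref{lifa2}. Throughout I use that $\a=\lam+1/2>0$ because $\lam>-1/2$, so that $(r^2-t^2)^{\lam-1/2}$ is exactly the admissible Erd\'elyi--Kober kernel $(r^2-t^2)^{\a-1}$ from (\ref{EKO}); in particular the integrable singularity at the lower endpoint $r=t$ is automatically accounted for by Lemma \ref{lifa2}, and the prefactor $1/c_{\lam,m}$ is an inessential constant since $c_{\lam,m}\neq 0$ for $\lam>-1/2$, $\lam\neq 0$. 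Since finiteness is meant in the Lebesgue sense, it suffices to majorize the absolute values of the integrands.

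For part (i), in $(\G^{\lam, m}_{-} f)(t)$ the argument of $C^\lam_m$ is $t/r\in(0,1)$ on the range $r>t$, so inequality (\ref{kioxsru}) applies directly. If $m$ is even I bound $|C^\lam_m(t/r)|\le c$ and get
\[ |(\G^{\lam, m}_{-} f)(t)|\le \frac{c}{|c_{\lam,m}|}\intl_t^\infty (r^2-t^2)^{\lam-1/2}\,|f(r)|\,r\,dr=c\,(I^{\lam+1/2}_{-,2}|f|)(t); \]
if $m$ is odd I bound $|C^\lam_m(t/r)|\le c\,t/r$, and the extra factor $t/r$ turns the majorant into $c\,t\,(I^{\lam+1/2}_{-,2}g)(t)$ with $g(r)=|f(r)|/r$. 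In either case Lemma \ref{lifa2} with $\a=\lam+1/2$ gives finiteness for almost all $t>a$ under $\int_a^\infty g(s)\,s^{2\lam}\,ds<\infty$, which is precisely $\int_a^\infty |f(s)|\,s^{2\lam-\eta}\,ds<\infty$, i.e. condition (\ref{for10zg}). The fixed factor $t$ in the odd case is harmless, finiteness being asserted pointwise a.e.

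For part (ii) the argument $r/t$ of $C^\lam_m$ now exceeds $1$, so (\ref{kioxsru}) is unavailable; instead I use the elementary growth bound $|C^\lam_m(x)|\le c\,x^m$ for $x\ge 1$, valid because $C^\lam_m$ is a polynomial of degree $m$ (each monomial satisfies $|a_j|x^j\le |a_j|x^m$ when $x\ge 1$). This yields
\[ |(\stackrel{*}{\G}\!{}_{\!-}^{\!\lam,m} f)(t)|\le \frac{c\,t}{|c_{\lam,m}|}\intl_t^\infty (r^2-t^2)^{\lam-1/2}\Big(\frac{r}{t}\Big)^m |f(r)|\,\frac{dr}{r^{2\lam+1}}=c\,t^{1-m}\,(I^{\lam+1/2}_{-,2}g)(t), \]
where $g(r)=|f(r)|\,r^{m-2\lam-2}$. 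Lemma \ref{lifa2} then gives finiteness for a.e. $t>a$ under $\int_a^\infty g(s)\,s^{2\lam}\,ds=\int_a^\infty |f(s)|\,s^{m-2}\,ds<\infty$, which is condition (\ref{for10zgd}).

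Finally, the Chebyshev case $\lam=0$ is identical: $T_m$ obeys the same inequality (\ref{kioxsru}) (as noted just after that formula) and the same polynomial bound $|T_m(x)|\le c\,x^m$ for $x\ge 1$, while $\a=\lam+1/2$ specializes to $1/2$; substituting $\lam=0$ in the two displays reproduces the assertions for $\T_{-}^m$ and $\stackrel{*}{\T}\!{}_{\!-}^m$. The only point requiring care, and the one ingredient not already cited in the excerpt, is the growth estimate $|C^\lam_m(x)|\le c\,x^m$ for $x\ge 1$ used in part (ii); everything else is a routine reduction to Lemma \ref{lifa2}.
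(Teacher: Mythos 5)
Your proof is correct. Part (i) is exactly the argument the paper intends (it simply cites Lemma \ref{lifa2} and (\ref{kioxsru}); you have spelled out the even/odd cases and the bookkeeping that turns $\eta=0,1$ into the weight $t^{2\lam-\eta}$). For part (ii) you take a slightly different route: the paper changes the order of integration by hand, arrives at $c\int_a^\infty |f(r)|\,r^{m-2}\,\eta(r)\,dr$ with an explicit auxiliary function $\eta(r)$ whose boundedness must then be checked, and concludes a.e.\ finiteness from $\int_a^b|\cdot|\,dt<\infty$; you instead absorb the polynomial growth $|C^\lam_m(r/t)|\le c\,(r/t)^m$ (valid for arguments $\ge 1$, and implicitly used by the paper as well) into a re-weighted function $g(r)=|f(r)|\,r^{m-2\lam-2}$, recognize the majorant as $c\,t^{1-m}(I^{\lam+1/2}_{-,2}g)(t)$, and quote Lemma \ref{lifa2}, whose hypothesis $\int_a^\infty g(s)\,s^{2\lam}\,ds<\infty$ is precisely (\ref{for10zgd}). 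Since Lemma \ref{lifa2} is itself proved by the same Fubini computation, the two arguments are equivalent in substance, but your packaging has the advantage of treating (i) and (ii) uniformly and of dispensing with the separate boundedness check for $\eta(r)$; the harmless prefactors $t$ and $t^{1-m}$ are correctly dismissed since finiteness is asserted pointwise a.e. The Chebyshev case $\lam=0$ goes through verbatim, as you note.
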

  \begin{proof} (i) follows immediately from Lemma \ref{lifa2} and (\ref{kioxsru}). To prove (ii),   changing the order of integration, for any $b\in (a, \infty)$ we have
  \bea
&&\intl_a^b |(\stackrel{*}{\G}\!{}_{\!-}^{\!\lam,m} f)(t)|\,dt\le c \intl_a^b \frac{dt}{t^2} \intl_t^\infty (r^2 - t^2)^{\lam -1/2}\, \left (\frac{r}{t} \right )^m\, \frac{|f (r)|\,dr}{r^{2\lam +1}}\nonumber\\
&&\le c \intl_a^\infty  \frac{|f (r)|\,dr}{r^{2\lam +1-m}}\intl_{a}^{r}(r^2 - t^2)^{\lam -1/2}\,\frac{dt}{t^{m+2}}\nonumber\\
&&\le
 c \intl_a^\infty  \frac{|f (r)|\,dr}{r^{3}}\intl_{a/r}^{1}(1 - s^2)^{\lam -1/2}\,\frac{ds}{s^{m+2}}\nonumber\\
&&= c \intl_a^\infty \!f(r)\, r^{m-2}\, \eta (r)\,dr,\quad \eta (r)\!=\!r^{-m-1}\intl_{a/r}^{1}(1\! -\! s^2)^{\lam -1/2}\,\frac{ds}{s^{m+2}}.\nonumber\eea
Since the function $\eta (r)$ is bounded, the result follows.
\end{proof}
\begin {remark} \label{sharp1} {\rm The conditions (\ref{for10zg}) and (\ref{for10zgd}) are sharp. Suppose, for example, that $m$ is even and let $f_\e(t)=t^{-2\lam -1+\e}$. Then  (\ref{for10zg}) fails if $f=f_\e$ with $\e=0$. The Gegenbauer  integral  $(\G^{\lam, m}_{-} f_\e)(t)$, which can be explicitly evaluated by (\ref{89zse}) if $\e<0$, does not exist for $\e=0$ too. Other cases in Proposition  \ref{lo8xw} can be considered similarly.
}
\end{remark}

Our main concern is the operators $\G^{\lam, m}_{-}$ and  $\T^{m}_{-}$ which play an important role in the study of Radon transforms. Below we discuss  the injectivity of these operators and inversion formulas.

\begin {lemma} \label{89srg} Let $\lam >-1/2$. If $m=0,1$, then $\G^{\lam, m}_{-}$ is injective on $\bbr_+$ in the class of functions satisfying (\ref{for10zg}) for all $a>0$.
 If $m\ge 2$, then $\G^{\lam, m}_{-}$ is non-injective in this class of functions.
Specifically, let  $f_k (t)=t^{-2\lam-k-2}$,  where $k$ is a nonnegative  integer such that $ m-k=2,4,  \ldots$.
 Then $(\G^{\lam, m}_{-} f_k)(t)=0$ for all $t>0$.  The case $\lam =0$ gives the similar statement for $\T_{-}^m f$.
\end {lemma}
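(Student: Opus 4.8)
The plan is to split the proof into the injectivity claim for $m=0,1$, handled by reduction to the Erd\'elyi-Kober integral and the inversion result of Theorem~\ref{78awqe}, and the explicit non-injectivity construction for $m\ge 2$, obtained from a direct evaluation using the Mellin formula (\ref{89zse}); the Chebyshev case $\lam=0$ will follow verbatim. For the injectivity, I would use the reduction formulas (\ref{4gt6a8}): $\G^{\lam,0}_- f=I_{-,2}^{\lam+1/2}f$ and $\G^{\lam,1}_- f=t\,I_{-,2}^{\lam+1/2}\,t^{-1}f$. When $m=0$, the growth condition (\ref{for10zg}) (with $\eta=0$) reads $\intl_a^\infty |f|\,t^{2\lam}\,dt<\infty$, which is precisely hypothesis (\ref{for10z}) for $\a=\lam+1/2$; thus $\G^{\lam,0}_- f=0$ forces $I_{-,2}^{\lam+1/2}f=0$, and Theorem~\ref{78awqe} recovers $f=0$ a.e. When $m=1$, I would put $g=t^{-1}f$; condition (\ref{for10zg}) (with $\eta=1$) becomes $\intl_a^\infty |g|\,t^{2\lam}\,dt<\infty$, so the same theorem applies to $g$, and $t\,I_{-,2}^{\lam+1/2}g=0$ gives $g=0$, i.e. $f=0$ a.e. The Chebyshev identities (\ref{4gt6a8Ch}) let me repeat this word for word when $\lam=0$.

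For the non-injectivity I would evaluate $(\G^{\lam,m}_- f_k)(t)$ for $f_k(t)=t^{-2\lam-k-2}$ by the substitution $u=t/r$ in (\ref{4gt6a}). A routine change of variables collapses the $t$-dependence and yields
\[
(\G^{\lam,m}_- f_k)(t)=\frac{t^{-k-1}}{c_{\lam,m}}\,\intl_0^1 (1-u^2)^{\lam-1/2}\,C^\lam_m(u)\,u^k\,du=\frac{t^{-k-1}}{c_{\lam,m}}\,\a_m(k+1),
\]
where $\a_m$ is the Mellin transform of (\ref{89zse}) evaluated at $z=k+1$. Inserting the closed form (\ref{89zse55}), the factor $c_{\lam,m}$ cancels and the denominator carries the Gamma factor $\Gam((z+1-m)/2)=\Gam((k+2-m)/2)$. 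Since $m-k\in\{2,4,\dots\}$, its argument $(k+2-m)/2$ is a nonpositive integer, so this factor is infinite and $\a_m(k+1)=0$; hence $(\G^{\lam,m}_- f_k)(t)\equiv 0$. As $k=m-2$ is always an admissible (nonnegative, same-parity) index when $m\ge 2$, this produces a nonzero kernel element, and the same computation with (\ref{89zset}) settles the Chebyshev case.

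Finally I would verify that each $f_k$ belongs to the admissible class: for $f_k(t)=t^{-2\lam-k-2}$ one has $|f_k|\,t^{2\lam-\eta}=t^{-k-2-\eta}$, whose integral over $[a,\infty)$ converges for all $k\ge 0$ and $\eta\in\{0,1\}$, so (\ref{for10zg}) holds automatically. The argument is essentially mechanical; the only places demanding attention are matching (\ref{for10zg}) with the hypothesis (\ref{for10z}) of Theorem~\ref{78awqe} after the conjugation by $t^{\pm1}$ in the case $m=1$, and checking that the evaluation point $z=k+1\ge 1$ lies in the domain of validity of (\ref{89zse}) --- for odd $m$ this relies on the continuity extension flagged in the footnote to (\ref{89zse55}), which is harmless here. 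I do not anticipate any genuine difficulty beyond this bookkeeping.
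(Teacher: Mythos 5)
Your proposal is correct and follows essentially the same route as the paper: the paper likewise disposes of $m=0,1$ by reducing to the Erd\'elyi--Kober operators via (\ref{4gt6a8}), (\ref{4gt6a8Ch}) (whose injectivity is exactly what Theorem~\ref{78awqe} supplies under the matched hypothesis (\ref{for10z})), and for $m\ge 2$ performs the identical substitution leading to $t^{-k-1}\a_m(k+1)$ and the pole of $\Gam\bigl((k-m+2)/2\bigr)$. Your added checks (membership of $f_k$ in the admissible class, the domain of validity of (\ref{89zse55}) at $z=k+1$) are harmless bookkeeping the paper leaves implicit.
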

\begin{proof} The first statement is obvious from (\ref {4gt6a8}) and (\ref {4gt6a8Ch}) thanks to the injectivity of the Erd\'elyi-Kober operators. In the case $m\ge 2$, changing variables, we  get
\bea
(\G^{\lam, m}_{-}f_k)(t)&=&\frac{t^{-k-1}}{c_{\lam,m}} \intl_0^1 u^{k} (1 \!- \! u^2)^{\lam -1/2}\, C^\lam_m (u)\, du\nonumber\\&=&
t^{-k-1} \frac {\displaystyle{\Gam \left(\frac{k+1}{2}\right)\,\Gam \left(\frac{k+2}{2}\right)}}
{\displaystyle{ \Gam \left(\lam+ 1+\frac{k+m}{2}\right)\, \Gam \left(\frac{k-m+2}{2}\right)}};
\nonumber\eea
cf. (\ref{89zse}). Since the gamma function $\Gam ((k-m+2)/2)$ has a pole when  $k-m+2=0,-2, -4, \ldots,$ the result follows. If  $\lam =0$ the reasoning is similar and relies on (\ref{89zset}).
\end{proof}

Regarding inversion formulas,  if $m=0$ and $1$,  then $\G^{\lam, m}_{-}$ and $\T^{m}_{-}$ are expressed through the Erd\'elyi-Kober
 integrals  (see  (\ref{4gt6a8})) and can be explicitly inverted using Theorem \ref{78awqe} on the class of functions satisfying  (\ref{for10zg}).  We observe that this condition is necessary for the existence of these integrals.

 In the case  $m\ge 2$ some preparation is needed.

\begin{lemma} \label{89n6g} Let $\lam >-1/2$, $m\ge 2$. Suppose that
\be\label{for10zgn} \intl_a^\infty |f(t)|\, t^{2\lam +m-1}\, dt<\infty\quad \forall a>0.\ee
 \noindent {\rm (i)}  If $\lam \neq 0$, then for almost all $t>0$,
\be\label{8vcmk}
(\stackrel{*}{\G}\!{}_{\!-}^{\!\lam,m}  \G^{\lam, m}_{-} f)(t)=2^{2\lam +1}(I_-^{2\lam+1} f)(t).\ee
 \noindent {\rm (ii)} In the case $\lam=0$ we similarly have
\be\label{8vcmkt}
(\stackrel{*}{\T}\!{}_{\!-}^m  \T_{-}^m f)(t)=2(I_-^{1} f)(t).\ee
\end{lemma}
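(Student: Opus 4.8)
The plan is to reduce everything to the Mellin transform, for which the formulas (\ref{89zse}) and (\ref{89zse1}) are tailor-made. First I would strip the singular weights by the reciprocal substitutions $r=t/u$ in (\ref{4gt6a}) and $s=t/v$ in (\ref{4gt6a1}); a routine change of variables rewrites both operators as multiplicative convolutions on $\bbr_+$,
\be
(\G^{\lam, m}_{-} f)(t)=\frac{t^{2\lam+1}}{c_{\lam,m}}\intl_0^1 u^{-2\lam-2}(1-u^2)^{\lam -1/2}\, C^\lam_m (u)\, f(t/u)\, du,
\ee
\be
(\stackrel{*}{\G}\!{}_{\!-}^{\!\lam,m} g)(t)=\frac{1}{c_{\lam,m}}\intl_0^1 (1-v^2)^{\lam -1/2}\, C^\lam_m (1/v)\, g(t/v)\, dv.
\ee
Writing $\tilde h(z)=\intl_0^\infty h(t)\,t^{z-1}\,dt$, an interchange of integration (legitimate on a suitable vertical strip by the absolute convergence guaranteed by (\ref{for10zgn}) and Proposition \ref{lo8xw}) turns these into multiplication operators: $\G^{\lam,m}_-$ carries $\tilde f(z+2\lam+1)$ into its $(\a_m(z)/c_{\lam,m})$-multiple, and $\stackrel{*}{\G}\!{}_{\!-}^{\!\lam,m}$ multiplies $\tilde g(z)$ by $\b_m(z+1)/c_{\lam,m}$, where $\a_m,\b_m$ are precisely the integrals in (\ref{89zse}) and (\ref{89zse1}).

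Composing the two symbols and inserting the explicit gamma values from (\ref{89zse55}) and (\ref{89zse1}), the two $m$-dependent factors $\Gam\big((z+1-m)/2\big)$ and $\Gam\big(\lam+(z+1+m)/2\big)$ cancel, so that the whole dependence on $m$ disappears:
\be
\frac{\a_m(z)\,\b_m(z+1)}{c_{\lam,m}^2}=\frac{\Gam(z/2)\,\Gam\big((z+1)/2\big)}{\Gam\big(\lam+(z+1)/2\big)\,\Gam\big(\lam+(z+2)/2\big)}=2^{2\lam+1}\,\frac{\Gam(z)}{\Gam(z+2\lam+1)},
\ee
the last equality being Legendre's duplication formula $\Gam(w)\Gam(w+1/2)=2^{1-2w}\sqrt{\pi}\,\Gam(2w)$ applied to the numerator (with $w=z/2$) and to the denominator (with $w=(z+2\lam+1)/2$). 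Since $2^{2\lam+1}\Gam(z)/\Gam(z+2\lam+1)$ is exactly the Mellin multiplier of $2^{2\lam+1}I_{-}^{2\lam+1}$, this is the desired identity at the level of symbols.

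To make the passage from symbols to operators rigorous --- which is the step I expect to demand the most care, since (\ref{for10zgn}) controls $f$ only at infinity --- I would argue directly with the kernel. Interchanging the order of integration in $\stackrel{*}{\G}\!{}_{\!-}^{\!\lam,m}\G^{\lam, m}_{-}f$ writes it as $\intl_t^\infty K(t,r)\,f(r)\,dr$, and a scaling check shows $K$ is homogeneous of degree $2\lam$, i.e. $K(t,r)=r^{2\lam}\kappa(t/r)$ for a single function $\kappa$ on $(0,1)$. Testing on the power functions $f(r)=r^{-\beta}$ with $\beta>2\lam+m$ --- this range being exactly the threshold in (\ref{for10zgn}) --- the convolution forms together with (\ref{89zse}), (\ref{89zse1}) and the computation above give, for $\gamma=\beta-2\lam-1>m-1$,
\be
\intl_0^1 x^{\gamma-1}\kappa(x)\,dx=2^{2\lam+1}\,\frac{\Gam(\gamma)}{\Gam(\gamma+2\lam+1)}=\frac{2^{2\lam+1}}{\Gam(2\lam+1)}\intl_0^1 x^{\gamma-1}(1-x)^{2\lam}\,dx,
\ee
the last equality being the beta integral. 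As this holds on a half-line of exponents $\gamma$, injectivity of the Mellin transform forces $\kappa(x)=\tfrac{2^{2\lam+1}}{\Gam(2\lam+1)}(1-x)^{2\lam}$, hence $K(t,r)=\tfrac{2^{2\lam+1}}{\Gam(2\lam+1)}(r-t)^{2\lam}$, which is (\ref{8vcmk}). The Chebyshev case (\ref{8vcmkt}) is identical, with (\ref{89zset}) and (\ref{89zse1t}) replacing (\ref{89zse}) and (\ref{89zse1}); there $\lam=0$, $2\lam+1=1$, and the duplication step reduces to $\Gam(z/2)\Gam\big((z+1)/2\big)=\sqrt{\pi}\,2^{1-z}\Gam(z)$.
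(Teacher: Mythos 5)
Your proposal is correct and follows essentially the same route as the paper: both reduce the composition to an integral operator with a kernel homogeneous of degree $2\lam$, identify the profile of that kernel with $\tfrac{2^{2\lam+1}}{\Gam(2\lam+1)}(1-x)^{2\lam}$ by matching Mellin transforms via (\ref{89zse}), (\ref{89zse1}) and the duplication formula, and invoke injectivity of the Mellin transform. The only cosmetic difference is that you extract the kernel's Mellin transform by testing on power functions $r^{-\beta}$, whereas the paper computes it directly as a Mellin convolution $f_1\diamond f_2$; the Fubini justification you defer to Proposition \ref{lo8xw} is exactly the estimate the paper carries out explicitly using (\ref{kioxsru}).
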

\begin{proof}  (i) To  prove  (\ref{8vcmk}), we change the order of integration on the left-hand side. To justify  application of Fubini's theorem, let us replace all functions on the left-hand side of (\ref{8vcmk}) by their absolute values and make use of Proposition \ref{lo8xw} (ii) together with (\ref{kioxsru}).
 For any $a>0$ and $m$ even, we obtain
 \bea \label{kiowe}I&\equiv& \intl_a^\infty t^{m-2} dt \intl_t^\infty (r^2 - t^2)^{\lam -1/2}\,\Big | C^\lam_m \left (\frac{t}{r} \right )\Big |\, |f (r)|  \, r\, dr\\
&\le& c\intl_a^\infty |f (r)|  \,r\, dr \intl_a^r (r^2 - t^2)^{\lam -1/2}\, t^{m-2} dt\nonumber\\
&=&c\intl_a^\infty |f (r)|  \,r^{2\lam+m-1}\,  \vp_1 (r)\, dr, \quad \vp_1 (r)\!=\!\intl_{a/r}^1 s^{m-2} (1\!-\!s^2)^{\lam -1/2}\,  ds.\nonumber\eea
Since $\vp_1 (r)$ is bounded, then $I<\infty$.  If $m$ is odd, then $| C^\lam_m (t/r)|\le c \,t/r$ in (\ref{kiowe}) and we proceed as above with
\[ \vp_2 (r)=\intl_{a/r}^1 s^{m-1} (1-s^2)^{\lam -1/2}\,  ds.\]
The latter is bounded. For (\ref{8vcmkt}) the argument is similar.

 The above estimates enable us to change
 the order of integration on the left-hand side of (\ref{8vcmk}) and we get
\[ l.h.s.=\frac{1}{c^2_{\lam,m}}\,\intl_t^\infty f(s) I(s,t)\, ds,\]
 \be\label{89n6gz}I(s,t)=st\intl_t^s  (s^2 - r^2)^{\lam -1/2}  (r^2 - t^2)^{\lam -1/2}\,
C^\lam_m \left (\frac{r}{s} \right )\,\, C^\lam_m \left (\frac{r}{t} \right )\,\frac{dr}{r^{2\lam +1}}.\ee
Let us show that
 \be\label{89n6gzc}I(s,t)=\frac{2^{2\lam +1}\,c^2_{\lam,m}}{\Gam (2\lam +1)}\, (s-t)^{2\lam}\ee
where $c_{\lam,m}$ is defined by (\ref{gynko}). Once (\ref{89n6gzc}) is proved, the result follows.

 Setting $\xi=t/s$, we easily get \be I(s,t)=t^{2\lam} I_0 (\xi),\ee
\bea I_0 (\xi)&=& \xi^{1-2\lam}\intl_\xi^1  (1 - u^2)^{\lam -1/2}  \left (1 - \frac{\xi^2}{u^2}\right)^{\lam -1/2}
C^\lam_m (u)\,C^\lam_m \left (\frac{u}{\xi} \right )\,\frac{du}{u^{2}}\nonumber\\
&=& \xi^{1-2\lam} \,(f_1 \diamond f_2)(\xi).\nonumber\eea
Here $f_1 \diamond f_2$ denotes the Mellin convolution of functions
\[f_1 (u)= u^{-1}(1 - u^2)_+^{\lam -1/2}C^\lam_m (u), \qquad f_2 (u)= (1 - u^2)_+^{\lam -1/2}C^\lam_m (1/u).\]
Thus, we have to show that
\be\label {91s56} I_0 (\xi)=\frac{2^{2\lam +1}\,c^2_{\lam,m}}{\Gam (2\lam +1)}\,\left (\frac{1}{\xi} -1\right )_+^{2\lam}.\ee
It suffices to establish the coincidence of the Mellin transform  $\tilde I_0(z)$ with the  Mellin transform of the right-hand side of (\ref{91s56}) for sufficiently large $Re \, z$.
 The formulas (\ref{89zse}) and (\ref{89zse1}) enable us to compute the Mellin transform of  $I_0 (\xi)$.  We have
\bea
\tilde I_0(z)&\equiv& \intl_0^\infty \xi^{z-1}I_0 (\xi)\, d\xi=\tilde f_1 (z+1-2\lam) \,\tilde f_2 (z+1-2\lam)\nonumber\\
&=&c^2_{\lam,m} \frac{\displaystyle{\Gam \left(\frac{z}{2}-\lam\right)\,\Gam \left(\frac{z+1}{2}-\lam\right)}}
{\displaystyle{\Gam \left(\frac{z+1}{2}\right)\, \Gam \left(\frac{z}{2}+1\right)}}=2^{2\lam +1}c^2_{\lam,m}\, \frac{\Gam (z-2\lam)}{\Gam (z+1)}
\nonumber\\
&=&\frac{2^{2\lam +1}\,c^2_{\lam,m}}{\Gam (2\lam +1)}\,\intl_0^\infty \xi^{z-1}\,\left (\frac{1}{\xi} -1\right )_+^{2\lam}\, d\xi.
\nonumber\eea
Thus, the Mellin transforms of the both sides of (\ref{91s56}) coincide and we are done.

(ii) Let us prove (\ref{8vcmkt}).  As above,
\[ l.h.s.=\frac{4}{\pi}\,\intl_t^\infty f(s) I(s,t)\, ds,  \qquad I(s,t)\equiv I_0(\xi)=\xi \,(f_1 \diamond f_2)(\xi), \quad \xi=\frac{t}{s},\]
where
\[f_1 (u)= u^{-1}(1 - u^2)_+^{-1/2}T_m (u), \qquad f_2 (u)= (1 - u^2)_+^{-1/2}T_m (1/u).\]
By   (\ref{89zset}) and (\ref{89zse1t}),
\[
\tilde I_0(z)=\tilde f_1 (z+1) \,\tilde f_2 (z+1)=\frac{\pi}{2z},\]
and therefore,
$$I_0(\xi)=\frac{\pi}{2}\, H (1-\xi)=\frac{\pi}{2}\,\left \{\begin{array} {ll} 1 &  \mbox {if} \quad \xi<1,\\
0 & \mbox {if} \quad \xi>1.\\
\end{array}\right.
$$
 This gives $I(s,t)=(\pi/2)H(1-t/s)$, and (\ref{8vcmkt}) follows.
\end{proof}

The following inversion formulas for the Gegenbauer-Chebyshev integrals are immediate consequences of Lemma \ref{89n6g}.

\begin {corollary}\label {mlpzx} Let $m\ge 2$, $\lam >-1/2$, and suppose that $f$ satisfies the conditions of Lemma \ref {89n6g}.
 Then $f(t)$ can be uniquely reconstructed for almost all $t>0$  from the Gegenbauer-Chebyshev integrals
$\G^{\lam, m}_{-} f=g$ and $ \T_{-}^m f=g$
 by the formulas
\be\label{8vcmks}
f(t)=2^{-2\lam -1}(\Cal D^{2\lam +1}_{-}\stackrel{*}{\G}\!{}_{\!-}^{\!\lam,m}  g)(t), \ee
\be\label{8vcmkst}
f(t)=-\frac{1}{2}\,\frac{d}{dt}\,(\stackrel{*}{\T}\!{}_{\!-}^m \,t^{-2}\, g)(t),\ee
 where $\Cal D^{2\lam +1}_{-}$ stands for the corresponding Riemann-Liouville fractional derivative; see Section \ref{Fractional}.
 \end{corollary}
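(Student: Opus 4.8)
The plan is to read both inversion formulas directly off the composition identities of Lemma \ref{89n6g}; the only substantive step is to undo the single right-sided Riemann--Liouville integral that survives each composition. In each case I substitute the datum $g$ into the relevant identity and then apply the appropriate Riemann--Liouville fractional derivative.

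For part (i), with $\lam\neq 0$, I set $g=\G^{\lam,m}_{-}f$ in (\ref{8vcmk}), which gives $(\stackrel{*}{\G}\!{}_{\!-}^{\!\lam,m}g)(t)=2^{2\lam+1}(I_-^{2\lam+1}f)(t)$ for almost all $t>0$, hence $I_-^{2\lam+1}f=2^{-2\lam-1}\,\stackrel{*}{\G}\!{}_{\!-}^{\!\lam,m}g$. Applying the Riemann--Liouville derivative $\Cal D^{2\lam+1}_{-}$, which is by construction a left inverse of $I_-^{2\lam+1}$, and using $\Cal D^{2\lam+1}_{-}I_-^{2\lam+1}f=f$, produces exactly (\ref{8vcmks}). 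For part (ii), with $\lam=0$, I argue identically from (\ref{8vcmkt}): putting $g=\T_-^m f$ gives $\stackrel{*}{\T}\!{}_{\!-}^m g=2\,I_-^1 f$, and inverting the surviving integral $I_-^1$ by its Riemann--Liouville derivative $\Cal D_-^1=-d/dt$ yields (\ref{8vcmkst}).

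The one point that is not purely formal---and which I regard as the main obstacle---is the justification of the left-inverse relation $\Cal D^\a_{-}I^\a_{-}f=f$ for the surviving order $\a=2\lam+1$ (resp. $\a=1$), which the paper explicitly flags must be verified at each occurrence. Here the hypothesis (\ref{for10zgn}) is exactly what is needed: since $m\ge 2$, for $t\ge 1$ one has $t^{2\lam}\le t^{2\lam+m-1}$, so (\ref{for10zgn}) is stronger than the tail bound $\int_a^\infty|f(t)|\,t^{(2\lam+1)-1}\,dt<\infty$ required by Lemma \ref{lif}; consequently $I_-^{2\lam+1}f$ is finite almost everywhere and lies in the class on which $\Cal D^{2\lam+1}_{-}$ acts as a genuine left inverse, and the same comparison with $\a=1$ settles the Chebyshev case. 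Finally, the assertion that the reconstruction is \emph{unique} (even though $m\ge 2$, where $\G^{\lam,m}_{-}$ is non-injective on the larger class by Lemma \ref{89srg}) follows because (\ref{for10zgn}) excludes precisely the kernel functions $f_k(t)=t^{-2\lam-k-2}$: for these $\int_a^\infty |f_k(t)|\,t^{2\lam+m-1}\,dt=\int_a^\infty t^{m-k-3}\,dt$ diverges at infinity whenever $m-k\ge 2$, so no nonzero kernel element meets the hypothesis and the explicit formula recovers $f$ without ambiguity.
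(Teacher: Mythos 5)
Your argument is correct and is essentially the paper's own (the paper treats the corollary as an immediate consequence of Lemma \ref{89n6g}): you read the inversion off the composition identities (\ref{8vcmk}), (\ref{8vcmkt}) and observe that (\ref{for10zgn}) gives both the a.e.\ finiteness of $I_-^{2\lam+1}f$ (by Lemma \ref{lif}, since $t^{2\lam}\le a^{1-m}\,t^{2\lam+m-1}$ on $[a,\infty)$ when $m\ge 2$) and the left-inverse relation $\Cal D^{2\lam +1}_{-}I_-^{2\lam+1}f=f$, with uniqueness then automatic from the explicit formula. One caveat: in the Chebyshev case your derivation actually produces $f(t)=-\tfrac12\,\tfrac{d}{dt}\,(\stackrel{*}{\T}\!{}_{\!-}^{m}g)(t)$ with no factor $t^{-2}$, which is what Lemma \ref{89n6g}(ii) and formula (\ref{012aq2}) of Theorem \ref{recon65} require; the extra $t^{-2}$ in the printed (\ref{8vcmkst}) appears to be a misprint, so you should not assert that your computation ``yields (\ref{8vcmkst})'' exactly as written.
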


\begin{remark} \label {yyur} {\rm The assumption $\int_a^\infty |f(t)|\, t^{2\lam +m-1} dt\!<\!\infty$ in Corollary \ref{mlpzx} is essentially stronger than  (\ref{for10zg})
 in Proposition \ref{lo8xw}(i) which guarantees  the existence of $\G^{\lam, m}_{-} f$. The inversion problem for $\G^{\lam, m}_{-} f$ under the  less restrictive assumption (\ref{for10zg}) does not have a unique solution; cf. Lemma \ref{89srg}. We recall that for $m=0$ and $1$, unlike $m\ge 2$, the inversion formulas provided by Theorem \ref{78awqe} hold under the same assumptions which are necessary for the existence of the corresponding Gegenbauer-Chebyshev integrals.}
\end{remark}

\subsection  {The Left-sided Integrals}

 Let $\lam >-1/2$,  $m \in \bbz_+$. The {\it  left-sided Gegenbauer and Chebyshev fractional integrals} are defined as follows.
 For  $\lam \neq 0$, we set
\be \label {4gt6ale}
(\G^{\lam, m}_{+} f)(r)=\frac{r^{-2\lam}}{c_{\lam,m}}\,\intl_0^r (r^2 - t^2)^{\lam -1/2}\, C^\lam_m \left (\frac{t}{r} \right )\, f (t)  \, dt,\ee
\be \label {4gt6a1le}  (\stackrel{*}{\G}\!{}_{\!+}^{\!\lam,m} f)(r)
=\frac{1}{c_{\lam,m}}\,\intl_0^r (r^2 - t^2)^{\lam -1/2}\, C^\lam_m \left (\frac{r}{t} \right )\, f (t) \, t\,dt,\ee
$c_{\lam,m}$ being defined by (\ref{gynko}). In the case $\lam = 0$ we denote
\be \label {4gt6atle}
(\T_{+}^m f)(r)=\frac{2}{\sqrt{\pi}}\,\intl_0^r (r^2 - t^2)^{-1/2}\, T_m \left (\frac{t}{r} \right )\, f (t)  \,  dt,\ee
\be \label {4gt6a1tle}  (\stackrel{*}{\T}\!{}_{\!+}^m f)(r)
=\frac{2}{\sqrt{\pi}}\,\intl_0^r (r^2 - t^2)^{-1/2}\, T_m \left (\frac{r}{t} \right )\, f (t) \, t\,  dt.\ee
The left-sided  integrals are expressed through the right-sided ones by the formulas
\bea &&\label {4gt6a1ldr}
(\G^{\lam, m}_{+} f)(r)=\frac{1}{r}\, (\G^{\lam, m}_{-} f_1)\left(\frac{1}{r}\right), \qquad f_1(t)=\frac{1}{t^{2\lam +2}} \, f\left(\frac{1}{t}\right);\qquad \\
&& \label {4gt6a1ldz}
(\stackrel{*}{\G}\!{}_{\!+}^{\!\lam,m} f)(r)=r^{2\lam}(\stackrel{*}{\G}\!{}_{\!-}^{\!\lam,m} f_2)\left(\frac{1}{r}\right), \qquad f_2(t)=\frac{1}{t} \, f\left(\frac{1}{t}\right).\eea

These formulas combined with Proposition \ref{lo8xw} give the following statement.

 \begin{proposition} \label{lo8xwADD}  Let  $a>0$,  $\lam >-1/2$.  The integrals (\ref{4gt6ale})-(\ref{4gt6a1tle})
 are absolutely convergent  for almost all $r<a$ under the following conditions.

{\rm (i)} For (\ref{4gt6ale}), (\ref{4gt6atle}):
 \be\label {026g094t}   \intl_0^a t^\eta f(t)\, dt <\infty,\qquad \eta=\left \{\begin{array} {ll} 0 &\mbox{if $m$ is even,}\\
1 &\mbox{if $m$ is odd,}\end{array}\right.\ee

{\rm (ii)} For (\ref{4gt6a1le}), (\ref{4gt6a1tle}):
 \be\label {026gt}
  \intl_0^a  t^{1-m} f(t)\, dt <\infty.\ee
 \end{proposition}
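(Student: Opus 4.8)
The plan is to reduce everything to the already-established Proposition \ref{lo8xw} by means of the reciprocal-substitution identities (\ref{4gt6a1ldr}) and (\ref{4gt6a1ldz}), which express each left-sided integral as a right-sided integral of a transformed function evaluated at the reciprocal point. Since $r<a$ corresponds to $1/r>1/a$, the assertion ``finite for almost all $r<a$'' for a left-sided operator translates directly into ``finite for almost all $s>1/a$'' for the associated right-sided operator, and the latter is governed by the explicit weight conditions of Proposition \ref{lo8xw} (with the threshold $a$ there replaced by $1/a$). The entire argument is thus a matter of tracking how the weights $t^{2\lam-\eta}$ and $t^{m-2}$ transform under $t=1/s$.

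For part (i) I would start from (\ref{4gt6a1ldr}), which gives $(\G^{\lam,m}_+ f)(r)=r^{-1}(\G^{\lam,m}_- f_1)(1/r)$ with $f_1(t)=t^{-2\lam-2}f(1/t)$. By Proposition \ref{lo8xw}(i) the right-sided integral $(\G^{\lam,m}_- f_1)(s)$ is finite for almost all $s>1/a$ provided $\int_{1/a}^\infty |f_1(s)|\,s^{2\lam-\eta}\,ds<\infty$. Inserting $|f_1(s)|=s^{-2\lam-2}|f(1/s)|$ collapses the weight to $s^{-2-\eta}$, and the change of variable $t=1/s$ (so $ds=-dt/t^2$, and $s\in(1/a,\infty)$ becomes $t\in(0,a)$) converts the integral into $\int_0^a t^\eta |f(t)|\,dt$, which is precisely condition (\ref{026g094t}). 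Its finiteness is then inherited by $(\G^{\lam,m}_- f_1)(s)$ for almost all $s>1/a$, hence by $(\G^{\lam,m}_+ f)(r)$ for almost all $r<a$.

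For part (ii) I would run the identical scheme using (\ref{4gt6a1ldz}), namely $(\stackrel{*}{\G}^{\lam,m}_+ f)(r)=r^{2\lam}(\stackrel{*}{\G}^{\lam,m}_- f_2)(1/r)$ with $f_2(t)=t^{-1}f(1/t)$, now invoking Proposition \ref{lo8xw}(ii) with weight $s^{m-2}$. Substituting $|f_2(s)|=s^{-1}|f(1/s)|$ and applying $t=1/s$ turns the hypothesis into $\int_0^a t^{1-m}|f(t)|\,dt<\infty$, which is (\ref{026gt}). The Chebyshev cases for $\T_+^m$ and $\stackrel{*}{\T}_+^m$ follow verbatim by setting $\lam=0$ and appealing to the $\lam=0$ part of Proposition \ref{lo8xw}. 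There is no genuine obstacle in this argument; the single point demanding care is the exact bookkeeping of the powers of $t$ through the reciprocal substitution, so as to confirm that the transformed weights reproduce (\ref{026g094t}) and (\ref{026gt}) exactly and not merely up to a harmless bounded factor.
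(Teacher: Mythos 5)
Your proposal is correct and is essentially the paper's own argument: the paper derives Proposition \ref{lo8xwADD} precisely by combining the reduction formulas (\ref{4gt6a1ldr})--(\ref{4gt6a1ldz}) with Proposition \ref{lo8xw}, and your bookkeeping of the weights under $t=1/s$ (yielding $t^{\eta}$ and $t^{1-m}$ exactly) checks out.
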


The conditions (\ref{026g094t}) and (\ref{026gt}) are sharp; see Remark \ref{sharp1}.
The following statement can be derived from  Lemma \ref{89srg}, using (\ref{4gt6a1ldr}),  or  proved directly, using (\ref{89zse1}).

 \begin {lemma} \label {marti}
If $m=0,1$, then $\G^{\lam, m}_{+}$ is injective on $\bbr_+$  in the class of functions satisfying (\ref{026g094t}) for all $a>0$.
 If $m\ge 2$, then $\G^{\lam, m}_{+}$ is non-injective in this class of functions.
Specifically, let  $f_k (t)=t^k$,  where $k$ is a nonnegative  integer such that $ m-k=2,4,  \ldots$.
 Then $(\G^{\lam, m}_{+} f_k)(t)=0$ for all $t>0$.
\end {lemma}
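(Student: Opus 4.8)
The plan is to prove Lemma \ref{marti} by transferring the corresponding facts about the right-sided operator $\G^{\lam, m}_{-}$, which were already established in Lemma \ref{89srg}, through the reciprocal substitution formula (\ref{4gt6a1ldr}). This is the natural route because the excerpt explicitly invites it (``can be derived from Lemma \ref{89srg}, using (\ref{4gt6a1ldr})''), and because the entire left-sided theory was set up in precise parallel to the right-sided one. First I would recall that (\ref{4gt6a1ldr}) reads
\[
(\G^{\lam, m}_{+} f)(r)=\frac{1}{r}\, (\G^{\lam, m}_{-} f_1)\left(\frac{1}{r}\right), \qquad f_1(t)=\frac{1}{t^{2\lam +2}}\, f\left(\frac{1}{t}\right),
\]
so that $\G^{\lam, m}_{+} f\equiv 0$ on $\bbr_+$ if and only if $\G^{\lam, m}_{-} f_1\equiv 0$ on $\bbr_+$. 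The key observation is that the reciprocal map $f\mapsto f_1$ is an involutive bijection of $\bbr_+$-functions that carries the admissibility class (\ref{026g094t}) for the left-sided operator exactly onto the admissibility class (\ref{for10zg}) for the right-sided operator.

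For the injectivity claim ($m=0,1$), I would argue as follows. Suppose $f$ satisfies (\ref{026g094t}) for all $a>0$ and $\G^{\lam, m}_{+} f=0$. A change of variable $t\mapsto 1/t$ in the integral (\ref{026g094t}) shows that $f_1$ satisfies (\ref{for10zg}) for all $a>0$; concretely, $\int_0^a t^\eta |f(t)|\,dt<\infty$ becomes $\int_{1/a}^\infty s^{2\lam-\eta}|f_1(s)|\,ds<\infty$ after substituting $t=1/s$ and using $f_1(s)=s^{-2\lam-2}f(1/s)$. By (\ref{4gt6a1ldr}), $\G^{\lam, m}_{-} f_1=0$, and Lemma \ref{89srg} then forces $f_1=0$ a.e., hence $f=0$ a.e. For the non-injectivity claim ($m\ge 2$), I would simply exhibit the counterexample directly. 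Taking $f_k(t)=t^k$ with $m-k$ a positive even integer, its reciprocal image is $(f_k)_1(t)=t^{-2\lam-2}\cdot t^{-k}=t^{-2\lam-k-2}$, which is precisely the null function identified in Lemma \ref{89srg}. Since $\G^{\lam, m}_{-}(f_k)_1=0$ by that lemma, formula (\ref{4gt6a1ldr}) gives $\G^{\lam, m}_{+} f_k=0$ for all $t>0$.

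Alternatively, and perhaps cleaner for the reader, I would give the direct computation suggested by the parenthetical remark, verifying the null-space claim via (\ref{89zse1}). Substituting $f_k(t)=t^k$ into (\ref{4gt6ale}) and changing variables $t=ru$ yields
\[
(\G^{\lam, m}_{+} f_k)(r)=\frac{r^{k+1}}{c_{\lam,m}}\intl_0^1 u^{k}(1-u^2)^{\lam-1/2}\,C^\lam_m(u)\,du,
\]
and this integral equals $\a_m(k+1)$ in the notation of (\ref{89zse}), so by (\ref{89zse55}) it is proportional to $1/\Gam((k-m+2)/2)$, which vanishes exactly when $k-m+2\in\{0,-2,-4,\dots\}$, i.e. when $m-k$ is a positive even integer. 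I expect the proof to be essentially routine; the only point requiring genuine care is the verification that the reciprocal substitution maps the admissibility class (\ref{026g094t}) onto (\ref{for10zg}) so that Lemma \ref{89srg} applies without circularity. I would therefore present the change-of-variable computation explicitly and then invoke Lemma \ref{89srg} and (\ref{4gt6a1ldr}) to conclude.
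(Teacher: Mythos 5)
Your proposal is correct and follows exactly the two routes the paper itself indicates (the paper gives no further detail beyond the sentence preceding the lemma): transfer from Lemma \ref{89srg} via the reciprocal formula (\ref{4gt6a1ldr}), with the change-of-variables check that (\ref{026g094t}) maps onto (\ref{for10zg}), or direct evaluation of the moment integral via (\ref{89zse})--(\ref{89zse55}). The only blemish is the prefactor in your direct computation, which should be $r^{k}$ rather than $r^{k+1}$ (the powers of $r$ contribute $-2\lam+(2\lam-1)+k+1=k$, consistent with the paper's own computation of $(\G^{\lam,m}_{+}[t^k])(r)=\a_{k,m}r^k$ in the proof of Theorem \ref{zaehle4}); this does not affect the vanishing conclusion.
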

Similarly, Lemma \ref{89n6g}  yields the following.

\begin{lemma} \label{89n6gle} Let $m\ge 2$, $\lam >-1/2$. Suppose that $f$ satisfies (\ref{026gt}) for all $a>0$.
  If $\lam \neq 0$, then for almost all $t>0$,
\be\label{8vcmkle}
(\stackrel{*}{\G}\!{}_{\!+}^{\!\lam,m}  \G^{\lam, m}_{+} f)(t)=2^{2\lam +1}(I_+^{2\lam+1} f)(t).\ee
In the case $\lam=0$ we similarly have
\be\label{8vcmktle}
(\stackrel{*}{\T}\!{}_{\!+}^m  \T_{+}^m f)(t)=2(I_+^{1} f)(t).\ee
\end{lemma}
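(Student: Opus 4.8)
The plan is to deduce the identities \eqref{8vcmkle} and \eqref{8vcmktle} from their right-sided counterparts \eqref{8vcmk} and \eqref{8vcmkt} in Lemma~\ref{89n6g} by means of the reciprocal substitutions \eqref{4gt6a1ldr}, \eqref{4gt6a1ldz}, and \eqref{zhuhjtf}, thereby avoiding a repetition of the Mellin-transform computation. All three substitutions are induced by the diffeomorphism $t\mapsto 1/t$ of $\bbr_+$, which preserves null sets, so the almost-everywhere character of the conclusion is inherited from Lemma~\ref{89n6g}.

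First I would verify that the hypothesis propagates correctly. With $f_1(t)=t^{-2\lam-2}f(1/t)$ as in \eqref{4gt6a1ldr}, the substitution $s=1/t$ transforms the tail condition $\intl_a^\infty |f_1(t)|\,t^{2\lam+m-1}\,dt$ of \eqref{for10zgn} into $\intl_0^{1/a}|f(s)|\,s^{1-m}\,ds$, which is exactly the near-origin condition \eqref{026gt} assumed here. Hence $f_1$ satisfies the hypotheses of Lemma~\ref{89n6g} for every $a>0$, and \eqref{8vcmk} applies to $f_1$.

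Next I would compose the operators. Putting $g=\G^{\lam,m}_+ f$, formula \eqref{4gt6a1ldr} gives $g(1/t)=t\,(\G^{\lam,m}_- f_1)(t)$, so the auxiliary function $t^{-1}g(1/t)$ entering \eqref{4gt6a1ldz} is simply $\G^{\lam,m}_- f_1$. Feeding this into \eqref{4gt6a1ldz} and then invoking \eqref{8vcmk} yields
\[
(\stackrel{*}{\G}\!{}_{\!+}^{\!\lam,m}\G^{\lam,m}_+ f)(r)=r^{2\lam}\,(\stackrel{*}{\G}\!{}_{\!-}^{\!\lam,m}\G^{\lam,m}_- f_1)(1/r)=2^{2\lam+1}\,r^{2\lam}\,(I_-^{2\lam+1}f_1)(1/r).
\]

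Finally I would pass from the right-sided Riemann--Liouville integral back to the left-sided one via \eqref{zhuhjtf} with $\a=2\lam+1$. A short check shows that the auxiliary function $t^{-2\lam-2}f_1(1/t)$ appearing there collapses to $f$ itself, the powers $t^{-2\lam-2}$ and $t^{2\lam+2}$ cancelling, so that $(I_-^{2\lam+1}f_1)(1/r)=r^{-2\lam}(I_+^{2\lam+1}f)(r)$. Substituting this into the display, the factors $r^{2\lam}$ and $r^{-2\lam}$ cancel and \eqref{8vcmkle} follows. The Chebyshev identity \eqref{8vcmktle} is obtained verbatim, with \eqref{8vcmkt} in place of \eqref{8vcmk} and $\a=1$. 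I expect the only delicate point to be the bookkeeping of the powers of $t$ through the two nested reciprocal substitutions; once one confirms that every spurious factor cancels --- as the exact matching of hypotheses in the first step already foreshadows --- the argument is purely formal.
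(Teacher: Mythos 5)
Your proposal is correct and matches the paper's intended argument: the paper derives Lemma \ref{89n6gle} from Lemma \ref{89n6g} precisely via the reciprocal-substitution formulas (\ref{4gt6a1ldr}), (\ref{4gt6a1ldz}), and (\ref{zhuhjtf}), and your bookkeeping of the powers of $t$ (in particular the identification of $t^{-1}g(1/t)$ with $\G^{\lam,m}_{-}f_1$ and the collapse of $t^{-2\lam-2}f_1(1/t)$ to $f$) checks out, as does the translation of (\ref{for10zgn}) for $f_1$ into (\ref{026gt}) for $f$.
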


\begin {corollary}\label {mlpzxle} Suppose that $\lam >-1/2$ and let $f$ satisfy (\ref{026gt})  for all $a>0$.
 Then $f(t)$ can be uniquely reconstructed for almost all $t>0$  from the Gegenbauer-Chebyshev integrals
 by the formulas
\be\label{8vcmksle}
f(t)=2^{-2\lam -1}  (\Cal D^{2\lam +1}_{+}
\stackrel{*}{\G}\!{}_{\!+}^{\!\lam,m}  g)(t), \qquad g=\G^{\lam, m}_{+} f, \ee
\be\label{8vcmkstle}
f(t)=\frac{1}{2}\,\frac{d}{dt}\,(\stackrel{*}{\T}\!{}_{\!+}^m  g)(t),\qquad g=\T_{+}^m f,\ee
where $\Cal D^{2\lam +1}_{+}$ stands for the corresponding Riemann-Liouville fractional derivative.
\end{corollary}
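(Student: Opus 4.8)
The plan is to read off both inversion formulas directly from the composition identities of Lemma~\ref{89n6gle}, so that the whole task reduces to inverting a single Riemann--Liouville integral. Consider first (\ref{8vcmksle}). Put $g=\G^{\lam, m}_{+} f$ and feed it into the starred operator; by Lemma~\ref{89n6gle} we obtain $(\stackrel{*}{\G}\!{}_{\!+}^{\!\lam,m} g)(t)=2^{2\lam +1}(I_+^{2\lam+1} f)(t)$ for almost all $t>0$. Multiplying by $2^{-2\lam-1}$ and applying the fractional derivative $\Cal D^{2\lam+1}_{+}$, which by definition (see Section~\ref{Fractional}) is a left inverse of $I_+^{2\lam+1}$, gives $2^{-2\lam-1}(\Cal D^{2\lam+1}_{+}\stackrel{*}{\G}\!{}_{\!+}^{\!\lam,m} g)(t)=(\Cal D^{2\lam+1}_{+}I_+^{2\lam+1}f)(t)=f(t)$ for almost all $t$, which is exactly (\ref{8vcmksle}).

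For the Chebyshev case I would argue identically, using (\ref{8vcmktle}) in place of (\ref{8vcmkle}) and observing that here the exponent is $2\lam+1=1$. Thus with $g=\T_{+}^m f$ we have $(\stackrel{*}{\T}\!{}_{\!+}^m g)(t)=2(I_+^{1}f)(t)=2\intl_0^t f(s)\,ds$, so that $\tfrac12\,\tfrac{d}{dt}(\stackrel{*}{\T}\!{}_{\!+}^m g)(t)=\tfrac{d}{dt}\intl_0^t f(s)\,ds=f(t)$ for almost all $t$ by the Lebesgue differentiation theorem. This is (\ref{8vcmkstle}). Uniqueness in both formulas is automatic, since the right-hand sides are determined by $g$ alone.

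The step requiring genuine care --- and the one I expect to be the main obstacle --- is the justification of the left-inverse identity $\Cal D^{2\lam+1}_{+}I_+^{2\lam+1}f=f$ for the function class at hand, in line with the paper's standing warning (Section~\ref{Fractional}) that this equality must be verified at each occurrence. The hypothesis (\ref{026gt}) is exactly what is needed: for $m\ge 2$ one has $t^{1-m}\ge 1$ on $(0,1)$, so $\intl_0^1 |f(t)|\,dt\le\intl_0^1 t^{1-m}|f(t)|\,dt<\infty$ and $f$ is integrable near the origin. Hence $I_+^{2\lam+1}f$ is well defined, and the standard Riemann--Liouville inversion (computed through (\ref{frr+}); cf.~\cite{SKM}) applies to recover $f$. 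No condition at infinity is needed, because both $I_+^{2\lam+1}$ and $\Cal D^{2\lam+1}_{+}$ are unilateral left-sided operators, so only the local behaviour of $f$ near $0$ enters the argument.
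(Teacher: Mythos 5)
Your proof is correct and follows exactly the route the paper intends: the corollary is stated as an immediate consequence of Lemma \ref{89n6gle}, namely composing with the starred operator to reduce to $2^{2\lam+1}I_+^{2\lam+1}f$ and then applying the left inverse $\Cal D^{2\lam+1}_{+}$ (respectively $\tfrac{d}{dt}$ when $2\lam+1=1$). Your additional check that (\ref{026gt}) forces $f\in L^1_{loc}$ near the origin (for $m\ge 2$), so that the left-inverse identity $\Cal D^{2\lam+1}_{+}I_+^{2\lam+1}f=f$ is legitimate, is precisely the justification the paper leaves implicit.
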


{\bf Open Problem.} Are there  any other functions in the kernel of  $\G^{\lam, m}_{\pm} $, rather than those  indicated by Lemmas  \ref{marti} and \ref{89srg}? It is assumed that the action of these operators is considered on  functions satisfying the conditions of Propositions \ref{lo8xwADD} and \ref{lo8xw}, respectively.

\section {Radon Transforms and Spherical  Harmonics}\label {ndspher}

We fix a real-valued orthonormal basis $\{Y_{m,\mu}\}$ of spherical harmonics in $L^2(S^{n-1})$; see, e.g., \cite{Mu}. Here
 $m\in \bbz_+$ and $ \mu=1,2, \ldots d_n (m)$, where
\be\label{kWSQRT} d_n(m) =(n+2m-2)\,
\frac{(n+m-3)!}{m! \, (n-2)!}\ee
is the dimension of the subspace of spherical harmonics of degree $m$.

The following Funk-Hecke Theorem is well-known in analysis on the sphere; see,  e.g., \cite[p. 18]{H08},   \cite[p. 117]  {See}.

 \begin{theorem}\label {sTTT1}   Let $h (s) (1-s^2)^{(n-3)/2}\in L^1(-1,1)$. Then  for every spherical harmonic $Y_m$ of degree $m$ and every $\th\in\sn$,
\be \label {fhf}
\intl_{S^{n-1}} h(\theta \cdot \xi)\,Y_m(\xi)\, d\xi=\lam_m \,Y_m (\theta) \qquad \text{(the Funk-Hecke formula)},\ee
where
\be \label {shap232S}
\lam_m=\sig_{n-2}\intl_{-1}^1 h(s)\,P_m(s)\,
(1-s^2)^{(n-3)/2}\, ds,\ee
\be\label {shap30} P_m(s)=
\begin{cases}
T_m (s) \quad &\mbox{if} \quad n=2,\\
{}\\ \displaystyle{\frac{m!\, (n-3)!}{ (m+n-3)!}}\, \,C_m^{n/2 -1}
(s) \quad &\mbox{if} \quad n\ge 3.
\end{cases}
\ee
\end{theorem}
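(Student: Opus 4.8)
**The plan is to prove the Funk-Hecke formula by reducing it to a known orthogonality/integral identity for the relevant one-variable polynomials**, exploiting the rotational symmetry of the sphere. First I would fix $\theta \in \sn$ and observe that the left-hand side of (\ref{fhf}), viewed as a function of $\theta$, defines a linear operator $Y_m \mapsto \int_{\sn} h(\theta\cdot\xi)\,Y_m(\xi)\,d\xi$ that commutes with the action of $O(n)$. Since the space of spherical harmonics of degree $m$ is an irreducible representation of $O(n)$, Schur's lemma forces this operator to act as a scalar $\lam_m$ on that space. This immediately yields the form $\lam_m\,Y_m(\theta)$ in (\ref{fhf}); the only remaining task is to identify the constant $\lam_m$ by evaluating both sides on a conveniently chosen harmonic at a conveniently chosen point.

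To compute $\lam_m$ explicitly, I would choose $\theta=e_n$ (the north pole) and pick a \emph{zonal} harmonic, i.e.\ $Y_m(\xi)=P_m(\xi\cdot e_n)$ with $P_m$ as in (\ref{shap30}), which is the unique (up to scalar) harmonic invariant under rotations fixing $e_n$. Since $P_m(1)=1$ with the normalization in (\ref{shap30}), the right-hand side of (\ref{fhf}) becomes simply $\lam_m$. The left-hand side is then $\int_{\sn} h(\xi_n)\,P_m(\xi_n)\,d\xi$, and slicing the sphere by the value $s=\xi_n=\cos\vp$ — exactly the slice integration of Lemma \ref{viat} — reduces this to the one-dimensional integral $\sig_{n-2}\int_{-1}^1 h(s)\,P_m(s)\,(1-s^2)^{(n-3)/2}\,ds$, which is precisely (\ref{shap232S}). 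The integrability hypothesis $h(s)(1-s^2)^{(n-3)/2}\in L^1(-1,1)$, together with the polynomial boundedness (\ref{kioxsru}) of $C_m^{n/2-1}$ and $T_m$ on $[-1,1]$, guarantees that all the integrals converge absolutely, so Fubini applies in the slicing step.

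\textbf{The main obstacle} is justifying that the integral operator really acts as a scalar and that $P_m(\theta\cdot e_n)$ is the correct zonal harmonic with the stated normalization; equivalently, if one prefers to avoid representation-theoretic language, the substantive point is the \emph{addition formula} for spherical harmonics, which expresses $\sum_\mu Y_{m,\mu}(\xi)Y_{m,\mu}(\theta)$ as a constant multiple of $P_m(\xi\cdot\theta)$. I would either invoke this addition formula directly from a standard reference (e.g.\ Müller \cite{Mu}) or phrase the argument via irreducibility of the $O(n)$-representations. Given that the statement is attributed to classical analysis on the sphere and the paper cites \cite{H08} and \cite{See}, the cleanest route is to state that (\ref{fhf}) follows from the addition theorem and the invariance of $d\xi$, and then carry out only the explicit slice computation that pins down $\lam_m$ as in (\ref{shap232S})--(\ref{shap30}). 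This keeps the proof short and matches the paper's stated intent of merely reviewing a well-known result.
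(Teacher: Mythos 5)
The paper does not prove this theorem at all: it is stated as a classical fact with pointers to \cite[p.~18]{H08} and \cite[p.~117]{See}, so there is no in-paper argument to compare against. Your sketch is the standard proof and is essentially correct. The two ingredients you isolate are exactly the right ones: (a) the operator $Y\mapsto\int_{S^{n-1}}h(\theta\cdot\xi)Y(\xi)\,d\xi$ commutes with $O(n)$ and hence acts as a scalar on the irreducible space $\mathcal H_m$ (using $O(n)$ rather than $SO(n)$ correctly covers $n=2$); and (b) the scalar is pinned down by testing on the zonal harmonic at $\theta=e_n$, where the slice integration of Lemma \ref{viat} turns the spherical integral into $\sigma_{n-2}\int_{-1}^1 h(s)P_m(s)(1-s^2)^{(n-3)/2}\,ds$. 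The normalization $P_m(1)=1$ that you rely on is indeed correct, since $C_m^{n/2-1}(1)=(m+n-3)!/(m!\,(n-3)!)$ cancels the prefactor in (\ref{shap30}). Two small points worth making explicit if you were to write this out in full: first, Schur's lemma applies only after you check that $T_h$ maps $\mathcal H_m$ into $L^2(S^{n-1})$ and that the projection of $T_hY_m$ onto $\mathcal H_{m'}$ vanishes for $m'\neq m$ (inequivalence of the irreducibles), so that the scalar relation holds as an identity of continuous functions and not just up to an orthogonal complement; second, the alternative route through the addition formula requires some care for merely $L^1$ weights $h$, whereas the invariance argument needs only the stated integrability hypothesis, which is why it is the cleaner choice here.
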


Now let us consider the Radon transform and its dual; see Section \ref{90kiy3}.
Since these transforms commute with rotations, they  can be diagonalized (at least formally) in terms of  spherical harmonic expansions. Specifically, if
\be\label{Xuz}f (x) \sim \sum_{m,\mu} f_{m,\mu} (r)\,Y_{m,\mu} (\theta), \qquad  r=|x|\neq 0, \quad \theta=x/r,\ee then
 for $\vp (\theta,t)=(Rf)(\theta,t)$  we have
\be\label{Xuz1} \vp (\theta,t)\sim \sum_{m,\mu} \vp_{m,\mu} (t)\,Y_{m,\mu} (\theta),\ee
where $\vp_{m,\mu}$ expresses through $f_{m,\mu}$ for each pair $m,\mu$.
Similarly, if $\vp$ is a function on $Z_n=S^{n-1} \times \bbr$,   then, for $f=R^*\vp$, (\ref{Xuz1}) implies (\ref{Xuz}) with $f_{m,\mu}$ determined by $\vp_{m,\mu}$.

We will be using the same notation $\frS_m$ for the spaces of functions of the form $f(x)=u(|x|)\,Y_m (x/|x|)$ and $\vp(\theta,t)=v(t)\,Y_m (\theta)$, where $Y_m$ is a spherical harmonic of degree $m$.

\subsection{Action  on the Spaces $\frS_m$}\label {nic Exp}
${}$
\vskip 0.2truecm

The formulas in this section are not new (at least, for smooth rapidly decreasing functions); cf. \cite{Dea, Lud, Na1}. We present them in our notation and give an independent proof under minimal assumptions related to the existence of the corresponding integrals.

Let  $\lam=(n-2)/2$, $f(x)=u(|x|)\,Y_m (x/|x|)$. For $n\ge 3$ we define
 \be\label {885frqi}
 v(t)=\frac{\pi^{\lam +1/2}}{c_{\lam,m}}\,\intl_{|t|}^\infty (r^2 - t^2)^{\lam -1/2}\, C^\lam_m \left (\frac{t}{r} \right )\, u (r)  \, r\, dr, \ee
 \be\label{gtawe}
c_{\lam,m}=\frac{ \Gam (2\lam+m)\,\Gam (\lam+1/2)}{2 m!\, \Gam (2\lam)}=\frac{(n+m-3)!\, \Gam((n-1)/2)}{2m!\,(n-3)! }.\ee
Similarly, for $n=2$ we set
  \be \label {4gt6atqQ}
v(t)=2\intl_{|t|}^\infty (r^2 - t^2)^{-1/2}\, T_m \left (\frac{t}{r} \right )\, u (r)  \, r\, dr.\ee

\begin{lemma} \label{ppo9j} Let  $f(x)=u(|x|)\,Y_m (x/|x|)$, where
\be\label{oollazs} \intl_a^\infty |u(r)| \,r^{2\lam} dr <\infty\quad \forall a>0.\ee
Then $(Rf)(\theta,t)$ is finite for all $\theta \!\in \!\sn$ and almost all $t\!\in \!\bbr$. Furthermore,
\be\label{12oollazs}(Rf)(\theta,t)=v(t)\, Y_m (\theta).\ee
 The function $v(t)$  has the following properties:

\vskip 0.2truecm

{\rm (a)}   $\quad v(- t)=(-1)^m v(t)$.

\vskip 0.2truecm

{\rm (b)}  If $t>0$, then $v$    is represented by the Gegenbauer-Chebyshev   integrals (\ref{4gt6a}) and (\ref{4gt6at}). Specifically,
\be\label {poxe}
 v(t)=  \pi^{\lam +1/2}  (\G^{\lam, m}_{-} u)(t) \quad \mbox{\rm and } \quad  v(t)= \pi^{1/2}  (\T_{-}^m u)(t)\ee
for  $n\ge 3$  and $n=2$, respectively.

\vskip 0.2truecm

{\rm (c)}  For any nonnegative integer $j<m$,
\be\label {poxe1}
\intl_{-\infty}^\infty t^j v(t)\, dt=0 \quad \mbox{\rm provided that} \quad  \intl_0^\infty |u(r)|\, r^{j+2\lam +1}dr<\infty.\ee
\end{lemma}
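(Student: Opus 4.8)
The plan is to compute $(Rf)(\theta,t)$ directly from the slice representation (\ref{rtra1}) and to recognize the outcome as the Gegenbauer--Chebyshev integral defining $v$. First I would pass to polar coordinates in the hyperplane $\theta^{\perp}$: writing $y=\rho\omega$ with $\omega\in S^{n-2}\subset\theta^{\perp}$ and substituting $r=\sqrt{t^2+\rho^2}$ (so that $\rho^{n-2}d\rho=(r^2-t^2)^{(n-3)/2}r\,dr$ and $|t\theta+y|=r$), the slice integral becomes
$$(Rf)(\theta,t)=\intl_{|t|}^\infty (r^2-t^2)^{\lam-1/2}\,u(r)\,r\Big[\intl_{S^{n-2}} Y_m\Big(\tfrac{t}{r}\theta+\tfrac{\sqrt{r^2-t^2}}{r}\,\omega\Big)\,d\omega\Big]dr,$$
using $\lam=(n-2)/2$, hence $(n-3)/2=\lam-1/2$. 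To justify this rearrangement and the a.e.\ finiteness simultaneously, I would replace $Y_m$ by $\|Y_m\|_\infty$ and $f$ by $|f|$: the inner integral is then dominated by $\sig_{n-2}$, so the whole expression is controlled by $c\intl_{|t|}^\infty(r^2-t^2)^{\lam-1/2}|u(r)|\,r\,dr$, which is a constant multiple of the Erd\'elyi--Kober integral $I^{\lam+1/2}_{-,2}|u|$ at $|t|$. By Lemma \ref{lifa2} with $\alpha=\lam+1/2$, the hypothesis (\ref{oollazs}) makes this finite for a.e.\ $t$, which both gives the a.e.\ finiteness of $(Rf)(\theta,t)$ for every $\theta$ and legitimizes Fubini in the slice computation.

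The crux is the evaluation of the inner average over the subsphere $S^{n-2}$. I would invoke the classical Gegenbauer (zonal) addition formula, which states that averaging a degree-$m$ spherical harmonic over the subsphere reproduces it up to a Gegenbauer factor:
$$\frac{1}{\sig_{n-2}}\intl_{S^{n-2}} Y_m(\cos\psi\,\theta+\sin\psi\,\omega)\,d\omega=\frac{C^\lam_m(\cos\psi)}{C^\lam_m(1)}\,Y_m(\theta),\qquad \lam=\tfrac{n-2}{2}.$$
This follows from the rotation-equivariance of the averaging operator on the irreducible degree-$m$ subspace, the scalar being identified by testing on a zonal harmonic. With $\cos\psi=t/r$ the bracket becomes $\sig_{n-2}\,C^\lam_m(t/r)\,Y_m(\theta)/C^\lam_m(1)$, and a routine reduction using $\sig_{n-2}=2\pi^{\lam+1/2}/\Gam(\lam+1/2)$, $C^\lam_m(1)=\Gam(2\lam+m)/(\Gam(2\lam)\,m!)$ and (\ref{gtawe}) yields $\sig_{n-2}/C^\lam_m(1)=\pi^{\lam+1/2}/c_{\lam,m}$. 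This is exactly (\ref{885frqi}), proving (\ref{12oollazs}) for $n\ge3$. For $n=2$ the subsphere is $S^0=\{\pm\theta^\perp\}$, so the average is a two-term sum; placing the unit vectors at angles $\alpha\pm\psi$ gives $2\cos(m\alpha)\cos(m\psi)=2\,T_m(\cos\psi)\,Y_m(\theta)$, which reproduces (\ref{4gt6atqQ}). This is the step I expect to carry the real content, chiefly in stating the addition formula correctly and matching the constants.

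The three properties are then short. For (b): when $t>0$ we have $|t|=t$, so (\ref{885frqi}) and (\ref{4gt6atqQ}) coincide verbatim with the definitions (\ref{4gt6a}) and (\ref{4gt6at}) of $\G^{\lam, m}_{-}$ and $\T_{-}^m$ up to the stated factors, giving (\ref{poxe}). For (a): the limit $|t|$ is even in $t$ and $C^\lam_m(-x)=(-1)^m C^\lam_m(x)$ (likewise $T_m$), so replacing $t$ by $-t$ multiplies $v$ by $(-1)^m$; equivalently this follows from $(Rf)(-\theta,-t)=(Rf)(\theta,t)$ together with $Y_m(-\theta)=(-1)^m Y_m(\theta)$.

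Finally, for (c) I would evaluate $\intl_{-\infty}^\infty t^j v(t)\,dt$ by Fubini. Swapping the order over the region $\{|t|\le r\}$ and substituting $t=rs$ ($s\in[-1,1]$) in the inner $t$-integral, the absolute-value double integral factors as $c\intl_0^\infty |u(r)|\,r^{j+2\lam+1}dr\cdot\intl_{-1}^1|s|^j(1-s^2)^{\lam-1/2}|C^\lam_m(s)|\,ds$, so the hypothesis $\intl_0^\infty|u(r)|\,r^{j+2\lam+1}dr<\infty$ makes it finite and licenses the interchange. Performing the substitution in the signed integral, the $r$- and $s$-factors separate and the $s$-factor is $\intl_{-1}^1 s^j(1-s^2)^{\lam-1/2}C^\lam_m(s)\,ds$; since $s^j$ is a polynomial of degree $j<m$, this vanishes by orthogonality of $C^\lam_m$ (resp.\ $T_m$ for $n=2$) against the weight $(1-s^2)^{\lam-1/2}$. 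Hence $\intl_{-\infty}^\infty t^j v(t)\,dt=0$, which is (\ref{poxe1}). (When $j$ and $m$ have opposite parity this is immediate from part (a), since then $t^j v(t)$ is odd.)
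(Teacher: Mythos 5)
Your proof is correct, and the constants check out: $\sig_{n-2}/C^\lam_m(1)=\pi^{\lam+1/2}/c_{\lam,m}$ does reduce (\ref{885frqi}) to your bracket formula, the domination by $I^{\lam+1/2}_{-,2}|u|$ together with Lemma \ref{lifa2} is exactly the right way to get a.e.\ finiteness (uniformly in $\theta$) from (\ref{oollazs}), and parts (a)--(c) are handled as in the paper. The route to (\ref{12oollazs}) is, however, genuinely different. The paper does not compute the slice integral (\ref{rtra1}) in polar coordinates on $\theta^\perp$; instead it first rewrites $Rf$ as the weighted hemisphere integral (\ref{hpplz3}) (a projective change of variables resting on Lemma \ref{viat}) and then applies the Funk--Hecke formula of Theorem \ref{sTTT1} with the kernel $h(s)=s^{-n}u(t/s)$, $s>0$, which produces $v(t)=\sig_{n-2}\int_0^1(1-s^2)^{(n-3)/2}P_m(s)\,u(t/s)\,s^{-n}ds$ and hence (\ref{885frqi}) after $s=t/r$. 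You instead stay on the hyperplane and invoke the subsphere mean-value (Gegenbauer addition) identity
\[
\frac{1}{\sig_{n-2}}\intl_{S^{n-2}}Y_m(\theta\cos\psi+\om\sin\psi)\,d\om=\frac{C^\lam_m(\cos\psi)}{C^\lam_m(1)}\,Y_m(\theta),
\]
justified by Schur's lemma on the irreducible degree-$m$ subspace. The trade-off: the paper's argument uses only tools it has already set up (Lemma \ref{viat}, formula (\ref{hpplz3}), Theorem \ref{sTTT1}) and is uniform with the treatment of $R^*$ elsewhere in Section 4, whereas yours is more direct geometrically but imports a classical identity not stated in the paper; if you use your version you should either prove that identity (your Schur-lemma sketch plus evaluation on a zonal harmonic suffices) or cite it, and note that it is in fact equivalent to the Funk--Hecke theorem. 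Your $n=2$ two-point computation and the orthogonality argument for (c) match the paper's in substance.
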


\begin{proof} Let first $t>0$. By (\ref{hpplz3}),
\[(Rf) (\theta, t)\!=\!t^{n-1}\!\!\!\intl_{\om\cdot \theta>0} \!\!\!f\left (\frac{t\om}{\om\cdot \theta} \right )\,\frac{d\om}{(\om\cdot \theta)^n}, \quad f\left (\frac{t\om}{\om\cdot \theta} \right)\!=\! u\left (\frac{t}{\om\cdot \theta} \right) Y_m (\om).\]
 Now (\ref{12oollazs})  holds by the Funk-Hecke formula (\ref{fhf}) (set $h(s)=s^{-n} u(t/s)$ if $s>0$ and $h(s)\equiv 0$, otherwise) and
\be\label{78amo} v(t)=\sig_{n-2}\intl_0^1 (1-s^2)^{(n-3)/2}   P_m (s)\, u\left(\frac{t}{s}\right )\,\frac{ds}{s^n}\, .\ee
By (\ref{oollazs})  and Lemma \ref{lifa2},     the condition $h (s) (1-s^2)^{(n-3)/2}\in L^1(-1,1)$ in Theorem \ref{sTTT1} is satisfied  for almost all $t>0$ , so that (\ref{12oollazs})  is valid for all $\theta \!\in \!\sn$ and almost all $t>0$.
 The equality (\ref{78amo}) implies  (\ref {885frqi}) and (\ref{4gt6atqQ}).
The formulas in (\ref{poxe}) follow from (\ref{78amo}) owing to (\ref{4gt6a}) and (\ref{4gt6at}).
  The equality $v(- t)=(-1)^m v(t)$ is a consequence of the formulas $(Rf) (\theta, t)=(Rf) (-\theta, -t)$ and $Y_m (-\theta)=(-1)^m Y_m (\theta)$.
   To prove (c), we first change the order of integration. This operation is possible thanks to  the inequality in (\ref{poxe1}). Then the result follows by the orthogonality of Gegenbauer (or Chebyshev) polynomials.
\end{proof}

For the dual Radon transform we have the following.

\begin{lemma} \label{ppo9q}Let  $\lam=(n-2)/2$, $\vp(\theta,t)=v(t)\,Y_m (\theta)$, where  $Y_m$ is a spherical harmonic of degree $m$ and $v(t)$ is a locally integrable function on $\bbr$ satisfying $v(- t)=(-1)^m v(t)$.  Then $(R^*\vp)(x)\equiv (R^*\vp)(r\theta)$ is finite for all $\theta\in \sn$ and almost all $r>0$. Furthermore,
\be\label {poxe190} (R^*\vp)(r\theta)=u(r)\,Y_m (\th).\ee The function  $u(r)$    is represented by the Gegenbauer  integral (\ref{4gt6ale}) (or the  Chebyshev integral (\ref{4gt6atle})) as follows.

\noindent For $n\ge 3:$
\be\label {885frq} u(r)=\frac{r^{-2\lam}}{\tilde  c_{\lam,m}}\,\intl_0^r (r^2 - t^2)^{\lam -1/2}\, C^\lam_m \left (\frac{t}{r} \right )\, v (t)  \,  dt=   \pi^{\lam +1/2}  (\G^{\lam, m}_{+} v)(t),\ee
\[
\tilde c_{\lam,m}=\frac{\pi^{1/2} \Gam (2\lam+m)\,\Gam (\lam+1/2)}{2 m!\, \Gam (2\lam)\, \Gam (\lam+1)}.\]

\noindent For $n=2:$
\be \label {4gt6atq}
u(r)=\frac{2}{\pi}\intl_0^r (r^2 - t^2)^{-1/2}\, T_m \left (\frac{t}{r} \right )\,  v (t)  \,  dt= \pi^{1/2} (\T_{+}^m v)(t).\ee
\end{lemma}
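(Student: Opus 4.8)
The plan is to reduce the statement for the dual Radon transform $R^*$ to the already-established formula for the forward Radon transform $R$ by invoking the conjugacy relation in Lemma~\ref{iozesf}, rather than repeating the Funk--Hecke computation. First I would verify the existence and finiteness of $(R^*\vp)(x)$ for almost all $x$: since $\vp(\theta,t)=v(t)\,Y_m(\theta)$ with $v$ locally integrable and $Y_m$ bounded on $\sn$, the function $\vp$ is locally integrable on $Z_n$, so Corollary~\ref{gttgzuuuuh} applies directly and gives finiteness of $(R^*\vp)(x)$ for almost all $x\in\rn$. The parity assumption $v(-t)=(-1)^m v(t)$ together with $Y_m(-\theta)=(-1)^m Y_m(\theta)$ ensures $\vp$ is even, which is exactly the symmetry needed to pass through relation (\ref{jikb}).

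Next I would establish the diagonal form (\ref{poxe190}). The cleanest route is to apply the identity (\ref{jikb}), namely $(R^*\vp)(x)=\tfrac{2}{|x|\,\sig_{n-1}}\,(RA\vp)(x/|x|,1/|x|)$, where $A\vp$ is the reciprocal substitution from (\ref{jikbVF}). A short computation shows that $A\vp$ again has separated form $w(t)\,Y_m(\theta)$ with $w(t)=|t|^{-n}v(1/t)$, up to the sign bookkeeping coming from the parity of $v$ and of $Y_m$. I would then feed this into Lemma~\ref{ppo9j}, which evaluates $R$ on $\frS_m$ as the right-sided Gegenbauer--Chebyshev integral $\G^{\lam,m}_{-}$ (resp.\ $\T^m_{-}$). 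The resulting expression, after the reciprocal substitution is undone, is precisely the left-sided integral $\G^{\lam,m}_{+}$ (resp.\ $\T^m_{+}$): this is guaranteed by the conjugacy formulas (\ref{4gt6a1ldr}) relating the left- and right-sided operators via $f\mapsto t^{-2\lam-2}f(1/t)$. Tracking the multiplicative factors of $|x|$, $\sig_{n-1}$, and the normalizing constants $c_{\lam,m}$, $\tilde c_{\lam,m}$ through this chain yields (\ref{885frq}) and (\ref{4gt6atq}).

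Alternatively, I could argue directly in parallel with the proof of Lemma~\ref{ppo9j}: start from the definition (\ref{durt}) of $R^*$, expand $\vp(\theta,x\cdot\theta)=v(x\cdot\theta)Y_m(\theta)$, and apply the Funk--Hecke Theorem~\ref{sTTT1} with $h(s)=v(rs)$, where $r=|x|$. This immediately gives $(R^*\vp)(r\theta)=u(r)Y_m(\theta)$ with
\[
u(r)=\frac{\sig_{n-2}}{\sig_{n-1}}\intl_{-1}^1 v(rs)\,P_m(s)\,(1-s^2)^{(n-3)/2}\,ds,
\]
and the substitution $t=rs$ converts this to the integral over $[0,r]$ appearing in (\ref{885frq}), once the parity of $v$ is used to fold the integral onto $[0,1]$. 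I expect this second route to be more transparent for the bookkeeping of constants, while the first route is conceptually cleaner.

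The main obstacle I anticipate is precisely the constant and parity bookkeeping rather than any conceptual difficulty: one must carefully fold the integral over $(-1,1)$ onto $(0,1)$ using $v(-t)=(-1)^m v(t)$ and the parity $P_m(-s)=(-1)^m P_m(s)$ of the Gegenbauer/Chebyshev polynomials, and then reconcile the resulting normalizing constant $\sig_{n-2}/\sig_{n-1}$ with the constant $\tilde c_{\lam,m}$ stated in the lemma. A secondary point requiring care is justifying the application of Funk--Hecke for almost every $r$: the hypothesis $h(s)(1-s^2)^{(n-3)/2}\in L^1(-1,1)$ must be checked, and here it follows from local integrability of $v$ together with Lemma~\ref{lifa2} (in the guise of Proposition~\ref{lo8xwADD}), which guarantees convergence of the left-sided integral $\G^{\lam,m}_{+}v$ for almost all $r$. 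With those two technical points addressed, the identification (\ref{885frq})--(\ref{4gt6atq}) follows.
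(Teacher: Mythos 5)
Your second ("alternative") route is essentially the paper's own proof: the paper notes that $\vp$ is locally integrable on $Z_n$ (hence $R^*\vp$ is finite a.e.), applies the Funk--Hecke formula directly to the definition (\ref{durt}) with $h(s)=v(rs)$ to get $u(r)=\frac{\sig_{n-2}}{\sig_{n-1}}\int_{-1}^1(1-s^2)^{(n-3)/2}P_m(s)\,v(rs)\,ds$, and then folds the integral onto $(0,1)$ using the parities of $v$ and $P_m$ — exactly as you describe, including the bookkeeping points you flag. Your first route, reducing $R^*$ to $R$ via Lemma \ref{iozesf} and Lemma \ref{ppo9j} together with the reciprocal relations (\ref{4gt6a1ldr}), is a legitimate alternative (it is the same mechanism the paper uses later to pass from the kernel of $R^*$ to the kernel of $R$), but it costs extra care with the substitution $A\vp$ and the constants, whereas the direct Funk--Hecke computation the paper chose is shorter and self-contained.
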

\begin{proof} We first note that  $\vp$ is locally integrable on $Z_n$ and therefore, $(R^* \vp)(x)$ is
 finite for almost all $x$. Then, by the  Funk-Hecke formula, we get (\ref{poxe190}) with
 \[ u(r)=\frac{\sig_{n-2}}{\sig_{n-1}}\intl_{-1}^1 (1-s^2)^{(n-3)/2} P_m (s)\,v(rs)\,ds. \]
Since $v(- s)=(-1)^m v(s)$ and $P_m (-s)=(-1)^m P_m (s)$, the last formula gives the result.
\end{proof}

\begin{theorem} \label {recon65} Suppose that
 \be\label{oslls} \intl_{|x|>a} |f(x)|\, \left \{\begin{array} {ll} |x|^{-1}   &  \mbox {if} \quad m=0,1,\\
 |x|^{m-2}   &  \mbox {if} \quad m\ge 2\\
 \end{array}\right \}\, dx <\infty \quad \forall a>0.\ee
Then the Fourier-Laplace coefficients $f_{m,\mu}(t)$ of $f$ can be uniquely reconstructed for almost all $t>0$ from  the corresponding coefficients $\vp_{m,\mu}$ of $\vp=Rf$ by the following formulas.

\noindent For $n\ge 3:$
\be\label{012aq}
f_{m,\mu}(t)=c\,  \left (-\frac{d}{dt}\right )^{n-1}t  \intl_t^\infty (r^2 - t^2)^{(n -3)/2}\, C^{n/2 -1}_m \left (\frac{r}{t} \right )\, \vp_{m,\mu} (r)  \, r^{1-n}\, dr,\ee
\[c=\frac{ \Gam (n/2-1)\,m!}{2\pi^{(n -1)/2} (n-3+m)!}.\]

\noindent For $n=2:$
\be\label{012aq2}
f_{m,\mu}(t)=-\frac{1}{\pi}\, \frac{d}{dt}\, t\intl_t^\infty (r^2 - t^2)^{-1/2}\, T_m \left (\frac{r}{t} \right )\, \vp_{m,\mu} (r)  \,
\frac{dr}{r}.\ee
\end{theorem}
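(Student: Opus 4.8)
The plan is to reduce Theorem \ref{recon65} to the inversion of the right-sided Gegenbauer--Chebyshev integrals established in Corollary \ref{mlpzx}. By Lemma \ref{ppo9j}, each Fourier--Laplace coefficient $\vp_{m,\mu}(t)$ of $\vp=Rf$ is obtained from the corresponding coefficient $f_{m,\mu}(t)$ of $f$ by the operator $\G^{\lam,m}_{-}$ (for $n\ge 3$, with $\lam=(n-2)/2$) or $\T^m_{-}$ (for $n=2$), up to the explicit constant $\pi^{\lam+1/2}$ or $\pi^{1/2}$. Thus, writing $u=f_{m,\mu}$ and $v=\vp_{m,\mu}$, we have $v=\pi^{\lam+1/2}\,\G^{\lam,m}_{-}u$ (respectively $v=\pi^{1/2}\,\T^m_{-}u$), and the task is simply to solve this equation for $u$.

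First I would verify that the hypothesis (\ref{oslls}) on $f$ guarantees the integral conditions needed to apply the inversion machinery to each coefficient. Passing to polar coordinates, (\ref{oslls}) controls $\int_a^\infty |f_{m,\mu}(r)|\,r^{2\lam}\,dr$ when $m=0,1$ and $\int_a^\infty |f_{m,\mu}(r)|\,r^{2\lam+m-1}\,dr$ when $m\ge 2$ (the latter because $|x|^{m-2}\,dx$ in polar form carries $r^{m-2}\cdot r^{n-1}=r^{2\lam+m-1}$). For $m=0,1$ the relevant operator is an Erd\'elyi--Kober integral by (\ref{4gt6a8}), invertible via Theorem \ref{78awqe} under precisely the condition (\ref{for10zg}) that (\ref{oslls}) supplies. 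For $m\ge 2$ the stronger condition (\ref{for10zgn}) of Lemma \ref{89n6g} is exactly $\int_a^\infty |f_{m,\mu}(r)|\,r^{2\lam+m-1}\,dr<\infty$, which is again furnished by (\ref{oslls}). Hence Corollary \ref{mlpzx} applies and yields the reconstruction
\[
u(t)=2^{-2\lam-1}\pi^{-\lam-1/2}\,(\Cal D^{2\lam+1}_{-}\,\stackrel{*}{\G}\!{}_{\!-}^{\!\lam,m}\,v)(t),
\qquad n\ge 3,
\]
with the analogous Chebyshev formula for $n=2$.

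The remaining work is to unwind this abstract inversion into the explicit derivative-of-an-integral expressions (\ref{012aq}) and (\ref{012aq2}). For $n=2$, formula (\ref{8vcmkst}) of Corollary \ref{mlpzx} already has the desired shape: substituting the definition (\ref{4gt6a1t}) of $\stackrel{*}{\T}\!{}_{\!-}^m$ into $f(t)=-\tfrac12\tfrac{d}{dt}(\stackrel{*}{\T}\!{}_{\!-}^m t^{-2}g)(t)$, tracking the factor $2/\sqrt\pi$ against the normalizing $\pi^{1/2}$, gives (\ref{012aq2}) directly. For $n\ge 3$ the main bookkeeping is to evaluate the Riemann--Liouville derivative $\Cal D^{2\lam+1}_{-}$ of order $2\lam+1=n-1$. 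When $n$ is odd this exponent is an even integer and the derivative is a plain integer power of $d/dt$; the apparent single factor $(-d/dt)^{n-1}$ in (\ref{012aq}) should emerge after combining $\Cal D^{2\lam+1}_{-}$ with the operator $D=(2t)^{-1}d/dt$ hidden in the Erd\'elyi--Kober structure of $\stackrel{*}{\G}\!{}_{\!-}^{\!\lam,m}$, using the alternative form (\ref{frr+z4}) of the fractional derivative which is built precisely to trade $D$ for $d/dt$.

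The main obstacle I anticipate is the constant-chasing and the verification that the explicit differential-integral operator in (\ref{012aq}) really coincides with the composition $\Cal D^{2\lam+1}_{-}\stackrel{*}{\G}\!{}_{\!-}^{\!\lam,m}$ for \emph{all} $n\ge 3$, not merely for odd $n$. For even $n$ the order $2\lam+1=n-1$ is odd, so $\Cal D^{n-1}_{-}$ genuinely involves a fractional Riemann--Liouville derivative; nonetheless the paper writes (\ref{012aq}) as an ordinary $(n-1)$-fold derivative of a convergent integral, so I would need to confirm that the fractional half-integration built into the kernel $\stackrel{*}{\G}\!{}_{\!-}^{\!\lam,m}$ (which carries $(r^2-t^2)^{\lam-1/2}$ with half-integer exponent) combines with the fractional derivative to leave an honest integer-order derivative acting on the displayed Gegenbauer integral. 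This amounts to checking a Mellin-transform identity of the same type already used in the proof of Lemma \ref{89n6g}, so it is routine in principle, but the interaction of the fractional derivative with the multiplicative weights $t^{-2\lam-1}$, $t$, $t^{-2\lam}$ appearing in (\ref{frr+z4}) is where an error is most likely to creep in, and I would treat that computation with care.
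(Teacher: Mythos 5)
Your proposal is correct and follows the paper's proof exactly: Lemma \ref{ppo9j} identifies $\vp_{m,\mu}$ with $\pi^{\lam+1/2}\,\G^{\lam,m}_{-}f_{m,\mu}$ (resp.\ $\pi^{1/2}\,\T^m_{-}f_{m,\mu}$), and Corollary \ref{mlpzx}, whose hypothesis (\ref{for10zgn}) is exactly what (\ref{oslls}) supplies after passing to polar coordinates, inverts this relation. One correction: the obstacle you anticipate for even $n$ is illusory, because $2\lam+1=n-1$ is an integer for \emph{every} $n\ge 3$ (odd or even), so $\Cal D^{n-1}_{-}=(-d/dt)^{n-1}$ contains no fractional part at all, and (\ref{012aq}) is literally formula (\ref{8vcmks}) with the definition (\ref{4gt6a1}) of $\stackrel{*}{\G}\!{}_{\!-}^{\!\lam,m}$ written out (only constant-chasing remains).
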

\begin{proof} By Lemma \ref{ppo9j},   $\vp_{m,\mu} (t)= \pi^{(n -1)/2}  (\G^{n/2 -1, m}_{-} f_{m,\mu})(t)$ if $n\ge 3$, and
$\vp_{m,\mu} (t)= \pi^{1/2}  (\T_{-}^m f_{m,\mu})(t)$
 if $n=2$.  Hence, the result follows by Corollary \ref{mlpzx}, the conditions of which are satisfied, owing to   (\ref{oslls}).
\end{proof}

\subsection{The Kernel and Support Theorems}\label {pport The}

\subsubsection{The Kernel of $R^*$}

The next two theorems give the description of the kernel of $R^*$  in terms of the  Fourier-Laplace coefficients
\be \label{zaehl2YU}  \vp_{m,\mu} (t)\!=\!\intl_{S^{n-1}}\vp (\th, t)\, Y_{m,\mu} (\th)\, d\th. \ee
In both theorems it is assumed that $\vp (\th, t)$ is an even  locally integrable  function on  $Z_n=S^{n-1} \times \bbr$. The inequality $\vp \neq 0$,  means that  the set $\{(\th,t): \vp (\th, t)\neq 0\}$ has positive measure.

\begin{theorem} \label{zaeh} Let  $\vp_{m,\mu} (t)=0$ for almost all $t\in \bbr$ if $m=0,1$, and
\be\label{KOKLI} \vp_{m,\mu} (t)= \sum_{\substack{k=0 \\  m-k \,  even }}^{m-2} c_k \,t^k, \qquad c_k=\const,\ee
if $m\ge 2$. Then $(R^*\vp)(x)=0$ for almost all $x\in \rn$.
\end{theorem}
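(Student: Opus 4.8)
The plan is to diagonalize $R^*$ through spherical harmonics and reduce the vanishing of $(R^*\vp)(x)$ to a statement about the left-sided Gegenbauer-Chebyshev integrals applied to each Fourier-Laplace coefficient $\vp_{m,\mu}(t)$. By Lemma~\ref{ppo9q}, for each fixed pair $(m,\mu)$, the component $\vp_{m,\mu}(t)\,Y_{m,\mu}(\theta)$ of $\vp$ is sent by $R^*$ to $u_{m,\mu}(r)\,Y_{m,\mu}(\theta)$, where $u_{m,\mu}$ is (up to a constant) the left-sided integral $\G^{\lam,m}_{+}\vp_{m,\mu}$ for $n\ge 3$, or $\T^m_{+}\vp_{m,\mu}$ for $n=2$, with $\lam=(n-2)/2$. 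Since $R^*$ commutes with rotations and acts block-diagonally, $(R^*\vp)(x)=0$ for almost all $x$ will follow once I show that $u_{m,\mu}(r)\equiv 0$ for every pair $(m,\mu)$. Thus the whole theorem reduces to verifying that each prescribed coefficient $\vp_{m,\mu}$ lies in the kernel of the corresponding left-sided operator.

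First I would treat the cases $m=0,1$: here $\vp_{m,\mu}\equiv 0$ by hypothesis, so trivially $u_{m,\mu}\equiv 0$. For $m\ge 2$, the hypothesis (\ref{KOKLI}) says that $\vp_{m,\mu}(t)$ is a linear combination of monomials $t^k$ with $0\le k\le m-2$ and $m-k$ even. Lemma~\ref{marti} states precisely that $\G^{\lam,m}_{+}$ annihilates each such monomial $f_k(t)=t^k$ whenever $m-k=2,4,\ldots$, i.e. $(\G^{\lam,m}_{+} f_k)(t)=0$ for all $t>0$. By linearity of $\G^{\lam,m}_{+}$, it then annihilates the whole combination (\ref{KOKLI}), so $u_{m,\mu}\equiv 0$ for $n\ge 3$. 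The case $n=2$ is identical, using the Chebyshev operator $\T^m_{+}$, whose kernel is described by the same lemma.

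The remaining point is to assemble these componentwise conclusions into the pointwise (a.e.) statement $(R^*\vp)(x)=0$ on $\rn$. Since $\vp$ is assumed even and locally integrable on $Z_n$, Corollary~\ref{gttgzuuuuh} guarantees that $(R^*\vp)(x)$ is finite for almost all $x$, so the expansion (\ref{Xuz}) of $R^*\vp$ in spherical harmonics is meaningful. The correspondence between the expansions (\ref{Xuz}) and (\ref{Xuz1}) shows that the Fourier-Laplace coefficient of $R^*\vp$ indexed by $(m,\mu)$ equals $u_{m,\mu}(r)$, which we have just shown to vanish for every pair. Hence all Fourier-Laplace coefficients of $R^*\vp$ are zero, and therefore $(R^*\vp)(r\theta)=0$ for almost all $r>0$ and all $\theta$, giving $(R^*\vp)(x)=0$ for almost all $x\in\rn$.

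I expect the main technical obstacle to be the justification that vanishing of all Fourier-Laplace coefficients forces the function itself to vanish almost everywhere, together with the legitimacy of applying Lemma~\ref{ppo9q} coefficientwise: one must check that the integrability hypotheses of that lemma are met. Here the monomial structure (\ref{KOKLI}) is crucial, since $\vp_{m,\mu}(t)=\sum c_k t^k$ with $k\le m-2$ satisfies the growth restriction (\ref{026g094t}) needed for $\G^{\lam,m}_{+}\vp_{m,\mu}$ to be defined on the relevant range, so the application of Lemma~\ref{marti} is licit. The even symmetry of $\vp$ ensures compatibility with the parity $v(-t)=(-1)^m v(t)$ demanded in Lemma~\ref{ppo9q}, which is why only the monomials with $m-k$ even appear.
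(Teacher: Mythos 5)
Your proposal is correct and follows essentially the same route as the paper: reduce the vanishing of $R^*\vp$ to the vanishing of its Fourier--Laplace coefficients, identify those coefficients with the left-sided Gegenbauer--Chebyshev integrals $\G^{\lam,m}_{+}\vp_{m,\mu}$, and invoke Lemma \ref{marti} to annihilate the monomials $t^k$ with $m-k$ even. Two of your steps are asserted where the paper supplies the actual argument. First, you obtain the identity $(R^*\vp)_{m,\mu}=\pi^{\lam+1/2}\,\G^{\lam,m}_{+}\vp_{m,\mu}$ by applying Lemma \ref{ppo9q} componentwise and appealing to ``block-diagonal action''; for a merely locally integrable $\vp$ the interchange of $R^*$ with the infinite harmonic expansion is only formal (the paper itself calls the diagonalization ``at least formal''), and the paper instead proves the identity directly in Lemma \ref{zaehle} by applying Fubini and the Funk--Hecke formula to $(R^*\vp)_{m,\mu}(r)$ itself, with no expansion of $\vp$ required. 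Second, the step you flag as the main obstacle --- that vanishing of all Fourier--Laplace coefficients of the $L^1(S^{n-1})$ function $f_r(\th)=(R^*\vp)(r\th)$ forces $f_r=0$ a.e. --- is settled in the paper via the Poisson integral: $\Pi_\rho f_r=\sum_{m,\mu}\rho^m (R^*\vp)_{m,\mu}(r)\,Y_{m,\mu}\equiv 0$ for all $\rho\in[0,1)$, and $\Pi_\rho f_r\to f_r$ in $L^1$ as $\rho\to 1$. Neither point changes your strategy, but both need to be written out for the proof to be complete; your verifications of the integrability hypothesis (\ref{026g094t}) and of the parity compatibility are correct and match the paper.
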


\begin{theorem} \label{zaehle4}  Suppose in addition that  $\vp\in S'(Z_n)$. If  $(R^*\vp)(x)=0$ for almost all $x\in \rn$, then all  $\vp_{m,\mu} (t)$ are polynomials and the following statements hold.

\noindent (i) If $m=0,1$, then $\vp_{m,\mu} (t)\equiv 0$.

\noindent (ii)  If $m\ge 2$ and $\vp \neq 0$,  then $\vp_{m,\mu} (t)\not\equiv 0$  for at least one pair $(m,\mu)$. For every such pair, $\vp_{m,\mu} (t)$ has the form (\ref{KOKLI}).
\end{theorem}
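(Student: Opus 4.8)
The plan is to reduce the statement about $R^*$ to facts about the left-sided Gegenbauer-Chebyshev operators $\G^{\lam,m}_+$ and $\T^m_+$ established in Section 3, using the diagonalization of $R^*$ on the spaces $\frS_m$ provided by Lemma \ref{ppo9q}. The key observation is that by Lemma \ref{ppo9q}, if $\vp(\th,t)=v(t)Y_m(\th)$ then $(R^*\vp)(r\theta)=u(r)Y_m(\th)$ where (up to a constant) $u=\G^{\lam,m}_+ v$ for $n\ge 3$ and $u=\T^m_+ v$ for $n=2$, with $\lam=(n-2)/2$. Hence $(R^*\vp)=0$ for a.e. $x$ is equivalent, component by component, to $\G^{\lam,m}_+ \vp_{m,\mu}=0$ (resp. $\T^m_+\vp_{m,\mu}=0$) for a.e. $r>0$. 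So the entire theorem becomes a question about the kernel of $\G^{\lam,m}_+$ acting on the coefficients $\vp_{m,\mu}$.

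First I would use the extra hypothesis $\vp\in S'(Z_n)$ to establish that each $\vp_{m,\mu}(t)$ is a polynomial. This is the main structural input and the step I expect to be the chief obstacle, since it is where the kernel must be pinned down rather than merely described. The natural route is the Semyanistyi-Lizorkin machinery recalled in Section 2: since $R^*$ is an isomorphism from $\Phi_e(Z_n)$ onto $\Phi(\rn)$ (Theorem \ref{pesen}), the condition $R^*\vp=0$ says precisely that $\vp$ annihilates the range of the adjoint, i.e. $\vp$ vanishes in the $\Phi'_e$-sense. By the analogue of Proposition \ref{lpose6} for $Z_n$ — that two $S'(Z_n)$-distributions agreeing in the $\Phi'$-sense differ by a function whose $t$-Fourier transform is supported at the origin, hence a polynomial in $t$ — one concludes that each $\vp_{m,\mu}(t)$ is a polynomial. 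I would phrase this by testing against $\psi(\th,t)=\chi(t)Y_{m,\mu}(\th)$ with $\chi\in\Phi(\bbr)$, reducing to the one-dimensional statement that a distribution killing all of $\Phi(\bbr)$ is a polynomial.

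Once each $\vp_{m,\mu}$ is known to be a polynomial, the remaining work is purely computational and uses Lemma \ref{marti}. For a polynomial $v(t)=\sum_k c_k t^k$, the parity relation $v(-t)=(-1)^m v(t)$ forces only monomials $t^k$ with $k\equiv m \pmod 2$ to survive. Applying $\G^{\lam,m}_+$ to $t^k$ and invoking (\ref{89zse1}) (or Lemma \ref{marti} directly), the image $\G^{\lam,m}_+ t^k$ vanishes identically exactly when $m-k=2,4,6,\dots$, i.e. when $0\le k\le m-2$ and $m-k$ is even; for all other admissible $k$ (namely $k\ge m$ with $k\equiv m\pmod 2$) the Mellin computation gives a nonzero multiple of $r^{k-2\lam-m}$ or similar, which cannot cancel against the vanishing monomials. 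This immediately yields (i): for $m=0,1$ there are no monomials $t^k$ with $0\le k\le m-2$, so the only polynomials in the kernel are those with all surviving monomials killed, forcing $\vp_{m,\mu}\equiv 0$. For $m\ge 2$ it yields the form (\ref{KOKLI}), since the kernel polynomials are spanned precisely by the $t^k$ with $m-k$ even and $k\le m-2$.

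Finally, for part (ii) the statement $\vp\neq 0 \implies \vp_{m,\mu}\not\equiv 0$ for at least one pair with $m\ge 2$ follows by contraposition together with (i): if every $\vp_{m,\mu}$ with $m\ge 2$ vanished and every $\vp_{m,\mu}$ with $m=0,1$ vanished by (i), then all Fourier-Laplace coefficients would be zero, whence $\vp=0$ a.e., contradicting $\vp\neq 0$. The main obstacle, as noted, is rigorously deducing that the coefficients are polynomials from $\vp\in S'(Z_n)$; the cleanest path is to transcribe Proposition \ref{lpose6} to the $Z_n$ setting via Theorem \ref{pesen}, which is exactly the reason the hypothesis $\vp\in S'(Z_n)$ (absent from Theorem \ref{zaeh}) is imposed here. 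Everything downstream is the explicit kernel calculation already packaged in Lemma \ref{marti}.
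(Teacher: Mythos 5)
Your proposal is correct and follows essentially the same route as the paper: reduce via the Funk--Hecke diagonalization to $\G^{\lam,m}_{+}\vp_{m,\mu}=0$, use $\vp\in S'(Z_n)$ together with Theorem \ref{pesen} and the Semyanistyi--Lizorkin spaces (testing against $\chi(t)Y_{m,\mu}(\th)$, which is exactly Lemma \ref{zaehle3}) to conclude each $\vp_{m,\mu}$ is a polynomial, and then pin down its form by the Mellin computation behind Lemma \ref{marti}. The only cosmetic differences are that the paper cites Lemma \ref{zaehle} rather than Lemma \ref{ppo9q} for the coefficient identity, and that the image of $t^k$ under $\G^{\lam,m}_{+}$ is $\a_{k,m}r^{k}$ (not $r^{k-2\lam-m}$), with $\a_{k,m}\neq 0$ handling all $k$ outside the exceptional set without needing the parity observation.
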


 The proof of these theorems   needs some preparation.

\begin{lemma} \label{zaehle} If $\vp \in L^1_{loc}(Z_n)$ is  even, then for almost all $r>0$,
\be \label{zaehl2}
(R^*\vp)_{m,\mu}(r)\!\equiv\!\intl_{S^{n-1}}\!\!(R^*\vp) (r\th)\, Y_{m,\mu} (\th)\, d\th\!= \!\pi^{\lam +1/2}\,(\G^{\lam, m}_{+} \vp_{m,\mu})(r),\ee
 where $\lam=(n-2)/2$ and $\G^{\lam, m}_{+} \vp_{m,\mu}$ is the  Gegenbauer  integral  (\ref{4gt6ale})  (or the  Chebyshev integral (\ref{4gt6atle})).
 \end{lemma}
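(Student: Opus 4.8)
The plan is to compute the Fourier-Laplace coefficient $(R^*\vp)_{m,\mu}(r)$ directly by applying Lemma \ref{ppo9q} to the component of $\vp$ lying in the space $\frS_m$, and then to identify the resulting expression with the right-hand side of (\ref{zaehl2}). First I would fix $m$ and $\mu$ and isolate the relevant piece of $\vp$: since integration against $Y_{m,\mu}$ on $S^{n-1}$ annihilates all spherical harmonic components of $\vp(\cdot,t)$ except the $(m,\mu)$ one, the computation of $(R^*\vp)_{m,\mu}(r)$ reduces to the case where $\vp(\theta,t)=\vp_{m,\mu}(t)\,Y_{m,\mu}(\theta)$, i.e.\ to a function in $\frS_m$. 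The hypothesis that $\vp$ is even forces $\vp_{m,\mu}(-t)=(-1)^m\vp_{m,\mu}(t)$, which is exactly the parity condition required by Lemma \ref{ppo9q}; this is the point that must be checked so that the lemma applies.

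Once the reduction is in place, I would invoke Lemma \ref{ppo9q} with $v=\vp_{m,\mu}$. That lemma gives $(R^*\vp)(r\theta)=u(r)\,Y_m(\theta)$ where, for $n\ge 3$, $u(r)$ is the left-sided Gegenbauer integral (\ref{885frq}), and for $n=2$ it is the Chebyshev integral (\ref{4gt6atq}). Integrating $(R^*\vp)(r\theta)$ against $Y_{m,\mu}(\theta)$ over $S^{n-1}$ and using the orthonormality of the basis $\{Y_{m,\mu}\}$ then yields $(R^*\vp)_{m,\mu}(r)=u(r)$. It remains only to rewrite $u(r)$ in the normalization used in the statement: comparing the constant $\tilde c_{\lam,m}$ appearing in (\ref{885frq}) with $c_{\lam,m}$ in (\ref{gtawe}) and the definition (\ref{4gt6ale}) of $\G^{\lam,m}_+$ shows that $u(r)=\pi^{\lam+1/2}(\G^{\lam,m}_+ \vp_{m,\mu})(r)$, which is precisely (\ref{zaehl2}). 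The $n=2$ case is handled in parallel via (\ref{4gt6atq}) and the Chebyshev integral (\ref{4gt6atle}).

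The main obstacle, such as it is, lies not in the algebra but in the measure-theoretic bookkeeping needed to justify that the computation holds for \emph{almost all} $r>0$ and that the passage from the full $\vp$ to its $(m,\mu)$-component commutes with $R^*$. Since $\vp\in L^1_{loc}(Z_n)$, Corollary \ref{gttgzuuuuh} guarantees $(R^*\vp)(x)$ is finite for almost all $x$, and local integrability of $\vp_{m,\mu}$ on $\bbr$ (which follows from that of $\vp$ together with the boundedness of $Y_{m,\mu}$) ensures the left-sided integral converges for almost all $r$; I would appeal to Fubini's theorem to interchange the $S^{n-1}$-integration defining the Fourier-Laplace coefficient with the $S^{n-1}$-integration implicit in the dual transform $R^*$. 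This interchange is legitimate because the combined integrand is absolutely integrable over the relevant compact range of $r$, exactly as in the convergence estimates already established for the Gegenbauer-Chebyshev integrals in Proposition \ref{lo8xwADD}. With these justifications the identity (\ref{zaehl2}) follows directly from Lemma \ref{ppo9q} and the orthonormality of the spherical harmonics.
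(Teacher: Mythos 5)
Your proposal is correct and follows essentially the same route as the paper: the paper likewise interchanges the two $S^{n-1}$-integrations (justified by the a.e.\ Lebesgue existence of the integral in (\ref{zaehl2})), applies the Funk--Hecke formula to the inner integral, and then uses the evenness of $\vp$ together with $P_m(-s)=(-1)^m P_m(s)$ to fold the integral onto $[0,1]$ and recognize $\pi^{\lam+1/2}(\G^{\lam,m}_{+}\vp_{m,\mu})(r)$ --- which is exactly the computation of Lemma \ref{ppo9q} that you invoke. The only cosmetic difference is that you phrase the Fubini step as a ``reduction to the $(m,\mu)$-component of $\vp$'' before citing Lemma \ref{ppo9q}, whereas the paper carries the same interchange out inline; the content is identical.
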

\begin{proof}
Since the integral in (\ref{zaehl2}) exists in the Lebesgue sense for almost all $r>0$, we can  change the order of integration. Using the Funk-Hecke formula (\ref{fhf}), we obtain
\bea
(R^*\vp)_{m,\mu}(r)&=&\intl_{S^{n-1}} \,d_*\eta\intl_{S^{n-1}} \vp (\eta, r\th \cdot \eta)\,Y_{m,\mu} (\th)\, d\th\nonumber\\
&=&\frac{\sig_{n-2}}{\sig_{n-1}}\intl_{-1}^1 (1-s^2)^{(n-3)/2} P_m (s)\,ds \intl_{S^{n-1}} \vp (\eta, rs)\,Y_{m,\mu} (\eta)\,d\eta \nonumber\\
&=&\frac{\sig_{n-2}}{\sig_{n-1}}\intl_{-1}^1 (1-s^2)^{(n-3)/2} P_m (s)\,\vp_{m,\mu}(rs)\,ds.\nonumber\eea\
Since $\vp$ is even, then
$\vp_{m,\mu}(- t)=(-1)^m \vp_{m,\mu}(t)$. Moreover, $P_m (-s)=(-1)^m P_m (s)$. Hence, the last integral equals $
\pi^{\lam +1/2}\,(\G^{\lam, m}_{+} \vp_{m,\mu})(r)$; cf. the  proof of Lemma \ref{ppo9q}.
\end{proof}

{\it Proof of Theorem \ref{zaeh}} \  By Lemma \ref {marti}, the operator $\G^{\lam, m}_{+}$ annihilates  monomials $t^k$ provided that $ 0\le k\le m-2$ with  $m-k$  even. Hence, by (\ref{zaehl2}),
$(R^*\vp)_{m,\mu}(r)=0$ for almost all $r>0$.  We recall that $R^*\vp$ is locally integrable in $\rn$. Hence, by Fubini's theorem,
 the function $f_r (\th)\equiv (R^*\vp)(r\th)$ belongs to $L^1(S^{n-1})$  for almost all $r>0$. Let us consider the Poisson integral
\[ (\Pi_\rho f_r) (\th)\!=\!\frac{1}{\sig_{n-1}}\intl_{S^{n-1}} \frac{1\!-\!\rho^2}{|\th\!-\!\rho\eta|^n}\,f_r (\eta)\, d\eta;\]
see, e.g., Stein and Weiss \cite{SW}.
Since $(R^*\vp)_{m,\mu}(r)\!=\!0$ a.e. for all $m$, $\mu$, then
$$(\Pi_\rho f_r) (\th)=\sum\limits_{m,\mu}\rho^m  (f_r)_{m,\mu} Y_{m,\mu} (\th)=\sum\limits_{m,\mu}\rho^m  (R^*\vp)_{m,\mu}(r) Y_{m,\mu} (\th)=0$$
for almost all $r>0$, all $\rho \in [0,1)$, and all $\th\in S^{n-1}$. Furthermore,
 since
\[f_r (\th)=\lim\limits_{\rho \to 1}(\Pi_\rho f_r) (\th)\]
 in the $L^1$-norm, then
 $f_r (\th)\!=\!(R^*\vp)(r\th)\!=\!0$ for almost all $\th \in S^{n-1}$ and almost all $r>0$. This gives the result.
\hfill $\square$

Note that in Theorem \ref{zaeh} we did not assume  $\vp \in S'(Z_n)$. This assumption will be used in the proof of Theorem \ref{zaehle4}.

  The next lemma employs the distribution spaces $S'(Z_n)$ and $\Phi'(Z_n)$ from Section \ref{90kiy3}.

 \begin{lemma} \label{zaehle3}  Let $\vp$ be a locally integrable  function in $S'(Z_n)$. If $\vp=0$ in the $\Phi'(Z_n)$-sense, then all  Fourier-Laplace coefficients $\vp_{m,\mu} (t)$ are polynomials. If, moreover, $\vp \neq 0$,  then  $\vp_{m,\mu} (t)\not\equiv 0$  for at least one pair $(m,\mu)$.
\end{lemma}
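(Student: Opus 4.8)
The plan is to reduce the statement to the one-dimensional Semyanistyi--Lizorkin result (Proposition \ref{lpose6} with $n=1$), applied separately to each Fourier-Laplace coefficient, and then to deduce the final assertion from the completeness of the spherical harmonics. The whole argument rests on testing the global hypothesis against \emph{separated} test functions of the form $Y_{m,\mu}(\th)\,\psi(t)$.

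First I would fix a pair $(m,\mu)$ and record that $\vp_{m,\mu}$ is a well-defined tempered distribution on $\bbr$: for $\psi\in S(\bbr)$ the function $(\th,t)\mapsto Y_{m,\mu}(\th)\,\psi(t)$ lies in $S(Z_n)$, so one may set $\lng \vp_{m,\mu},\psi\rng=\lng \vp, Y_{m,\mu}\otimes\psi\rng$, which for a locally integrable $\vp$ equals $\int_{\bbr}\vp_{m,\mu}(t)\,\psi(t)\,dt$ by Fubini. The crucial observation is that if $\psi\in\Phi(\bbr)$, then $Y_{m,\mu}\otimes\psi\in\Phi(Z_n)$. Indeed, the defining moment conditions of $\Phi(Z_n)$ (see (\ref{mest1})) factor for a product: $\int_{\bbr}t^j(\d_\th^\a\d_t^k(Y_{m,\mu}\otimes\psi))(\th,t)\,dt=(\d_\th^\a Y_{m,\mu})(\th)\int_{\bbr}t^j\psi^{(k)}(t)\,dt$, and for $\psi\in\Phi(\bbr)$ all of these vanish, since the condition $\int t^j\psi=0$ for every $j$ propagates to $\int t^j\psi^{(k)}=0$ for all $j,k$ through the integration-by-parts identity $\int t^j\psi^{(k)}\,dt=-j\int t^{j-1}\psi^{(k-1)}\,dt$.

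With this in hand, the hypothesis that $\vp$ vanishes in the $\Phi'(Z_n)$-sense gives $\lng\vp,Y_{m,\mu}\otimes\psi\rng=0$, hence $\lng\vp_{m,\mu},\psi\rng=0$, for every $\psi\in\Phi(\bbr)$; that is, $\vp_{m,\mu}=0$ in the $\Phi'(\bbr)$-sense. By Proposition \ref{lpose6} with $n=1$, $\vp_{m,\mu}$ differs from $0$ by a polynomial, so it \emph{is} a polynomial, which proves the first assertion for every pair $(m,\mu)$. For the last statement I would argue by contraposition: suppose $\vp_{m,\mu}\equiv 0$ for all $m,\mu$. Since $\vp\in L^1_{loc}(Z_n)$, Fubini gives $\vp(\cdot,t)\in L^1(S^{n-1})$ for almost every $t$, and for such $t$ the identically zero polynomial $\vp_{m,\mu}(t)$ equals $\int_{S^{n-1}}\vp(\th,t)Y_{m,\mu}(\th)\,d\th$. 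Intersecting the countably many full-measure sets over all $(m,\mu)$, I obtain a full-measure set of $t$ for which $\vp(\cdot,t)$ is orthogonal to every $Y_{m,\mu}$. Because finite linear combinations of the $Y_{m,\mu}$ contain all restrictions of polynomials to $S^{n-1}$ and hence are dense in $C(S^{n-1})$ by Stone--Weierstrass, an $L^1(S^{n-1})$ function orthogonal to all of them vanishes a.e. Thus $\vp(\cdot,t)=0$ a.e.\ for a.e.\ $t$, so $\vp=0$ a.e.\ on $Z_n$, contradicting $\vp\neq 0$.

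The main obstacle, really the only nonroutine point, is the membership $Y_{m,\mu}\otimes\psi\in\Phi(Z_n)$, which is what licenses passing from the global hypothesis on $\Phi(Z_n)$ to a statement in the single variable $t$; everything else is the cited one-dimensional result together with the $L^1$-completeness of spherical harmonics.
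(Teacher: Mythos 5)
Your proposal is correct and follows essentially the same route as the paper: test against separated functions $Y_{m,\mu}(\th)\,\psi(t)$, observe that these lie in $\Phi(Z_n)$ when $\psi\in\Phi(\bbr)$, invoke Proposition \ref{lpose6} in one variable, and conclude the last assertion from the completeness of spherical harmonics in $L^1(S^{n-1})$. The only difference is that you spell out two points the paper leaves implicit --- the verification of the moment conditions for the product test function and the a.e.\ vanishing of an $L^1$ function orthogonal to all $Y_{m,\mu}$ --- and both verifications are sound.
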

\begin{proof} Given $\om \in S(\bbr)$, let $\psi (\th, t)= \om (t)Y_{m,\mu} (\th)\in S(Z_n)$. Then the expression
\[(\vp_{m,\mu},\om)=\intl_{Z_n} \vp (\th, t)\,\psi(\th, t)\, d\th dt=(\vp, \psi )\]
is meaningful, that is, $\vp_{m,\mu}\in S'(\bbr)$. If $\om \in \Phi (\bbr)$, then  $\psi\in \Phi(Z_n)$ and, by the assumption,
$(\vp_{m,\mu},\om)=(\vp, \psi )=0$, that is, $\vp_{m,\mu}=0$ in the $\Phi'(\bbr)$-sense. Hence, by Proposition \ref{lpose6}, $\vp_{m,\mu} (t)$ is a polynomial.
 If all $\vp_{m,\mu} (t)$ are identically zero, then  $\vp (\th, t)=0$ a.e. on $Z_n$,  which gives the second statement by contradiction.
\end{proof}

{\it Proof of Theorem \ref{zaehle4}} \   Since $(R^*\vp)(x)=0$, then $(R^*\vp)_{m,\mu}(r)=0$ and, by  (\ref{zaehl2}),
\be\label{zaehle5} (\G^{\lam, m}_{+} \vp_{m,\mu})(r)=0\ee
for almost all $r>0$ and all $m,\mu$. Furthermore,  if $g\in \Phi (Z_n)$ and $g_e\in \Phi_e (Z_n)$ is the even component of $g$, then $(\vp,g)=(\vp,g_e)$, because $\vp$ is even. Since by Theorem \ref{pesen}, $g_e=Rf$ for some $f\in \Phi(\rn)$, then  $R^*\vp=0$ yields
$$
(\vp, g)=(\vp,g_e)=(\vp,Rf)=(R^*\vp, f)=0.
$$
By Lemma  \ref{zaehle3} it follows that $\vp_{m,\mu} (t)$ is a polynomial.
The structure of this polynomial is determined by the equality
\be\label{zaehle6}\G^{\lam, m}_{+} \vp_{m,\mu}=0,\ee
 which follows from (\ref{zaehle5}). Specifically, by Lemma \ref{marti}, if $m=0,1$, then $\vp_{m,\mu}(t)=0$ for  all $t\in \bbr_+$. If, moreover, $\vp \neq 0$, then $\vp_{m,\mu} (t)$ is not identically zero for at least one pair $(m,\mu)$ with $m\ge 2$. For each such  pair,
  $\G^{\lam, m}_{+} \vp_{m,\mu}$ is a finite sum of the form $\sum_k c_k \,\G^{\lam, m}_{+} [t^k]$, where
 the terms corresponding to $ k\le m-2$ with $m-k$  even are zero. For all other $k$ in this sum
 (we denote this set by  $\K$), we have
\[
(\G^{\lam, m}_{+} [t^k])(r)=\a_{k,m} \,r^{k}, \qquad \a_{k,m}= \frac{1}{c_{\lam,m}}\intl_0^1  (1 - s^2)^{\lambda -1/2}\, C^\lambda_m (s) \, s^k\, ds,\]
where $\lam=(n-2)/2$.  By (\ref{89zse}),
 $\a_{k,m}\neq 0$. Thus, (\ref{zaehle6}) yields
$$\sum\limits_{k\in \K} c_k\,  \a_{k,m}\, r^{k}=0 \quad \forall r>0.$$
It follows that all $c_k$ with $k\in \K$ are zero and $\vp_{m,\mu}(t)$ contains only terms corresponding to $m-k\ge 2$  even.
This completes the proof.
\hfill $\square$

\subsubsection{The Kernel of $R$}

Theorems \ref{zaeh} and \ref{zaehle4} combined with the  formula
\be\label {jikbBU2} (Rf)(\theta, t)\!=\!\frac{\sig_{n-1}}{2|t|}\, (R^* Bf)\left (\frac{\theta}{t}\right ), \qquad (Bf)(\theta, t)\!=\!\frac{1}{|t|^n} \,
f \left (\frac{\theta}{t}\right )\ee
(see Lemma \ref{iozesf}), enable us to  describe  the kernel of the Radon transform $R$. We first prove the following simple lemma.
\begin{lemma} \label{azw1a} If
\be\label{azw1X}  I_1 (f)=\intl_{|x|>a} \!\frac{|f(x)|}{|x|}\, dx<\infty\quad \mbox{for  all $a>0$},\ee
then $Bf\in L^1_{loc} (Z_n)$. If, moreover,
\be\label{azw1} I_2(f)\!=\!\intl_{|x|<a}\! \!\!|x|^{N-1} |f(x)|\, dx\!<\!\infty \quad \mbox{for some $N\!>\!0$ and  $a\!>\!0$},\ee
 then $Bf\in S'(Z_n)$.
\end{lemma}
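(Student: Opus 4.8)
The plan is to reduce both assertions to the hypotheses \eqref{azw1X} and \eqref{azw1} by passing to polar coordinates on $\rn$ and performing the reciprocal substitution $r=1/t$, which is exactly the device that turns the factor $|t|^{-n}$ and the argument $\th/t$ defining $Bf$ back into $f$ on $\rn$. The basic computation I would record first is the identity, valid for every $0<T<\infty$,
\[
\intl_{\sn}\intl_{0<|t|<T}|(Bf)(\th,t)|\,dt\,d\th=2\intl_{|x|>1/T}\frac{|f(x)|}{|x|}\,dx .
\]
This comes from splitting into $t>0$ and $t<0$, substituting $r=1/t$ on each half-line (so that $\th/t=\pm r\th$ and $|t|^{-n}\,dt=r^{n-2}\,dr$), and recognizing $r^{n-1}\,dr\,d\th$ as the volume element of $\rn$. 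Since every compact subset of $Z_n$ is contained in some $\sn\times[-T,T]$ and \eqref{azw1X} holds for every $a>0$, the right-hand side is finite for $a=1/T$; this yields $Bf\in L^1_{loc}(Z_n)$ and settles the first claim.

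For the second claim I would estimate the pairing $(Bf,\psi)=\int_{Z_n}(Bf)\,\psi$ against a seminorm $\|\psi\|_m$ of $\psi\in S(Z_n)$, splitting the $t$-integration at $|t|=1/a$, where $a$ is the radius in \eqref{azw1}. On the bounded range $|t|\le 1/a$ I bound $|\psi|\le\|\psi\|_0$ and invoke the identity above, so this contribution is at most $2\|\psi\|_0\,I_1(f)$ with the radius $a$. On the range $|t|>1/a$ the substitution gives $r=1/|t|<a$, so the argument of $f$ lands in the ball $|x|<a$ where \eqref{azw1} is available; using the decay $|\psi(\th,t)|\le\|\psi\|_m(1+|t|)^{-m}$ and the elementary bound $(1+1/r)^{-m}\le r^m$, I obtain
\[
\intl_{\sn}\intl_{|t|>1/a}|Bf|\,|\psi|\,dt\,d\th\le 2\|\psi\|_m\intl_{\sn}\intl_0^a r^{\,n-2+m}\,|f(r\th)|\,dr\,d\th .
\]
Choosing an integer $m\ge N$ makes $r^{\,n-2+m}\le r^{\,n-2+N}$ on $(0,a)$, up to an absorbable constant, and $\int_{\sn}\int_0^a r^{\,n-2+N}|f(r\th)|\,dr\,d\th=I_2(f)$. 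Adding the two ranges gives $|(Bf,\psi)|\le C\,\|\psi\|_m$, i.e. $Bf\in S'(Z_n)$.

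The one genuinely delicate point is the exponent bookkeeping in the second range: one must verify that the power of $r$ manufactured by $|t|^{-n}$, the Jacobian weight $r^{n-2}$, and the decay order $r^{m}$ of the test function together dominate the weight $r^{\,N-1}\cdot r^{\,n-1}=r^{\,n-2+N}$ that appears in $I_2(f)$, and this is precisely what forces the admissible seminorm order $m\ge N$. The minor nuisance that $I_2$ controls only the ball $|x|<a$ while $I_1$ controls its complement is handled by splitting $t$ exactly at $1/a$; and if a larger ball is convenient, one may enlarge $a$, since finiteness of $I_1(f)$ for all radii lets one extend the $I_2$ bound from any ball to any larger one. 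This lemma is the technical bridge that, via \eqref{jikbBU2}, transfers Theorems \ref{zaeh} and \ref{zaehle4} from $R^*$ to $R$.
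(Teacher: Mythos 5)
Your proof is correct and follows essentially the same route as the paper: the key step in both is the reciprocal substitution $r=1/|t|$, which converts the $t$-integral of $|Bf|$ into the weighted integrals $I_1$ and $I_2$ of $f$. The only cosmetic difference is that the paper packages the temperedness estimate as a single identity $\int_{Z_n}|Bf|(1+|t|)^{-N}\,d_*\theta\,dt=\frac{2}{\sigma_{n-1}}\int_{\rn}\frac{|x|^{N-1}}{(1+|x|)^{N}}|f(x)|\,dx$ and then splits $\rn$ at $|x|=a$, whereas you split the $t$-domain at $|t|=1/a$ first; the exponent bookkeeping you carry out is exactly the content of that identity.
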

\begin{proof} Changing variables, for any $a>0$ we have
\[\intl_{\sn} d_*\th \intl_{-1/a}^{1/a}|(Bf)(\theta, t)|\, dt=\frac{2}{\sig_{n-1}}\intl_{|x|>a} \!\frac{|f(x)|}{|x|}\, dx.\]
Similarly,
\[ \intl_{Z_n}\frac{|(Bf)(\theta, t)|}{(1+|t|)^N}\, d_*\th dt=\frac{2}{\sig_{n-1}}\intl_{\rn} \frac{|x|^{N-1}}{(1+|x|)^N}\, |f(x)|\, dx.\]
This gives the result.
\end{proof}

The condition (\ref{azw1}) allows $f(x)$  to  grow as $x\to 0$, but not faster than some power of $|x|^{-1}$. The condition (\ref{azw1X}) is necessary for the existence of the Radon transform on the set of radial functions; cf. Theorem \ref {byvs1}.

Theorems \ref{zaeh} and \ref{zaehle4} in conjunction with Lemma \ref{azw1a} yield the following statements in which
 $f_{m,\mu} (r)$ denote the Fourier-Laplace coefficients  of the function $f_r (\th)=f(r\th)$ and the inequality $f \neq 0$ means  that the set $\{x: f (x)\neq 0\}$ has positive measure.

\begin{theorem} \label{azw1a2}  Let $I_1 (f)<\infty$. Suppose that  $f_{m,\mu} (r)=0$ for almost all $r>0$ if $m=0,1$, and
\be \label{azw2}
f_{m,\mu} (r)= \sum_{\substack{k=0 \\  m-k \,  even }}^{m-2} \frac{c_{k}}{r^{n+k}}, \qquad c_{k}=\const,\ee
if $m\ge 2$. Then $(Rf)(\theta, t)=0$  almost everywhere on $Z_n$.
\end{theorem}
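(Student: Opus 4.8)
The plan is to reduce the assertion to Theorem~\ref{zaeh}, which already describes the kernel of the dual transform $R^*$, by exploiting the projective equivalence~(\ref{jikbBU2}). According to that formula, $(Rf)(\theta,t)=\frac{\sig_{n-1}}{2|t|}\,(R^*Bf)(\theta/t)$, and since the factor $\sig_{n-1}/(2|t|)$ is finite and nonzero for $t\neq0$, it suffices to prove that $(R^*Bf)(x)=0$ for almost all $x\in\rn$. The passage from this to $(Rf)(\theta,t)=0$ for almost all $(\theta,t)\in Z_n$ is then secured by observing that $(\theta,t)\mapsto\theta/t$ is, away from the null sets $\{t=0\}$ and $\{x=0\}$, a diffeomorphism, so that preimages of null sets are null.

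To apply Theorem~\ref{zaeh} I must verify its three hypotheses for $\vp=Bf$. That $Bf$ is even follows at once from $(Bf)(-\theta,-t)=|t|^{-n}f(\theta/t)=(Bf)(\theta,t)$, and local integrability on $Z_n$ is exactly the first conclusion of Lemma~\ref{azw1a}, which is available because $I_1(f)<\infty$. The substantive step is the computation of the Fourier--Laplace coefficients $(Bf)_{m,\mu}(t)=\int_{\sn}(Bf)(\theta,t)\,Y_{m,\mu}(\theta)\,d\theta$.

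For $t>0$ the point $\theta/t\in\rn$ has radial part $1/t$ and angular part $\theta$, so the spherical-harmonic expansion of $f$ gives $(Bf)(\theta,t)=t^{-n}\sum_{m,\mu}f_{m,\mu}(1/t)\,Y_{m,\mu}(\theta)$; orthonormality then yields $(Bf)_{m,\mu}(t)=t^{-n}f_{m,\mu}(1/t)$. Inserting the hypotheses on $f_{m,\mu}$ makes $(Bf)_{m,\mu}(t)=0$ for $m=0,1$, while for $m\ge2$
\[
(Bf)_{m,\mu}(t)=t^{-n}\sum_{\substack{k=0\\ m-k \, even}}^{m-2}\frac{c_k}{(1/t)^{n+k}}=\sum_{\substack{k=0\\ m-k \, even}}^{m-2}c_k\,t^k,
\]
which is precisely the form~(\ref{KOKLI}) required in Theorem~\ref{zaeh}. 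The values for $t<0$ follow from evenness via $(Bf)_{m,\mu}(-t)=(-1)^m(Bf)_{m,\mu}(t)$; since every surviving index satisfies $m-k$ even, one has $(-1)^{m+k}=1$, and the same polynomial represents $(Bf)_{m,\mu}$ on all of $\bbr$.

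With all hypotheses checked, Theorem~\ref{zaeh} gives $(R^*Bf)(x)=0$ almost everywhere, and the reduction in the first paragraph finishes the argument. I anticipate no serious difficulty: the proof is essentially a transcription of Theorem~\ref{zaeh} under the radial inversion $r\mapsto1/r$ underlying $B$. The only two points demanding attention are the coefficient computation above --- in particular that $B$ converts the decay profile $c_k r^{-n-k}$ into the monomial $c_k t^k$ --- and the justification that the null set where $R^*Bf$ may fail to vanish pulls back to a null set in $Z_n$ under $(\theta,t)\mapsto\theta/t$.
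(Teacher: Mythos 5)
Your proposal is correct and follows essentially the same route as the paper: the paper also proves this theorem by showing that the hypotheses on $f_{m,\mu}$ translate, under $B$, into $(Bf)_{m,\mu}(t)=0$ for $m=0,1$ and $(Bf)_{m,\mu}(t)=\sum' c_k t^k$ for $m\ge 2$ (using evenness and the parity of $m-k$ to cover $t<0$), then invokes Theorem \ref{zaeh} to get $R^*Bf=0$ a.e.\ and transfers back via (\ref{jikbBU2}). Your extra remarks on the null-set pullback under $(\theta,t)\mapsto\theta/t$ and on verifying local integrability of $Bf$ via Lemma \ref{azw1a} are sound and merely make explicit what the paper leaves implicit.
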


\begin{theorem} \label{zaehle4RA}  Let $I_i (f)\!<\!\infty$; $i\!=\!1,2$.
 Suppose that $(Rf)(\theta, t)\!=\!0$  almost everywhere on $Z_n$.  Then each  Fourier-Laplace coefficient $f_{m,\mu} (r)$ is a finite linear combination of functions $r^{-n-k}$, $k=0,1, \ldots$,  and the following statements hold.

\noindent (i) If $m=0,1$, then $f_{m,\mu} (r)\equiv 0$.

\noindent (ii)  If $m\ge 2$ and $f \neq 0$,  then $f_{m,\mu} (r)\not\equiv 0$  for at least one pair $(m,\mu)$. For every such pair, $f_{m,\mu} (r)$ has the form (\ref{azw2}).
\end{theorem}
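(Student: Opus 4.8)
The plan is to reduce the statement to its dual counterpart, Theorem \ref{zaehle4}, by passing to the auxiliary function $\vp=Bf$, where $B$ is the reciprocal substitution of Lemma \ref{iozesf}. First I would record the structural facts about $\vp$. By Lemma \ref{azw1a}, the hypothesis $I_1(f)<\infty$ gives $\vp=Bf\in L^1_{loc}(Z_n)$, and the additional hypothesis $I_2(f)<\infty$ gives $\vp\in S'(Z_n)$; a direct check from $(Bf)(\theta,t)=|t|^{-n}f(\theta/t)$ shows that $\vp$ is even, which is the standing assumption in Theorem \ref{zaehle4}. Since $\vp\in L^1_{loc}(Z_n)$, the dual transform $R^*\vp$ is finite almost everywhere by Corollary \ref{gttgzuuuuh}. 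Next I would use the conjugacy formula (\ref{jikbBU2}), namely $(Rf)(\theta,t)=(\sig_{n-1}/2|t|)\,(R^*\vp)(\theta/t)$; because the map $(\theta,t)\mapsto \theta/t$ sweeps out $\rn\setminus\{0\}$ as $(\theta,t)$ ranges over $Z_n$ with $t\neq 0$, the hypothesis $(Rf)(\theta,t)=0$ a.e.\ on $Z_n$ is equivalent to $(R^*\vp)(x)=0$ a.e.\ on $\rn$. Thus $\vp$ meets all the hypotheses of Theorem \ref{zaehle4}.

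The key computation, which I expect to be the main point requiring care, is the relation between the Fourier-Laplace coefficients of $\vp$ and those of $f$. Writing
\[
\vp_{m,\mu}(t)=|t|^{-n}\intl_{S^{n-1}} f(\theta/t)\,Y_{m,\mu}(\theta)\,d\theta
\]
and decomposing $f(\theta/t)=f((1/t)\theta)$ in spherical harmonics (the point $(1/t)\theta$ has modulus $1/|t|$ and direction $\pm\theta$ according to $\sgn t$), the orthonormality of $\{Y_{m,\mu}\}$ together with $Y_{m,\mu}(-\theta)=(-1)^m Y_{m,\mu}(\theta)$ yields, for $t>0$,
\[
\vp_{m,\mu}(t)=t^{-n}\,f_{m,\mu}(1/t),
\]
with the factor $(-1)^m$ appearing for $t<0$, consistent with the evenness of $\vp$. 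The signs and the change of the radial variable must be tracked precisely here, but no genuine difficulty arises beyond this bookkeeping.

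With this relation in hand the conclusion is immediate. Theorem \ref{zaehle4} asserts that every $\vp_{m,\mu}$ is a polynomial; substituting $t=1/r$ into $f_{m,\mu}(r)=r^{-n}\vp_{m,\mu}(1/r)$ turns a polynomial $\vp_{m,\mu}(t)=\sum_k c_k t^k$ into $f_{m,\mu}(r)=\sum_k c_k\,r^{-n-k}$, the asserted finite linear combination of the functions $r^{-n-k}$, $k=0,1,\dots$. For part (i), the vanishing $\vp_{m,\mu}\equiv 0$ for $m=0,1$ transfers directly to $f_{m,\mu}\equiv 0$. For part (ii), since $B$ is a diffeomorphism, $f\neq 0$ is equivalent to $\vp\neq 0$; then Theorem \ref{zaehle4}(ii) supplies a pair $(m,\mu)$ with $m\ge 2$ and $\vp_{m,\mu}\not\equiv 0$, hence $f_{m,\mu}\not\equiv 0$, and guarantees that every nonvanishing coefficient with $m\ge 2$ has the form (\ref{KOKLI}). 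Under the substitution $t=1/r$ this is precisely the form (\ref{azw2}), which completes the deduction.
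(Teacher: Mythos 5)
Your proposal is correct and follows essentially the same route as the paper: the paper likewise applies Lemma \ref{azw1a} to place $Bf$ in $L^1_{loc}(Z_n)\cap S'(Z_n)$, transfers $Rf=0$ to $R^*Bf=0$ via (\ref{jikbBU2}), invokes Theorem \ref{zaehle4}, and converts $(Bf)_{m,\mu}(t)=t^{-n}f_{m,\mu}(1/t)$ back through the substitution $t=1/r$ to obtain (\ref{azw2}). The bookkeeping of signs for $t<0$ that you flag is handled in the paper exactly as you describe, using $Y_{m,\mu}(-\theta)=(-1)^m Y_{m,\mu}(\theta)$ and the evenness of $m-k$.
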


{\it Proof of Theorems \ref{azw1a2}   and \ref{zaehle4RA}. }  \ If  $Rf=0$  a.e. on $Z_n$, then $R^* Bf=0$ a.e. on $\rn$. Hence, by  Theorem \ref{zaehle4},  for $t>0$ we have
\[(Bf)_{m,\mu} (t)\!=\!t^{-n} \!\!\intl_{S^{n-1}} \!\!f \left (\frac{\theta}{t}\right )\, Y_{m,\mu} (\th)\, d\th \!=\!\left \{ \begin{array} {ll} 0  \quad &{\rm if} \!\quad m=0,1,\\
\sum\limits^{m-2}_{k=0}{}^{'} c_{k}\, t^{k} &{\rm if}\! \quad m\ge2,
\end{array}\right.\]
where  $\sum^{'}$ includes only those terms for which $m-k$ is even. Changing variable $t=1/r$, we obtain (\ref{azw2}). Conversely, if  $f_{m,\mu} (r)=0$ for  $m=0,1$, and (\ref{azw2}) holds  for  $m\ge 2$, then $(Bf)_{m,\mu} (t)=0$ if $ m=0,1$, and
 $(Bf)_{m,\mu} (t)=\sum\limits^{m-2}_{k=0}{}^{'} c_{k}\, t^{k}$ if $ m\ge2$.
The last equality  is obvious for $t>0$. If $t<0$, then
\bea (Bf)_{m,\mu} (t)&=&\intl_{S^{n-1}} (Bf)(\theta, t)\, Y_{m,\mu} (\th)\, d\th\nonumber\\
&=&(-1)^m \intl_{S^{n-1}} (Bf)(\theta, |t|)\, Y_{m,\mu} (\th)\, d\th\nonumber\\
&=&(-1)^m \sum\limits^{m-2}_{k=0}{}^{'} c_k\, |t|^{k}=\sum\limits^{m-2}_{k=0}{}^{'} c_{k}\, t^{k}\nonumber\eea
because  $m-k$ is even. Hence, by Theorem \ref{zaeh},  $R^* Bf=0$ a.e. on $\rn$ and therefore, by (\ref{jikbBU2}), $Rf=0$ a.e. on $Z_n$.
\hfill $\square$

\begin{example} {\rm Consider the function $f(x)= |x|^{-n} Y_2 (x/|x|)$, $x\neq 0$, where $Y_2$ is a spherical harmonic of degree $2$. This function
 has a non-integrable singularity at the origin  and the integrals of $f$ over hyperplanes through the origin are not absolutely convergent.
 However,  $(Rf)(\th, t)$ is represented by an absolutely convergent integral
for all $ (\th, t)$ with  $t\neq 0$
 and is continuous on the open half-cylinders $C_{\pm}=\{(\th, t)\in Z_n:\, \pm t> 0\}$. Since $f$
 obeys (\ref{azw1}) with any $N>1$, then, by Theorem \ref{azw1a2}, $(Rf)(\th, t)\equiv 0$ in $C_{\pm}$. The latter means that, by continuity, we can also set $(Rf)(\th, t)\equiv 0$   at the points of the form $(\th,0)$, $\th\in S^{n-1}$.
}
\end{example}

\begin{remark}\label {affine in} {\rm We remind the reader that the Radon transform in our treatment is defined assuming that the space $\rn$ has the Euclidean structure. It means that  the origin $(0, \ldots 0)$ is fixed. Theorems \ref{zaeh} and \ref{azw1a2} are formulated in accordance with this structure. Hence, they are not affine invariant.
}
\end{remark}

\subsubsection{Support Theorems}
Theorems \ref{zaeh} and \ref{azw1a2} yield the following  versions of Helgason's support theorem; cf. \cite [p. 10]{H11}. For  $a>0$, we denote
\[ B_a^+ =\{ x\in \bbr^n: |x|<a\}, \qquad   B_a^- =\{ x\in \bbr^n: |x|>a\}, \]
\[  C_a^+ =\{ (\th, t)\in Z_n: |t|<a\}, \qquad C_a^- =\{ (\th, t)\in Z_n: |t|>a\}.\]

\begin{theorem} \label{azw1a2R}  Let $a>0$. If $f(x)=0$ for almost all $x\in B_a^-$, then $(Rf)(\th, t)\!=\!0$ a.e. on  $C_a^-$. Conversely, if \be\label{osllsg} \intl_{B_a^-} |f(x)|\,|x|^m\, dx <\infty\quad \forall \, m\in \bbn\ee
and $(Rf)(\th, t)\!=\!0$ a.e. on  $C_a^-$, then  $f(x)=0$ for almost all $x\in B_a^-$.
\end{theorem}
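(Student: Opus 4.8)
The plan is to split the statement into its two implications: the forward direction is essentially definitional, while the converse I would reduce, via the rotation covariance of $R$, to a one-dimensional unilateral injectivity statement for the Gegenbauer--Chebyshev integrals. For the forward direction, if $f(x)=0$ for a.e.\ $x\in B_a^-$, then for every $(\th,t)\in C_a^-$ the hyperplane $\{x: x\cdot\th=t\}$ consists of points with $|x|\ge |t|>a$, hence lies in $B_a^-$, so $(Rf)(\th,t)=0$; under (\ref{osllsg}) these integrals converge absolutely for a.e.\ such hyperplane by the argument of Theorem \ref{byvs1} restricted to the exterior region. All the substance is in the converse.

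For the converse I would expand $f$ in spherical harmonics and work with the Fourier--Laplace coefficients $f_{m,\mu}(r)$. Since $|f_{m,\mu}(r)|\le c\int_{S^{n-1}}|f(r\th)|\,d\th$, the hypothesis (\ref{osllsg}) yields $\int_a^\infty|f_{m,\mu}(r)|\,r^{M}\,dr<\infty$ for every $M\ge 0$ and every $a>0$; in particular the decay condition (\ref{oollazs}) of Lemma \ref{ppo9j} holds for each component. Applying Lemma \ref{ppo9j} to the projection of $f$ onto the $(m,\mu)$-harmonic then gives, for $t>0$,
\[(Rf)_{m,\mu}(t)=\pi^{\lam+1/2}\,(\G^{\lam,m}_{-}f_{m,\mu})(t),\qquad \lam=\tfrac{n-2}{2},\]
with $\T_-^m$ replacing $\G^{\lam,m}_-$ when $n=2$. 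Integrating the assumption $(Rf)(\th,t)=0$ on $C_a^-$ against $Y_{m,\mu}$ shows $(Rf)_{m,\mu}(t)=0$ for a.e.\ $|t|>a$, so that $(\G^{\lam,m}_-f_{m,\mu})(t)=0$ for a.e.\ $t>a$ and every pair $(m,\mu)$.

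The remaining, and genuinely delicate, step is to pass from the vanishing of $\G^{\lam,m}_-f_{m,\mu}$ on $(a,\infty)$ to the vanishing of $f_{m,\mu}$ there; this is where the unilateral (support-to-the-right) structure of the operators is decisive. Each operator in the inversion of Corollary \ref{mlpzx}---the companion integral $\stackrel{*}{\G}\!{}_{\!-}^{\!\lam,m}$ of (\ref{4gt6a1}), the Riemann--Liouville integral $I_-^{2\lam+1}$, and the derivative $\Cal D^{2\lam+1}_-$---has the property that its value at $t$ depends only on the argument on $(t,\infty)$. Writing $g=\G^{\lam,m}_-f_{m,\mu}$ and noting that (\ref{osllsg}) supplies exactly the integrability (\ref{for10zgn}) required for the composition identity (\ref{8vcmk}) of Lemma \ref{89n6g}, I get $2^{2\lam+1}(I_-^{2\lam+1}f_{m,\mu})(t)=(\stackrel{*}{\G}\!{}_{\!-}^{\!\lam,m}g)(t)=0$ for a.e.\ $t>a$ when $m\ge 2$; for $m=0,1$ the same follows from (\ref{4gt6a8}) and the right-sidedness of the Erd\'elyi--Kober operator, and the case $n=2$ is identical using (\ref{8vcmkt}) and $I_-^1$. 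Applying the right-sided left inverse $\Cal D^{2\lam+1}_-$ then forces $f_{m,\mu}(t)=0$ for a.e.\ $t>a$. Since $\{Y_{m,\mu}\}$ is complete in $L^2(S^{n-1})$ and $f_r\in L^1(S^{n-1})$ for a.e.\ $r$, vanishing of all coefficients gives $f=0$ a.e.\ on $B_a^-$. I expect the main obstacle to be precisely this localization: the inversion formulas are stated on all of $(0,\infty)$, and one must verify that the support-preserving character of the entire chain lets them be restricted to the exterior $(a,\infty)$, with (\ref{osllsg}) being just strong enough to validate Lemma \ref{89n6g} for all harmonic degrees at once.
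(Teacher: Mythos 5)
Your converse is, in substance, the paper's own argument: the paper integrates the hypothesis against $Y_{m,\mu}$ (using (\ref{duas3}) to justify the interchange and get $(Rf)_{m,\mu}(t)=0$ for a.e.\ $|t|>a$), identifies $(Rf)_{m,\mu}$ with $\pi^{\lam+1/2}\,\G^{\lam,m}_{-}f_{m,\mu}$ via Lemma \ref{ppo9j}, and then invokes the packaged inversion of Theorem \ref{recon65} (i.e.\ Corollary \ref{mlpzx}) to conclude $f_{m,\mu}=0$ a.e.\ on $(a,\infty)$; you merely unpack that inversion into Lemma \ref{89n6g} followed by $\Cal D^{2\lam+1}_{-}$, and the localization to $(a,\infty)$ that you flag as the delicate point is indeed what the unilateral structure delivers and what makes the paper's citation of Theorem \ref{recon65} legitimate. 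Two repairs are needed, though. First, in the forward direction your pointwise claim ``the hyperplane lies in $B_a^-$, so $(Rf)(\th,t)=0$'' is not valid for \emph{every} $(\th,t)\in C_a^-$: the hypothesis $f=0$ a.e.\ with respect to Lebesgue measure on $\rn$ does not force $f$ to vanish $(n-1)$-dimensionally a.e.\ on each individual hyperplane (take $f$ to be the indicator of one hyperplane outside the ball). The a.e.-in-$(\th,t)$ conclusion requires a Fubini-type argument, and this is exactly what the paper supplies by applying the identity (\ref{duas3}) to $|f_a|$, $f_a=f\chi_{B_a^-}$, to obtain $R[|f_a|]=0$ a.e.\ on $Z_n$; no appeal to (\ref{osllsg}) is needed or available in that direction. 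Second, at the end, completeness of $\{Y_{m,\mu}\}$ in $L^2(S^{n-1})$ does not by itself give uniqueness of Fourier--Laplace coefficients for the $L^1$ functions $f_r$; the paper closes this step with the Poisson-integral argument from the proof of Theorem \ref{zaeh} (equivalently, uniform density of spherical polynomials in $C(S^{n-1})$), which you should import.
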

\begin{proof} The first statement is obvious if $f$ is continuous. In the general case we set $f_a (x)=f (x)$ if $|x|>a$ and $0$ otherwise. By (\ref{duas3}),
\be\label{duas3SZ} \intl_{Z_n} \frac{(R [|f_a|])(\theta,
t)}{(1+t^2)^{n/2}}\,d_*\theta dt= \intl_{\bbr^n}
\frac{|f_a(x)|}{(1+|x|^2)^{1/2}}\,dx. \ee
Since the right-hand side is zero, then so is the left-hand side, and, therefore, $R [|f_a|]=0$ a.e. on $Z_n$. Hence, $Rf=0$ a.e. on $C_a^-$.

Conversely, if  $f$ obeys (\ref{osllsg}), then, by (\ref{duas3}), the integral \[\intl_a^b dt \intl_{\sn} |(Rf)(\th, t)|\, d\th\] is finite for all $b\in (a,\infty)$. Hence,
\be\label {lp45SW} \intl_a^b |(Rf)_{m,\mu} (t)|\, dt\le c \intl_a^b \intl_{\sn} |(Rf)(\th, t)|\, d\th dt<\infty.\ee
If $(Rf)(\th, t)=0$ for almost all $(\th, t)\in C_a^-$, then the right-hand side of (\ref{lp45SW}) equals zero for all $b>a$.  Hence,  the left-hand side is also zero and, therefore,  $(Rf)_{m,\mu} (t)=0$  for almost all $t\notin (-a, a)$. By
Theorem \ref {recon65}, it follows that $f_{m,\mu} (r)=0$  for almost all $r>a$. Invoking the Poisson integral, as in the proof of Theorem
\ref{zaeh}, we conclude that  $f(x)=0$ a.e. whenever $|x|>a$.
\end{proof}

\begin{theorem} \label{azw1a2R2}    Let $\vp$ be an even  function on $Z_n$, $a>0$. If $\vp(\th,t)\!=\!0$ a.e. on $C_a^+$, then   $(R^*\vp)(x)\!=\!0$ a.e. on $B_a^+$. Conversely, if
 \be\label{osllsgdu} \intl_{C_a^+} |\vp(\th,t)|\,|t|^{-m}\, dtd\th  <\infty \quad \forall \,m\in \bbn\ee
and $(R^*\vp)(x)\!=\!0$ a.e. on $B_a^+$, then  $\vp(\th,t)\!=\!0$ a.e. on $C_a^+$.
\end{theorem}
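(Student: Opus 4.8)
The plan is to deduce both implications from the analogous result for the Radon transform, Theorem~\ref{azw1a2R}, using the projective equivalence of $R$ and $R^*$ from Lemma~\ref{iozesf}. I would set $g=A\vp$, where $(A\vp)(x)=|x|^{-n}\vp(x/|x|,1/|x|)$, so that by (\ref{jikb})
\be\label{transfPLAN} (R^*\vp)(x)=\frac{2}{|x|\,\sig_{n-1}}\,(Rg)\Big(\frac{x}{|x|},\frac{1}{|x|}\Big),\qquad x\neq 0.\ee
The substitution $x\mapsto(x/|x|,1/|x|)$ is a diffeomorphism of $\rn\setminus\{0\}$ onto $\sn\times(0,\infty)$ with smooth, nonvanishing Jacobian, so it transports Lebesgue-null sets to null sets in both directions; moreover it carries $\{0<|x|<a\}$ onto $\{t>1/a\}$ and $\{|x|>1/a\}$ onto $\{0<t<a\}$. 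Since $\vp$ is even, the vanishing of $\vp$ a.e. on $C_a^+$ is equivalent to the vanishing of $g$ a.e. on $B_{1/a}^-$, and by (\ref{transfPLAN}) the vanishing of $R^*\vp$ on $B_a^+$ is equivalent to the vanishing of $Rg$ on $C_{1/a}^-$.

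For the forward implication I would argue as follows. Assuming $\vp=0$ a.e. on $C_a^+$ gives $g=0$ a.e. on $B_{1/a}^-$, and the forward part of Theorem~\ref{azw1a2R} (with radius $1/a$) then yields $Rg=0$ a.e. on $C_{1/a}^-$; feeding this into (\ref{transfPLAN}) gives $(R^*\vp)(x)=0$ for a.e. $x\in B_a^+$. The one subtle point, exactly as in Theorem~\ref{azw1a2R}, is that $(R^*\vp)(x)=\int_{\sn}\vp(\th,x\cdot\th)\,d_*\th$ integrates $\vp$ over a set of measure zero in $Z_n$, so a.e. vanishing of $\vp$ does not by itself kill the slice integral. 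I would dispose of this by passing to $|\vp|$ and using the positive integral identity obtained from (\ref{duas3}) under the substitution of Lemma~\ref{iozesf}, which forces $\int_{\sn}|\vp(\th,x\cdot\th)|\,d_*\th=0$ for a.e. $x\in B_a^+$.

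For the converse I would again transfer to Theorem~\ref{azw1a2R}, now using its converse part with radius $1/a$. The hypothesis $R^*\vp=0$ a.e. on $B_a^+$ translates to $Rg=0$ a.e. on $C_{1/a}^-$, and it remains to check the growth condition $\int_{|x|>1/a}|g(x)|\,|x|^m\,dx<\infty$ for every $m$. Passing to polar coordinates and substituting $t=1/r$ gives
\be\label{powershift}
\intl_{|x|>1/a}|g(x)|\,|x|^m\,dx=\intl_{\sn}\intl_0^a |\vp(\th,t)|\,t^{-m-1}\,dt\,d\th,
\ee
which is finite by (\ref{osllsgdu}) (applied with $m$ replaced by $m+1$, the even symmetry of $\vp$ identifying the right-hand side with half of $\intl_{C_a^+}|\vp|\,|t|^{-m-1}$). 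Theorem~\ref{azw1a2R} then gives $g=0$ a.e. on $B_{1/a}^-$, and the null-preserving substitution returns $\vp=0$ a.e. on $C_a^+$.

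The crux of the argument is the power shift visible in (\ref{powershift}): the weight $|x|^m$ natural for $R$ corresponds to the weight $|t|^{-m-1}$ for $R^*$, so the assumption (\ref{osllsgdu}) imposed for all $m$ is precisely what Theorem~\ref{azw1a2R} demands. I expect the main obstacle to be the careful handling of the measure-zero slice integrals and of the Lebesgue-sense existence required in (\ref{transfPLAN}); both should be resolvable exactly as in the proof of Theorem~\ref{azw1a2R}, with the positive identity (\ref{duas3}) serving as the device that converts a.e. hypotheses into pointwise vanishing.
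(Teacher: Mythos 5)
Your argument is correct and is exactly the route the paper takes: the paper's proof of Theorem \ref{azw1a2R2} consists of the single sentence that the statement follows from Theorem \ref{azw1a2R} via (\ref{jikb}), and your write-up simply supplies the details (the null-set-preserving change of variables, the passage to $|\vp|$ via (\ref{duas3}), and the weight computation showing that $|x|^m$ for $g=A\vp$ corresponds to $|t|^{-m-1}$ for $\vp$) that the paper leaves implicit.
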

\begin{proof}   The statement  follows from the previous theorem by (\ref{jikb}).
\end{proof}
\begin{remark}\label{his impl} {\rm  The condition (\ref{osllsg}) gives an example of a class of functions for which the implication
\be \label{osllsgdu5}\text{\rm $(Rf)(\th, t)\!=\!0$  on  $C_a^-$} \quad  \Longrightarrow \quad  \text{\rm $f(x)\!=\!0$ on  $B_a^-$} \ee
is true. However, in general, this implication does not hold. For example, every function of the form
\[f(x)=Y_m (x')\sum\limits^{m-2}_{k=0}{}^{'} \; \frac{c_{k}}{|x|^{n+k}},\qquad x'=x/|x|,\]
where  $\sum^{'}$ includes only those terms for which $m-k$ is even, has the vanishing Radon transform on $C_a^-$; cf. Theorem \ref{azw1a2}. On the other hand, a rapid decrease of $f$  is  not necessary  for (\ref{osllsgdu5}), as can be easily seen, by taking functions of the form $f(x)=Y_m (x/|x|) \,g(|x|)$ with $m=0,1$; cf.
Theorem \ref{recon65}.  A similar remark can be addressed to  Theorem  \ref{azw1a2R2}. }
\end{remark}

\section{Spheres through the Origin}\label{Cormack-Quinto}

Below we consider the spherical mean Radon-like  transform which assigns to a function $f$ on  $\rn$ the integrals of $f$ over  spheres passing to the origin. This transform is defined by the formula
\be\label {Corma} (\Q f) (x)=\intl_{\sn} f (x+|x|\,\th)\, d_*\th, \ee
 where $d_*\th$ is the normalized surface element,  so that $\int_{\sn}d_*\th=1$. Thus, $f$ is integrated in (\ref{Corma})
  over the sphere of radius $|x|$ with center at $x$.
 There is a remarkable connection between
 (\ref{Corma}) and  the Darboux equation. Specifically,  in the classical
 Cauchy problem for the Darboux equation we are looking for  a function $u(x,t)$ satisfying
\begin{equation}\label{EPD.1CQ}
\Delta u-u_{tt}-\frac{n-1}{t}\,u_{t}\!
=\!0, \qquad u(x,0)\!=\!f(x),\quad u_{t}(x,0)\!=\!0.
\end{equation}%
Here $x\in \rn$, $t>0$, $f$ is a given function. Now consider the  inverse problem: \textit{ Given   the trace $u\left( x,|x|\right)  $ of  the solution  of  (\ref{EPD.1CQ})  on the cone   $t=|x|$,  reconstruct the initial function $f(x)$. }
It is known that the solution  of the Cauchy problem  (\ref{EPD.1CQ}) has the form
\be\label {mf}
u(x,t)=\intl_{S^{n-1}} f (x+t\theta)\, d_*\theta;\ee
see, e.g.,  \cite [p. 699]{CH}. Hence, the above inverse problem reduces to reconstruction of $f$ from  $(\Q f) (x)$.

The  study of the operator  (\ref{Corma}) relies on the  following connection between $(\Q f) (x)$, the dual Radon
transform (\ref{durt}), and the Radon transform (\ref{rtra1}).
\begin{lemma} \label {CormaQ} Let $n\ge 2$. Then
\be\label {Corma2}
(\Q f)  (x)=  |x|^{2-n}(R^*\vp)(x), \qquad \vp (\th, t)=(2|t|)^{n-2} \,
  f(2t\th),\ee
  and
\be\label {Corma2a}
\!\!\!\!(\Q f)  (x)\!=\!  |x|^{1-n}(R\psi)\left(\frac{x}{|x|}, \frac{1}{|x|} \right), \quad \psi (x)\!=\!\frac{2^{n-1}}{\sig_{n-1}} |x|^{2-2n}
f\left(\frac{2x}{|x|^2} \right),\ee
provided that  either side of the corresponding equality exists in the Lebesgue sense.
\end{lemma}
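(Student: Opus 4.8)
The plan is to prove the representation (\ref{Corma2}) by an explicit change of variables on the sphere over which $\Q f$ is integrated, and then to obtain (\ref{Corma2a}) as a formal consequence of (\ref{Corma2}) together with the interrelation between $R^*$ and $R$ supplied by Lemma \ref{iozesf}.

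For (\ref{Corma2}) I would first reparametrize the integration sphere $\{y:\, |y-x|=|x|\}$ by the directions of its points. Since $|y-x|=|x|$ is equivalent to $|y|^2=2\,x\cdot y$, each $\phi\in\sn$ with $x\cdot\phi>0$ corresponds to exactly one nonzero point $y=2(x\cdot\phi)\phi$ of this sphere. Writing $x=|x|\xi$ with $\xi\in\sn$ and $y=x+|x|\om$, a direct computation gives $\xi\cdot\phi=\cos(\gamma/2)$, where $\gamma$ is the angle between $\om$ and $\xi$; thus the map $\om\mapsto\phi$ halves the polar angle measured from $\xi$ and preserves the azimuthal direction. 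Passing to spherical coordinates about $\xi$ and using the identity $\sin^{n-2}\gamma\,d\gamma=2^{n-1}\sin^{n-2}\beta\,\cos^{n-2}\beta\,d\beta$ with $\beta=\gamma/2$, I would evaluate the Jacobian, obtaining
\[
d_*\om=2^{n-1}\cos^{n-2}\beta\,d_*\phi=2^{n-1}|x|^{2-n}(x\cdot\phi)^{n-2}\,d_*\phi,
\]
since $\cos\beta=\xi\cdot\phi=(x\cdot\phi)/|x|$.

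As $\om$ runs over $\sn$, the direction $\phi$ runs once over the hemisphere $\{x\cdot\phi>0\}$, so (\ref{Corma}) becomes
\[
(\Q f)(x)=|x|^{2-n}\cdot 2\intl_{x\cdot\phi>0}(2(x\cdot\phi))^{n-2}\,f(2(x\cdot\phi)\phi)\,d_*\phi.
\]
On the other hand, inserting $\vp(\th,t)=(2|t|)^{n-2}f(2t\th)$ into $(R^*\vp)(x)=\intl_{\sn}\vp(\th,x\cdot\th)\,d_*\th$ and noting that $\th$ and $-\th$ produce the same point $2(x\cdot\th)\th$ and the same weight $(2|x\cdot\th|)^{n-2}$, I would fold the integral onto $\{x\cdot\th>0\}$, arriving at precisely the right-hand side above. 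This yields (\ref{Corma2}).

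For (\ref{Corma2a}) I would combine (\ref{Corma2}) with the identity (\ref{jikb}), namely $(R^*\vp)(x)=\tfrac{2}{|x|\,\sig_{n-1}}(RA\vp)(x/|x|,1/|x|)$, to get $(\Q f)(x)=\tfrac{2}{\sig_{n-1}}|x|^{1-n}(RA\vp)(x/|x|,1/|x|)$; it then remains to check that $\tfrac{2}{\sig_{n-1}}A\vp=\psi$. Substituting into $(A\vp)(x)=|x|^{-n}\vp(x/|x|,1/|x|)$ gives $(A\vp)(x)=2^{n-2}|x|^{2-2n}f(2x/|x|^2)$, whence $\tfrac{2}{\sig_{n-1}}(A\vp)(x)=\tfrac{2^{n-1}}{\sig_{n-1}}|x|^{2-2n}f(2x/|x|^2)=\psi(x)$, as claimed. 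The only delicate point is the Jacobian computation, which rests on the half-angle relation $\gamma=2\beta$; the Lebesgue-sense proviso is handled by running the same computation with $|f|$ in place of $f$, which shows that the two sides of each identity are simultaneously finite and thereby legitimizes the changes of variables.
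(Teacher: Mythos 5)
Your proof is correct and follows essentially the same route as the paper: identity (\ref{Corma2}) is verified by the half-angle change of variables on the sphere (your inscribed-angle parametrization $d_*\om = 2^{n-1}\cos^{n-2}\beta\, d_*\phi$ is just the geometric form of the paper's substitution $t=2s^2-1$ in the slice integral, since $\cos\gamma = 2\cos^2\beta-1$), and (\ref{Corma2a}) is then deduced from (\ref{jikb}) exactly as in the paper. The only point worth making explicit is that (\ref{jikb}) is being applied to $\vp(\th,t)=(2|t|)^{n-2}f(2t\th)$, which is automatically even under $(\th,t)\mapsto(-\th,-t)$, as the derivation of (\ref{jikb}) requires.
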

\begin{proof} The formula (\ref{Corma2}) is due to Cormack and Quinto up to a minor change of notation; cf.  \cite[formula (11)]{CoQ}. To prove it, let $x=r\eta$, $r>0$, $\eta \in \sn$. Then  (\ref{Corma2}) becomes
\be\label {Corma1}
(\Q f)  (x)=2^{n-2}\intl_{\sn} f (2r (\eta \cdot \th)\, \th )\, |\eta \cdot \th|^{n-2} \, d_*\th. \ee
Choose $\gam \in O(n)$  so that $\eta=\gam e_n$. Changing variable $\th=\gam \xi$ and setting $f_{r,\gam} (x)=f(r \gam  x)$, we have
\bea
&&(\Q f)  (x)\!=\!\intl_{\sn} f (r \gam e_n \!+\!r \gam \xi)\, d_*\xi\!=\!\!\intl_{\sn} f_{r,\gam}  ( e_n + \xi)\, d_*\xi\nonumber\\
&&=\frac{1}{\sig_{n-1}}\intl_{-1}^1 (1-t^2)^{(n-3)/2} dt\intl_{S^{n-2}}\!\!f_{r,\gam} (\sqrt {1-t^2} \, \eta +(1+t)\, e_n)\, d\eta.\nonumber\eea
Put $t=2s^2 -1$. This gives
\bea
&&\!\!\!\!\!\!(\Q f)  (x)\!=\! \frac{2^{n-1}}{\sig_{n-1}}\intl_{0}^1 (1\!-\!s^2)^{(n-3)/2}\, s^{n-2}\, ds\!\intl_{S^{n-2}}f_{r,\gam} (2s(\sqrt{1\!-\!s^2} \,\eta \!+\!s e_n))\,d\eta\nonumber\\
&&=2^{n-2} \intl_{\sn}  f_{r,\gam} (2\xi\, (\xi \cdot e_n))\, |\xi \cdot e_n|^{n-2}\, d_*\xi.\nonumber\eea
The last expression coincides with the right-hand side of (\ref{Corma1}).  The equality (\ref{Corma2a})  follows from (\ref{Corma2}) and (\ref{jikb}).
\end{proof}

The following existence result is a consequence of (\ref{Corma2}) and Corollary \ref{gttgzuuuuh} (one can alternatively use (\ref{Corma2a}) and Theorem \ref{byvs1}).
\begin{theorem}\label {llyinteg} If
\be\label {Corma3} \intl_{|x|<a} \frac{|f  (x)|}{|x|}\, dx <\infty\quad \forall \, a>0,\ee
then $(\Q f)(x)$ is finite for almost all $x$. If $f$ is nonnegative, radial, and (\ref{Corma3}) fails  for some $a>0$, then $(\Q f)(x)\equiv \infty$.
\end{theorem}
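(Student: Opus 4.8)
The plan is to read everything off the factorization (\ref{Corma2}), which expresses $\Q$ as the dual Radon transform of the auxiliary function $\vp(\th,t)=(2|t|)^{n-2}f(2t\th)$ up to the scalar factor $|x|^{2-n}$, and then to invoke Corollary \ref{gttgzuuuuh} for both halves of the statement.

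First I would treat the existence assertion. Since $|x|^{2-n}$ is finite for every $x\neq 0$ and the origin is a null set, it is enough to show that $(R^*\vp)(x)$ is finite almost everywhere; by the first part of Corollary \ref{gttgzuuuuh} this follows once $\vp\in L^1_{loc}(Z_n)$. To check the latter I would fix $b>0$ and estimate $\intl_{\sn}d\th\intl_{-b}^b |\vp(\th,t)|\,dt$. The substitution $u=2t$ turns the weight $(2|t|)^{n-2}$ into $|u|^{n-2}$; splitting into $u>0$ and $u<0$ and using the symmetry $\th\mapsto-\th$ of the sphere, each half becomes $\tfrac12\intl_{\sn}d\th\intl_0^{2b}u^{n-2}|f(u\th)|\,du$. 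Writing $x=u\th$ with $dx=u^{n-1}\,du\,d\th$ collapses this to $\tfrac12\intl_{|x|<2b}|f(x)|\,|x|^{-1}\,dx$, so the whole local integral equals $\intl_{|x|<2b}|f(x)|\,|x|^{-1}\,dx$, which is finite by (\ref{Corma3}) with $a=2b$. Hence $\vp\in L^1_{loc}(Z_n)$ and $\Q f$ is finite a.e.

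For the divergence assertion I would specialize to nonnegative radial $f=f_0(|x|)$. Then $f(2t\th)=f_0(2|t|)$ is independent of $\th$, so $\vp(\th,t)\equiv\vp_0(t)=(2|t|)^{n-2}f_0(2|t|)\ge0$, which is exactly the situation covered by the second part of Corollary \ref{gttgzuuuuh}. It remains to convert the failure of (\ref{Corma3}) into the divergence hypothesis $\intl_0^{a'}\vp_0(t)\,dt=\infty$. For radial $f$ one has $\intl_{|x|<a}|f(x)|\,|x|^{-1}\,dx=\sig_{n-1}\intl_0^a f_0(r)\,r^{n-2}\,dr$, so (\ref{Corma3}) failing for some $a>0$ means this integral diverges; the substitution $u=2t$ then gives $\intl_0^{a/2}\vp_0(t)\,dt=\tfrac12\intl_0^{a}u^{n-2}f_0(u)\,du=\infty$. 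The corollary yields $(R^*\vp)(x)\equiv\infty$, and multiplying by the positive finite factor $|x|^{2-n}$ gives $(\Q f)(x)\equiv\infty$.

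The whole argument is essentially bookkeeping once (\ref{Corma2}) is available; the only point that requires care is keeping the change of variables $u=2t$ and the attendant powers of $2$ consistent, so that the local-integrability condition for $\vp$ matches (\ref{Corma3}) exactly and, dually, so that the failure of (\ref{Corma3}) reproduces the divergence condition $\intl_0^{a'}\vp_0(t)\,dt=\infty$. There is no serious analytic obstacle, since the finiteness/divergence dichotomy for $R^*$ has already been packaged into Corollary \ref{gttgzuuuuh}.
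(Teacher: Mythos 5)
Your proof is correct and follows exactly the route the paper takes: the paper derives Theorem \ref{llyinteg} precisely as a consequence of the factorization (\ref{Corma2}) together with Corollary \ref{gttgzuuuuh}, and your change-of-variables bookkeeping (showing $\vp\in L^1_{loc}(Z_n)$ is equivalent to (\ref{Corma3}), and that failure of (\ref{Corma3}) gives $\int_0^{a/2}\vp_0=\infty$) correctly fills in the details the paper leaves implicit.
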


Since every function in $L^p (\rn)$, $1\le p< n/(n-1)$, can be uniquely reconstructed from its Radon transform, the equality (\ref{Corma2a}) implies the following statement.
\begin{theorem} \label {llyinteg1}If
\be\label{gYYvtCo}
|x|^{2(n-1-n/p)} f(x) \in L^p (\rn), \qquad 1\le p< \frac{n}{n-1},\ee
then $f$ can be uniquely reconstructed from $\Q f$ by the formula
\be\label{gbDDDCo}
f(x)\!=\!2^{n-1}\sig_{n-1} |x|^{2-2n} (R^{-1} g)\left(\frac{2x}{|x|^2} \right), \quad g(\th, t)\!=\!t^{1-n} (\Q f)\left(\frac{\th}{t} \right),\ee
where $R^{-1}$ is the inverse Radon transform.
\end{theorem}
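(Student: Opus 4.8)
The plan is to read the statement off from the factorization (\ref{Corma2a}) together with the quoted injectivity of $R$ on $L^p(\rn)$, $1\le p<n/(n-1)$. Writing
\[
\psi (x)=\frac{2^{n-1}}{\sig_{n-1}}\, |x|^{2-2n} f\left(\frac{2x}{|x|^2}\right),
\]
the formula (\ref{Corma2a}) says $(\Q f)(x)=|x|^{1-n}(R\psi)(x/|x|,1/|x|)$. Thus the whole matter reduces to three points: (a) the hypothesis forces $\psi\in L^p(\rn)$; (b) consequently $\psi=R^{-1}(R\psi)$; and (c) $R\psi$ is exactly the function $g$ built from $\Q f$, after which $f$ is recovered by undoing the substitution defining $\psi$.

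For (a) I would carry out the change of variables $y=2x/|x|^2$. This map is an involution with $|x|=2/|y|$ and Jacobian $dx=2^{n}|y|^{-2n}\,dy$. Substituting into $\intl_{\rn}|\psi(x)|^p\,dx$, the weight $|x|^{p(2-2n)}$ becomes $|y|^{p(2n-2)}$ while the Jacobian contributes $|y|^{-2n}$, so the total power of $|y|$ is $p(2n-2)-2n=2p(n-1)-2n$, which is precisely $2p\,(n-1-n/p)$. Hence
\[
\|\psi\|_{L^p(\rn)}^p=c\,\intl_{\rn}\big||x|^{2(n-1-n/p)}f(x)\big|^p\,dx,
\]
with $c$ an explicit constant, and the right-hand side is finite by (\ref{gYYvtCo}). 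This change-of-variables bookkeeping — matching all the powers of $2$ and of $|x|$ so that the weight is exactly the one in (\ref{gYYvtCo}) — is the only genuinely computational step, and I expect it to be the main (though routine) obstacle.

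For (b), since $1\le p<n/(n-1)$ and $\psi\in L^p(\rn)$, the quoted uniqueness theorem gives that $\psi$ is determined by $R\psi$, i.e. $\psi=R^{-1}(R\psi)$; in particular $R\psi$ exists for almost all hyperplanes, which also legitimizes evaluating (\ref{Corma2a}) in the Lebesgue sense. For (c), setting $x=\th/t$ with $t>0$ in (\ref{Corma2a}) gives $|x|=1/t$ and
\[
(R\psi)(\th,t)=|x|^{n-1}(\Q f)(x)=t^{1-n}(\Q f)(\th/t)=g(\th,t),
\]
so $g=R\psi$ as even functions on $Z_n$ (the even extension demanded by $R^{-1}$ being the one automatically carried by $R\psi$). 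Therefore $\psi=R^{-1}g$. Finally, solving the defining relation for $f$ gives $f(2x/|x|^2)=(\sig_{n-1}/2^{n-1})\,|x|^{2n-2}\,(R^{-1}g)(x)$; substituting $z=2x/|x|^2$, so that $|x|=2/|z|$, $|x|^{2n-2}=2^{2n-2}|z|^{2-2n}$, and $x=2z/|z|^2$, and renaming $z$ as $x$, yields
\[
f(x)=2^{n-1}\sig_{n-1}\,|x|^{2-2n}\,(R^{-1}g)\!\left(\frac{2x}{|x|^2}\right),
\]
which is exactly (\ref{gbDDDCo}). This simultaneously establishes uniqueness and exhibits the inversion formula.
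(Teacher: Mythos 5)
Your proposal is correct and follows exactly the paper's (one-sentence) argument: combine the factorization (\ref{Corma2a}) with the injectivity of $R$ on $L^p(\rn)$, $1\le p<n/(n-1)$. Your change-of-variables check that $\|\psi\|_{L^p}^p = c\int \big||x|^{2(n-1-n/p)}f(x)\big|^p dx$ is the detail the paper leaves implicit, and your bookkeeping of the powers of $2$ and $|x|$ is right.
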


For example, $\Q$ is injective on the class of functions $f$ for which $|x|^{-2} f(x) \in L^1 (\rn)$. It is also injective on  the class
of all compactly supported continuous function on $\rn$. Every such function satisfies (\ref{gYYvtCo}) with $p$ sufficiently close to $n/(n-1)$.

The operator $\Q$ is not injective on the class of {\it all} functions $f$ satisfying (\ref{Corma3}).  The kernel of $\Q$  is described  in the next statement which follows from Theorems \ref{zaeh} and  \ref{zaehle4}.
We recall that the Fourier-Laplace coefficients of $f$ are defined by
\[ \label{zaehl1Qui}  f_{m,\mu} (r)=\intl_{S^{n-1}}f (r\th)\, Y_{m,\mu} (\th)\, d\th, \qquad r>0, \]
 and the inequality $f\neq 0$,  means that  the set $\{x \in \rn: f (x)\neq 0\}$ has positive measure.

\begin{theorem} \label{zaehQuin}
Let $f$  satisfy (\ref{Corma3}).

\noindent (i) \  Suppose that  $f_{m,\mu} (r)=0$ for almost all $r>0$ if $m=0,1$, and
\be \label{azw2INTROQ}
f_{m,\mu} (r)= \sum_{\substack{k=0 \\  m-k \,  even }}^{m-2} c_{k}\,r^{k}, \qquad c_{k}=\const,\ee
if $m\ge 2$. Then  $(\Q f)(x)=0$ for almost all $x\in \rn$.

\noindent (ii) \  Conversely, let $(\Q f)(x)=0$ for almost all $x\in \rn$. Suppose additionally that $f\in S'(\rn)$.
 Then each  Fourier-Laplace coefficient $f_{m,\mu} (r)$ is a finite linear combination of the power functions $r^{k}$, $k=0,1, \ldots$,  and the following statements hold.

\noindent {\rm (a)} If $m=0,1$, then $f_{m,\mu} (r)\equiv 0$.

\noindent {\rm (b)}  If $m\ge 2$ and $f \neq 0$,  then $f_{m,\mu} (r)\not\equiv 0$  for at least one pair $(m,\mu)$. For every such pair, $f_{m,\mu} (r)$ has the form (\ref{azw2INTROQ}).
\end{theorem}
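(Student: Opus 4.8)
The plan is to deduce both parts from the kernel description of the dual Radon transform in Theorems \ref{zaeh} and \ref{zaehle4}, via the projective identity (\ref{Corma2}) of Lemma \ref{CormaQ}. Since $|x|^{2-n}$ never vanishes for $x\neq 0$, the factorization $(\Q f)(x)=|x|^{2-n}(R^*\vp)(x)$ with the even function $\vp(\th,t)=(2|t|)^{n-2}f(2t\th)$ shows that $\Q f=0$ a.e. if and only if $R^*\vp=0$ a.e. The first step is therefore to record how the Fourier--Laplace coefficients of $\vp$ and $f$ are related. Inserting the spherical-harmonic expansion $f(r\th)=\sum f_{m,\mu}(r)Y_{m,\mu}(\th)$ into the definition of $\vp$ and using orthonormality of the basis, one obtains for $t>0$ the pointwise relation $\vp_{m,\mu}(t)=(2t)^{n-2}f_{m,\mu}(2t)$, the values for $t<0$ being fixed by the evenness of $\vp$, i.e. $\vp_{m,\mu}(-t)=(-1)^m\vp_{m,\mu}(t)$.

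For part (i) I would substitute the prescribed radial profiles into this relation. The vanishing of $f_{m,\mu}$ for $m=0,1$ forces $\vp_{m,\mu}\equiv 0$, while for $m\ge2$ the dilation $t\mapsto 2t$ together with the weight $(2t)^{n-2}$ carries each admissible monomial of $f_{m,\mu}$ onto a single power of $t$. The computational heart of (i) is to check that these powers land precisely on the admissible exponents in (\ref{KOKLI}), so that $\vp_{m,\mu}$ is exactly a kernel polynomial for $\G^{\lam,m}_{+}$ in the sense of Lemma \ref{marti}. Granting this, Theorem \ref{zaeh} gives $R^*\vp=0$ and hence $\Q f=0$; the existence of $\Q f$ as an absolutely convergent integral is guaranteed by Theorem \ref{llyinteg}, once (\ref{Corma3}) is verified for the given $f$.

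For the converse (ii) I would again pass to $R^*\vp=0$ and invoke Theorem \ref{zaehle4}. To apply it I must first check that $\vp$ is an even, locally integrable element of $S'(Z_n)$: local integrability follows from (\ref{Corma3}) exactly as in Theorem \ref{llyinteg}, and evenness is immediate. Granting $\vp\in S'(Z_n)$, Theorem \ref{zaehle4} makes each $\vp_{m,\mu}$ a polynomial that vanishes for $m=0,1$ and has the form (\ref{KOKLI}) for $m\ge 2$ whenever $f\neq 0$. Reading the relation $\vp_{m,\mu}(t)=(2t)^{n-2}f_{m,\mu}(2t)$ backwards (set $r=2t$ and divide by $(2t)^{n-2}$) then exhibits every $f_{m,\mu}$ as a finite linear combination of power functions and yields the stated form (\ref{azw2INTROQ}) together with the dichotomy (a)--(b).

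The main obstacle is the membership $\vp\in S'(Z_n)$ needed in (ii). The map $f\mapsto\vp$ is a pullback to polar-type coordinates combined with the weight $(2|t|)^{n-2}$, and the associated change of variables turns the pairing $(\vp,\psi)$ for $\psi\in S(Z_n)$ into $\tfrac12\int_{\rn}|x|^{-1}f(x)\,g_\psi(x)\,dx$, where $g_\psi$ inherits a $|x|^{-1}$-type singularity at the origin from the boundary value $\psi(\cdot,0)$. Thus temperedness of $\vp$ is \emph{not} automatic from (\ref{Corma3}) alone, and this is precisely where the extra hypothesis $f\in S'(\rn)$ enters. I expect to resolve it by testing only against the Semyanistyi--Lizorkin class $\Phi(Z_n)$, whose elements annihilate the problematic boundary terms, in the spirit of Lemma \ref{azw1a} and the proof of Theorem \ref{zaehle4RA}, and then upgrading the conclusion from the $\Phi'$-sense to genuine polynomial coefficients through Proposition \ref{lpose6}.
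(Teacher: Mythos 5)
Your route is the one the paper itself takes: Lemma \ref{CormaQ} reduces everything to the kernel of $R^*$, and Theorems \ref{zaeh} and \ref{zaehle4} are applied to $\vp(\th,t)=(2|t|)^{n-2}f(2t\th)$, whose coefficients satisfy $\vp_{m,\mu}(t)=(2t)^{n-2}f_{m,\mu}(2t)$ for $t>0$. The trouble lies in the single step you explicitly defer --- ``check that these powers land precisely on the admissible exponents in (\ref{KOKLI})'' --- because for $n\ge 3$ they do not. A monomial $f_{m,\mu}(r)=r^{k}$ is carried to $\vp_{m,\mu}(t)=2^{\,n-2+k}\,t^{\,n-2+k}$, and by Lemma \ref{marti} (equivalently, by the pole of the Gamma factor in (\ref{89zse})) the operator $\G^{\lam,m}_{+}$ with $\lam=(n-2)/2$ annihilates $t^{j}$ exactly when $j\le m-2$ and $m-j$ is even; with $j=n-2+k$ this reads $k\le m-n$ and $m-k-n$ even, which coincides with the stated condition ``$k\le m-2$, $m-k$ even'' only when $n=2$. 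A concrete check: for $n=3$ and $f(x)=Y_{2}(x/|x|)$, so that $f_{2,\mu}(r)\equiv 1$ is the case $m=2$, $k=0$ of (\ref{azw2INTROQ}), one finds $\vp_{2,\mu}(t)=2|t|$ and a direct computation (using $\int_{0}^{1}uP_{2}(u)\,du=\tfrac18\neq 0$) gives $(\Q f)(x)=\tfrac14 f(x)\not\equiv 0$; the function that $\Q$ actually annihilates here is $|x|^{-1}Y_{2}(x/|x|)$.

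So the deferred verification fails, and carrying it out correctly shows that the radial profiles lying in the kernel are $f_{m,\mu}(r)=\sum c_{k}\,r^{\,k+2-n}$, i.e. $r^{2-n}$ times the polynomial in (\ref{azw2INTROQ}), rather than $\sum c_{k}r^{k}$; the same shift by $r^{2-n}$ is needed in the conclusion of part (ii), where the coefficients come out as combinations of $r^{\,k+2-n}$, $k=0,1,\dots$ (negative powers when $n\ge 3$). With that correction your argument closes, and the remaining ingredients --- the factorization through $R^*$, the evenness relation $\vp_{m,\mu}(-t)=(-1)^{m}\vp_{m,\mu}(t)$, the local integrability of $\vp$ from (\ref{Corma3}), and your plan to secure $\vp\in S'(Z_n)$ or else to work in $\Phi'(Z_n)$ and finish with Proposition \ref{lpose6} --- are sound and are exactly what the paper's terse derivation presupposes. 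You should also note that temperedness of $\vp$ in $t$ now concerns the behavior of $f$ at infinity (not at the origin as for $Bf$ in Lemma \ref{azw1a}), so the hypothesis $f\in S'(\rn)$ is being used in that regime.
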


Another consequence of (\ref{Corma2}) is the support theorem  that follows from Theorem \ref{azw1a2R}.  Given $a>0$, we denote by $B_a$ and $B_{2a}$ the balls centered at the origin of radius $a$ and $2a$, respectively.
\begin{theorem} \label{zaehQusu}
 If $f=0$ a.e. in $B_{2a}$, then $\Q f=0$ a.e. in $B_{a}$.  If
\be\label {Corma3su} \intl_{|x|<2a} \frac{|f  (x)|}{|x|^{m+1}}\, dx <\infty \quad  \forall \,  m\in \bbn\ee
 and  $\Q f=0$ a.e. in $B_{a}$, then   $f=0$ a.e. in $B_{2a}$.
\end{theorem}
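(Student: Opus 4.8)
The plan is to transfer the whole statement to the dual Radon transform by means of the Cormack--Quinto formula (\ref{Corma2}) and then quote the support theorem for $R^*$, i.e.\ Theorem \ref{azw1a2R2}. Put $\vp(\th,t)=(2|t|)^{n-2}f(2t\th)$ as in (\ref{Corma2}); then $\vp$ is even, and $(\Q f)(x)=|x|^{2-n}(R^*\vp)(x)$. Two elementary dictionaries drive the proof. First, since $|x|^{2-n}$ vanishes only at the origin, the vanishing of $\Q f$ a.e.\ on $B_a$ is the same as the vanishing of $R^*\vp$ a.e.\ on $B_a^+$. Second, for $t>0$ the assignment $y=2t\th$ is just polar coordinates on $B_{2a}\setminus\{0\}$, and the factor $(2|t|)^{n-2}$ is nonzero off $t=0$; hence $\vp=0$ a.e.\ on $C_a^+$ is equivalent to $f=0$ a.e.\ on $B_{2a}$. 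Under these identifications the two halves of the theorem become the two halves of Theorem \ref{azw1a2R2}.

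For the first assertion I would argue directly, so as not to presuppose the existence of $\Q f$. Fix $\th\in\sn$; the map $x\mapsto x+|x|\th$ has Jacobian $1+\th\cdot(x/|x|)$, which vanishes only on the ray $x=-s\th$, $s>0$. Off this null set it is a local diffeomorphism, so it pulls Lebesgue-null sets back to null sets. Consequently, if $f=0$ a.e.\ on $B_{2a}$, then for each fixed $\th$ one has $f(x+|x|\th)=0$ for a.e.\ $x\in B_a$ (the sphere of radius $|x|$ centered at $x\in B_a$ lies in $B_{2a}$). By Fubini's theorem $f(x+|x|\th)=0$ for a.e.\ $(x,\th)\in B_a\times\sn$, whence for a.e.\ $x\in B_a$ the integrand in (\ref{Corma}) vanishes for a.e.\ $\th$ and $(\Q f)(x)=0$.

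For the converse, assume (\ref{Corma3su}) and $\Q f=0$ a.e.\ on $B_a$. The crux is that (\ref{Corma3su}) is exactly the hypothesis (\ref{osllsgdu}) of Theorem \ref{azw1a2R2} written for $\vp$. Indeed, using the evenness of the integrand, passing to polar coordinates, and substituting $r=2t$,
\be
\intl_{C_a^+}|\vp(\th,t)|\,|t|^{-m}\,dt\,d\th=2^m\intl_{\sn}\!\intl_0^{2a}\!|f(r\th)|\,r^{n-2-m}\,dr\,d\th=2^m\!\!\intl_{|x|<2a}\!\frac{|f(x)|}{|x|^{m+1}}\,dx,
\ee
so (\ref{osllsgdu}) holds for every $m$ precisely when (\ref{Corma3su}) does; in particular the case $m=0$ makes $\vp$ locally integrable on $C_a^+$, so $R^*\vp$ is well defined there. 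Since $\Q f=0$ a.e.\ on $B_a$ forces $R^*\vp=0$ a.e.\ on $B_a^+$, Theorem \ref{azw1a2R2} gives $\vp=0$ a.e.\ on $C_a^+$, and the second dictionary returns $f=0$ a.e.\ on $B_{2a}$.

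The only genuinely delicate points are the two correspondences in the opening paragraph: matching the weight $|t|^{-m}(2|t|)^{n-2}$ against $|x|^{-(m+1)}$ under $y=2t\th$ (so that the growth hypotheses coincide for all $m$ simultaneously), and checking that the change of variables preserves null sets in both directions, so that the a.e.\ statements transfer cleanly. Once these are in place, the rest is a direct appeal to Theorem \ref{azw1a2R2}.
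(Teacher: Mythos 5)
Your proposal is correct and follows exactly the route the paper indicates: the paper gives no explicit proof, merely stating the theorem as a consequence of the link (\ref{Corma2}) between $\Q$ and $R^*$ together with the Euclidean support theorem (Theorem \ref{azw1a2R}, of which Theorem \ref{azw1a2R2} is the dual form you invoke). Your weight computation matching (\ref{Corma3su}) with (\ref{osllsgdu}) and your direct Jacobian argument for the easy direction are accurate fillings-in of the details the paper leaves implicit.
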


All these theorems can be  reformulated for the inverse problem (\ref{EPD.1CQ}). For example, the solution to this problem  is unique in the class of  compactly supported continuous functions on $\rn$ and also in the wider class determined by Theorem \ref {llyinteg1}.  Theorem \ref{zaehQusu}  shows that if the trace $u(x, |x|)$ is  zero
for almost all $x\in B_{a}$, then $f(x)= 0$ for  almost all $x\in B_{2a}$  provided that (\ref{Corma3su}) holds.

\section{The Funk Transform}\label{Connection}

 The Funk transform of a function $f$ on the unit sphere $S^n$ in $\bbr^{n+1}$ has the form
\be\label{Funk.aa}
 (Ff)(\th)=\!\!\!
 \intl_{\{  \sig\in S^{n} : \,
\th \cdot \sig =0\}} \!\!\!\! f(\sig) \,d_\th \sig,
\ee
where $d_\th \sig$ stands for the $O(n+1)$-invariant probability measure on the $(n-1)$-sphere $\{\sig \in S^{n}:
\th \cdot \sig =0\}$; see, e.g., \cite{GGG2, H11}. One can readily show  that $Ff$ is well-defined for all $f\in L^1 (S^{n})$ and annihilates odd functions. Below we replenish this statement  using the results of Section  \ref{ndspher} and the link between the Funk transform and the Radon transform.

Let $e_1, \ldots, e_{n+1}$ be the coordinate unit vectors in  $\bbr^{n+1}$,
\[ \bbr^{n-1}= \bbr e_1 \oplus \cdots \oplus \bbr e_{n-1}, \qquad \rn= \bbr e_1 \oplus \cdots \oplus \bbr e_{n},\]
\be\label{r hemisphere} S^{n}_+=\{ \th=(\th_1, \ldots, \th_{n+1})\in S^{n}: 0<\th_{n+1} \le 1\}.\ee
 Consider the projection map
 \be\label{Con22on} \rn \ni x \xrightarrow{\;\mu\;} \th\in S^{n}_+, \qquad \th=\mu (x)=\frac{x+e_{n+1}}{|x+e_{n+1}|}.\ee

\vskip 0.1 truecm
\begin{figure}[h]
\centering
\includegraphics[scale=.7]{funk-to-radon.eps}
\caption*{Figure 1: $\mu: \, x \rightarrow \th$.} \label{funk-to-radon}
\end{figure}
\vskip 0.1 truecm

A simple geometric argument  shows that
$ |x|=(1-\th^2_{n+1})^{1/2}/|\th_{n+1}|$ and  the inequalities
$ |x|>a$ and $ |\th_{n+1}|<(1+a^2)^{-1/2}$ are equivalent
 for every $a\ge 0$. Moreover, if $f$ is even, then  (\ref{Con22on}) and  (\ref{hvar}) yield
\be\label{hvaRFr}
 \intl_{|x|>a} \!\!\frac{ f(\mu (x))}{(1+|x|^2)^{(n+1)/2}}\, dx \!=\!\frac{1}{2}\intl_{|\th_{n+1}|<\a} \!\!\!f(\th)\, d\th, \qquad \a\!=\!(1\!+\!a^2)^{-1/2}, \ee
 provided that at least one of these integrals exists in the Lebesgue sense.

 The  map $\mu$ extends to the bijection $\tilde \mu$ from the set $\Pi_n$ of all unoriented hyperplanes in $\rn$ onto the set
    \be\label{Con22on5} \tilde S^{n}_+=\{\om =(\om_1, \ldots, \om_{n+1})\in S^{n}: \,  0\le \om_{n+1}<1\}.\ee
cf. (\ref{r hemisphere}). Specifically, if $\t =\{x\in \rn: x\cdot \eta =t\}\in \Pi_n$, $\eta\in \sn \subset \rn$, $t\ge 0$, and $\tilde \t$ is the $n$-dimensional subspace containing the lifted plane $\t +e_{n+1}$, then $\om \in \tilde S^{n}_+$ is defined to be a normal vector to $\tilde \t$. A simple geometric consideration shows that
 \be\label{Con22on1}  \om =-\eta\, \cos \, \a +e_{n+1}\sin \a, \qquad \tan \a=t.\ee

The above notation is  used in the following theorem.

\begin{theorem} \label{Con22on22} Let $g(x) = (1+|x|^2)^{-n/2} f(\mu (x))$, $x\in \rn$, where $f$ is an even function  on $S^{n}$.  The Funk transform $F$  and the  Radon transform $R$ are related by the formula
\be\label{Con22on2} (Ff)(\om)= \frac{2}{\sig_{n-1}\, \sin d (\om, e_{n+1})}\, (Rg)(\tilde \mu^{-1} \om),  \qquad \om \in \tilde S^{n}_+,\ee
where $d (\om, e_{n+1})$ is the geodesic distance between $\om$ and $e_{n+1}$.
\end{theorem}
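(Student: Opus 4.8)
The plan is to exhibit the great $(n-1)$-sphere $C_\om=\{\sig\in S^{n}:\om\cdot\sig=0\}$, over which $Ff$ integrates, as the gnomonic (central) image of the hyperplane $\t=\tilde\mu^{-1}\om$, and then to compute the Jacobian that converts the invariant measure on $C_\om$ into Euclidean surface measure on $\t$. First I would use the evenness of $f$: the antipodal map preserves $C_\om$ and fixes $f$, while the equatorial slice $\{\sig_{n+1}=0\}\cap C_\om$ is null because $\om'\neq 0$ for $\om\in\tilde S^{n}_+$. Hence
\[ (Ff)(\om)=\frac{1}{\sig_{n-1}}\intl_{C_\om} f(\sig)\,dS=\frac{2}{\sig_{n-1}}\intl_{C_\om^+} f(\sig)\,dS, \qquad C_\om^+=C_\om\cap\{\sig_{n+1}>0\}, \]
where $dS$ is the $(n-1)$-dimensional surface measure, of total mass $\sig_{n-1}$ on the unit sphere $C_\om$. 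On $C_\om^+$ the map $\mu$ of (\ref{Con22on}) is a bijection onto $\t$ with inverse $\sig\mapsto \sig'/\sig_{n+1}$, and writing $\sig=\mu(x)$ one checks that $\om\cdot\sig=0$ is equivalent to $\om'\cdot x+\om_{n+1}=0$, i.e.\ to $x\cdot\eta=t$ with $\eta,t$ as in (\ref{Con22on1}). Thus $\mu$ carries $\t$ onto $C_\om^+$ and $f(\sig)=f(\mu(x))$.

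The technical heart is the Jacobian of $\mu|_\t:\t\to C_\om^+$. Put $\rho=|x+e_{n+1}|=(1+|x|^2)^{1/2}$ and let $\mu'=x/\rho$ denote the first $n$ coordinates of $\mu(x)$. Differentiating $\mu(x)=(x+e_{n+1})/\rho$ along a vector $v$ tangent to $\t$ (so $v\in\rn$, $v\cdot\eta=0$) gives
\[ \partial_v\mu=\frac{1}{\rho}\,[\,v-(\mu'\cdot v)\,\mu\,]. \]
For an orthonormal basis $v_1,\dots,v_{n-1}$ of the tangent space $T=\eta^\perp\cap\rn$, the Gram matrix of the images equals $\rho^{-2}(I-ww^\top)$ with $w_i=\mu'\cdot v_i$, so its determinant is $\rho^{-2(n-1)}(1-|w|^2)$. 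Here $|w|^2$ is the squared length of the part of $\mu'$ orthogonal to $\eta$ inside $\rn$, namely $|w|^2=|x|^2/\rho^2-(x\cdot\eta)^2/\rho^2=(|x|^2-t^2)/\rho^2$, and using $\rho^2=1+|x|^2$ this collapses to $1-|w|^2=(1+t^2)/\rho^2$. Hence the pushforward of $dS$ under $\mu^{-1}$ is $J(x)\,d_\t x$ with
\[ J(x)=\sqrt{\det\mathrm{Gram}}=(1+t^2)^{1/2}(1+|x|^2)^{-n/2}. \]

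It remains to recognize the prefactor. From (\ref{Con22on1}), $\cos d(\om,e_{n+1})=\om\cdot e_{n+1}=\om_{n+1}=\sin\a$, so $\sin d(\om,e_{n+1})=\cos\a=(1+t^2)^{-1/2}$ and therefore $J(x)=(1+|x|^2)^{-n/2}/\sin d(\om,e_{n+1})$. Combining this with the change of variables and the definition $g(x)=(1+|x|^2)^{-n/2}f(\mu(x))$ yields
\[ (Ff)(\om)=\frac{2}{\sig_{n-1}}\intl_{\t}f(\mu(x))\,J(x)\,d_\t x=\frac{2}{\sig_{n-1}\,\sin d(\om,e_{n+1})}\intl_{\t}(1+|x|^2)^{-n/2}f(\mu(x))\,d_\t x, \]
and the last integral is exactly $(Rg)(\t)=(Rg)(\tilde\mu^{-1}\om)$, which is (\ref{Con22on2}). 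I expect the Gram-determinant evaluation---specifically the clean collapse $1-|w|^2=(1+t^2)/\rho^2$---to be the only step requiring genuine care; everything else is bookkeeping once the gnomonic picture is fixed. Throughout one keeps the Lebesgue-sense existence hypotheses in force, so that the halving via evenness and the change of variables are legitimate, exactly as recorded for $R$ and $R^*$ in Section~\ref{90kiy3}; no new convergence subtlety arises.
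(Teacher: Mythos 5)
Your argument is correct, and it reaches (\ref{Con22on2}) by a technically different route than the paper. The paper first uses rotation invariance about the $x_{n+1}$-axis to place $\om$ in the special position $\om=-e_n\cos\a+e_{n+1}\sin\a$, then parametrizes the great sphere $C_\om$ by $\bbr^{n-1}$ through the rotated map $y\mapsto r_\om\,(y+e_{n+1})/|y+e_{n+1}|$ and invokes Lemma \ref{viat} (formula (\ref{hvar})) to convert the sphere integral; the factor $1/\cos\a$ and the argument $\mu(z+te_n)$ then emerge from a chain of algebraic substitutions. You instead parametrize $C_\om^+$ directly by the hyperplane $\t=\tilde\mu^{-1}\om$ via $\mu$ itself and compute the metric distortion intrinsically: the Gram matrix $\rho^{-2}(I-ww^\top)$ and the collapse $1-|w|^2=(1+t^2)/(1+|x|^2)$ give the Jacobian $J(x)=(1+t^2)^{1/2}(1+|x|^2)^{-n/2}$ in one stroke, with no reduction to a special position and no appeal to Lemma \ref{viat}. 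I checked the key steps: the derivative formula $\partial_v\mu=\rho^{-1}[v-(\mu'\cdot v)\mu]$ (using $v\cdot\mu=\mu'\cdot v$ for $v\in\rn$), the determinant $\rho^{-2(n-1)}(1-|w|^2)$, the identification $|w|^2=(|x|^2-t^2)/(1+|x|^2)$ from $x\cdot\eta=t$ on $\t$, and the prefactor $\sin d(\om,e_{n+1})=\cos\a=(1+t^2)^{-1/2}$ — all are right, as are the bijection $\mu:\t\to C_\om^+$ and the halving by evenness (the equatorial slice of $C_\om$ is a proper great subsphere since $\om\neq \pm e_{n+1}$ on $\tilde S^n_+$). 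Your approach buys a coordinate-free, single change of variables at the cost of a Gram-determinant computation; the paper's buys elementary one-dimensional substitutions at the cost of the special-position reduction and the auxiliary Lemma \ref{viat}. Either is acceptable; yours is arguably the more transparent explanation of where the weight $(1+|x|^2)^{-n/2}$ and the factor $1/\sin d(\om,e_{n+1})$ come from.
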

\begin{proof} Since the operators on both sides of this equality commute with rotations about the $x_{n+1}$ axis, it suffices to prove the theorem when $\om$ is the $\tilde \mu$-image of the hyperplane
 $\t =\{x\in \rn: x\cdot e_n =t\}$, that is, $\om =-e_n\, \cos \, \a +e_{n+1}\sin \a$, where $\tan \a=t$, $0\le \a<\pi/2$.

Let $\tilde \om=e_n\, \sin \a + e_{n+1} \cos \, \a$. We denote by $r_\om$  a rotation in the $(x_n, x_{n+1})$-plane that takes $e_{n+1}$ to $\tilde \om$. Changing variables and using (\ref{hvar}), we obtain
\bea
(Ff)(\om)&=&\intl_{S^{n} \cap \om^\perp} f(\sig)\, d_\om \sig=\intl_{S^{n-1}} f(r_\om\zeta)\, d_* \zeta\nonumber\\
&=&\frac{2}{\sig_{n-1}}\intl_{\bbr^{n-1}} f(r_\om \, e_y)\,\frac{dy}{|y+e_{n+1}|^n}, \qquad e_y=\frac{y+e_{n+1}}{|y+e_{n+1}|}. \nonumber\eea
Note that
\bea
r_\om \, e_y&=&\frac{y+r_\om e_{n+1}}{\sqrt {1+|y|^2}}=\frac{y+e_n\, \sin \a + e_{n+1} \cos \, \a}{\sqrt {1+|y|^2}}\nonumber\\
&=&\frac{z+e_n \tan \a+ e_{n+1}}{|z+e_n \tan \a+ e_{n+1}|}, \qquad z=\frac{y}{ \cos \, \a}.\nonumber\eea
Hence,
\bea
(Ff)(\om)&=&\!\frac{2}{\sig_{n-1}\, \cos \, \a}\intl_{\bbr^{n-1}} \!\!f \left (\frac{z\!+\!e_n \tan \a\!+\! e_{n+1}}{|z\!+\!e_n \tan \a\!+ \! e_{n+1}|}\right )\,\frac{dz}{(t^2\!+\!|z|^2\! +\!1)^{n/2}}\nonumber\\
&=&\frac{2}{\sig_{n-1}\, \cos \, \a}\intl_{\bbr^{n-1}} f(\mu (z+te_n))\,\frac{dz}{(t^2+|z|^2 +1)^{n/2}}\nonumber\\
&=& \frac{2}{\sig_{n-1}\, \sin d (\om, e_{n+1})}\, (Rg)(e_n,t). \nonumber\eea
This gives the result.
\end{proof}

Theorem \ref{Con22on22} enables us to essentially extend the classes of function $f$ for which the Funk transform $Ff$ is finite a.e. on $S^n$ and is injective. For example, Theorem \ref {byvs1}  and (\ref{Con22on2}) imply the following statement.
\begin{theorem}\label{byvs1SPH}  Let $f$ be an even function on $S^n$. If
\be \label {lkmuxSPH}
\intl_{|\th_{n+1}|<\a} |f(\th)| \,  d\th <\infty \quad \forall \, \a\in (0,1),\ee
 then  $(F f)(\om)$ is finite for  almost all $\om \in S^n$.
If $f$ is nonnegative, zonal, and (\ref{lkmuxSPH}) fails, then  $(F f)(\om)\equiv \infty$.
\end{theorem}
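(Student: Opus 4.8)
The plan is to reduce the statement to the corresponding result for the Radon transform, Theorem \ref{byvs1}, through the identity (\ref{Con22on2}) of Theorem \ref{Con22on22}. First I would introduce $g(x)=(1+|x|^2)^{-n/2} f(\mu(x))$, exactly the function appearing in Theorem \ref{Con22on22}, so that
\[(Ff)(\om)=\frac{2}{\sig_{n-1}\,\sin d(\om,e_{n+1})}\,(Rg)(\tilde\mu^{-1}\om),\qquad \om\in\tilde S^n_+.\]
The scalar prefactor is strictly positive and finite for every $\om\neq\pm e_{n+1}$, so on $\tilde S^n_+$ the finiteness of $(Ff)(\om)$ is equivalent to that of $(Rg)(\tilde\mu^{-1}\om)$ outside the two poles, which form a null set.

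The crucial step is to verify that hypothesis (\ref{lkmuxSPH}) on $f$ is equivalent to hypothesis (\ref{lkmux}) on $g$. Applying (\ref{hvaRFr}) to the even function $|f|$ gives
\[\intl_{|x|>a}\frac{|f(\mu(x))|}{(1+|x|^2)^{(n+1)/2}}\,dx=\frac{1}{2}\intl_{|\th_{n+1}|<\a}|f(\th)|\,d\th,\qquad \a=(1+a^2)^{-1/2},\]
valid in $[0,\infty]$. The integrand in (\ref{lkmux}) for $g$ is $|g(x)|/|x|=|f(\mu(x))|/(|x|(1+|x|^2)^{n/2})$, and the ratio of this weight to the one above equals $(1+|x|^2)^{1/2}/|x|=\sqrt{1+|x|^{-2}}$, which lies in $(1,\sqrt{1+a^{-2}})$ on $\{|x|>a\}$. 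Hence the two integrals over $\{|x|>a\}$ are finite together, and since $a\mapsto\a=(1+a^2)^{-1/2}$ is a decreasing bijection of $(0,\infty)$ onto $(0,1)$, the requirement (\ref{lkmuxSPH}) for all $\a\in(0,1)$ is precisely (\ref{lkmux}) for $g$ for all $a>0$.

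With this equivalence the first assertion follows at once: (\ref{lkmuxSPH}) yields (\ref{lkmux}) for $g$, so Theorem \ref{byvs1} makes $(Rg)(\xi)$ finite for almost all $\xi\in\Pi_n$; transporting this through the diffeomorphism $\tilde\mu$ (which carries null sets to null sets) and the displayed identity gives $(Ff)(\om)<\infty$ for almost all $\om\in\tilde S^n_+$. Because $(Ff)(\om)$ depends only on the hyperplane $\om^\perp$, one has $(Ff)(\om)=(Ff)(-\om)$, so a.e.\ finiteness on the upper hemisphere $\tilde S^n_+$ propagates to a.e.\ finiteness on all of $S^n$. For the second assertion, if $f$ is nonnegative and zonal then $g$ is nonnegative and radial, since $\mu(x)_{n+1}=(1+|x|^2)^{-1/2}$ depends only on $|x|$, making both $f(\mu(x))$ and the weight $(1+|x|^2)^{-n/2}$ functions of $|x|$ alone; moreover the failure of (\ref{lkmuxSPH}) forces the failure of (\ref{lkmux}). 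Theorem \ref{byvs1} then gives $(Rg)(\xi)\equiv\infty$, whence $(Ff)(\om)\equiv\infty$ by the identity and the symmetry above.

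I expect the only genuinely delicate point to be the equivalence of the two integrability conditions, which hinges on the change-of-variable identity (\ref{hvaRFr}) together with the elementary observation that the weight ratio $\sqrt{1+|x|^{-2}}$ is bounded on each set $\{|x|>a\}$; the measure-theoretic transfer of the exceptional null set through $\tilde\mu$ and the even symmetry of $F$ are routine.
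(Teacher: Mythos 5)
Your proposal is correct and follows essentially the same route as the paper: reduce to Theorem \ref{byvs1} via the identity (\ref{Con22on2}), and use (\ref{hvaRFr}) to show that condition (\ref{lkmuxSPH}) for $f$ is equivalent to condition (\ref{lkmux}) for $g$, the key point being that the extra weight (which equals $(1-\th_{n+1}^2)^{-1/2}$ in the spherical variable, i.e.\ $\sqrt{1+|x|^{-2}}$ in yours) is bounded above and below on each set $\{|x|>a\}$. Your additional remarks on the bidirectional bound, the radiality of $g$ for zonal $f$, and the extension from $\tilde S^n_+$ to all of $S^n$ by evenness merely make explicit details the paper leaves implicit.
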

\begin{proof} Following  Theorems \ref {byvs1} and \ref{Con22on22}, we need to transform the integral
\[I=  \intl_{|x|>a} \frac{|g(x)|}{|x|}\, dx= \intl_{|x|>a} \frac{ |f(\mu (x))|}{(1+|x|^2)^{n/2} |x|}\, dx. \]
By  (\ref{hvaRFr}), it can be written as
\[ I=\frac{1}{2}\intl_{|\th_{n+1}|<\a} f_1(\th)\, d\th, \qquad    f_1(\mu (x))= |f(\mu (x))|\, \frac{(1+|x|^2)^{1/2}}{ |x|}\, ,\]
 $\a=(1+a^2)^{-1/2}$. Since
\[
 \intl_{|\th_{n+1}|<\a} f_1(\th)\, d\th= \intl_{|\th_{n+1}|<\a} \frac{ |f(\th)| \, d\th}{(1-\th^2_{n+1})^{1/2}}\le c_\a  \intl_{|\th_{n+1}|<\a} |f(\th)| \, d\th, \]
the results follows.

\end{proof}

Combining Theorem \ref{Con22on22}, (\ref{Con22on1}) and (\ref{hvaRFr}) with  Theorem \ref{azw1a2R}, we arrive at the  support theorem for the Funk transform.

\begin{theorem}\label{SPHSup} For $\a\in (0,1)$, let
\[ \Cal O_\a=\{\th\in S^n:\, |\th_{n+1}|<\a\}, \qquad  \tilde {\Cal O}_\a=\{\om\in S^n:\, |\om_{n+1}|>\sqrt {1-\a^2}\}.\]
If $f=0$ a.e. in $\Cal O_\a$, then $Ff=0$ a.e. in $\tilde {\Cal O}_\a$. Conversely, if
\[ \intl_{\Cal O_\a} |f(\th)| \, |\th_{n+1}|^{-m-1}\, d\th <\infty \quad \forall \, m\in\bbn\]
and $Ff=0$ a.e. in $\tilde {\Cal O}_\a$,   then $f=0$ a.e. in $\Cal O_\a$.
\end{theorem}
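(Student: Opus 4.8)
The plan is to deduce the statement from the Radon-transform support theorem (Theorem \ref{azw1a2R}) by transporting every hypothesis and conclusion through the projection $\mu$ and its extension $\tilde\mu$. Since the formula (\ref{Con22on2}) requires $f$ even and the Funk transform annihilates odd functions, while both $\Cal O_\a$ and $\tilde{\Cal O}_\a$ are invariant under the antipodal map $\th\mapsto -\th$, I assume throughout that $f$ is even (as in Theorem \ref{byvs1SPH}). Then Theorem \ref{Con22on22} applies with $g(x)=(1+|x|^2)^{-n/2}f(\mu(x))$. I set $a=\sqrt{1-\a^2}/\a$, so that $\a=(1+a^2)^{-1/2}$, and I establish three ``dictionary'' equivalences: (1) $f=0$ a.e.\ on $\Cal O_\a$ iff $g=0$ a.e.\ on $B_a^-$; (2) $Ff=0$ a.e.\ on $\tilde{\Cal O}_\a$ iff $Rg=0$ a.e.\ on $C_a^-$; and (3) the moment condition $\intl_{\Cal O_\a}|f(\th)|\,|\th_{n+1}|^{-m-1}\,d\th<\infty$ iff $\intl_{B_a^-}|g(x)|\,|x|^m\,dx<\infty$. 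Once these are in place, Theorem \ref{azw1a2R} applied to $g$ yields both implications at once.

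For (1), I recall from the discussion preceding Theorem \ref{Con22on22} that $|x|>a\iff|\th_{n+1}|<\a$ under $\th=\mu(x)$, and that $\mu$ is a diffeomorphism of $\rn$ onto $S^n_+$ preserving null sets; since $f$ is even and $\Cal O_\a$ is symmetric, vanishing of $f$ a.e.\ on $\Cal O_\a$ is equivalent to vanishing of $g$ a.e.\ on $\{|x|>a\}=B_a^-$. For (2), the factor $\sin d(\om,e_{n+1})$ in (\ref{Con22on2}) vanishes only at $\om=\pm e_{n+1}$, a null set, so $(Ff)(\om)=0\iff (Rg)(\tilde\mu^{-1}\om)=0$ for almost all $\om$; it remains to match the regions. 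By (\ref{Con22on1}) the plane $\{x\cdot\eta=t\}$ is sent to a normal vector $\om$ with $|\om_{n+1}|=|t|/\sqrt{1+|t|^2}$, an increasing function of $|t|$, so $|t|>a$ is equivalent to $|\om_{n+1}|>a/\sqrt{1+a^2}=\sqrt{1-\a^2}$. Thus $\tilde\mu$ carries $C_a^-$ onto $\tilde{\Cal O}_\a$, again preserving null sets.

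Entry (3) is the only genuine computation. Applying (\ref{hvaRFr}) to the even function equal to $|f(\th)|\,|\th_{n+1}|^{-m-1}$ on $\Cal O_\a$ and zero elsewhere, and using $\th_{n+1}=(1+|x|^2)^{-1/2}$ for $\th=\mu(x)$, I obtain
\[
\intl_{\Cal O_\a} |f(\th)|\,|\th_{n+1}|^{-m-1}\, d\th = 2\intl_{|x|>a} |f(\mu(x))|\,(1+|x|^2)^{(m-n)/2}\, dx.
\]
On the other hand $\intl_{|x|>a}|g(x)|\,|x|^m\,dx=\intl_{|x|>a}|f(\mu(x))|\,(1+|x|^2)^{-n/2}|x|^m\,dx$, and on $\{|x|>a\}$ the weights $(1+|x|^2)^{(m-n)/2}$ and $(1+|x|^2)^{-n/2}|x|^m$ are comparable up to positive constants. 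Hence, ranging over all $m\in\bbn$, the two families of moment conditions are equivalent, which is precisely the hypothesis of Theorem \ref{azw1a2R} for $g$.

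With the dictionary in hand, the forward implication follows from the unconditional part of Theorem \ref{azw1a2R}, and the converse from its conditional part under the moment bound. The main point requiring care is not any single estimate but the measure-theoretic bookkeeping: checking that $\mu$ and $\tilde\mu$ transport ``almost everywhere'' statements faithfully and that the reduction to even $f$ is legitimate. These verifications are routine, and the theorem follows.
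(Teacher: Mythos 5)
Your proof is correct and follows exactly the route the paper indicates (the paper offers no written argument beyond the sentence that Theorem \ref{Con22on22}, formula (\ref{Con22on1}), and formula (\ref{hvaRFr}) combine with Theorem \ref{azw1a2R}); your three dictionary equivalences, with $a=\sqrt{1-\a^2}/\a$, supply precisely the intended details. Your explicit restriction to even $f$ is in fact necessary for the converse direction, since $F$ annihilates odd functions, and it is implicit in the paper's statement just as in Theorems \ref{byvs1SPH} and \ref{786NGR1SP}.
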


In a similar way, Theorems \ref{azw1a2} and \ref{zaehle4RA} yield the corresponding result for the kernel of the operator $F$. We know that $\ker F=\{0\}$ if the action of $F$ is considered on even integrable functions. The situation changes if the functions under consideration  allow  non-integrable singularities at the poles $\pm e_{n+1}$, so that the Funk transform still exists in the a.e. sense.

If $f$ is even, it suffices to consider the points
 $\th\in S^n$  which are represented in the spherical polar coordinates as
\[\th = \eta\, \sin \psi  + e_{n+1} \, \cos \,\psi, \qquad \eta \in \sn, \qquad 0<\psi<\pi/2.\]
The corresponding Fourier-Laplace coefficients (in the $\eta$-variable) have the form
\be\label{TPNY} f_{m,\mu} (\psi)= \intl_{\sn} f(\eta\, \sin \psi  + e_{n+1} \, \cos \,\psi)\, Y_{m,\mu} (\eta)\, d\eta.\ee
 We write $f\neq 0$ if  the set $\{\th \in S^n: f (\th)\neq 0\}$ has positive measure.

\begin{theorem} \label {786NGR1SP}  Let $f$ be an even function on $S^n$ such that
\be\label{TPNY1del} I_1 (f)=\intl_{|\th_{n+1}|<\a} |f(\th)| \,  d\th <\infty\quad \mbox{for  all $\;\a\in (0,1)$}.\ee

\noindent (i) \  Suppose that  $f_{m,\mu} (\psi)=0$ for almost all $\psi \in (0,\pi/2)$ if $m=0,1$, and
\be \label{azw2PHTX}
f_{m,\mu} (\psi)=\sin^{-n} \psi  \sum_{\substack{k=0 \\  m-k \,  even }}^{m-2} c_{k}\,\cot^{k} \psi, \qquad c_{k}=\const,\ee
if $m\ge 2$. Then  $(F f)(\om)=0$  a. e. on $S^n$.

\noindent (ii) \  Conversely, let $(F f)(\om)=0$  a. e. on $S^n$. Suppose, in addition to (\ref{TPNY1del}),  that
\be\label{TPNY1eps}
I_2(f)= \intl_{|\th_{n+1}|>\a} |f(\th)| \,(1-\th^2_{n+1})^{\gam}\,d\th<\infty\ee
for some $\gam>-1/2$ and  $\a\in (0,1)$.
 Then each  Fourier-Laplace coefficient $f_{m,\mu} (\psi)$ is a finite linear combination of the  functions $\sin^{-n} \psi \, \cot^{k} \psi$, $k=0,1, \ldots$,  and the following statements hold.

\noindent {\rm (a)} If $m=0,1$, then $f_{m,\mu} (\psi)\equiv 0$.

\noindent {\rm (b)}  If $m\ge 2$ and $f \neq 0$,  then $f_{m,\mu} (\psi)\not\equiv 0$  for at least one pair $(m,\mu)$. For every such pair, $f_{m,\mu} (\psi)$ has the form (\ref{azw2PHTX}).
\end{theorem}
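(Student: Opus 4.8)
The plan is to carry the whole statement over to the Euclidean Radon transform by means of Theorem \ref{Con22on22} and then apply Theorems \ref{azw1a2} and \ref{zaehle4RA}, in the same spirit in which Theorem \ref{SPHSup} was derived from Theorem \ref{azw1a2R}. With $g(x)=(1+|x|^2)^{-n/2} f(\mu (x))$ as in Theorem \ref{Con22on22}, the scalar factor $2/(\sig_{n-1}\,\sin d(\om,e_{n+1}))$ in (\ref{Con22on2}) is finite and different from zero for every $\om \in \tilde S^n_+$. Hence $(Rg)(\th,t)=0$ a.e. on $Z_n$ forces $(Ff)(\om)=0$ for almost all $\om \in \tilde S^n_+$, and conversely. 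Since the great subsphere in (\ref{Funk.aa}) is the same for $\om$ and $-\om$, the transform $Ff$ is even, so that its vanishing on $\tilde S^n_+$ propagates to almost all of $S^n$ (the complement $\{\om_{n+1}=\pm 1\}$ being null). This yields the equivalence $Ff=0$ a.e. on $S^n$ $\iff$ $Rg=0$ a.e. on $Z_n$.

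First I would record the coordinate dictionary. Writing $x=r\eta$ with $\eta\in\sn$, one has $\mu (r\eta)=\eta\,\sin\psi+e_{n+1}\cos\psi$ with $r=\tan\psi$, whence $\th_{n+1}=\cos\psi$, $1+r^2=\cos^{-2}\psi$, and $r^{-1}=\cot\psi$. Consequently the Fourier-Laplace coefficients are linked by
\[ g_{m,\mu}(r)=\intl_{\sn} g(r\eta)\,Y_{m,\mu}(\eta)\,d\eta=(1+r^2)^{-n/2} f_{m,\mu}(\psi)=\cos^n\psi\;f_{m,\mu}(\psi),\qquad r=\tan\psi. \]
Under this identification the model kernel of $R$ turns into the model kernel of $F$: a term $c_k\,r^{-n-k}$ in $g_{m,\mu}$ corresponds to $c_k\,\cos^{-n}\psi\,\cot^{n+k}\psi=c_k\,\sin^{-n}\psi\,\cot^{k}\psi$ in $f_{m,\mu}$, which is exactly the summand in (\ref{azw2PHTX}); the correspondence is reversible. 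This already settles part (i): if $f_{m,\mu}$ vanishes for $m=0,1$ and has the form (\ref{azw2PHTX}) for $m\ge2$, then $g_{m,\mu}$ vanishes for $m=0,1$ and has the form (\ref{azw2}) for $m\ge2$, so Theorem \ref{azw1a2} gives $Rg=0$ and therefore $Ff=0$ a.e.

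For the converse (ii) I must check that $g$ fulfils the hypotheses $I_1(g)<\infty$ and $I_2(g)<\infty$ of Theorem \ref{zaehle4RA}. The bound $I_1(g)<\infty$ is already contained in the proof of Theorem \ref{byvs1SPH}, where $\intl_{|x|>a}|g(x)|\,|x|^{-1}\,dx$ was estimated by $c_\a\intl_{|\th_{n+1}|<\a}|f(\th)|\,d\th$, finite by (\ref{TPNY1del}). For $I_2(g)$ I would invoke the $(n+1)$-dimensional analogue of (\ref{hvar}), which gives, with $\a=(1+a^2)^{-1/2}$,
\[ \intl_{|x|<a}|x|^{N-1}|g(x)|\,dx=\frac12\intl_{|\th_{n+1}|>\a}\frac{(1-\th^2_{n+1})^{(N-1)/2}}{|\th_{n+1}|^{N}}\,|f(\th)|\,d\th\le \frac{\a^{-N}}{2}\intl_{|\th_{n+1}|>\a}(1-\th^2_{n+1})^{(N-1)/2}\,|f(\th)|\,d\th. \]
Taking $N=2\gam+1>0$ makes the exponent $(N-1)/2$ equal to $\gam$, so $I_2(g)<\infty$ by (\ref{TPNY1eps}). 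Theorem \ref{zaehle4RA} then applies to $g$, and reading its conclusions through $g_{m,\mu}(r)=\cos^n\psi\,f_{m,\mu}(\psi)$ shows that each $f_{m,\mu}(\psi)$ is a finite linear combination of the functions $\sin^{-n}\psi\,\cot^k\psi$, together with the alternatives (a) and (b) obtained verbatim from the corresponding parts of Theorem \ref{zaehle4RA}.

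The genuinely routine ingredients are the two changes of variable and the boundedness of $|\th_{n+1}|^{-N}$ on $\{|\th_{n+1}|>\a\}$. The step deserving the most care is the equivalence $Ff=0\iff Rg=0$ on the right domains, since Theorem \ref{Con22on22} describes $Ff$ only on the cap $\tilde S^n_+$: here I would make explicit the evenness of $Ff$ and the fact that $\tilde S^n_+$ together with its antipode covers $S^n$ up to a null set, so that the a.e. vanishing on $\tilde S^n_+$ upgrades to a.e. vanishing on $S^n$.
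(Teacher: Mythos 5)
Your proposal is correct and follows exactly the route of the paper's own proof: transfer to $g(x)=(1+|x|^2)^{-n/2}f(\mu(x))$ via Theorem \ref{Con22on22} and apply Theorems \ref{azw1a2} and \ref{zaehle4RA}, checking that the hypotheses and the model kernels correspond under $r=\tan\psi$. You merely supply the details the paper dismisses as "readily checked" (the evenness/null-set argument for the equivalence $Ff=0 \iff Rg=0$, the coefficient dictionary $g_{m,\mu}(r)=\cos^n\psi\,f_{m,\mu}(\psi)$, and the verification of $I_1(g),I_2(g)<\infty$ with $N=2\gam+1$), and these are all accurate.
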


\begin{proof} By Theorem \ref{Con22on22}, it suffices to reformulate our statement in terms of the function $g(x) = (1+|x|^2)^{-n/2} f(\mu (x))$ and then apply Theorems \ref{azw1a2} and \ref{zaehle4RA}. One can readily check that the assumptions of these theorems (with $f$ replaced by $g$) are equivalent to the corresponding assumptions in Theorem \ref {786NGR1SP} and (\ref{azw2PHTX}) mimics (\ref{azw2}).
\end{proof}

\begin{example} {\rm Let $\{Y_{m,\mu}\}$ be a fixed real-valued orthonormal basis of spherical harmonics in $L^2(\sn)$. Consider any function of the form
\[
f(\th)\!=\!\frac{Y_{2,\mu}(\th'/|\th'|)}{(1\!-\!\th^2_{n+1})^{n/2}}, \quad \th'\!=\!(\th_1, \ldots, \th_n), \quad \mu\!=\!1,2,\ldots, \frac{(n\!+\!2)(n\!-\!1)}{2}.\]
This function is even and satisfies the assumptions of Theorem \ref {786NGR1SP}  for any $\gam>n/2$. Moreover, if $\th= \eta\, \sin \psi  + e_{n+1} \, \cos \,\psi$, $ \eta \in \sn$, $ 0<\psi<\pi/2$, then
 \[ f_{2,\mu} (\psi)=\sin^{-n} \psi
 \intl_{\sn} [ Y_{m,\mu} (\eta)]^2\, d\eta= \sin^{-n} \psi.\]
  Hence, by Theorem \ref{786NGR1SP}, $Ff=0$ a.e. on $S^n$. In fact,
 $(F f)(\om)=0$ for {\it all} $\om$ away from the poles $\pm e_{n+1}$. To see that, it suffices to  smoothen $f$ in arbitrarily small neighborhoods of the poles.
}
\end{example}

\begin{remark}\label{aZSZS3} {\rm As in the Euclidean case, where the origin $(0, \ldots, 0)$ is fixed  (cf.  Remark \ref {affine in}),  here we fix the north pole $(0, \ldots, 0, 1)$. If we choose another point as a pole, the statement of Theorem \ref{786NGR1SP} will be modified accordingly.
}
\end{remark}

\section{The Spherical Slice Transform}\label{Slice}

Let  $S^n$ be the unit sphere in $\bbr^{n+1}$, $n\ge 2$. We denote by
 $\Gam (S^n)$  the set of all $(n-1)$-dimensional geodesic spheres   $\gam \subset S^n$  passing through the north pole $e_{n+1}$.
Every  $\gam$ is a cross-section of $S^n$ by the corresponding hyperplane.
Below we consider an integral transform that assigns to a function $f$ on  $S^n$ a function $\frS f$ on $\Gam (S^n)$
by the formula
\be\label{Sliceint1}
(\frS f) (\gam)=\intl_{\gam} f(\eta)\, d_{\gam}\eta,  \ee
where $d_\gam \eta$ denotes the usual surface element on  $\gam$. The map $f \to \frS f$
is called the {\it spherical slice transform} of $f$.

Every geodesic sphere $\gam \in \Gam (S^n)$  can be indexed  by  its center $\xi=(\xi_1, \ldots, \xi_{n+1})$ in the
the closed  hemisphere
\[
\bar S^n_+=\{\xi=(\xi_1, \ldots \xi_{n+1})\in S^n:\; 0\le \xi_{n+1}\le 1\},\]
so that
 \[\gam \equiv \gam(\xi)=\{\eta \in S^n: \eta \cdot \xi =e_{n+1} \cdot \xi\},    \qquad \xi \in \bar S^n_+.\]
 If $\xi_{n+1}=1$, then $\gam(\xi)$  boils down to one point,  the north pole.  If $\xi_{n+1}=0$, then $\gam(\xi)$ is a ``great circle'' through the poles $\pm e_{n+1}$.

 The operator (\ref{Sliceint1}) has an intimate connection with the  Cauchy problem for the Darboux  equation on $S^n$:
 \be\label {uchypapv}
\del_{\xi} u \!-\! u_{\om\om} \!-\! (n\!-\!1) \cot \om\, u_\om \!=\!0, \qquad u(\xi,0) \!= \!f(\xi), \quad  u_\om (\xi, 0)\! =\! 0.
\ee
 Here $\xi\in S^n$ is the space variable,  $\om\in (0,\pi)$ is the time variable,  $\del_{\xi}$ is the  Beltrami-Laplace operator  acting on  $u(\xi, \om)$  in the $\xi$-variable.  If $(M_\xi f)(t)$  is the spherical mean
 \be \label {uchy77b}
(M_\xi f)(t)=\frac{(1-t^2)^{(1-n)/2}}{\sig_{n-1}}\intl_{\xi\cdot \eta =t} f (\eta)\, d\eta, \qquad t\in (-1,1), \ee
 then the function  $u(\xi, \om)=(M_\xi f)(\cos\, \om)$
 is the solution to the problem (\ref{uchypapv}); see, e.g., \cite {O1, O2}.

The corresponding inverse problem is formulated as follows:

\textit{ Let $d (\xi, e_{n+1})$ be the geodesic distance between the point $\xi$ and the north pole $e_{n+1}$.   Given   the trace $u (\xi, d (\xi, e_{n+1}))$ of  the solution $u(\xi, \om)$  of  (\ref{uchypapv})  on the conical set
 \[\{(\xi,\om): \, \xi \in \bar S^n_+, \; \om =d (\xi, e_{n+1})\},\]
   reconstruct the initial function $f$. }

One can easily see that $u (\xi, d (\xi, e_{n+1}))$ is exactly our slice transform (\ref{Sliceint1}) with $\gam=\gam(\xi)$.

Using spherical coordinates, for $\xi \in \bar S^n_+ $ we  write
\[ \xi= \th \sin \psi +e_{n+1}\, \cos \, \psi, \qquad  \th\in \sn \subset \rn, \quad 0\le \psi \le \pi/2,\]
\[\gam =\gam(\xi)=\gam(\th, \psi), \qquad (\frS f) (\gam)=(\frS f) (\xi)= (\frS f) (\th, \psi).\] Then
 \[\gam(\xi)=\{\eta \in S^n: \eta \cdot \xi =\cos \, \psi\}.\]

Consider the bijective mapping
\be\label{MUIT} \rn \ni x \xrightarrow{\;\nu\;} \eta\in S^{n}\setminus \{e_{n+1}\},  \qquad \nu (x)=\frac{2x+(|x|^2-1)\,e_{n+1}}
{|x|^2+1}.\ee
 The inverse mapping $\nu^{-1}: S^{n}\setminus \{e_{n+1}\} \rightarrow \rn$ is the  stereographic projection  from the north pole $e_{n+1}$ onto $\rn=\bbr e_1 \oplus \cdots \oplus \bbr e_{n}$. If
 \[ \eta= \om \,\sin \vp + e_{n+1} \cos \, \vp, \qquad \om \in \sn, \qquad 0<\vp\le \pi,\]
then $x=\nu^{-1} (\eta)=s\om$, $s=\cot (\vp/2)$;  see Figure 2.

\vskip 0.1 truecm
\begin{figure}[h]
\centering
\includegraphics[scale=.6]{fig72a.eps}
\caption*{Figure 2: $\eta= \om \,\sin \vp + e_{n+1} \cos \, \vp, \quad |x|=\cot (\vp/2)$.} \label{stereo2}
\end{figure}
\vskip 0.1 truecm

We shall show that the spherical slice transform on $S^n$ can be expressed through the hyperplane Radon transform on $\rn$ by making use of this  projection.

The following statement is a counterpart of Lemma \ref{viat}   and can be found in the literature in different forms; see, e.g., \cite{Mi65}.  For the sake of completeness, we present it  with  a simple proof.

\begin{lemma} \label {stereoviat} ${}$\hfill

\noindent {\rm (i)} If $f\in L^1 (S^n)$, then
\be \label {stslice}\intl_{S^n} f(\eta)\, d\eta= 2^{n}\intl_{\bbr^{n}} (f\circ \nu) (x)\,\frac{dx}{(|x|^2 +1)^{n}}.\ee

\noindent {\rm (ii)}  If $g\in L^1 (\bbr^{n})$, then
\be \label {stslice1} \intl_{\bbr^{n}} g(x)\, dx=\intl_{S^n} (g\circ\nu^{-1}) (\eta)\,\frac{ d\eta}{(1-\eta_{n+1})^{n}}.\ee
\end{lemma}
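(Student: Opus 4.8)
The plan is to reduce both identities to a single Jacobian computation for the stereographic projection, using the polar (slice) decompositions of the two measures. First I would parametrize $S^n$ by writing each $\eta\in S^n$ away from $\pm e_{n+1}$ as $\eta=\om\sin\vp+e_{n+1}\cos\vp$ with $\om\in\sn$ and $0<\vp\le\pi$, so that the surface measure factors as $d\eta=\sin^{n-1}\vp\,d\vp\,d\om$. On the Euclidean side I would use polar coordinates $x=s\om$, $dx=s^{n-1}\,ds\,d\om$. As recorded just before the lemma, $\nu^{-1}(\eta)=s\om$ with $s=\cot(\vp/2)$, so the two parametrizations share the same angular variable $\om$ and the whole problem collapses to the one-dimensional change of variable $s=\cot(\vp/2)$.

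The key step is to express $d\eta$ through $dx$. Using the half-angle identities $\sin\vp=2\sin(\vp/2)\cos(\vp/2)$ and $ds=-\tfrac12\csc^2(\vp/2)\,d\vp$, I would combine $s^{n-1}\,ds$ with $\sin^{n-1}\vp\,d\vp$ and simplify to obtain $d\eta=2^n\sin^{2n}(\vp/2)\,dx$. The decisive simplification is the relation $\sin^2(\vp/2)=(1-\cos\vp)/2=(1-\eta_{n+1})/2=1/(|x|^2+1)$, where the last equality follows from $s^2=\cot^2(\vp/2)=(1+\eta_{n+1})/(1-\eta_{n+1})$ and hence $1-\eta_{n+1}=2/(|x|^2+1)$. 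Substituting this yields the clean Jacobian
\[
d\eta=\frac{2^n}{(|x|^2+1)^n}\,dx,
\]
from which part (i) is immediate.

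For part (ii) I would simply re-read the same Jacobian through the factor $(1-\eta_{n+1})^{-n}$. Since $1-\eta_{n+1}=2/(|x|^2+1)$ gives $(1-\eta_{n+1})^n=2^n/(|x|^2+1)^n$, the weight $(1-\eta_{n+1})^{-n}$ exactly cancels the Jacobian $2^n/(|x|^2+1)^n$, leaving $d\eta/(1-\eta_{n+1})^n=dx$ and hence the stated formula.

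The only point requiring care, and the main (though modest) obstacle, is the bookkeeping of endpoints and null sets: the north pole $e_{n+1}$, corresponding to $|x|\to\infty$, and the south pole, corresponding to $\vp=\pi$ and $x=0$, are of measure zero on $S^n$ and may be discarded without affecting either integral. For $f\in L^1(S^n)$ and $g\in L^1(\rn)$ a dominated-convergence argument over the exhausting regions $\{\,|x|<T\,\}$ legitimizes the change of variables, and the sign in $ds=-\tfrac12\csc^2(\vp/2)\,d\vp$ is harmless once one passes to absolute values, as usual in the change-of-variables formula.
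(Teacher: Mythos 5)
Your proposal is correct and follows essentially the same route as the paper: spherical coordinates $\eta=\om\sin\vp+e_{n+1}\cos\vp$ on $S^n$, polar coordinates on $\bbr^n$, and the substitution $s=\cot(\vp/2)$, with part (ii) obtained from part (i) via the identity $1-\eta_{n+1}=2/(|x|^2+1)$ (the paper's formula (\ref{ll3567g})). Packaging the computation as the single Jacobian relation $d\eta=2^n(|x|^2+1)^{-n}\,dx$ is a harmless reformulation of the paper's direct manipulation of the integrals.
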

\begin{proof} {\rm (i)} Passing to spherical coordinates, we have
\bea
l.h.s.
&=&\intl_0^\pi \sin^{n-1} \vp\, d\vp
\intl_{S^{n-1}} \!\!f(\om\, \sin
\vp + e_{n+1}\, \cos \, \vp)\, d\om\nonumber\\
&{}& \qquad \mbox {\rm ($s=\cot (\vp/2)$)}\nonumber\\
&=& 2^n\intl_0^\infty \frac{s^{n-1} ds}{(s^2 +1)^{n}}\intl_{\sn} \!\!\! f \left (\frac{2s\om  \!+\! (s^2\! -\!1)\,e_{n+1}}{s^2 +1}\right )\, d\om\!=\!r.h.s.\nonumber\eea

 {\rm (ii)} We set $g(x)=2^n (|x|^2 +1)^{-n} (f\circ \nu) (x)$ in (\ref{stslice}). Since
  \be\label{ll3567g} |x|^2 +1= s^2 +1=\cot^2 (\vp/2) +1=\frac{2}{1-\cos \vp}=\frac{2}{1-\eta_{n+1}},\ee
  the result  follows.
\end{proof}

\begin{lemma} \label {stereoviatS}
The spherical slice transform on $S^n$ and  the hyperplane Radon transform on $\rn$ are linked by the formula
\be \label {stereogr1r3}
(\frS f) (\theta, \psi)=(Rg)(\theta, t),  \qquad t=\cot \psi,\ee
\be \label {stereogr1r4} g(x)=\left (\frac{2}{|x|^2+1}\right)^{n-1}  (f\circ \nu) (x),\ee
provided that either side of (\ref{stereogr1r3}) is finite when $f$ is replaced by $|f|$.
\end{lemma}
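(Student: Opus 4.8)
The plan is to exploit the classical fact that stereographic projection carries geodesic spheres through the center of projection to affine hyperplanes, and rescales surface measure by a power of the conformal factor. Concretely, I would first check that $\nu^{-1}$ maps the slice $\gam(\theta,\psi)$ onto the hyperplane $\{x : x\cdot\theta = t\}$ with $t=\cot\psi$, then account for the distortion of the $(n-1)$-dimensional surface measure, and finally substitute into (\ref{Sliceint1}) and compare with (\ref{rtra1}).

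First step (level set). I write $\xi = \theta\sin\psi + e_{n+1}\cos\psi$, so that $\gam(\theta,\psi) = \{\eta\in S^n: \eta\cdot\xi = \cos\psi\}$. For $\eta=\nu(x)$, since $\theta\in\sn\subset\rn$ is orthogonal to $e_{n+1}$, one has $\eta\cdot\theta = 2(x\cdot\theta)/(|x|^2+1)$ and $\eta\cdot e_{n+1}=(|x|^2-1)/(|x|^2+1)$. Substituting these into $\eta\cdot\xi=\cos\psi$ and clearing the denominator, the terms involving $|x|^2$ cancel and the equation collapses to $x\cdot\theta\,\sin\psi = \cos\psi$, i.e. $x\cdot\theta = \cot\psi = t$. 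Thus $\nu$ maps the hyperplane $\{x: x\cdot\theta=t\}$ bijectively onto $\gam(\theta,\psi)$.

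Second step (measure). Here I would use that $\nu$ is conformal with conformal factor $\lam(x)=2/(|x|^2+1)$; this is classical, and is already visible in Lemma \ref{stereoviat}, where the top-dimensional volume element pulls back with the factor $\lam(x)^n = (2/(|x|^2+1))^n$. Consequently the induced $(n-1)$-dimensional surface measure on the image of the hyperplane is scaled by $\lam(x)^{n-1}$, that is, $d_{\gam}\eta = (2/(|x|^2+1))^{n-1}\,d_\theta u$, where $d_\theta u$ is the Euclidean element in $\theta^\perp$ as in (\ref{rtra1}). Substituting this together with $\eta=\nu(t\theta+u)$ into (\ref{Sliceint1}) gives $(\frS f)(\theta,\psi)=\intl_{\theta^\perp} (2/(|t\theta+u|^2+1))^{n-1}(f\circ\nu)(t\theta+u)\,d_\theta u$, which by the definition (\ref{stereogr1r4}) of $g$ is precisely $\intl_{\theta^\perp} g(t\theta+u)\,d_\theta u = (Rg)(\theta,t)$.

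The only point requiring care is the surface-measure rescaling in the second step, and I expect this to be the main obstacle. I would justify it either by invoking the conformality of $\nu$ directly, or, to keep the argument self-contained and in the spirit of Lemma \ref{stereoviat}, by computing the Jacobian of the restriction of $\nu$ to the hyperplane $\{x: x\cdot\theta=t\}$ and checking that it equals $\lam(x)^{n-1}$. The hypothesis that one side of (\ref{stereogr1r3}) be finite when $f$ is replaced by $|f|$ guarantees absolute convergence, so all of the above manipulations are legitimate.
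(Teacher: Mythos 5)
Your argument is correct, but it follows a genuinely different route from the paper's. The paper does not verify directly that $\nu^{-1}$ carries the slice $\gam(\th,\psi)$ onto the hyperplane $\{x: x\cdot\th=\cot\psi\}$; instead it reduces to $\th=e_n$ by rotation invariance, locates the Euclidean center $o'$ of $\gam$ in its supporting hyperplane, writes $\gam=o'+\rho\,r\,\tilde\gam$ with $r=\sin\psi$ for a suitable rotation $\rho$, applies the top-dimensional stereographic change of variables (Lemma \ref{stereoviat} with $n$ replaced by $n-1$) to the unit sphere $\tilde\gam$, and then identifies the resulting integral over $\bbr^{n-1}$ with $(Rg)(e_n,t)$ by an explicit algebraic computation and the substitution $y=u\sin\psi$. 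That route is longer but entirely self-contained: it never needs the conformality of $\nu$ or the Jacobian of its restriction to a hyperplane, only the already-established volume formula (\ref{stslice}). Your route is shorter and more conceptual --- the level-set computation in your first step is correct (the $|x|^2$ terms do cancel, leaving $x\cdot\th=\cot\psi$), and it makes transparent why the exponent in (\ref{stereogr1r4}) is $n-1$: it is the codimension-one instance of the conformal factor $\lam(x)^n$ appearing in Lemma \ref{stereoviat}. Two small caveats: $\nu$ maps the hyperplane onto $\gam\setminus\{e_{n+1}\}$ rather than onto all of $\gam$, which is harmless since a point has measure zero; and the surface-measure rescaling $d_\gam\eta=\lam^{n-1}d_\th u$, which you rightly flag as the crux, is a genuine fact that still has to be supplied --- Lemma \ref{stereoviat} gives only the top-dimensional case and does not by itself imply the codimension-one scaling. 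Either of your two proposed justifications (quoting the conformality $\nu^*g_{S^n}=\lam^2 g_{\rn}$, or computing the Jacobian of $\nu$ restricted to the hyperplane) closes this, so there is no gap in substance, but the proof is not complete until one of them is actually written out.
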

\begin{proof} To prove the lemma, we combine the stereographic projection with translation and rotation.
 Since both $\frS $ and $R$ commute with rotations about the $x_{n+1}$-axis, it suffices to assume $\th=e_n=(0, \ldots, 0,1,0)$. Let $\t_\gam$ be the hyperplane containing  $\gam=\gam(e_n, \psi)$, and let $o' \in \t_\gam$ be the center of the sphere $\gam$. A simple calculation shows that
 \be \label {sVES}
o'=e_n \cos \psi \sin \psi +e_{n+1}\, \cos^2 \psi.\ee
We translate $\t_\gam$ so that $o'$ moves to the origin $o=(0, \ldots, 0)$. Then we rotate the translated plane $\t_\gam -o'$, making it coincident with the coordinate plane $e_n^\perp$ and keeping the  subspace $\bbr^{n-1}=\bbr e_1 \oplus \ldots \oplus \bbr e_n$ fixed.  Let $\tilde\gam \subset e_n^\perp$ be the image of  $\gam$ under this transformation. We stretch $\tilde\gam$ up to the unit sphere $\sn$ in $e_n^\perp$ and
project $\sn$ stereographically onto  $\bbr^{n-1}$; see Figure 3.

\vskip 0.1 truecm
\begin{figure}[h]
\centering
\includegraphics[scale=.7]{fig72b.eps}
\caption*{Figure 3: $\gam =o'+\rho \tilde \gam, \; r=\sin \psi$.} \label{stereo2}
\end{figure}
\vskip 0.1 truecm

Thus, can write
\be \label {sVES1} \gam=o'+\rho\tilde\gam, \quad \rho=\left[ \begin{array} {cc} I_{n-1} &  0  \\ 0 & \rho_\psi
\end{array}\right], \quad \rho_\psi=\left[ \begin{array} {cc} \sin \psi &  -\cos \psi  \\ \cos \psi & \sin \psi
\end{array}\right], \ee
\[(\frS f) (e_n, \psi)\!=\!\intl_{\tilde\gam } f(o'\!+\!\rho\sig)\, d_{\tilde\gam }\sig\!=\!r^{n-1}\intl_{\sn}  f(o'\!+\!\rho r\sig)\, d\sig, \quad r\!=\!\sin \psi.\]
By Lemma \ref{stereoviat} (with $n$ replaced by $n-1$), we obtain
\[(\frS f) (\th, \psi)=(2r)^{n-1}\intl_{\bbr^{n-1}} f(o'+\rho r\tilde \nu (y))\, \frac{dy}{(|y|^2 +1)^{n-1}}, \]
 \[\tilde\nu (y)=\frac{2y+(|y|^2-1)\,e_{n+1}}{|y|^2+1}. \]
The expression under the sign of $f$  can be  transformed using (\ref{sVES}) and (\ref{sVES1}):
\[o'\!+\!\rho r\tilde \nu (y)=\frac{A}{|y|^2+1}, \]
\bea A&=&(|y|^2 +1)(e_n \cos \psi \sin \psi +e_{n+1}\, \cos^2 \psi)\nonumber\\
&+&
[2y+(|y|^2\!-\!1)(-e_n \cos \psi +e_{n+1} \sin \psi)]\, \sin \psi\nonumber\\
&=&2y \sin \psi+e_n \sin 2 \psi + e_{n+1} (|y|^2 + \cos 2\psi).\nonumber\eea
Hence,
\bea
&&(\frS f) (e_n, \psi)=(2\sin \psi)^{n-1}\nonumber\\
&&\times \intl_{\bbr^{n-1}} \!\! \!f
\left (\frac{2y \sin \psi+e_n \sin 2 \psi + e_{n+1} (|y|^2 \!+\! \cos 2\psi)}
{|y|^2+1}\right )\,\frac{dy}{(|y|^2 \!+\!1)^{n-1}}. \nonumber\eea
Changing variable $y=u  \sin \psi$ and setting $t=\cot \psi$, this expression can be represented as
\[2^{n-1}\!\!\intl_{\bbr^{n-1}} \!\!\! f\left (\frac{2(u\!+\!te_n)\!+\!(|u\!+\!te_n|^2 \!-\!1) \, e_{n+1}}{|u+te_n|^2 +1}\right)\, \frac{du}{(|u\!+\!te_n|^2 \!+\!1)^{n-1}}.\]
The latter is the Radon transform  $(Rg)(e_n, t)$ with $g$ defined by (\ref{stereogr1r4}).
\end{proof}

Lemma \ref{stereoviatS} enables us to investigate the slice transform $\frS $ using  properties of the hyperplane Radon transform $R$. For example, Theorem \ref{byvs1}, combined with  (\ref{ll3567g}) and Lemma \ref{stereoviat},  gives the following  result.

\begin{theorem} \label {stereoscp2} If
\be \label {stereogr1r50} \intl_{\eta_{n+1}>1-\e}
\frac{|f(\eta)|}{(1-\eta_{n+1})^{1/2}}\,d\eta<\infty \quad \forall \,0<\e\le 2, \ee
 then $(\frS f) (\xi)$ is finite for almost all $\xi \in S^n_+$.
If $f$ is nonnegative, zonal, and (\ref{stereogr1r50}) fails, then  $(\frS f) (\xi)\equiv \infty$.
\end{theorem}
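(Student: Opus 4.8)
The plan is to push everything to the Euclidean Radon transform through Lemma~\ref{stereoviatS} and then quote Theorem~\ref{byvs1}. By (\ref{stereogr1r3})--(\ref{stereogr1r4}) one has $(\frS f)(\theta,\psi)=(Rg)(\theta,\cot\psi)$ with $g(x)=(2/(|x|^2+1))^{n-1}(f\circ\nu)(x)$. The map $(\theta,\psi)\mapsto(\theta,\cot\psi)$ is a smooth bijection of $\{\theta\in\sn,\ 0<\psi<\pi/2\}$ onto $\sn\times(0,\infty)$, so ``$(\frS f)(\xi)$ finite for a.e.\ $\xi\in S^n_+$'' is equivalent to ``$(Rg)(\theta,t)$ finite for a.e.\ hyperplane''. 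By Theorem~\ref{byvs1} the latter holds once
\[\intl_{|x|>a}\frac{|g(x)|}{|x|}\,dx<\infty\qquad\forall\,a>0,\]
and this is the inequality I would verify is equivalent to (\ref{stereogr1r50}).

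To do so I would apply Lemma~\ref{stereoviat}(ii) to the integrand, using (\ref{ll3567g}) in the form $2/(|x|^2+1)=1-\eta_{n+1}$, hence $|x|^2=(1+\eta_{n+1})/(1-\eta_{n+1})$ for $\eta=\nu(x)$. The weight $(2/(|x|^2+1))^{n-1}$ contributes $(1-\eta_{n+1})^{n-1}$, the factor $1/|x|$ contributes $((1-\eta_{n+1})/(1+\eta_{n+1}))^{1/2}$, and the Jacobian $(1-\eta_{n+1})^{-n}$ of Lemma~\ref{stereoviat}(ii) supplies the remaining power; these combine to give
\[\intl_{|x|>a}\frac{|g(x)|}{|x|}\,dx=\intl_{\eta_{n+1}>1-\e}\frac{|f(\eta)|}{(1-\eta_{n+1})^{1/2}\,(1+\eta_{n+1})^{1/2}}\,d\eta,\]
where the set $\{|x|>a\}$ maps onto the polar cap $\{\eta_{n+1}>1-\e\}$ with $\e=2/(a^2+1)$, so that $a\in(0,\infty)$ corresponds to $\e\in(0,2)$.

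The delicate point --- and the only real obstacle --- is that the weight here is $(1-\eta_{n+1})^{-1/2}(1+\eta_{n+1})^{-1/2}$, which differs from the weight $(1-\eta_{n+1})^{-1/2}$ in (\ref{stereogr1r50}) by the factor $(1+\eta_{n+1})^{-1/2}$. On any cap $\{\eta_{n+1}>1-\e\}$ with $\e<2$ this extra factor is bounded above and below by positive constants, since it only degenerates as $\eta_{n+1}\to-1$ (the south pole), which such a cap avoids. Hence for each fixed $\e\in(0,2)$ the two integrability conditions are equivalent, and the displayed $g$-condition for all $a>0$ is equivalent to (\ref{stereogr1r50}) for all $\e\in(0,2)$; this is guaranteed by the hypothesis, which proves the first assertion.

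For the second assertion, note that $\eta_{n+1}=(|x|^2-1)/(|x|^2+1)$ depends only on $|x|$, so if $f$ is nonnegative and zonal then $g$ is nonnegative and radial. If (\ref{stereogr1r50}) fails, then the right-hand side above is infinite for some $\e\in(0,2)$, whence $\intl_{|x|>a}|g(x)|/|x|\,dx=\infty$ for the corresponding $a>0$; the second part of Theorem~\ref{byvs1} then yields $(Rg)\equiv\infty$, and by (\ref{stereogr1r3}) we conclude $(\frS f)(\xi)\equiv\infty$. The whole argument parallels the proof of Theorem~\ref{byvs1SPH}, with the stereographic map $\nu$ replacing $\mu$, the bookkeeping of the powers of $1-\eta_{n+1}$ being the one step that must be carried out with care.
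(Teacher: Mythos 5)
Your proof is correct and follows exactly the route the paper intends: the paper gives no written proof beyond the remark that the theorem follows from Theorem \ref{byvs1} combined with (\ref{ll3567g}) and Lemma \ref{stereoviat}, and your computation --- transferring $\int_{|x|>a}|g(x)|\,|x|^{-1}\,dx$ to the cap $\{\eta_{n+1}>1-\e\}$ with $\e=2/(a^2+1)$ via Lemma \ref{stereoviat}(ii) and absorbing the harmless factor $(1+\eta_{n+1})^{-1/2}$ --- supplies precisely the omitted details, including the reduction of the zonal case to radial $g$. The only caveat, which you inherit from the paper's own formulation rather than introduce, is that the correspondence $a>0\leftrightarrow\e\in(0,2)$ never reaches the endpoint $\e=2$, so your converse (like the paper's) really treats failure of (\ref{stereogr1r50}) at some $\e<2$.
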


The next statement,  which mimics Theorem \ref {llyinteg1}, is another consequence of  (\ref{ll3567g}) and Lemma \ref{stereoviat}.

\begin{theorem} \label {llhericeg1} If
\be\label{gYYvtCov}
(1-\eta_{n+1})^{n-1-n/p} f(\eta) \in L^p (S^n), \qquad 1\le p< \frac{n}{n-1},\ee
then $f$ can be uniquely reconstructed from $\frS f$ by the formula
\be\label{gbDDDCo}
f(\eta)= (1-\eta_{n+1})^{1-n} (R^{-1} F \circ \nu)(\eta),\ee
where $ F(\th, t)=(\frS f)(\th, \cot^{-1} t)$ and    $R^{-1}$ is the inverse Radon transform.
\end{theorem}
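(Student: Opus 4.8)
The plan is to transfer the problem to $\rn$ via the stereographic correspondence of Lemma~\ref{stereoviatS} and then invoke the $L^p$-inversion of the Radon transform, in complete analogy with the proof of Theorem~\ref{llyinteg1}. Define $g$ by (\ref{stereogr1r4}), so that $(Rg)(\theta,t)=(\frS f)(\theta,\psi)$ with $t=\cot\psi$. The decisive first step is to verify that the hypothesis (\ref{gYYvtCov}) on $f$ is equivalent to $g\in L^p(\rn)$ with the \emph{same} exponent $p\in[1,n/(n-1))$.

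To do this I would change variables by $\eta=\nu(x)$. By Lemma~\ref{stereoviat}(i) this amounts to $dx=2^{-n}(|x|^2+1)^n\,d\eta$, and by the identity (\ref{ll3567g}) we have $2/(|x|^2+1)=1-\eta_{n+1}$. Substituting these into $\intl_{\rn}|g|^p\,dx$ gives
\be
\intl_{\rn} |g(x)|^p\,dx=\intl_{S^n} (1-\eta_{n+1})^{(n-1)p-n}\,|f(\eta)|^p\,d\eta.
\ee
Since $(n-1)p-n=(n-1-n/p)\,p$, the right-hand side is finite exactly when $(1-\eta_{n+1})^{n-1-n/p}f\in L^p(S^n)$, which is (\ref{gYYvtCov}). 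This weight bookkeeping, and in particular the coincidence of the two exponents, is the heart of the argument and the only place where anything has to be checked carefully; everything else is assembly of results already available.

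With $g\in L^p(\rn)$, $1\le p<n/(n-1)$, secured, the function $g$ is uniquely determined by $Rg$ and $g=R^{-1}(Rg)$. By Lemma~\ref{stereoviatS}, $Rg=F$ where $F(\theta,t)=(\frS f)(\theta,\cot^{-1}t)$, so $g=R^{-1}F$. It then remains to undo (\ref{stereogr1r4}) pointwise: writing $x=\nu^{-1}(\eta)$ and using again $2/(|x|^2+1)=1-\eta_{n+1}$ yields $f(\eta)=(1-\eta_{n+1})^{1-n}(g\circ\nu^{-1})(\eta)=(1-\eta_{n+1})^{1-n}(R^{-1}F\circ\nu^{-1})(\eta)$, which is the asserted reconstruction formula (\ref{gbDDDCo}) (the composition being understood through the stereographic map $\nu^{-1}$ of Lemma~\ref{stereoviat}(ii)). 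Uniqueness is automatic: the assignment $f\mapsto g$ is a bijection of the weighted class onto $L^p(\rn)$ (multiplication by a nonvanishing factor composed with $\nu$), and $R$ is injective on $L^p(\rn)$ in this range, so $\frS f_1=\frS f_2$ forces $g_1=g_2$ and hence $f_1=f_2$.
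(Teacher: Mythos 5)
Your argument is correct and is precisely the one the paper intends: the theorem is stated there without proof as ``another consequence of (\ref{ll3567g}) and Lemma \ref{stereoviat}, mimicking Theorem \ref{llyinteg1},'' and your weight computation showing $\intl_{\rn}|g|^p\,dx=\intl_{S^n}(1-\eta_{n+1})^{(n-1)p-n}|f|^p\,d\eta$ together with the $L^p$-injectivity of $R$ is exactly that argument. You also correctly read the composition in (\ref{gbDDDCo}) as being through $\nu^{-1}$ (the stereographic projection), which is how the paper's notation must be understood.
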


A simple  calculation shows that the injectivity condition (\ref{gYYvtCov}) is stronger than the existence condition (\ref{stereogr1r50}).

\begin{corollary} The operator $\frS$ is injective on the class of functions $f$ for which $(1-\eta_{n+1})^{-1} f(\eta) \in L^1 (S^n)$.
Moreover, it is injective on  $L^\infty (S^n)$.
\end{corollary}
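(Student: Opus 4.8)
The plan is to deduce both assertions directly from the reconstruction result in Theorem~\ref{llhericeg1}, by verifying that each of the two function classes falls under the hypothesis (\ref{gYYvtCov}) for a suitable exponent $p\in[1,n/(n-1))$. In other words, I treat injectivity as a corollary of explicit invertibility: if the weighted function $(1-\eta_{n+1})^{n-1-n/p}f$ lands in $L^p(S^n)$ for some admissible $p$, then $f$ is determined by $\frS f$, so $\frS$ is injective on that class.

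For the first class the choice $p=1$ settles the matter at once. Since $n-1-n/p=n-1-n=-1$ when $p=1$, the condition (\ref{gYYvtCov}) reads $(1-\eta_{n+1})^{-1}f(\eta)\in L^1(S^n)$, which is precisely the stated hypothesis. As $n/(n-1)>1$ for every $n\ge 2$, the value $p=1$ is admissible, and Theorem~\ref{llhericeg1} yields unique recovery of $f$ from $\frS f$.

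For $L^\infty(S^n)$ I would produce an admissible $p$ for which (\ref{gYYvtCov}) holds whenever $f$ is merely bounded. Writing $|f|\le M$ a.e., membership $(1-\eta_{n+1})^{n-1-n/p}f\in L^p(S^n)$ reduces to integrability of the weight $(1-\eta_{n+1})^{(n-1)p-n}$ over $S^n$. The only possible singularity sits at the north pole $e_{n+1}$, where $1-\eta_{n+1}\to 0$; away from the pole the weight is bounded. Passing to spherical coordinates $\eta=\om\sin\vp+e_{n+1}\cos\vp$, $\om\in\sn$, $0<\vp<\pi$, with $d\eta=\sin^{n-1}\vp\,d\vp\,d\om$ and $1-\eta_{n+1}=1-\cos\vp\sim\vp^2/2$ as $\vp\to 0$, the integrand near the pole behaves like $\vp^{\,2(n-1)p-n-1}$, which is integrable exactly when $p>n/\big(2(n-1)\big)$.

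It then suffices to note that the admissible range is nonempty for every $n\ge 2$: since $n/\big(2(n-1)\big)<n/(n-1)$ and $n/(n-1)>1$, one can choose $p\in[1,n/(n-1))$ with $p>n/\big(2(n-1)\big)$; for $n\ge 3$ the value $p=1$ already works, while for $n=2$ any $p\in(1,2)$ suffices. For such $p$ the weighted function lies in $L^p(S^n)$, and Theorem~\ref{llhericeg1} gives injectivity of $\frS$ on $L^\infty(S^n)$. The only genuine computation — and the mildest of obstacles — is the exponent bookkeeping in the third paragraph, together with checking that the interval for $p$ never degenerates.
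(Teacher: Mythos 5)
Your proposal is correct and follows essentially the same route as the paper: the first claim is Theorem~\ref{llhericeg1} with $p=1$, and the second follows because every bounded function satisfies (\ref{gYYvtCov}) for a suitable admissible $p$ (the paper simply says ``$p$ sufficiently close to $n/(n-1)$''). Your explicit computation of the integrability threshold $p>n/\big(2(n-1)\big)$ near the pole is a correct and slightly more precise version of the detail the paper leaves implicit.
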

\begin{proof}
The first statement is contained in Theorem \ref{llhericeg1} (set $p=1$). The second one follows from the observation that every bounded function  satisfies (\ref{gYYvtCov}) with $p$  sufficiently close to $n/(n-1)$.
\end{proof}

If $f$ is zonal, then $\frS f$ is zonal too and can be represented by the Erd\'elyi-Kober type fractional integral. To this end, we
 set
\[\eta =\om\, \sin \vp + e_{n+1}\, \cos \, \vp, \qquad \om \in S^{n-1}, \qquad 0<\vp\le \pi.\]
Since $f$ is zonal, then $f(\eta)$ depends only on $\vp$. We denote $f(\eta)=f_0 (\cot \vp/2)$. Similarly, if
 \[ \xi= \th \sin \psi +e_{n+1}\, \cos \, \psi, \qquad \th\in \sn, \qquad 0\le \psi \le \pi/2,\] then $(\frS f)(\xi)$ depends only on $\psi$. We set
  $(\frS f)(\xi)=F_0(\cot \psi)$.

\begin{theorem} \label {stereoscp2} If $f$   is a zonal function satisfying (\ref{stereogr1r50}), then
\be F_0(t)=2^{n-1} \sig_{n-2}\intl^\infty_t \frac{f_0 (r)}{(1+r^2)^{n-1}}  \, (r^2 -t^2)^{(n-3)/2}\, r\, dr.\ee
\end{theorem}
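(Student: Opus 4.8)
The plan is to deduce the formula directly from Lemma \ref{stereoviatS}, which already expresses the slice transform as a hyperplane Radon transform, and then to invoke the radial Radon formula (\ref{rese}). The key observation is that for a \emph{zonal} $f$ the auxiliary function $g$ of (\ref{stereogr1r4}) is radial, so that $(Rg)(\th,t)$ is independent of $\th$ and is computed by (\ref{rese}) with an explicit radial profile.

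First I would verify that $g(x)=g_0(|x|)$ and identify $g_0$. Writing $x=s\om$, $s=|x|$, $\om\in\sn$, the stereographic map (\ref{MUIT}) gives $\nu(s\om)=\om\,\sin\vp+e_{n+1}\cos\vp$, whose $e_{n+1}$-coordinate is $(s^2-1)/(s^2+1)=\cos\vp$; this yields $\cot(\vp/2)=s$, which is exactly the relation $|x|=\cot(\vp/2)$ recorded in Figure 2. Consequently $(f\circ\nu)(x)=f_0(\cot(\vp/2))=f_0(|x|)$, and (\ref{stereogr1r4}) shows that $g$ is radial with
\[ g_0(r)=\left(\frac{2}{1+r^2}\right)^{n-1} f_0(r)=\frac{2^{n-1}}{(1+r^2)^{n-1}}\,f_0(r).\]

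Next I would apply (\ref{stereogr1r3}) with $\th$ fixed. Since $(\frS f)(\th,\psi)=F_0(\cot\psi)=F_0(t)$ for $t=\cot\psi\ge0$, and $(\frS f)(\th,\psi)=(Rg)(\th,t)$, we get $F_0(t)=(Rg)(\th,t)$. Because $g$ is radial, (\ref{rese}) gives
\[ (Rg)(\th,t)=\sig_{n-2}\intl_t^\infty g_0(r)\,(r^2-t^2)^{(n-3)/2}\,r\,dr,\]
and substituting the profile $g_0$ above (with $|t|=t$) produces precisely the asserted identity. Finiteness of the integral for almost all $t$ is guaranteed by the hypothesis (\ref{stereogr1r50}): it translates, through (\ref{ll3567g}) and Lemma \ref{stereoviat}, into condition (\ref{lkmux}) for the radial function $g$, so that Theorem \ref{byvs1} applies.

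The argument is essentially bookkeeping of changes of variables, so there is no deep obstacle; the only point requiring care is the passage from the spherical parametrization to the Euclidean one, namely confirming the identity $\cot(\vp/2)=|x|$ under $\nu$ and matching the radial variable $r$ with the integration variable in (\ref{rese}). Once the radiality of $g$ and the explicit form of $g_0$ are established, the conclusion is immediate.
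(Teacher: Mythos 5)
Your proposal is correct and follows essentially the same route as the paper: both arguments reduce the statement to Lemma \ref{stereoviatS}, observe that zonality of $f$ makes the auxiliary function $g$ of (\ref{stereogr1r4}) radial with profile $2^{n-1}(1+r^2)^{1-n}f_0(r)$, and then apply the radial Radon formula (\ref{rese}). Your added verification that $\cot(\vp/2)=|x|$ under $\nu$, and the remark on finiteness via Theorem \ref{byvs1}, are consistent with what the paper takes for granted.
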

\begin{proof} Since $f$   is  zonal, then  $g$ in  (\ref{stereogr1r3}) is radial. We set  $g(x)= \tilde g (|x|)$. By (\ref{rese}) and  Lemma \ref{stereoviatS},
\[
F_0(t)=\sigma_{n-2} \intl^\infty_{t}\! \tilde g(r)(r^2-t^2)^{(n-3)/2} r \,dr.\]
It remains to express $ \tilde g $ through $f_0$. We have
\[ g(x)=\frac{2^{n-1}\,  (f\circ \nu) (x)}{(1+|x|^2)^{n-1}}, \qquad |x|=|\nu (\eta)|=  \cot \vp/2;\]
cf. (\ref{stereogr1r4})   and (\ref{ll3567g}).  Hence,
\[\tilde g(r)=\frac{2^{n-1}\,  f_0 (r) }{(1+r^2)^{n-1}},\]
and we are done.
\end{proof}

Another  application of the Radon transform theory is related to the spherical harmonic decomposition of
$f(\eta)=f(\om \sin \vp +e_{n+1}\, \cos \, \vp) $ in the $\om$-variable. Let
\[f_{m,\mu} (\vp) =\intl_{\sn }f(\om \sin \vp +e_{n+1}\, \cos \, \vp)\, Y_{m,\mu} (\om)\, d\om.\]
 Then   Theorems \ref{azw1a2} and \ref{zaehle4RA} in conjunction with Lemma \ref{stereoviatS} imply the following  description of the kernel of the operator $\frS$.

\begin{theorem} \label{zasliep}
Let
\be\label{TPNY1delep} I_1 (f)=\intl_{\eta_{n+1}>1-\e}
\frac{|f(\eta)|}{(1-\eta_{n+1})^{1/2}}\,d\eta<\infty \quad \forall \, \e\in (0, 2]. \ee

\noindent (i) \  Suppose that  $ f_{m,\mu} (\vp)=0$ for almost all $\vp \in (0,\pi)$ if $m=0,1$, and
\be\label{zNYUep}
 f_{m,\mu} (\vp)\!=\!(1\!-\!\cos\, \vp)^{1-n} \sum_{\substack{k=0 \\  m-k \,  even }}^{m-2} \!c_{k}\,\left (\tan \frac{\vp}{2}\right )^{n+k}, \quad c_{k}\!=\!\const,\ee
if $m\ge 2$. Then  $(\frS f)(\xi)=0$  a.e. on $S^n$.

\noindent (ii) \  Conversely, let  $(\frS f)(\xi)=0$   a. e. on $S^n$. Suppose, in addition to (\ref{TPNY1delep}),   that
\be\label{TZZ1eps}
I_2(f)=\intl_{\eta_{n+1}<1-\e}
|f(\eta)|\, (1+\eta_{n+1})^{\lam}\,d\eta<\infty\ee
for some $\lam>-1/2$ and  $0<\e\le 2$. Then each  Fourier-Laplace coefficient $f_{m,\mu} (\vp)$ is a finite linear combination of the  functions \[(1-\cos\, \vp)^{1-n} \, \left (\tan \frac{\vp}{2}\right )^{n+k}, \qquad k=0,1, \ldots,\]  and the following statements hold.

\noindent {\rm (a)} If $m=0,1$, then $f_{m,\mu} (\vp)\equiv 0$.

\noindent {\rm (b)}  If $m\ge 2$ and $f \neq 0$,  that is, the set $\{\eta: f (\eta)\neq 0\}$ has positive measure, then $f_{m,\mu} (\vp)\not\equiv 0$  for at least one pair $(m,\mu)$. For every such pair, $f_{m,\mu} (\vp)$ has the form (\ref{zNYUep}).
\end{theorem}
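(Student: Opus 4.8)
The plan is to transport the whole problem to the hyperplane Radon transform on $\rn$ by means of Lemma~\ref{stereoviatS}, and then quote Theorems~\ref{azw1a2} and~\ref{zaehle4RA}. With $g(x)=(2/(|x|^2+1))^{n-1}(f\circ\nu)(x)$ as in (\ref{stereogr1r4}), formula (\ref{stereogr1r3}) reads $(\frS f)(\theta,\psi)=(Rg)(\theta,\cot\psi)$. Since $\psi\mapsto\cot\psi$ maps $(0,\pi/2)$ onto $(0,\infty)$ and $(Rg)(\theta,t)=(Rg)(-\theta,-t)$ for every $(\theta,t)\in Z_n$, the vanishing of $\frS f$ a.e. on $S^n$ is equivalent to the vanishing of $Rg$ a.e. on $Z_n$. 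Hence part (i) will follow from Theorem~\ref{azw1a2} applied to $g$, and part (ii) from Theorem~\ref{zaehle4RA} applied to $g$, once the hypotheses and the shapes of the coefficients are matched.

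First I would record the dictionary between Fourier--Laplace coefficients. Writing $x=r\om$ with $\om=x/|x|\in\sn$ and $\eta=\nu(x)=\om\sin\vp+e_{n+1}\cos\vp$, one has $r=|x|=\cot(\vp/2)$, so $1/r=\tan(\vp/2)$, and by (\ref{ll3567g}) $\,2/(r^2+1)=1-\cos\vp$. Integrating $g(r\om)=(2/(r^2+1))^{n-1}f(\nu(r\om))$ against $Y_{m,\mu}(\om)$ gives
\[ g_{m,\mu}(r)=\Big(\tfrac{2}{r^2+1}\Big)^{n-1}f_{m,\mu}(\vp)=(1-\cos\vp)^{\,n-1}f_{m,\mu}(\vp), \qquad r=\cot(\vp/2). \]
Thus $g_{m,\mu}(r)=\sum_k c_k r^{-n-k}$ (the form (\ref{azw2})) is equivalent to $f_{m,\mu}(\vp)=(1-\cos\vp)^{1-n}\sum_k c_k(\tan(\vp/2))^{n+k}$ (the form (\ref{zNYUep})), and ``a finite linear combination of the $r^{-n-k}$'' transcribes verbatim into ``a finite linear combination of the $(1-\cos\vp)^{1-n}(\tan(\vp/2))^{n+k}$''. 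This yields (i) at once, and the alternatives (a), (b) are inherited directly from Theorem~\ref{zaehle4RA}.

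It then remains to verify the integrability hypotheses. For $I_1(g)=\intl_{|x|>a}|g(x)|/|x|\,dx$ I would write the weight $|x|^{-1}(2/(|x|^2+1))^{n-1}$ as $2^n(|x|^2+1)^{-n}w(x)$ with $w(x)=(|x|^2+1)/(2|x|)$, apply Lemma~\ref{stereoviat}(i), and use the identity $w=(1-\eta_{n+1}^2)^{-1/2}$ together with the equivalence $|x|>a\Leftrightarrow\eta_{n+1}>1-\e$, $\e=2/(a^2+1)$, to obtain
\[ I_1(g)=\intl_{\eta_{n+1}>1-\e}\frac{|f(\eta)|}{\sqrt{1-\eta_{n+1}^2}}\,d\eta, \]
which is comparable to $I_1(f)$ in (\ref{TPNY1delep}) because $1+\eta_{n+1}$ is bounded away from $0$ near the north pole. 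In the same fashion, $I_2(g)=\intl_{|x|<a}|x|^{N-1}|g(x)|\,dx$ reduces, via the weight $W(x)=\tfrac12|x|^{N-1}(|x|^2+1)$ and the equivalence $|x|<a\Leftrightarrow\eta_{n+1}<1-\e$, to an integral whose only possible divergence is at the south pole and which is finite precisely when $\intl_{\eta_{n+1}<1-\e}|f(\eta)|(1+\eta_{n+1})^{(N-1)/2}\,d\eta<\infty$; choosing $N=2\lam+1>0$ (admissible since $\lam>-1/2$) matches this with $I_2(f)$ in (\ref{TZZ1eps}).

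The only genuine work — and where I expect to spend the most care — is this last bookkeeping: keeping the stereographic weights $(1-\eta_{n+1})$ versus $|x|$, the constant factors $2^n$, and the comparability of the weights near the two poles straight. Once $I_1(g)\asymp I_1(f)$ and $I_2(g)\asymp I_2(f)$ are in place, both parts of the statement are a direct transcription of Theorems~\ref{azw1a2} and~\ref{zaehle4RA} through the coefficient dictionary above.
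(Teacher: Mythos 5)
Your proposal is correct and follows exactly the route the paper intends: the paper derives Theorem \ref{zasliep} by combining Lemma \ref{stereoviatS} with Theorems \ref{azw1a2} and \ref{zaehle4RA}, and your coefficient dictionary $g_{m,\mu}(r)=(1-\cos\vp)^{n-1}f_{m,\mu}(\vp)$ with $r=\cot(\vp/2)$, together with the weight computations identifying $I_1(g)$ and $I_2(g)$ (with $N=2\lam+1$) with the integrals in (\ref{TPNY1delep}) and (\ref{TZZ1eps}), supplies precisely the bookkeeping the paper leaves implicit. No gaps.
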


In a similar way,  Theorem \ref{azw1a2R}  implies the  support theorem for the slice transform.
\begin{theorem} \label{zasli} Given $a\in (0,1)$, consider the spherical caps
\[ \Om_a\!=\!\{\eta \in S^n: \, \eta_{n+1} >a\}, \qquad  \tilde \Om_a\!=\!\{\xi \in S^n: \, \xi_{n+1} >\sqrt {(1\!+\!a)/2}\,\}.\]
If $f=0$ a.e. in $\Om_a$, then $\frS f =0$ a.e. in $\tilde \Om_a$.  Conversely, if
\[ \intl_{\Om_a}  (1-\eta_{n+1})^{-1-m/2} \,|f (\eta)|\, d\eta<\infty \quad \forall \, m\in \bbn \]
and $\frS f =0$ a.e. in $\tilde \Om_a$, then $f=0$ a.e. in $\Om_a$.
\end{theorem}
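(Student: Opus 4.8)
The plan is to push the whole statement through the stereographic dictionary of Lemma \ref{stereoviatS}, so that it becomes a direct corollary of the Euclidean support theorem, Theorem \ref{azw1a2R}. Writing $\eta=\nu(x)$ and recalling (\ref{ll3567g}), i.e. $|x|^2+1=2/(1-\eta_{n+1})$, we have $\eta_{n+1}=(|x|^2-1)/(|x|^2+1)$, so the cap $\Om_a=\{\eta_{n+1}>a\}$ corresponds under $\nu$ to the exterior ball $B_b^-=\{x\in\rn:\,|x|>b\}$ with $b=\sqrt{(1+a)/(1-a)}$. A slice $\gam(\th,\psi)$ has center $\xi=\th\sin\psi+e_{n+1}\cos\psi$, so $\xi_{n+1}=\cos\psi$, and with $t=\cot\psi$ one gets $t^2=\cos^2\psi/(1-\cos^2\psi)$; hence the threshold $\xi_{n+1}>\sqrt{(1+a)/2}$ is equivalent to $t>b$. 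Thus $\tilde\Om_a$ corresponds to $\{(\th,t):\,t>b\}$, which, since $(Rg)(\th,t)=(Rg)(-\th,-t)$, carries exactly the same information as the exterior slab $C_b^-=\{|t|>b\}$. Throughout I set $g(x)=(2/(|x|^2+1))^{n-1}(f\circ\nu)(x)$ as in (\ref{stereogr1r4}), so that $(\frS f)(\th,\psi)=(Rg)(\th,\cot\psi)$ by Lemma \ref{stereoviatS}.

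For the forward implication I would argue as follows. If $f=0$ a.e. on $\Om_a$, then $g=0$ a.e. on $B_b^-$, because $\nu$ is a diffeomorphism with positive Jacobian and the prefactor in the definition of $g$ never vanishes, so null sets match up. Applying the first (unconditional) part of Theorem \ref{azw1a2R} to $g$ with radius $b$ gives $(Rg)(\th,t)=0$ a.e. on $C_b^-$, and the dictionary then yields $\frS f=0$ a.e. on $\tilde\Om_a$.

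For the converse the substantive point is to check that the integral hypothesis on $f$ is precisely the finiteness of $\int_{B_b^-}|g(x)|\,|x|^m\,dx$ for all $m$, which is what the converse part of Theorem \ref{azw1a2R} requires. Using the change of variables $d\eta=2^n(|x|^2+1)^{-n}\,dx$ from Lemma \ref{stereoviat}(i), the identity $(2/(|x|^2+1))^{n-1}\,(|x|^2+1)^n/2^n=(|x|^2+1)/2=1/(1-\eta_{n+1})$, and $|x|^2=(1+\eta_{n+1})/(1-\eta_{n+1})$, one computes
\[
\intl_{B_b^-}|g(x)|\,|x|^m\,dx=\intl_{\Om_a}\frac{(1+\eta_{n+1})^{m/2}}{(1-\eta_{n+1})^{1+m/2}}\,|f(\eta)|\,d\eta .
\]
On $\Om_a$ the factor $(1+\eta_{n+1})^{m/2}\le 2^{m/2}$ is bounded, so this is dominated by $2^{m/2}\int_{\Om_a}(1-\eta_{n+1})^{-1-m/2}|f(\eta)|\,d\eta$, which is finite for every $m\in\bbn$ by hypothesis. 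Hence $g$ satisfies the assumptions of the converse in Theorem \ref{azw1a2R} with radius $b$, and since $\frS f=0$ a.e. on $\tilde\Om_a$ translates into $Rg=0$ a.e. on $C_b^-$, that theorem delivers $g=0$ a.e. on $B_b^-$, i.e. $f=0$ a.e. on $\Om_a$.

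The two changes of variables are routine; the one piece of bookkeeping that needs real care is the matching of radii — verifying that the center-threshold $\sqrt{(1+a)/2}$ for $\xi$ and the threshold $a$ for $\eta$ both pull back to the single Radon radius $b=\sqrt{(1+a)/(1-a)}$ — together with confirming that the weight $(1-\eta_{n+1})^{-1-m/2}$ in the hypothesis is exactly the stereographic pullback of the Radon weight $|x|^m$. Once these identifications are nailed down, the statement follows immediately from Theorem \ref{azw1a2R}.
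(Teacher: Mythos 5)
Your proof is correct and follows exactly the route the paper intends: the paper gives no separate argument for this theorem, simply asserting that it follows from Theorem \ref{azw1a2R} via the stereographic dictionary of Lemma \ref{stereoviatS}, and your computation of the radius $b=\sqrt{(1+a)/(1-a)}$ and of the pullback of the weight $|x|^m$ to $(1-\eta_{n+1})^{-1-m/2}$ supplies precisely the bookkeeping the paper leaves to the reader.
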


\section{The Totally Geodesic Radon Transform on the Hyperbolic Space}\label {Link wi}
We will be dealing with the hyperboloid model of the $n$-dimensional real hyperbolic space $\hn$ which is described in \cite{GGV}; see also \cite{BR1}.  Let $\bbe^{n, 1}\sim \bbr^{n+1}$, $n\ge 2$, be  the $(n+1)$-dimensional  pseudo-Euclidean   real vector space with the inner product
\be\label {tag 2.1-HYP}[{\bf x}, {\bf y}] = - x_1 y_1 - \ldots -x_n y_n + x_{n +1} y_{n +1}. \ee
The  space $\hn$ is realized as the upper sheet of the two-sheeted hyperboloid    in $\bbe^{n, 1}$, that is,
\[\hn = \{{\bf x}\in \bbe^{n,1} :
\| {\bf x} \|^2 = 1, \ x_{n +1} > 0 \}.\]
The corresponding one-sheeted hyperboloid is defined by
\[\overset {*}{\bbh}{}^n = \{
{\bf x} \in E^{n,1}: \|{\bf x} \|^2 = - 1 \}.\]
Both  $\hn$ and $\overset{*}{\bbh}{}^n$ are orbits of the identity component  $G= SO_0(n,1)$  of the special pseudo-orthogonal group $SO(n,1)$ of linear transformations preserving the bi-linear form $[{\bf x},  {\bf y}]$ and having the determinant $1$.

Unlike the boldfaced ${\bf x}, {\bf y}\in \bbe^{n, 1}$, the usual letters $x,y$ will be used for points in $\hn$. The geodesic distance between $x$ and $y$ is defined by $d(x,y) = \cosh^{-1}[x,y]$. We fix the $G$-invariant measure $dx$ on $\hn$ which is normalized so that
\be\label {tRRRHYP} \intl_{\hn} \!f(x)\, dx= \intl_0^\infty \sh^{n -1} r \,  d r\intl_{\sn} \!\! f(\theta\, \sh r  + e_{n+1}  \ch r) \, d \theta\ee
for every  $f\in L^1 (\hn)$.
The totally geodesic Radon transform of a function $f$ on $\hn$ is defined by the formula
\be\label {tag 3.1-HYP} (\frR f) (\xi) = \int\limits_{\{x\in\hn:\,[x, \xi] = 0\}}
f (x) \, d_\xi x,  \qquad \xi \in \hns, \ee
and represents an  even function on $\hns$.  The corresponding  dual  transform of an even function $\vp$ on $\hns$ has the form
\be\label {tag 3.2-HYP} (\frR^\ast \vp) (x) =
\int\limits_{\{\xi \in  \hns :\,[x, \xi] = 0\}} \!  \!\vp (\xi)  \, d_x \xi,
\qquad x \in \hn. \ee
The measures
$d_\xi x$  and $d_x \xi$ are $G$-images of the corresponding measures on the sets
\[\bbh^{n-1} = \{y\in \hn:\, y_n=0\}, \qquad S^{n -1}=\{\eta \in \hns :\, \eta_{n+1}=0\}.\]
 Specifically, let  $\omega_x$ and $\omega_\xi$ be hyperbolic rotations in $G$  satisfying
\be\label {MMM-HYP} \omega_x :\, e_{n +1} \to  x, \qquad \omega_\xi :\, e_n \to \xi.\ee
If $f_\xi (y)=f (\omega_\xi y)$ and  $\vp_x (\eta)=\vp (\omega_x \eta)$, then the precise meaning of the above integrals is the following:
\be\label {MMM-HYP1}
(\frR f) (\xi) = \intl_{\bbh^{n-1}} f_\xi (y)\, dy, \qquad (\frR^\ast \vp) (x) =\intl_{\sn}\vp_x (\eta) \, d_*\eta.\ee
Both $\frR$ and $\frR^\ast$ are $G$-invariant.

Let $S^{n -1}$ and $S^{n -2}$  be  the unit spheres in the coordinate planes $\bbr^n =\bbr e_1 \oplus  \ldots \oplus \bbr  e_n$ and   $\bbr^{n -1}=\bbr e_1 \oplus \ldots \oplus \bbr e_{n -1}$, respectively. The notation
\be\label {tag 2.7-HYPX} a_r=\left[\begin{array} {ccc} I_{n-1} &0 &0\\
0 &\cosh r &\sinh r\\ 0 &\sinh r &\cosh r\end{array}\right ]\ee
is used for   the corresponding hyperbolic rotation in the plane $(x_n, x_{n+1})$.
Given   $x \in \hn$ and $\xi \in \hns$, we set
\bea\label {tag 3.3-HYP}  x \!&=&\! \theta\, \sh r \! +\! e_{n+1}  \ch r \!=\! \omega_\th a_r e_{n +1}, \quad \,\theta\in S^{n -1}, \;
r \in {\bbr}_+,\\
\label {tag 3.3-HYPa}  \xi \!&=& \! \sigma\,\ch \rho \!+\! e_{n +1} \sh \rho\!=\!\omega_\sigma a_\rho e_n, \qquad \sigma \in S^{n -1}, \;   \rho \in {\bbr}.\eea
Here  $\omega_\theta$ and $ \omega_\sigma \in SO (n)$ are
arbitrary rotations satisfying $\omega_\theta e_n = \theta$, $\omega_\sigma e_n = \sigma$; $a_\rho$ has the same meaning as $a_r$ in (\ref{tag 2.7-HYPX}).

\begin{lemma} \label{Lemma 3.1HYP}   Let  $f_\sigma (x) = f (\omega_\sigma x)$,  $\vp_\theta (\xi) = \vp (\omega_\theta \xi)$.  Then
\bea\label {tag 3.4-HYP} (\frR f)(\xi)\!\! &=& \!\!\int\limits^\infty_0 \sh^{n -2} s  \, ds \\
\!\!&\times&\!\! \int\limits_{S^{n -2}}\!\!
f_\sigma (\omega \,\sh s   + (e_n \, \sh \rho  + e_{n +1} \,\ch \rho )\,\ch s)  \, d \omega, \nonumber\\
\label {tag 3.5-HYP}  (\frR^* \vp)  (x) \!\!\!&=&\!\!\! \int\limits_{S^{n -1}}\!\!\! \vp_\theta (\eta^\prime
+ e_n\, \eta_n \,\ch r + e_{n +1}\, \eta_n\,\sh r ) \, d_* \eta, \eea
$\eta\!=\!(\eta', \eta_n)$, provided that the corresponding integrals exist in the Lebesgue sense.
\end{lemma}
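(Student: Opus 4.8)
The plan is to establish the two identities separately, in each case selecting a convenient representative for the hyperbolic rotation in (\ref{MMM-HYP}), substituting the geodesic polar decomposition of the relevant domain of integration, and then tracking the action of the boost $a_\rho$ (resp.\ $a_r$) on the parametrizing point. First I would treat (\ref{tag 3.4-HYP}). Writing $\xi=\omega_\sigma a_\rho e_n$ as in (\ref{tag 3.3-HYPa}), I take $\omega_\xi=\omega_\sigma a_\rho$ in (\ref{MMM-HYP1}). This is legitimate because any two rotations with $\omega_\xi e_n=\xi$ differ on the right by an element of the stabilizer of $e_n$ in $G$, and such an element maps $\bbh^{n-1}=\{y\in\hn:\, y_n=0\}$ onto itself preserving the invariant measure $dy$; hence $\int_{\bbh^{n-1}}f(\omega_\xi y)\,dy$ does not depend on the choice. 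I then parametrize $\bbh^{n-1}$ by the $(n-1)$-dimensional analogue of (\ref{tRRRHYP}): every $y\in\bbh^{n-1}$ has the form $y=\omega\,\sh s+e_{n+1}\ch s$ with $\omega\in S^{n-2}\subset\bbr e_1\oplus\cdots\oplus\bbr e_{n-1}$ and $s\ge 0$, and $dy=\sh^{n-2}s\,ds\,d\omega$.

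The computation then reduces to evaluating $a_\rho(\omega\,\sh s+e_{n+1}\ch s)$. Since $\omega$ lies in the span of $e_1,\dots,e_{n-1}$, it is fixed by $a_\rho$, while (\ref{tag 2.7-HYPX}) gives $a_\rho e_{n+1}=e_n\,\sh\rho+e_{n+1}\ch\rho$; hence $a_\rho(\omega\,\sh s+e_{n+1}\ch s)=\omega\,\sh s+(e_n\,\sh\rho+e_{n+1}\ch\rho)\,\ch s$. Applying $\omega_\sigma$ and recalling $f_\sigma=f\circ\omega_\sigma$ produces exactly the integrand in (\ref{tag 3.4-HYP}).

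For the dual transform (\ref{tag 3.5-HYP}) the argument is entirely parallel. Writing $x=\omega_\theta a_r e_{n+1}$ as in (\ref{tag 3.3-HYP}) and taking $\omega_x=\omega_\theta a_r$ in (\ref{MMM-HYP1}), I obtain $(\frR^*\vp)(x)=\int_{\sn}\vp(\omega_\theta a_r\eta)\,d_*\eta$, where $\eta$ ranges over $\sn=\{\eta\in\hns:\,\eta_{n+1}=0\}$ in $\bbr e_1\oplus\cdots\oplus\bbr e_n$. Splitting $\eta=\eta'+e_n\eta_n$ with $\eta'\in\bbr e_1\oplus\cdots\oplus\bbr e_{n-1}$, and using $a_r e_n=e_n\ch r+e_{n+1}\sh r$ from (\ref{tag 2.7-HYPX}) together with the fact that $a_r$ fixes $\eta'$, gives $a_r\eta=\eta'+e_n\eta_n\ch r+e_{n+1}\eta_n\sh r$. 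Applying $\omega_\theta$ and using $\vp_\theta=\vp\circ\omega_\theta$ yields (\ref{tag 3.5-HYP}).

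I do not expect a genuine obstacle here; the content is the correct setup rather than a hard estimate. The two points that require care are the independence of the defining integrals from the choice of $\omega_\xi$ and $\omega_x$ (which follows from the invariance of $dy$ and $d_*\eta$ under the respective stabilizers), and the precise verification that the boost acts trivially on the spherical variable --- on $\omega$ in the first case and on $\eta'$ in the second --- so that its entire effect is the displayed shift in the $(e_n,e_{n+1})$-plane. The standing hypothesis that the integrals converge in the Lebesgue sense justifies the passage to polar coordinates and any implicit use of Fubini's theorem.
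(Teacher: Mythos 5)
Your proposal is correct and follows essentially the same route as the paper: choose $\omega_\xi=\omega_\sigma a_\rho$ (resp.\ $\omega_x=\omega_\theta a_r$) in (\ref{MMM-HYP1}), pass to the geodesic polar parametrization of $\bbh^{n-1}$ (resp.\ the spherical variable on $S^{n-1}$), and use that the boost fixes the first $n-1$ coordinates while sending $e_{n+1}\mapsto e_n\,\sh\rho+e_{n+1}\,\ch\rho$ and $e_n\mapsto e_n\,\ch r+e_{n+1}\,\sh r$. Your explicit remark on the independence of the choice of $\omega_\xi$ and $\omega_x$ is a justification the paper leaves implicit, but it is the same argument.
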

\begin{proof}
  Consider the totally geodesic Radon transform (\ref{tag 3.1-HYP}) and set $x = \omega_\sigma a_\rho y$,
where $\sigma$ and $a_\rho$ are the same as in (\ref{tag 3.3-HYPa}). We have
\bea (\frR f)(\xi)&=&
\int\limits_{\bbh^{n-1}} f_\sigma
(a_\rho y)  \,  dy\nonumber\\
&=& \int\limits^\infty_0 \sh^{n -2} s \
d s \int\limits_{S^{n - 2}} f_\sigma
(a_\rho (\omega\,\sh s   +  e_{n +1}\,\ch s )  \,  d \omega. \nonumber\eea
This  gives (\ref{tag 3.4-HYP}). Further, setting $\xi = \omega_\theta a_r \eta$   in (\ref{tag 3.2-HYP}),
we obtain
\[ (\frR^* \vp)(x)\!=\! \int\limits_{S^{n -1}}\!\!
\vp_\theta (a_r \eta)  \,  d_* \eta \!=\! \int\limits_{S^{n -1}}\!\! \vp_\theta
(\eta^\prime
\!+\!  e_n\,\eta_n\,\ch r \  \!+\! e_{n +1}\,\eta_n \,\sh r) \,  d_* \eta. \]
\end{proof}

The totally geodesic  transform (\ref{tag 3.1-HYP}) and its dual (\ref{tag 3.2-HYP})  are intimately connected with the hyperplane  Radon
transform $R$   and its dual $R^*$. To establish this connection we fix the notation by setting
\be\label {tag 3.10-HYP} (R g) (\sig, t) \!=\! \int\limits_{\sig^\perp} g (\sig t \!+\! u)
 \,  d_\sig u, \qquad (R^\ast h) (y) \!=\!\int\limits_{S^{n -1}} h (\sig, y\cdot \sigma) \,  d_* \sigma. \ee
Here $t \in \bbr$ and  $\sig^\perp$ is the subspace of
$\bbr^n$ orthogonal to $\sig \in S^{n -1}$. Let
\[   x = \theta\, \sh r \! +\! e_{n+1}  \ch r, \qquad \,\theta\in S^{n -1}, \;
r \in {\bbr}_+,\]
\[ \xi = \sigma\,\ch \rho \!+\! e_{n +1} \sh \rho, \qquad \sigma \in S^{n -1}, \;   \rho \in {\bbr}.\]
We also write $\;\tilde x = (x_1, \ldots, x_n),\quad  \tilde \xi = (\xi_1, \ldots, \xi_n)$,
\[x= (\tilde x, x_{n +1}) = (\theta\,\sh r, \ch r)
\in \hn, \]
\[ \xi= (\tilde \xi, \xi_{n +1}) =
(\sigma \,\ch \rho, \sh \rho) \in \hns,\]
 \[f (x) \equiv f (\theta \,\sh r, \ch r), \qquad
\vp (\xi) \equiv
\vp (\sigma\ch \rho, \sh \rho).\]

To every  $x\! =\! (\tilde x, x_{n +1})\in \hn$ we associate its image  $y$ in the tangent hyperplane to $\hn$ at the point  $(0, \ldots, 0,1)\in \hn$,  so that $x$ and $y$ lie on the same line through the origin $(0, \ldots, 0,0)$ of $E^{n,1}$. If this tangent hyperplane is identified with the Euclidean space $\rn$, then  the map $x\to y$ is a bijection between $\hn$ and the unit ball
$B_{n} = \{ y \in \bbr^n: \ |y| < 1 \}$,  so that
\be\label {mmqAAWS} y=\frac{\tilde x}{x_{n +1}}, \qquad x\! =\! (\tilde x, x_{n +1})=\left (\frac{y}{\sqrt{1-| y|^2}}, \frac{1}{\sqrt{1-| y|^2}}\right ).\ee
Under this map, every totally geodesic submanifold of $\hn$ is associated with a chord in $B_{n}$ of the same dimension.
The corresponding functions on $\rn$ and $Z_n=\sn \times \bbr$ are defined by
\be\label {tag 3.11-HYP} g (y)=(1- | y|^2)^{-n / 2}_+ f
\left ( \frac{y}{\sqrt{1-| y|^2}}, \frac{1}{\sqrt{1- |y|^2}}\right ), \qquad  y \in \bbr^n, \ee
\be\label {tag 3.11-HYP} h (\sigma, t)=
(1- t^2)^{- n /2}_+
\vp \left ( \frac{\sigma}{\sqrt{ 1- t^2}},
\frac{t}{\sqrt{1\! -\! t^2}} \right ),   \quad\sigma\in \sn, \;  t \in \bbr, \ee
so that
\be\label {tag 3.12-HYP} f (x) =\frac{g (\tilde x / x_{n +1})}{x^{n}_{n +1} } =
\frac{g (\theta\,\tanh  r)}{\ch^{n}  r}\, , \ee
\be\label {tag 3.13-HYP} \vp (\xi) = (1 + \xi^2_{n +1})^{-n/2} \; \! h
\left( \frac{\tilde \xi}{\sqrt{1 + \xi^2_{n +1}}},
\frac{\xi_{n +1}}{\sqrt{1 + \xi^2_{n +1}}} \right ) =\frac{h (\sigma, \tanh \rho)}{
\ch^{n} \rho}\,. \ee

\begin{lemma} \label {Lemma 3.A6HYP} For every $\del \ge 0$,
\be\label {iKOOUY} \intl_{d (x, e_{n+1})>\del} f (x)\, \frac{dx}{x_{n+1}}= \intl_{{\rm tanh} \del <|y|<1}  g(y)\, dy\ee
provided that either integral exists in the Lebesgue sense.
\end{lemma}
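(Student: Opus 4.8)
The plan is to reduce both sides to the same one-dimensional integral by passing to the geodesic polar coordinates $x = \th\,\sh r + e_{n+1}\ch r$ on $\hn$ and the ordinary polar coordinates $y = s\th$ on the ball $\{|y|<1\}$, and then to identify the two expressions via the substitution $s = \tanh r$.

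First I would record the geometric meaning of the cutoff. Since the bilinear form (\ref{tag 2.1-HYP}) gives $[x, e_{n+1}] = x_{n+1}$, and $x_{n+1} = \ch r$ for $x = \th\,\sh r + e_{n+1}\ch r$, the definition $d(x,y) = \cosh^{-1}[x,y]$ yields $d(x, e_{n+1}) = \cosh^{-1}(\ch r) = r$. Hence the domain $\{d(x, e_{n+1}) > \del\}$ is exactly $\{r > \del\}$, and under the map (\ref{mmqAAWS}) it corresponds to $\{\tanh\del < |y| < 1\}$, because $|y| = \tanh r$. This already matches the two domains of integration. Applying the normalization (\ref{tRRRHYP}) with the extra weight $1/x_{n+1} = 1/\ch r$, the left-hand side becomes
\[\intl_\del^\infty \frac{\sh^{n-1} r}{\ch r}\, dr \intl_{\sn} f(\th\,\sh r + e_{n+1}\ch r)\, d\th,\]
and substituting $f(\th\,\sh r + e_{n+1}\ch r) = g(\th\tanh r)/\ch^n r$ from (\ref{tag 3.12-HYP}) collapses this to
\[\intl_\del^\infty \frac{\sh^{n-1} r}{\ch^{n+1} r}\, dr \intl_{\sn} g(\th\tanh r)\, d\th.\]

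For the right-hand side I would write $\intl_{\tanh\del<|y|<1} g(y)\,dy = \intl_{\tanh\del}^1 s^{n-1}\,ds \intl_{\sn} g(s\th)\, d\th$ in polar coordinates and then set $s = \tanh r$, so that $ds = dr/\ch^2 r$ and $s^{n-1} = \sh^{n-1} r/\ch^{n-1} r$, while the limits $s = \tanh\del$ and $s\to 1$ become $r = \del$ and $r\to\infty$. This turns the right-hand side into precisely the integral displayed above, which proves the identity.

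The only point requiring care is the clause \emph{provided that either integral exists in the Lebesgue sense}. Since every step is an equality of nonnegative quantities once $f$ (hence $g$) is replaced by its absolute value, Tonelli's theorem shows that finiteness of one side forces finiteness of the other; the signed identity then follows by Fubini. I do not anticipate any genuine obstacle here: the argument is a bookkeeping of the powers of $\ch r$ together with the single substitution $s = \tanh r$.
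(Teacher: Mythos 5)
Your proof is correct and follows essentially the same route as the paper's: both reduce the two sides to the common expression $\intl_\del^\infty \sh^{n-1}r\,\ch^{-n-1}r\,dr\intl_{\sn} g(\th\tanh r)\,d\th$ via (\ref{tRRRHYP}), (\ref{tag 3.12-HYP}), and the substitution $s=\tanh r$ (the paper merely runs the computation from right to left). Your extra remarks identifying $d(x,e_{n+1})=r$ and invoking Tonelli for the ``either integral exists'' clause are correct and slightly more explicit than the paper's one-line proof.
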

\begin{proof} We have
\[ r.h.s.\!=\!\intl_{{\rm tanh} \del}^1 s^{n-1}\, ds\intl_{\sn} g(\th  s)\, d\th\!=\!\intl_\del^\infty \frac{\tanh^{n-1} r }{\ch^2 r}\, dr\intl_{\sn} \!g(\th \, \tanh r)\, d\th.\]
By (\ref{tRRRHYP})  and (\ref{tag 3.12-HYP}), this expression coincides with the left-hand side.
\end{proof}

\begin{lemma} \label {Lemma 3.6HYP}
 The following equalities hold provided that the integral in either side  exists in the Lebesgue sense:
\be\label {tag 3.14-HYP} (\frR f) (\xi) = \frac{1}{\ch \rho} \,(R g)(\sigma, \tanh \rho), \ee
\be\label {tag 3.15-HYP}  (\frR^*\vp) (x) = \frac{1}{\ch r}\,
(R^\ast h) (\theta\,\tanh  r). \ee
\end{lemma}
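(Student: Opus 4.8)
The plan is to prove each identity by placing $\xi$ (resp.\ $x$) in a standard frame, inserting the integral representations of Lemma~\ref{Lemma 3.1HYP}, and then converting the resulting integral into a Euclidean Radon (resp.\ dual Radon) integral by means of the projective dictionary (\ref{tag 3.12-HYP})--(\ref{tag 3.13-HYP}) and a single change of variables. Both sides of (\ref{tag 3.14-HYP}) and of (\ref{tag 3.15-HYP}) are equivariant under the rotations $\omega\in SO(n)$ fixing $e_{n+1}$: one has $\frR(f\circ\omega)(\xi)=(\frR f)(\omega\xi)$, $R(g\circ\omega)(\sigma,t)=(Rg)(\omega\sigma,t)$, and the map $x\mapsto \tilde x/x_{n+1}$ intertwines $\omega$ with $g\mapsto g\circ\omega$ (and likewise $\frR^*,R^*$, $\vp\mapsto h$). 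Hence it suffices to treat $\sigma=e_n$ in (\ref{tag 3.14-HYP}) and $\theta=e_n$ in (\ref{tag 3.15-HYP}); the general case then follows by applying the special case to $f\circ\omega$ (resp.\ $\vp\circ\omega$) with $\omega e_n=\sigma$ (resp.\ $\omega e_n=\theta$).

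For (\ref{tag 3.14-HYP}) with $\sigma=e_n$, so that $\xi=e_n\ch\rho+e_{n+1}\sh\rho$ and $f_\sigma=f$, the integration point in (\ref{tag 3.4-HYP}) is $P=(\omega\,\sh s,\ \sh\rho\,\ch s,\ \ch\rho\,\ch s)\in\hn$ with $\omega\in S^{n-2}$, $s>0$. Its projective image is $y=\tilde P/P_{n+1}=(\omega\,\tanh s/\ch\rho,\ \tanh\rho)$, which always has $n$-th coordinate $t=\tanh\rho$, so $y$ runs over the Radon hyperplane $\{y\cdot e_n=t\}$. Writing $f(P)=g(y)/P_{n+1}^n$ by (\ref{tag 3.12-HYP}) and passing to polar coordinates $u=\omega\,\tanh s/\ch\rho\in\bbr^{n-1}$ yields the measure relation $\sh^{n-2}s\,ds\,d\omega=\ch^n s\,\ch^{n-1}\rho\,du$; the factor $\ch^n s$ cancels the $\ch^n s$ in $P_{n+1}^{n}=(\ch\rho\,\ch s)^{n}$, and the remaining powers of $\ch\rho$ leave $\frac{1}{\ch\rho}\int_{\bbr^{n-1}}g(u,t)\,du=\frac{1}{\ch\rho}(Rg)(e_n,\tanh\rho)$, as claimed.

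For (\ref{tag 3.15-HYP}) with $\theta=e_n$, so that $x=e_n\sh r+e_{n+1}\ch r$ and $\vp_\theta=\vp$, the integration point in (\ref{tag 3.5-HYP}) is $Q=(\eta',\ \eta_n\ch r,\ \eta_n\sh r)\in\hns$, $\eta=(\eta',\eta_n)\in S^{n-1}$. A direct check gives $|\tilde Q|=(1+\eta_n^2\sh^2 r)^{1/2}$, so (\ref{tag 3.13-HYP}) produces $\vp(Q)=(1+\eta_n^2\sh^2 r)^{-n/2}\,h(\sigma,\sigma_n\tanh r)$ with $\sigma=\tilde Q/|\tilde Q|$ and $\sigma_n\tanh r=\eta_n\sh r/|\tilde Q|=(e_n\tanh r)\cdot\sigma$. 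The map $\eta\mapsto\sigma$ is the radial projection onto $S^{n-1}$ of the diagonal map $D=\diag(1,\dots,1,\ch r)$, for which $D\eta=\tilde Q$, so $d\sigma=(\det D)\,|D\eta|^{-n}\,d\eta=\ch r\,(1+\eta_n^2\sh^2 r)^{-n/2}\,d\eta$. Inserting this cancels the weight $(1+\eta_n^2\sh^2 r)^{-n/2}$ exactly and leaves $(\frR^*\vp)(x)=\frac{1}{\ch r}\int_{S^{n-1}}h(\sigma,(e_n\tanh r)\cdot\sigma)\,d_*\sigma=\frac{1}{\ch r}(R^*h)(e_n\tanh r)$.

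The genuinely computational parts are the two measure identities, and I regard the spherical Jacobian in the second as the main obstacle. I would derive it once for a general $D\in GL(n,\bbr)$ by substituting $x=Dz$ in $\int_{\bbr^n}\Phi(x/|x|)\,e^{-|x|^2}\,dx$ for $\Phi$ homogeneous of degree $0$: comparing the polar decompositions before and after the substitution gives $d\sigma=(\det D)\,|D\eta|^{-n}\,d\eta$ for $\sigma=D\eta/|D\eta|$. All interchanges of order of integration and changes of variables are legitimate because, by hypothesis, the relevant integral converges in the Lebesgue sense on the side one starts from; since every step transforms $|f|$ (resp.\ $|\vp|$) into $|g|$ (resp.\ $|h|$) against the same positive weights, finiteness on one side transfers to the other, so (\ref{tag 3.14-HYP}) and (\ref{tag 3.15-HYP}) hold whenever either side exists.
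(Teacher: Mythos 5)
Your proof is correct, and the overall architecture (insert the representations of Lemma~\ref{Lemma 3.1HYP}, apply the dictionary (\ref{tag 3.12-HYP})--(\ref{tag 3.13-HYP}), change variables) is the same as the paper's; the explicit reduction to $\sigma=e_n$, $\theta=e_n$ via rotation equivariance is just a restatement of what Lemma~\ref{Lemma 3.1HYP} already encodes through $f_\sigma$ and $\vp_\theta$. For (\ref{tag 3.14-HYP}) your computation is essentially identical to the paper's: the author substitutes $t=\tanh s/\ch\rho$ and recognizes the polar form of $\int_{\bbr^{n-1}}$, while you pass directly to $u=\omega\tanh s/\ch\rho$; the measure identity $\sh^{n-2}s\,ds\,d\omega=\ch^{n}s\,\ch^{n-1}\rho\,du$ checks out. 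For (\ref{tag 3.15-HYP}) you take a genuinely different computational route: the paper writes the sphere integral in slice coordinates $\eta=(\omega\sqrt{1-\eta_n^2},\eta_n)$ and performs the scalar substitution $\eta_n=\zeta_n/\sqrt{\ch^2 r-\zeta_n^2\sh^2 r}$, whereas you use the global change of variables $\sigma=D\eta/|D\eta|$ with $D=\diag(1,\dots,1,\ch r)$ and the standard Jacobian $d\sigma=(\det D)\,|D\eta|^{-n}\,d\eta$, which cancels the weight $(1+\eta_n^2\sh^2 r)^{-n/2}$ in one stroke. Your version is shorter and makes the projective nature of the identity more transparent, at the cost of invoking (and having to justify, as you do via the polar-decomposition comparison) the anisotropic spherical Jacobian; the paper's slice computation is more elementary but more opaque. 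The final remark on transferring Lebesgue convergence between the two sides is also sound, since every step involves only positive Jacobians.
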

\begin{proof}  Let
  $\omega_\theta$, $\omega_\sigma  \!\in  \!SO (n)$ be
arbitrary rotations satisfying $\omega_\theta e_n \!=  \!\theta$, $\omega_\sigma e_n \! = \! \sigma$.  We write $g_\sigma (y) = g (\omega_\sigma y)$.  By (\ref{tag 3.4-HYP}) and  (\ref{tag 3.12-HYP}),
\bea  (\frR f) (\xi) &=& \int\limits^\infty_0 \sh^{n -2} s \
d s \int\limits_{S^{n -2}} g_\sigma
\left( \omega\, \frac{\tanh  s}{\ch \rho}  +e_n \,
\tanh \rho  \right )\,
\frac{d \omega}{(\ch s \, \ch \rho)^n} \nonumber\\
&{}& \mbox {\rm (set $t=\tanh s / \ch \rho$)} \nonumber\\
&=& \int\limits^\infty_0 \frac{t^{n -2} dt}{\ch \rho}
\int\limits_{S^{n -2}} g_\sigma (\omega \, t \! +\!
e_n\,\tanh \rho ) \, d \omega \nonumber\\
&=& \frac1{\ch \rho}
\int\limits_{\bbr^{n-1}} g_\sigma
(v \!+\! e_n\,\tanh \rho) \, d v. \nonumber\eea
The last expression coincides with (\ref{tag 3.14-HYP}).  Let us prove (\ref{tag 3.15-HYP}).  Denoting $h_\theta (\sigma, t) = h (\omega_\theta
\sigma, t)$ and using (\ref{tag 3.5-HYP}) and  (\ref{tag 3.13-HYP}), we have
\bea  &&(\frR^*\vp) (x) \!=\! \frac{1}{\sig_{n-1}}\int\limits^1_{-1}
(1 \!-  \!\eta^2_n)^{(n - 3)/2} d \eta_n \nonumber\\
&&\times
\int\limits_{S^{n -2}} \vp_\theta
\left (\om\, \sqrt{1 \!-\! \eta^2_n} \! +\!e_n  \eta_n\ch r + e_{n +1}\eta_n \, \sh r\right ) \, d \omega \nonumber\eea
or
\bea
&&(\frR^*\vp) (x) \!=\!  \frac{1}{\sig_{n-1}}\int\limits^1_{-1} \frac{(1\!-\!\eta^2_n)^{(n - 3)/2} \,d \eta_n}{(1 \!+\! \eta^2_n \,\sh^2 r)^{n/2}}\nonumber\\
&&\times\int\limits_{S^{n -2}} \! h_\theta \left (\frac{\omega\,\sqrt{1 \!-\!\eta^2_n}
 \!+ \!e_n   \eta_n \ch r}{\sqrt{1 + \zeta^2_n \,\sh^2 r}} ,
\frac{\eta_n \sh r}{\sqrt{1 \!+ \!\eta^2_n \,\sh^2 r }} \right ) d \omega.\nonumber\eea
Setting $\eta_n = \zeta_n / \sqrt{\ch ^2 r - \zeta^2_n \, \sh^2 r} $, we continue
\bea  &&(\frR^*\vp) (x) \!=\!
 \frac{1}{\sig_{n-1}\,\ch r} \int\limits^1_{-1} (1\! - \!\zeta^2_n)^{(n - 3)/2} d\zeta_n \nonumber\\
 &&\times
 \int\limits_{S^{n - 2}}
h_\theta (\omega\, \sqrt{1 \!-\! \zeta^2_n}  \!+ \! e_n \zeta_n, \,\zeta_n \tanh r)
\, d \omega \nonumber\\
&&= \frac{1}{\ch r} \int\limits_{S^{n -1}} h_\theta (\zeta, (\zeta \cdot e_n)\,\tanh r) \, d_* \zeta = \frac{1}{\ch r} \,(R^\ast h) (\theta \,\tanh r). \nonumber\eea
\end{proof}

Lemmas \ref{Lemma 3.A6HYP} and \ref{Lemma 3.6HYP} combined with Theorem \ref{byvs1} yield the following existence result for the totally geodesic transform $\frR$.
\begin{theorem} \label {HYYscp2} If
\be \label {HYYscp25}  \intl_{d (x, e_{n+1})>a} |f (x)|\, \frac{dx}{x_{n+1}}<\infty  \ee
for all $a>0$, then $(\frR f) (\xi)$ is finite for almost all $\xi \in \hns$.
If $f$ is nonnegative, zonal, and (\ref{HYYscp25}) fails for some $a>0$, then  $(\frS f) (\xi)\equiv \infty$.
\end{theorem}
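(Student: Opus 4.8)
The plan is to transfer everything to the hyperplane Radon transform $R$ on $\rn$ via Lemmas \ref{Lemma 3.A6HYP} and \ref{Lemma 3.6HYP}, and then invoke Theorem \ref{byvs1}. I read the symbol $\frS$ in the divergence clause as a misprint for $\frR$: the slice transform $\frS$ of Section \ref{Slice} acts on functions on $S^n$, whereas here $f$ lives on $\hn$ and the hypothesis (\ref{HYYscp25}) is the hyperbolic integral condition, so the operator whose divergence is asserted can only be the totally geodesic transform $\frR$. Accordingly I treat the second clause as the assertion $(\frR f)(\xi)\equiv\infty$.

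For the finiteness statement, I would first record that Lemma \ref{Lemma 3.A6HYP} applies verbatim to $|f|$ and $|g|$, because $(1-|y|^2)_+^{-n/2}\ge 0$, so
\be\label{planA}
\intl_{\tanh\del<|y|<1}|g(y)|\,dy=\intl_{d(x,e_{n+1})>\del}|f(x)|\,\frac{dx}{x_{n+1}},\qquad \del\ge 0.
\ee
As $\del$ runs over $\bbr_+$, the left endpoint $b=\tanh\del$ runs over $(0,1)$, so hypothesis (\ref{HYYscp25}) says exactly that $\intl_{b<|y|<1}|g(y)|\,dy<\infty$ for every $b\in(0,1)$. Since $g$ from (\ref{tag 3.11-HYP}) is supported in the open unit ball (the factor $(1-|y|^2)_+^{-n/2}$ vanishes for $|y|\ge 1$), for any $c>0$ I would estimate $\intl_{|y|>c}|g(y)|\,|y|^{-1}\,dy\le c^{-1}\intl_{c<|y|<1}|g(y)|\,dy<\infty$, which is precisely the existence condition (\ref{lkmux}) of Theorem \ref{byvs1} for $g$. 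Hence $(Rg)(\sigma,t)$ is finite for almost all $(\sigma,t)\in S^{n-1}\times\bbr$, and Lemma \ref{Lemma 3.6HYP} together with the change of variables $\xi\mapsto(\sigma,\tanh\rho)$ yields finiteness of $(\frR f)(\xi)$ for almost all $\xi\in\hns$.

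For the divergence statement, assume $f\ge 0$ is zonal and (\ref{HYYscp25}) fails for some $a>0$. Then $g$ is nonnegative and radial, and (\ref{planA}) with $\del=a$, $b=\tanh a$, gives $\intl_{b<|y|<1}g(y)\,dy=\infty$. Using $|y|^{-1}\ge 1$ on the unit ball I obtain
\be\label{planB}
\intl_{|y|>b}\frac{g(y)}{|y|}\,dy\ge\intl_{b<|y|<1}g(y)\,dy=\infty,
\ee
so the condition (\ref{lkmux}) fails for the nonnegative radial function $g$. By the second assertion of Theorem \ref{byvs1}, $(Rg)(\sigma,t)\equiv\infty$, and since $\ch\rho>0$, Lemma \ref{Lemma 3.6HYP} forces $(\frR f)(\xi)=(\ch\rho)^{-1}(Rg)(\sigma,\tanh\rho)\equiv\infty$, which is the conclusion of the theorem once the misprinted $\frS$ is read as $\frR$.

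The only genuinely delicate point is the transfer of the phrase \emph{almost all} through the change of variables: I must verify that null sets in the $(\sigma,t)$-parametrization pull back to null sets on $\hns$ under $\xi=\sigma\,\ch\rho+e_{n+1}\sh\rho$ with $t=\tanh\rho$. This is routine once one notes that $S^{n-1}\times(-1,1)\ni(\sigma,t)\mapsto\xi$ is a diffeomorphism with smooth nonvanishing Jacobian, but it is the step that actually requires the $G$-invariant measure on $\hns$ rather than merely the pointwise identities of Lemma \ref{Lemma 3.6HYP}. The radial/zonal bookkeeping in the divergence half is immediate, so no obstacle arises there.
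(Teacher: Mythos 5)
Your proof is correct and takes essentially the same route as the paper, which derives Theorem \ref{HYYscp2} directly from Lemmas \ref{Lemma 3.A6HYP} and \ref{Lemma 3.6HYP} combined with Theorem \ref{byvs1} — precisely your argument, including the reduction of (\ref{HYYscp25}) to the existence condition (\ref{lkmux}) for the function $g$ supported in the unit ball, where $|y|^{-1}$ is harmlessly bounded below by $1$. You are also right that $\frS$ in the divergence clause is a misprint for $\frR$, and your explicit check of the a.e.\ transfer under the diffeomorphism $(\sigma,\tanh\rho)\mapsto\xi$ merely spells out what the paper leaves implicit.
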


In a similar way, the support theorem  the Radon transform $R$ (see Theorem \ref{azw1a2R}) implies the following statement.
\begin{theorem} \label {786NGR} Let $a>0$   and let $\t_\xi$  denote the totally geodesic submanifold in $\hn$ indexed by $\xi\!\in \!\hns$.  If $f(x)=0$ for almost all $x\!\in \!\hn$ satisfying $d (x, e_{n+1})\!>\!a$, then
$(\frR f )(\xi)\!=\!0$ for almost all $\xi$ satisfying $d(\t_\xi, e_{n+1})>a$. Conversely, if $f$ satisfies  (\ref{HYYscp25})
 and   $(\frR f )(\xi)\!=\!0$ for almost all $\xi$ with $d(\t_\xi, e_{n+1})>a$, then $f(x)=0$ for almost all $x\in \hn$ satisfying $d (x, e_{n+1})>a$.
\end{theorem}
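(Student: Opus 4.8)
The plan is to transfer both implications to the Euclidean support theorem (Theorem~\ref{azw1a2R}) via the central projection $x=(\tilde x,x_{n+1})\mapsto y=\tilde x/x_{n+1}$ of $\hn$ onto the open unit ball of $\rn$, together with the associated function $g$ of (\ref{tag 3.11-HYP}). The two facts I would lean on are Lemma~\ref{Lemma 3.6HYP}, which gives $(\frR f)(\xi)=\ch^{-1}\rho\,(Rg)(\sigma,\tanh\rho)$ for $\xi=\sigma\,\ch\rho+e_{n+1}\sh\rho$, and Lemma~\ref{Lemma 3.A6HYP}, which converts the integrability hypothesis (\ref{HYYscp25}) for $f$ into one for $g$. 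Recall also that $g$ is supported in the unit ball.

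First I would pin down the geometric dictionary. Writing $x=\th\,\sh r+e_{n+1}\ch r$ gives $[x,e_{n+1}]=\ch r$, hence $d(x,e_{n+1})=r$ and $|y|=\tanh r$; so $\{x:\,d(x,e_{n+1})>a\}$ corresponds to the shell $\{\tanh a<|y|<1\}$. For the hyperplanes I would use the standard formula $\sinh d(e_{n+1},\t_\xi)=|[e_{n+1},\xi]|$; since $\sigma\in\sn$ has vanishing last coordinate this reduces to $|\sh\rho|$, whence $d(\t_\xi,e_{n+1})=|\rho|$. Thus $d(\t_\xi,e_{n+1})>a$ is equivalent to $|\tanh\rho|>\tanh a$, i.e.\ to $(\sigma,\tanh\rho)\in C_{\tanh a}^-$. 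In other words, the hyperbolic statement with radius $a$ is exactly the Euclidean statement with radius $\tanh a$; getting this point-to-hyperplane distance right is the one genuinely geometric step.

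With this dictionary the two implications are almost mechanical. Because the projection is a diffeomorphism and the weight $(1-|y|^2)^{-n/2}$ in $g$ is positive, $f$ vanishes a.e.\ on $\{d(x,e_{n+1})>a\}$ iff $g$ vanishes a.e.\ on $B_{\tanh a}^-$ (here we use that $g$ lives in the unit ball); and since $\ch^{-1}\rho\neq0$ in Lemma~\ref{Lemma 3.6HYP}, $\frR f=0$ a.e.\ on $\{d(\t_\xi,e_{n+1})>a\}$ iff $Rg=0$ a.e.\ on $C_{\tanh a}^-$. The forward implication is then immediate from the forward half of Theorem~\ref{azw1a2R}. For the converse I would need the moment bounds $\int_{B_{\tanh a}^-}|g(y)|\,|y|^m\,dy<\infty$ for all $m$; this is the only step requiring care, but it is painless here, since $g$ lives in the unit ball where $|y|^m\le1$, so by Lemma~\ref{Lemma 3.A6HYP} each such integral is dominated by $\int_{\tanh a<|y|<1}|g(y)|\,dy=\int_{d(x,e_{n+1})>a}|f(x)|\,x_{n+1}^{-1}\,dx$, finite by (\ref{HYYscp25}). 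Applying the converse half of Theorem~\ref{azw1a2R} and reading the dictionary backwards completes the proof.
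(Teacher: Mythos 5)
Your proposal is correct and follows exactly the route the paper takes: the paper derives Theorem \ref{786NGR} from Theorem \ref{azw1a2R} via Lemmas \ref{Lemma 3.A6HYP} and \ref{Lemma 3.6HYP}, observing (as you do) that the moment conditions (\ref{osllsg}) hold automatically because $g$ is supported in the unit ball, and that $d(x,e_{n+1})>a$ and $d(\t_\xi,e_{n+1})>a$ translate to $x_{n+1}>\ch a$ and $|\xi_{n+1}|>\sh a$, i.e.\ to the Euclidean radius $\tanh a$. Your geometric dictionary and the domination $\int_{B_{\tanh a}^-}|g(y)|\,|y|^m\,dy\le\int_{d(x,e_{n+1})>a}|f(x)|\,x_{n+1}^{-1}\,dx$ are precisely the points the paper leaves to the reader.
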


We observe an amazing fact that, unlike the Euclidean case in Theorem \ref{azw1a2R}, the above theorem does not require a rapid decay of $f$ at infinity. This fact was  discovered by Kurusa \cite{Ku94}. The reason is that the function $g$ in (\ref{tag 3.14-HYP}) is supported in the unit ball and therefore, the condition (\ref{osllsg}) holds automatically.
Note also  that the condition  $d(\t_\xi, e_{n+1})>a$ is equivalent to $|\xi_{n+1}|> \sh \,a$, and $d (x, e_{n+1})\!>\!a$ is equivalent to $x_{n+1}> \ch \,a$.

  Theorems \ref{azw1a2} and \ref{zaehle4RA}  give the corresponding result for the kernel of the operator $\fr R$. We write $x\in \hn$ in the hyperbolic polar coordinates as $x = \theta\, \sh r  + e_{n+1} \, \ch r$, $\th \in \sn$, $r>0$, and compute the Fourier-Laplace coefficients
\be\label{TPNYH} f_{m,\mu} (r)= \intl_{\sn} f(\th\, \sh r + e_{n+1} \ch r)\, Y_{m,\mu} (\th)\, d\th.\ee

\begin{theorem} \label {786NGR1}  Let
\be\label{THHHep} I_1 (f)=\intl_{x_{n+1}>1+\del}
|f(x)|\, \frac{dx}{x_{n+1}} <\infty \quad \forall \,\del>0. \ee

\noindent (i) \  Suppose that  $f_{m,\mu} (r)=0$ for almost all $r>0$ if $m=0,1$, and
\be \label{azw2INHY5}
f_{m,\mu} (r)= \sh^{-n} r\sum_{\substack{k=0 \\  m-k \,  even }}^{m-2} c_{k}\, \coth^{k} \psi, \qquad c_{k}=\const,\ee
if $m\ge 2$. Then  $(\frR f)(\xi)=0$  a. e.  on $\hns$.

\noindent (ii) \ Conversely, let $(\frR f)(\xi)=0$ a. e. on $\hns$. Suppose additionally that
\be\label{THH1eps}
I_2(f)=\intl_{x_{n+1}<1+\del}
|f(x)| \,(x_{n+1} -1)^{\lam}\,dx<\infty\ee
for some $\del>0$ and $\lam>-1/2$.
Then each  Fourier-Laplace coefficient $f_{m,\mu} (r)$ is a finite linear combination of the functions $\sh^{-n} r \, \coth^{k} \psi$, $k=0,1, \ldots$,  and the following statements hold.

\noindent {\rm (a)} If $m=0,1$, then $f_{m,\mu} (r)\equiv 0$.

\noindent {\rm (b)}  If $m\ge 2$ and $f \neq 0$, that is, the set $\{x: f (x)\neq 0\}$ has positive measure, then $f_{m,\mu} (r)\not\equiv 0$  for at least one pair $(m,\mu)$. For every such pair, $f_{m,\mu} (r)$ has the form (\ref{azw2INHY5}).
\end{theorem}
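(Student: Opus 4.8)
The plan is to push the entire problem down to the Euclidean hyperplane transform via the Klein (gnomonic) model and then quote the two kernel theorems of Section \ref{pport The}. The engine is Lemma \ref{Lemma 3.6HYP}: by (\ref{tag 3.14-HYP}), $(\frR f)(\xi)=(\ch\rho)^{-1}(Rg)(\sigma,\tanh\rho)$, where $g$ is the function on $\rn$ attached to $f$ by (\ref{tag 3.11-HYP})--(\ref{tag 3.12-HYP}). Since $\ch\rho>0$ and $\tanh\rho$ fills out $(-1,1)$ as $\rho$ runs over $\bbr$, while $g$ is supported in the unit ball so that $(Rg)(\sigma,t)=0$ automatically for $|t|\ge 1$, I would first observe that $\frR f=0$ a.e. on $\hns$ is equivalent to $Rg=0$ a.e. on $Z_n$. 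Everything then rests on applying Theorems \ref{azw1a2} and \ref{zaehle4RA} to $g$.

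Next I would set up the dictionary between the hypotheses. Lemma \ref{Lemma 3.A6HYP} shows that $I_1(f)<\infty$ in (\ref{THHHep}) is literally the Euclidean condition (\ref{azw1X}) for $g$, the region $x_{n+1}>1+\del$ corresponding to $\{b<|y|<1\}$ under $|y|=\tanh r$. For the converse direction I would prove the companion identity near the centre: with $x_{n+1}-1=\ch r-1$ and $|y|=\tanh r$, the substitution $s=\tanh r$ built on (\ref{tag 3.12-HYP}) turns (\ref{THH1eps}) into the Euclidean condition (\ref{azw1}) with $N=2\lam+1>0$. Both are routine changes of variable.

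The core is the coefficient matching. Reading off the $Y_{m,\mu}$-component in (\ref{tag 3.12-HYP}) gives
\[ g_{m,\mu}(s)=\ch^{n} r\, f_{m,\mu}(r), \qquad s=\tanh r. \]
Inserting the proposed form (\ref{azw2INHY5}) and using $\coth r=1/s$, I obtain
\[ g_{m,\mu}(s)=\coth^{n} r\sum_{k} c_k\,\coth^{k} r=\sum_{k}\frac{c_k}{s^{\,n+k}}, \]
which is exactly the Euclidean kernel form (\ref{azw2}); the parity constraint ``$m-k$ even'' is preserved, matching the evenness of $Rg$. Part (i) would then follow by applying Theorem \ref{azw1a2} to $g$, and part (ii) by applying Theorem \ref{zaehle4RA}; reading the conclusions back through $r=\tanh^{-1}s$ reproduces the dichotomy (a)--(b), the surviving pair $(m,\mu)$ being guaranteed because $f\neq 0$ forces $g\neq 0$.

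The step I expect to be the main obstacle is controlling the tension between the compact support of $g$ and the Euclidean kernel description: Theorem \ref{zaehle4RA} expresses $g_{m,\mu}(s)$ for all $s>0$, yet $g$ lives in $\{|y|<1\}$, so $g_{m,\mu}(s)=0$ for $s>1$. I would need to verify carefully that the singular-at-the-origin profile $\sum_k c_k s^{-n-k}$ really satisfies (\ref{azw1X}) and (\ref{azw1}) (the singularity at $y=0$ is only polynomial, so (\ref{azw1}) holds for $N$ large, while (\ref{azw1X}) is harmless since $g$ is bounded on each $\{b<|y|<1\}$), and that the kernel relation is invoked only on the range $s\in(0,1)$ that the geometry of $\hns$ actually sees. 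This is precisely the point where the compactness of the Klein ball meets the unilateral structure of the Gegenbauer--Chebyshev integrals, and it is the place that must be handled with the most attention.
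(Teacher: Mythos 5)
Your reduction is exactly the route the paper intends, and your dictionary is set up correctly: the identity $g_{m,\mu}(s)=\ch^{n}r\,f_{m,\mu}(r)$ with $s=\tanh r$, the translation of (\ref{THHHep}) into (\ref{azw1X}), and of (\ref{THH1eps}) into (\ref{azw1}) with $N=2\lam+1$. But the obstacle you flag in your last paragraph is not a delicate point to be ``handled with attention'' --- it is a genuine obstruction, and it breaks the argument for part (i) (and, as stated, part (i) itself). Because the Klein-model function $g$ vanishes for $|y|\ge 1$, the profile you compute is $g_{m,\mu}(s)=\sum_k c_k s^{-n-k}$ only on $(0,1)$, extended by zero. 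Theorem \ref{azw1a2} and Lemma \ref{89srg} annihilate the \emph{global} profile: what vanishes is the integral $\int_t^\infty(r^2-t^2)^{\lam-1/2}C^\lam_m(t/r)\,r^{-n-k+1}\,dr$, whereas $(Rg)_{m,\mu}(t)$ is the truncated integral $\int_t^1$, and the two differ by a tail $\int_1^\infty$ that is in general nonzero. Concretely, for $n=2$, $m=2$, $k=0$, i.e. $f=\sh^{-2}r\;Y_2(\theta)$ on $\bbh^2$, one finds $(\T_{-}^{2}[\,s^{-2}\chi_{(0,1)}\,])(t)=c\,\sqrt{1-t^2}$ with $c\neq 0$, so by (\ref{tag 3.14-HYP}) $(\frR f)(\xi)=2\,\ch^{-2}\rho\;Y_2(\sigma)\not\equiv 0$; a direct integration of $f$ over the geodesic $\{x_2=x_3\tanh\rho\}$ confirms this value. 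So the hypotheses of part (i) do not force $\frR f=0$.

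The same support constraint undoes part (ii)(b) from the other side. If $\frR f=0$ a.e., then, as you observe, $Rg=0$ a.e.\ on $Z_n$, and Theorem \ref{zaehle4RA} gives $g_{m,\mu}(s)=\sum_k c_k s^{-n-k}$ for almost all $s>0$; but $g_{m,\mu}$ vanishes a.e.\ on $(1,\infty)$, where the powers $s^{-n-k}$ are linearly independent, so all $c_k=0$ and $g=0$ a.e. (More simply still: apply the support theorem \ref{azw1a2R} for every $a>0$; its hypothesis (\ref{osllsg}) holds automatically because $g$ is supported in the unit ball, so $g=0$ a.e.\ already follows from $I_1(f)<\infty$ alone.) Hence on the stated class $\frR$ is injective, and the putative kernel elements (\ref{azw2INHY5}) do not exist --- in sharp contrast with the Euclidean and spherical cases, where the auxiliary function lives on all of $\rn$ and the power-law profile survives to infinity. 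This is precisely the phenomenon the paper itself invokes after Theorem \ref{786NGR} to explain why no decay at infinity is needed there; the same observation applies here. Your proposal correctly isolates the decisive point, but a complete treatment must conclude that the reduction proves injectivity of $\frR$ on this class rather than the stated kernel description.
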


Analogues of  Theorems \ref {786NGR} and \ref {786NGR1} for the dual transform $\fr R^*$ can be similarly derived from Theorems \ref{azw1a2R2} and \ref {zaeh}, respectively, using the connection (\ref{tag 3.15-HYP}). The corresponding results are left to the interested reader.

We conclude the paper by the following

\noindent{\bf Open Problem.} Keeping in mind that the Gegenbauer-Chebyshev integrals (and the corresponding Radon transforms) have  unilateral structure, we wonder if the following assumptions caused by the method of the proof can be omitted:

$\bullet$   $\;\vp \in S'(Z_n)$ in Theorem \ref {zaehle4},

$\bullet$   $\;f\in S'(\rn)$  in Theorem \ref {zaehQuin}(ii),

$\bullet$   $\;I_2 (f)<\infty$  in Theorems \ref{zaehle4RA}, \ref{786NGR1SP}(ii), \ref{zasliep}(ii), \ref{786NGR1}(ii).

\vskip 0.3 truecm

\noindent {\bf Acknowledgements.} I am grateful to Jan Boman, Sigurdur Helgason, Ricardo Estrada, Gestur \'Olafsson, and  Yuri Luchko for useful comments and discussion. My special thanks for the figures  in this paper go to  	Emily Ribando-Gros who
was  supported by the NSF VIGRE program.

\end{document}